\newcommand{\N}{\mathbb{N}}
\newcommand{\Z}{\mathbb{Z}}
\newcommand{\Q}{\mathbb{Q}}
\newcommand{\R}{\mathbb{R}}
\newcommand{\A}{\mathbb{A}}
\newcommand{\Ao}{\widetilde{\mathbb{A}}}
\newcommand{\tore}{\mathbb{T}}
\newcommand{\T}[1]{\tore^{#1}}
\newcommand{\abs}[1]{\left|#1\right|}
\newcommand{\norma}[1]{\left\|#1\right\|}
\newcommand{\fonc}[3]{#1:#2\to#3}
\newcommand{\sui}[1]{\left( #1_n \right)_{n\in\N}}
\newcommand{\suii}[2]{\left( #1_#2 \right)_{#2\in\N}}
\DeclareMathOperator{\lrho}{\rho_{loc}}
\DeclareMathOperator{\arho}{\rho_{ann}}
\DeclareMathOperator{\adhe}{Cl}
\DeclareMathOperator{\inte}{Int}
\DeclareMathOperator{\diam}{diam}
\DeclareMathOperator{\dist}{dist}
\DeclareMathOperator{\sing}{Sing}
\DeclareMathOperator{\supp}{Supp}
\DeclareMathOperator{\homeo}{Homeo}
\DeclareMathOperator{\conv}{Conv}
\DeclareMathOperator{\fron}{Fr}
\newcommand{\homeourd}{\homeo_0\left(\R^2;0\right)}
\newcommand{\homeoa}{\homeo_0\left(\A\right)}
\theoremstyle{theorem}
\newtheorem{lemm}{Lemma}[section]
\newtheorem{coroll}[lemm]{Corollary}
\newtheorem{propo}[lemm]{Proposition}
\newtheorem{theor}[lemm]{Theorem}
\newtheorem*{claim}{Claim}
\newtheorem{defini}[lemm]{Definition}
\newtheorem*{theoa}{Theorem A}
\newtheorem*{theoaast}{Theorem A*}
\newtheorem*{theob}{Theorem B}
\newtheorem*{theobast}{Theorem B*}
\newtheorem*{theoc}{Theorem C}
\newtheorem*{theocast}{Theorem C*}
\newtheorem*{theod}{Theorem D}
\theoremstyle{remark}
\newtheorem*{exam}{Example}
\newtheorem{remar}{Remark}
\title{\textbf{The Local Rotation Set is an Interval}}
\author{\textsc{Jonathan Conejeros}}
\date{}
\begin{document}

\maketitle

\begin{abstract}
Let $\homeourd$ be the set of all homeomorphisms of the plane isotopic to the identity and which fix $0$. Recently in \cite{leroux} Frédéric Le Roux gave the definition of the \textit{local rotation set of an isotopy $I$} in $\homeourd$ from the identity to a homeomorphism $f$ and he asked if this set is always an interval. In this article we give a positive answers to this question and to the analogous question in the case of the open annulus.
\end{abstract}

\section{Introduction}
The concept of \textit{rotation number} was introduced by H. Poincaré to study the dynamics of orientation-preserving homeomorphisms of the circle $\T{1}$ (in the context of torus flows, see \cite{poin}). More precisely, for every orientation-preserving homeomorphism $h$ of the circle $\T{1}$, H. Poincaré defined a number $\rho(h)$, measuring the ``asymptotic speed of rotation of the orbits of $h$ around $\T{1}$''. He proved that this number provides information about the dynamics of $h$. The construction of H. Poincaré can be generalized for homeomorphisms of surfaces. In the case of the two-dimensional torus $\T{2}=\R^2/\Z^2$, this notion of rotation set was introduced by M. Misiurewicz and K. Ziemian (see \cite{mizi}). For a torus homeomorphism $f$ which is isotopic to the identity, the rotation set associated to some lift $\widetilde{f}$ of $f$, denoted $\rho(\widetilde{f})$, is a subset of $\R^2$. In this, and in many other articles it is studied the relation between the rotation set and the dynamics of $f$. For example if $(p_1/q,p_2/q)$ is a rational point in the interior of $\rho(\widetilde{f})$, then there exists a $q$-periodic point for $f$ (see \cite{fra88}). In \cite{lm91} is proved that if $\rho(\widetilde{f})$ has interior non-empty, then $f$ has positive entropy. More recently, using transverse foliations to the dynamics, one can give more precise descriptions of the dynamics of some homeomorphisms of the torus (see for example \cite{dav13} and \cite{ltal}).

\subsection{Main results}

In this article, we are interested in the local case and in the case of the open annulus $\A=\T{1}\times \R$. In the local case, firstly, V. Na\u\i shul$'$ proved that for diffeomorphisms $f$ which fix 0 and whose differential at $0$ is a rotation, the angle of the rotation is a topological invariant (see \cite{nai}). In \cite{gp95} J.-M. Gambaudo and E. Pécou gave a simple proof of Na\u\i shul$'$ result. Next P. Le Calvez (see \cite{lec03}) and then P. Le Calvez and J. C. Yoccoz (see \cite{ly97}) defined the
\textit{local rotation number} of some homeomorphisms. These works provide settings where the local rotation set is a single number. More recently, F. Le Roux defined \textit{the local rotation set} for a general homeomorphism isotopic to the identity of the plane which fixes $0$ (see \cite{leroux}). An easy but important example is the family of the \textit{fibered rotations}. For a continuous function  $\fonc{\alpha}{(0,+\infty)}{\R}$ we define the fibered rotation as:
$$ h_{\alpha}: (r,\theta)\mapsto (r,\theta+ \alpha(r)).$$  In this case, the local rotation set of $h_\alpha$ coincides with the limit points of the function $\alpha$ at $0$. Hence for every compact interval in $[-\infty,+\infty]$ there exists a fibered rotation whose local rotation set is this interval.

A natural question is: Is the local rotation set always an interval?  The following theorem give a positive answer to the above question. We denote by $\homeourd$ the set of all homeomorphisms of the plane which are isotopic to the identity and which fix 0.

\begin{theoa}
  Let $I$ be an isotopy in $\homeourd$ starting from the identity. Then the local rotation set of $I$, $\lrho(I)$, is an interval. More precisely: every rational number $\frac{p}{q}$ belonging to the interior of the convex hull of $\lrho(I)$ is included in the rotation set of a compact invariant set $K$ arbitrarily close to $0$.
\end{theoa}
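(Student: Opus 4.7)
The plan is to transport the problem to the open annulus. Passing to polar coordinates (or blowing up $0$) conjugates the restriction of $I$ to a punctured neighborhood of $0$ to an isotopy on a half-open cylinder whose universal cover is $\R^2$; this is compatible with the isotopy to the identity and so singles out a lift $\tilde f$ of $f=f_1$. In this model an element of $\lrho(I)$ is a limit of the form $\lim_k \tfrac{1}{q_k}\bigl(\pi_1(\tilde f^{q_k}(\tilde z_k)) - \pi_1(\tilde z_k)\bigr)$ for sequences $z_k\to 0$ in the punctured plane whose forward orbits up to time $q_k$ stay in neighborhoods of $0$ shrinking to $\{0\}$. In this setting, the second assertion of Theorem A becomes the search, for every rational $p/q$ in the interior of $\conv(\lrho(I))$ and every neighborhood $U$ of $0$, of a $q$-periodic orbit of $f$ contained in $U$ whose distinguished lift is translated by exactly $p$ units in the $\pi_1$-direction per period.

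Fix such a rational $p/q$ and choose $\alpha<p/q<\beta$ in $\lrho(I)$, together with witnessing sequences of orbit segments in arbitrarily small neighborhoods of $0$. Replacing $\tilde f^q$ by $\tilde g := T^{-p}\circ\tilde f^q$, where $T$ is the generator of the deck group of the cylinder, one obtains a lift for which these orbit segments drift, on average, to the left and to the right in the $\pi_1$-coordinate, and a fixed point of $\tilde g$ corresponds exactly to a $q$-periodic orbit of $f$ with the prescribed rotation. The core step is to run a Franks/Le Calvez type forcing argument on $\tilde g$: the simultaneous presence near the puncture of orbits with opposite horizontal drift should force a fixed point of $\tilde g$ by a Brouwer-theoretic argument (chains of free topological disks, or equivalently the equivariant foliation produced by Le Calvez's theorem when $\tilde g$ is assumed fixed-point free in the region under consideration).

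The main obstacle, to which the bulk of the work should be devoted, is the \emph{localization} of this forcing argument. Standard versions of the Franks lemma in the open annulus produce a periodic point somewhere in the annulus, not in a prescribed neighborhood of a given end; here I need the point to lie in $U$. The essential input to overcome this is that $\alpha$ and $\beta$ are elements of the \emph{local} rotation set, so their witnesses can be required to lie entirely in arbitrarily small punctured neighborhoods of $0$. The task will then be to arrange the Brouwer chain of free disks built from these witnesses so that it remains inside such a neighborhood, yielding a fixed point of $\tilde g$ whose $f$-orbit $K$ is a compact invariant set in $U$ realizing $p/q$.

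Once the realization of every rational $p/q$ in the interior of $\conv(\lrho(I))$ is established, the first assertion follows immediately: the rotation set of each such compact invariant set $K$ is contained in $\lrho(I)$, so $\lrho(I)$ contains a dense subset of $\conv(\lrho(I))$, and since $\lrho(I)$ is closed by its definition as a set of accumulation values, one concludes $\lrho(I) = \conv(\lrho(I))$, an interval of $[-\infty,+\infty]$.
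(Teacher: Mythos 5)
The reduction steps in your proposal (shifting $p/q$ to $0$ by passing to $T^{-p}\circ\widetilde f^{q}$, i.e.\ to $J^{-p}*I^{q}$, and recovering connectedness of $\lrho(I)$ from density of realized rationals plus closedness) match the paper. The gap is in your core step: you propose to force a \emph{fixed point} of $\widetilde g=T^{-p}\circ\widetilde f^{q}$ (equivalently a $q$-periodic orbit of $f$ with rotation $p/q$) from the presence of orbits with opposite horizontal drift near the puncture, by a Franks/Le Calvez--type argument. This is both stronger than what Theorem A asserts (it only asks for a compact invariant set $K$ whose rotation set contains $p/q$) and false in general: the paper's example right after Theorem A$^*$ (the rescaled tower of ``twice Reeb'' annuli $D_n=\tfrac{1}{3^n}D_0$) has $0\in\rho_{D_n}(I)$ for invariant annuli accumulating at $0$, yet $f$ has no fixed point other than $0$ --- so no Brouwer-theoretic chain argument can produce the periodic orbit you are after without an intersection hypothesis. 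Fixed points are only obtained when $f$ satisfies the local intersection property; that case is Le Roux's earlier result, which the paper quotes. The genuinely hard case, which your proposal does not address, is when essential $f$-free curves exist arbitrarily close to $0$: there the paper uses Le Roux's extension theorem and Le Calvez's transverse foliation to find closed leaves, decomposes a neighborhood of $0$ into sub-annuli whose maximal invariant sets rotate with a definite sign (Proposition \ref{propcases1}), and then proves the new Theorem C, where the realization of rotation number $0$ comes not from a periodic point but from a Birkhoff-style intersection of an unstable branch of one band with a stable branch of the adjacent band, yielding a (generally non-periodic) orbit in the maximal invariant set $\Theta(A)$ whose forward and backward iterates drift to the same side with controlled displacement, forcing $0\in\rho_{\Theta(A)}(I)$.

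Your ``localization'' concern is real, but it is not the main obstacle, and it is resolved in the paper by different means than localizing a chain of free disks: Le Roux's Theorem \ref{theoextention} modifies $f$ away from a neighborhood of $0$ to kill distant contractible fixed points, and local transversality of the foliation at the end (Proposition \ref{theolocallytrans}) together with the local finiteness of the family of annuli between closed leaves keeps the whole construction inside a prescribed neighborhood $V$. So as written your proposal would succeed only under the local intersection property; without it the central forcing step fails, and the missing ingredient is precisely the stable/unstable-branch argument of Theorem C.
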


 Let us turn to the case of the open annulus $\A=\T{1}\times \R$. The notion of rotation set was introduced by J. Franks (see \cite{fra96}, see also \cite{leroux}). On the other hand, P. Le Calvez introduced the \textit{rotation set of recurrent points} (see \cite{lec01}). Moreover these sets coincide if the latter is non-empty and the homeomorphism satisfies the intersection property (see \cite{wang}). Our second result is the following. We denote by $\homeoa$ the set of all homeomorphisms of the open annulus which are isotopic to the identity.

\begin{theob}
  Let $I$ be an isotopy in $\homeoa$ starting from the identity. Then the rotation set in the open annulus, $\arho(I)$, is an interval.
\end{theob}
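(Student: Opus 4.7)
The plan is to reduce Theorem~B to Theorem~A via a compactification of the open annulus. Identify $\A=\T{1}\times\R$ with the punctured plane $\R^2\setminus\{0\}$ through a homeomorphism sending the end $r\to+\infty$ to the puncture, for instance $(\theta,r)\mapsto e^{-r}(\cos 2\pi\theta,\sin 2\pi\theta)$. The isotopy $I$ extends continuously to an isotopy $\widehat{I}$ on $\R^2$ in $\homeourd$, with $0$ fixed throughout. Under this identification, any annulus orbit whose radial coordinate tends to $+\infty$ becomes a plane orbit accumulating at $0$, with annulus rotation number equal to its local rotation number at $0$ for $\widehat{I}$; the contributions to $\arho(I)$ coming from such orbits therefore lie in $\lrho(\widehat{I})$, which is an interval by Theorem~A. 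The symmetric construction at the end $r\to-\infty$ (either via a second plane compactification, or by passing through the sphere $S^2$ with two distinguished fixed points) yields a second local rotation set, again an interval by Theorem~A.

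What the above does not capture are rotation numbers produced by orbits that remain in a compact sub-annulus. To handle these, I would follow the same scheme as in the proof of Theorem~A: show that every rational $p/q$ lying strictly between two elements of $\arho(I)$ already lies in $\arho(I)$, by exhibiting a compact $f$-invariant set whose rotation set is $\{p/q\}$. The mechanism is a Le~Calvez transverse singular foliation $\mathcal{F}$ to the isotopy on $\A$, combined with a Brouwer--forcing argument on the universal cover $\widetilde{\A}\cong\R^2$: starting from two orbits with rotation numbers $\alpha<\beta$, one extracts a closed transverse path of $\mathcal{F}$ whose homotopy class in $\widetilde{\A}$ corresponds to $p/q$, and this produces a $q$-periodic orbit of $f$ whose lift is translated by $(p,0)$ under $\widetilde{f}^q$. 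This is the annulus analogue of the construction of compact invariant sets close to $0$ that underlies Theorem~A, and density of the rationals in $\conv(\arho(I))$ then yields the interval property.

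The main obstacle I expect is the forcing step. Although the annulus has simpler topology than a neighbourhood of a plane fixed point (its fundamental group is $\Z$ and rotation numbers are scalar), the foliation $\mathcal{F}$ may have intricate asymptotic behaviour at the two ends and must be analysed carefully so that transverse trajectories with distinct horizontal asymptotic speeds can be combined. In particular, one must verify that the periodic orbit produced is \emph{essential} in $\A$ --- its lift is genuinely translated by $(p,0)$ rather than by $(0,0)$ --- which is the annulus counterpart of the condition in Theorem~A that the invariant set be close to $0$, and which is what guarantees that $p/q$ is realised as an actual annulus rotation number.
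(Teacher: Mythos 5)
Your second step is where the argument breaks down, and it is not just a technical obstacle to be overcome: the forcing mechanism you propose is false in the open annulus. You want to realise every rational $p/q$ strictly between two elements of $\arho(I)$ by a compact $f$-invariant set (indeed by a $q$-periodic orbit whose lift is translated by $(p,0)$). The paper itself exhibits a counterexample to exactly this: a homeomorphism of $\A$, built by collapsing a Reeb component between two rotational ends, which admits \emph{no} compact $f$-invariant subset of $\A$ at all, and yet has $\arho(I)=[-1,1]$. For such an $f$ the rational $0$ lies strictly between elements of the rotation set, but there is no periodic orbit and no invariant compact set of rotation number $0$; any Poincar\'e--Birkhoff/Franks-type forcing needs an intersection or conservativity hypothesis that an arbitrary isotopy on the open annulus does not satisfy. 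This is precisely why the paper's Theorem B* only asserts the existence of a compact, \emph{possibly non-invariant}, set $K$ with $0\in\rho_K(I)$, and why its proof cannot stay inside the foliation-plus-forcing framework: after the classification of Proposition \ref{propperturbation2}, the cases where $f$ fails (or satisfies) the local intersection property at an end are handled by Proposition \ref{propcomplement}, which builds $K$ out of an arc $\alpha$ joining the continua $\Theta^+(\hat V)$ and $\Theta^-(\hat V^c)$ together with pieces of $\fron(V)$, and the remaining case rests on Theorem C about the maximal invariant sets of two adjacent annuli bounded by $f$-free essential curves. None of this is recovered by your periodic-orbit scheme, so the gap is in the core of the argument, not in a verification you deferred.

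The first step also does less than you suggest. Extending $I$ over an end and invoking Theorem A shows that the local rotation set at each end is an interval (up to the sign change $\rho_{\hat V}(I)=-\rho_{\psi(\hat V)}(\psi I\psi^{-1})$ that the paper records), but $\arho(I)$ is the closure of the union of the sets $\rho_K(I)$ over all compact $K\subset\A$, and knowing that two subintervals of it (the end contributions) are intervals says nothing about connectedness of the whole set: the danger is exactly a gap between a positive rotation number produced in one region and a negative one produced in another, which is the situation Theorem B* and Theorem C are designed to resolve. So the compactification reduces nothing essential; the paper uses the end compactification only as a tool (local rotation sets at $N$ and $S$, Proposition \ref{theolocallytrans}) inside a direct analysis of the transverse foliation on $\A$, not as a reduction of Theorem B to Theorem A.
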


\subsection{Analogous results for some compact surfaces}

Let us recall briefly analogous results of Theorems A and B for some compact surfaces. In the case of  the closed annulus $\overline{\A}=\T{1}\times [0,1]$, one can easily give a positive answer to above question. Let $f$ be a homeomorphism of the closed annulus which is isotopic to the identity and let $I$ be an isotopy from the identity to $f$. There is a lift $\fonc{\widetilde{f}}{\R\times [0,1]}{\R\times [0,1]}$ of $f$ associated to $I$. The \textit{rotation set of $I$} is a compact interval of $\R$, which measures the asymptotic speeds of rotation of the orbits of $f$ around the annulus. More precisely let $n\geq 1$ be an integer, we define
$$ \rho_{n}:=  \left\{ \frac{p_1(\widetilde{f}^{n}(\widetilde{z}))-p_1(\widetilde{z})}{n} : \widetilde{z}\in \R\times [0,1]\right\},$$
where $\fonc{p_1}{\R\times [0,1]}{\R}$ is the projection on the first coordinate. The rotation set of $I$ is
$$\rho(I):=\bigcap_{m\geq 1}\adhe(\bigcup_{n\geq m} \rho_n).$$
Since $\overline{\A}$ is a connected set, the set $\rho_{n}$ is a connected subset of $\R$. Using that $\rho_{kn}$ is contained in $\rho_n$ for every integer $k\geq 1$, we deduce that $\cup_{n\geq m} \rho_{n}$ is an interval for all integer $m\geq 1$, and thus $\rho(I)$ being a decreasing sequence of intervals, it is an interval.\\

In the case of the two-dimensional torus $\T{2}=\R^2/\Z^2$, M. Misiurewicz and K. Ziemmian proved in \cite{mizi} that the rotation set is a compact convex subset of $\R^2$. However, it is not known which compact and convex subsets of the plane can be realized as rotation sets of homeomorphisms of $\T{2}$. It was conjectured by J. Franks and M. Misiurewicz in \cite{fm90} that a line segment $L$ could not be realized as a rotation set of a torus homeomorphisms in the following cases: (i) $L$ has irrational slope and a rational point in its interior, (ii) $L$ has rational slope but no rational points and (iii) $L$ has irrational slope and no rational points. Recently P. Le Calvez and F. A. Tal proved case (i) (see \cite{ltal}), and A. \'Avila has announced a counter-example for case (iii). \\

At first sight, the local case and the open annulus case may seem very close to the closed annulus case. However, if we try the straightforward adaptation of the notion of rotation set of the closed annulus, the invariance by topological conjugacy fails. A solution for this is ``to select good orbits'', but the resulting definition is a little more complex (see the definition of $\rho_{K}(I)$ below) and the easy argument for the case of the closed annulus fails to prove the connectedness of the local rotation set.

\subsection{Strategy of the Proofs}
Now, we will give the precise definitions and the strategy of the proof of Theorem A (these can be adapted to prove Theorem B).  Let $f$ be in $\homeourd$ i.e. a homeomorphism of the plane $\R^2$ which is isotopic to the identity and which fixes $0$. Let $I=(f_t)_{t\in [0,1]}$ be an isotopy from the identity to $f$ in $\homeourd$, that is a continuous path $t\mapsto f_t$ from $[0,1]$ to $\homeourd$ (endowed with the compact-open topology), which connects the identity to $f_1=f$. Let $\fonc{\pi}{\R\times (0,+\infty)}{\R^2\setminus \{0\}}$ be the universal covering of $\R^{2}\setminus \{0\}$ and let $\widetilde{I}=(\widetilde{f}_t)_{t\in [0,1]}$ be the lift of the isotopy $I$ to the universal covering of $\R^{2}\setminus \{0\}$ such that $\widetilde{f}_0=Id$, one defines $\widetilde{f}:=\widetilde{f}_1$. Given $z\in \R^2\setminus \{0\}$, and an integer $n\geq 1$ we consider $\rho_n(z)$ the \textit{average change of angular coordinate along the trajectory of $z$ for the isotopy $I$}, i.e.
$$ \rho_n(z):=\frac{1}{n}(p_1(\widetilde{f}^n(\widetilde{z}))-p_1(\widetilde{z})),$$
where $\fonc{p_1}{\R\times (0,+\infty)}{\R}$ is the projection on the first coordinate and $\widetilde{z}$ is a point in $\pi^{-1}(z)$.
Next for a compact set $K$ in $\R^2\setminus \{0\}$  we define
  $$ \rho_{K}(I):=\bigcap_{m\geq 1} \adhe\left( \bigcup_{n\geq m} \{ \rho_n(z) : z\in K,\, f^{n}(z)\in K   \}   \right).   $$
where the closure is taken in $\overline{\R}:=\R\cup \{+\infty\}\cup \{-\infty\}$.
We will give the definition of the \textit{local rotation set of $I$} due to F. Le Roux (see \cite{leroux}).
Given two neighborhoods $V$ and $W$ of $0$ with $W\subset V$, we define
$$ \rho_{V,W}(I):=\bigcap_{m\geq 1} \adhe\left( \bigcup_{n\geq m} \{ \rho_n(z) :  z\notin W, f^n(z)\notin W, \text{ and } z,\cdots,f^{n}(z)\in V \}   \right). $$
The \textit{local rotation set of the isotopy $I$} is defined as
$$ \lrho(I):=\bigcap_{V} \adhe\left( \bigcup_{W\subset V}  \rho_{V,W}(I)   \right),$$
where $W\subset V$ are neighborhoods of $0$.
Note that $\rho_{V,W}(I)$ is analogous to the above definition of $\rho_K(I)$ with $K=V\setminus W$.
Using usual properties of the rotation sets, we can reduce the proof of Theorem A to the following theorem.

\begin{theoaast}
  Let $I$ be an isotopy in $\homeourd$ from the identity to a homeomorphism $f$. Suppose that the local rotation set $\lrho(I)$ contains both positive and negative real numbers. Then $0$ belongs to $\lrho(I)$. More precisely, for every $V$ neighborhood of $0$, there exists an $f$-invariant and compact set $K$ in $\R^2\setminus \{0\}$ contained in $V$, such that $0$ belongs to $\rho_K(I)$.
\end{theoaast}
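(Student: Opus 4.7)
The plan is to produce, for any prescribed neighborhood $V$ of $0$, an $f$-invariant compact set $K\subset V\setminus\{0\}$ with $0\in\rho_K(I)$. The strategy has three pieces: (i) extract from the hypothesis orbit segments in $V$ with strictly positive and strictly negative mean angular displacement; (ii) interpolate between them to obtain orbit segments with mean angular displacement close to $0$; and (iii) take a weak-$\ast$ limit of the associated empirical measures to extract the invariant set. Throughout I would work in the universal cover $\fonc{\pi}{\R\times(0,+\infty)}{\R^2\setminus\{0\}}$, where $\phi_n(z):=p_1(\widetilde{f}^n(\widetilde z))-p_1(\widetilde z)$ is a continuous real-valued cocycle computing $n\,\rho_n(z)$.

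Step (i) is an unpacking of the hypothesis: since $\lrho(I)$ meets both $(0,+\infty)$ and $(-\infty,0)$, for every $V$ and every $\epsilon>0$ one finds sub-neighborhoods $W^\pm\subset V$, arbitrarily large iterates $n^\pm$, and points $z^\pm\in V\setminus W^\pm$ with $f^{n^\pm}(z^\pm)\notin W^\pm$, $f^k(z^\pm)\in V$ for $0\leq k\leq n^\pm$, and $\rho_{n^\pm}(z^\pm)$ within $\epsilon$ of prescribed values of opposite signs. Step (ii) is the heart of the argument. The difficulty is that the admissible set $\{z:f^k(z)\in V,\ 0\leq k\leq n\}$ is typically not path-connected, so one cannot just continuously deform $z^+$ into $z^-$ and apply an intermediate value theorem to $\phi_n$. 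I would instead apply Le Calvez's equivariant transverse foliation theorem to a maximal isotopy $I'$ extending $I$ (with $0\in\mathrm{Fix}(I')$), lift the foliation to the universal cover of $\R^2\setminus\{0\}$, and use the forcing machinery for transverse trajectories to concatenate the lifts of the trajectories of $z^\pm$ into a new admissible trajectory whose horizontal displacement is arbitrarily small. This should produce, for arbitrarily large $n$, orbit segments $(y_n,f(y_n),\dots,f^n(y_n))\subset V$ with $|\phi_n(y_n)|$ uniformly bounded and confined to a fixed compact annular subregion of $V\setminus\{0\}$.

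For step (iii), form the empirical measures $\mu_n:=\frac{1}{n+1}\sum_{k=0}^n\delta_{f^k(y_n)}$ and extract a weak-$\ast$ accumulation $\mu$. Then $\mu$ is $f$-invariant, supported in a compact subset of $V\setminus\{0\}$, and by construction the mean angular increment $\int\psi\,d\mu$ vanishes, where $\psi(z):=p_1(\widetilde{f}(\widetilde z))-p_1(\widetilde z)$ is the (lift-independent) angular-increment cocycle. Setting $K:=\supp(\mu)$ gives the desired compact $f$-invariant subset of $V\setminus\{0\}$; Birkhoff's ergodic theorem combined with Poincar\'e recurrence then produces, for $\mu$-a.e. $z\in K$, arbitrarily large integers $n$ with $f^n(z)$ returning close to $z$ and $\rho_n(z)\to 0$, whence $0\in\rho_K(I)$. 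I expect the main obstacle to be step (ii), and more specifically ensuring uniform control on the distance from $0$ of the intermediate orbits produced by the forcing argument: without this control the limit measure $\mu$ could concentrate mass at the fixed point, which would trivialize the conclusion and prevent the inclusion $K\subset V\setminus\{0\}$.
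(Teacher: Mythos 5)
Your Step (iii) proves strictly less than what is needed, and this is a genuine gap, not a technicality. A weak-$\ast$ limit of the empirical measures $\mu_n$ gives an $f$-invariant measure $\mu$ with $\int\psi\,d\mu=0$ supported in the maximal invariant set of your compact annular region; this only shows that $0$ lies in the \emph{measured} rotation set $\rho_{mes}(K,I)$, which by Lemma \ref{lemaensK} equals $\conv\bigl(\rho_{K}(I)\bigr)$, not that $0\in\rho_{K}(I)$. Your appeal to Birkhoff plus Poincar\'e recurrence fails when $\mu$ is not ergodic (and nothing in the construction makes it ergodic): if $K$ is the union of two invariant essential circles with rotation numbers $+1$ and $-1$, the half-half measure has zero rotation number, yet $\rho_{K}(I)=\{-1,+1\}$ and almost every point has $\rho_n(z)\to\pm1$, never $0$. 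Closing this gap is exactly the content of the paper's Theorem C: the distinction between $\conv(\rho_K(I))$ and $\rho_K(I)$ is the whole difficulty, so an averaging argument cannot finish the proof.

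Step (ii) is also not justified in the case that actually matters. After discarding the easy case where contractible fixed points accumulate at $0$ (there $K$ is a single fixed point), the paper extends $f$ (Theorem \ref{theoextention}), applies Le Calvez's transverse foliation theorem, and finds that closed leaves of $\mathcal{F}$ accumulate at $0$; the delicate situation is when an essential closed leaf $\gamma_1$ separates an annulus $A_0$ whose maximal invariant set rotates positively from an annulus $A_1$ whose maximal invariant set rotates negatively. In that situation the transverse trajectories of your points $z^+$ and $z^-$ are trapped in disjoint annuli bounded by $f$-free closed leaves and have no $\mathcal{F}$-transverse intersection, so the forcing machinery has nothing to concatenate; your interpolation step breaks down precisely where the theorem is hard. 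The paper replaces it by a different mechanism (Theorem C/C$^*$): Birkhoff-type stable and unstable branches of the two adjacent bands in the universal cover, a plane-topology separation argument (Lemma \ref{lemmproofpropimplyTheoC2}) forcing $\widetilde{f}^{\,n}(\Lambda_0^-(x))$ to meet $\Lambda_1^+(y)$, and hence a single point of the maximal invariant set whose forward and backward lifts both drift to the left, which yields $0\in\rho_{\Theta(A)}(I)$ pointwise, with $K=\Theta(A)$ invariant, compact, contained in $V$ and disjoint from $0$ because it is pinched between the free closed leaves. This last feature also answers the escape-to-$0$ problem you flagged but did not resolve: confinement between essential $f$-free closed leaves is what prevents the dynamics (or any limit measure) from degenerating onto the fixed point.
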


This theorem is an improvement of a result due to F. Le Roux. Under the hypotheses of Theorem A$^*$ and assuming furthermore that $f$ satisfies the \textit{local intersection property}, i.e. every Jordan curve $\gamma$ non-contractible in $\R^2\setminus \{0\}$ meets its image under $f$, i.e. $f(\gamma)\cap \gamma\neq \emptyset$, he proved that $V$ contains a \textit{contractible fixed point $z$ under $I$}, i.e. a fixed point of $f$ such that its trajectory under the isotopy $I$, $z\mapsto f_t(z)$ is a loop contractible in $\R^2\setminus \{0\}$ (see \cite{leroux} and for the case of the open annulus \cite{lec05}). Thus we see that, in this case, we take $K=\{z\}$.\\

On the other hand, let us examine the opposite case when $f$ does not have any contractible fixed point close enough to $0$. We can apply a generalization of Brouwer's translation theorem due to P. Le Calvez (see \cite{lec05}), and we obtain an oriented foliation $\mathcal{F}$ defined in $\R^2\setminus\{0\}$ which is positively transverse to the isotopy $I$. Let $\gamma$ be a closed leaf of $\mathcal{F}$. Then $\gamma$ is a Jordan curve, it is \textit{essential} i.e. non-contractible in $\R^2\setminus \{0\}$, and it is  \textit{$f$-free}, i.e. $f(\gamma)\cap\gamma=\emptyset$. Using the fact that $\lrho(I)$ contains both positive and negative real numbers, one obtains the following situation.
In every neighborhood of $0$, there exist three closed leaves $\gamma_0$, $\gamma_1$ and $\gamma_2$ of $\mathcal{F}$ such that:

\begin{itemize}
  \item[(i)] the curve $\gamma_1$ separates $\gamma_0$ et $\gamma_2$.
  \item[(ii)] The set of the points $x$ such that all iterates of $x$ under $f$ remain between $\gamma_0$ and $\gamma_1$ is non-empty and its rotation set is contained in $(0,+\infty)$.
\item[(iii)] The set of the points $x$ such that all iterates of $x$ under $f$ remain between $\gamma_1$ and $\gamma_2$ is non-empty and its rotation set is contained in $(-\infty,0)$.
\end{itemize}

Thus Theorem A$^*$ will be a consequence of the following theorem. This is the main and new result of this article.
For $E$ subset of $\R^2\setminus \{0\}$ we write $\Theta(E)$ for the \textit{maximal invariant set of $E$}, that is, the set of the points $x$ of $E$ such that all iterates of $x$ under $f$ remain in $E$. We remark that $\R^{2}\setminus \{0\}$ is homeomorphic to the open annulus $\A$, and so all above definitions are valid in $\A$.

\begin{theoc}
Let $I$ be an isotopy in $\homeoa$ from the identity to a homeomorphism $f$. Assume that there exist $\gamma_0$, $\gamma_1$ and $\gamma_2$ three essential, pairwise disjoint and $f$-free Jordan curves in the annulus $\A$ such that $\gamma_{1}$ separates $\gamma_0$ of $\gamma_2$. For $i\in\{0,1\}$ let $\Theta(A_i)$ be the maximal invariant set of the annulus $A_i$ delimited by $\gamma_i$ and $\gamma_{i+1}$. Suppose that
\begin{itemize}
  \item[(i)] the sets $\Theta(A_0)$ and $\Theta(A_1)$ are non-empty; and
  \item[(ii)] the set $\rho_{\Theta(A_0)}(I)$ is contained in either $(0,+\infty)$ or $(-\infty,0)$ and $\rho_{\Theta(A_1)}(I)$ is contained in the other one.
\end{itemize}
Let $\Theta(A)$ be the maximal invariant set of $A=A_0\cup A_1$. Then $$0\in \rho_{\Theta(A)}(I).$$
\end{theoc}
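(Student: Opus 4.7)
To prove Theorem C, I would work toward the following concrete task: for every $\delta>0$ and every integer $N\geq 1$, exhibit $n\geq N$ and $z\in\Theta(A)$ with $|\rho_n(z)|<\delta$. Up to reversing orientation, assume $\rho_{\Theta(A_0)}(I)\subset(0,+\infty)$ and $\rho_{\Theta(A_1)}(I)\subset(-\infty,0)$. Since both sets are closed in $\overline{\R}$ and neither contains $0$ nor $\pm\infty$, there exist constants $\alpha>0$, $M<+\infty$ and $N_0\geq 1$ such that for every $n\geq N_0$ one has $\alpha\leq \rho_n(z)\leq M$ for $z\in\Theta(A_0)$ and $-M\leq \rho_n(z)\leq -\alpha$ for $z\in\Theta(A_1)$. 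Lifting the dynamics to the universal cover $\widetilde{\A}$ by means of the isotopy and fixing the associated lift $\widetilde{f}$, the problem becomes to produce arbitrarily long orbit pieces entirely contained in $A$ whose horizontal displacement in $\widetilde{\A}$ is almost zero, and to extract from them a genuine $f$-orbit lying in $\Theta(A)$.

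The main tool would be the equivariant Brouwer translation theorem of P.~Le~Calvez (see \cite{lec05}): after enlarging $I$ to a maximal isotopy, one obtains an oriented singular foliation $\mathcal{F}$ on $\A\setminus\operatorname{Fix}(I)$ positively transverse to $I$. An essential $f$-free Jordan curve produces closed essential leaves of $\mathcal{F}$ in its homotopy class, so up to replacing each $\gamma_i$ by such a leaf (which preserves all the hypotheses of Theorem C) we may assume that $\gamma_0,\gamma_1,\gamma_2$ are themselves closed leaves of $\mathcal{F}$. Under this assumption the rotation number of an orbit is encoded by the asymptotic signed winding of its associated admissible transverse path around $\A$: orbit pieces in $\Theta(A_0)$ yield admissible paths winding positively with rate at least $\alpha$, while those in $\Theta(A_1)$ wind negatively with rate at most $-\alpha$.

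The heart of the proof is then to concatenate these two families of admissible paths into new admissible paths of almost zero winding and to realize them by actual orbits staying in $A$. For $k_0,k_1$ large with $k_0\approx k_1$, select pieces $z_0,\ldots,f^{k_0}(z_0)$ in $\Theta(A_0)$ and $z_1,\ldots,f^{k_1}(z_1)$ in $\Theta(A_1)$ whose transverse paths wind by $\approx +k_0\alpha$ and $\approx -k_1\alpha$ respectively. Once lifted to $\widetilde{\A}$, these paths travel in opposite angular directions inside the strip bounded by $\widetilde{\gamma}_0$ and $\widetilde{\gamma}_2$; since $\widetilde{\gamma}_1$ separates them, they are forced to admit an $\mathcal{F}$-transverse intersection. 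The realization results of the forcing theory of P.~Le~Calvez and F.~A.~Tal (see \cite{ltal}) then yield a genuine orbit piece inside $A$ of length $\approx k_0+k_1$ whose admissible path is homotopic to the concatenation, and thus whose rotation is close to $k_0\alpha-k_1\alpha\approx 0$. Letting $k_0,k_1\to\infty$ with $k_1/k_0\to 1$ and performing a diagonal extraction, while using the $f$-freeness of $\gamma_0$ and $\gamma_2$ to prevent cluster points from escaping through the boundary of $A$, produces a point $w\in\Theta(A)$ and a sequence $n_k\to\infty$ with $\rho_{n_k}(w)\to 0$. The principal obstacles I foresee are, first, to establish the $\mathcal{F}$-transverse intersection cleanly (this is where the \emph{separation} by $\gamma_1$ and the \emph{opposite signs} of the rotations are essential), and, second, to arrange the compactness argument so that the limit orbit indeed lies in $\Theta(A)$ rather than accumulating on $\gamma_0\cup\gamma_2$ or giving away the zero winding in the limit.
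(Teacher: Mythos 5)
Your plan founders at its central step. In your setting the transverse paths you want to intersect cannot meet: if $\mathcal{F}$ is positively transverse to $I$ and $\gamma_1$ is (or is replaced by) a closed leaf, then the transverse path of any point of $\Theta(A_0)$ is trapped in $A_0$ and that of any point of $\Theta(A_1)$ is trapped in $A_1$, because a positively transverse path can cross a closed leaf only in one direction and the endpoints of each path lie in the corresponding annulus. The separation by $\gamma_1$ therefore works \emph{against} an $\mathcal{F}$-transverse intersection rather than forcing one, so the forcing-theory realization of a concatenated path of near-zero winding has no starting point. (The preliminary reduction is also unjustified: an essential $f$-free Jordan curve need not be homotopic to, nor yield, a closed leaf of the foliation produced by Le Calvez's theorem; in the paper the implication goes the other way, closed leaves of the transverse foliation are automatically essential and $f$-free, and the proof of Theorem C uses no transverse foliation at all.) Moreover, even if you could produce arbitrarily long orbit segments inside $A$ with almost zero mean displacement, that only shows $0\in\rho_{A}(I)$, and the invariant-measure argument (as in Proposition \ref{propinclusion1}) upgrades this only to $0\in\conv \rho_{\Theta(A)}(I)$; since the whole difficulty is that $\rho_{\Theta(A)}(I)$ might a priori be disconnected, this does not give $0\in\rho_{\Theta(A)}(I)$, and your ``diagonal extraction'' producing $w\in\Theta(A)$ with $\rho_{n_k}(w)\to 0$ is precisely the unproved crux.

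The paper's proof is a purely topological Birkhoff-type argument. After reducing to the case where $\Theta(A)$ is disconnected (so that, up to replacing $I$ by $I^{-1}$ and passing to an iterate and to the universal cover, the three curves lift to attracting horizontal lines $\Gamma_0,\Gamma_1,\Gamma_2$), one constructs for each band the unstable set and stable set and their connected components, the \emph{branches}: the unstable branch $\Lambda_0^-(x)$ of $x\in\Theta(\widetilde{A}_0)$ is a compact connected subset of $\widetilde{A}_0$ meeting $\Gamma_1$, of uniformly bounded horizontal diameter, and negatively invariant in the appropriate sense; symmetrically for the stable branch $\Lambda_1^+(y)$ in $\widetilde{A}_1$. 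Hypothesis $(H_3)$ forces all backward iterates of $\Lambda_0^-(x)$ (and all forward iterates of $\Lambda_1^+(y)$) to drift to the left with uniform bounds. A connectedness and separation argument in the strip (Proposition \ref{prop61}) shows that for suitable $y$ the sets $\widetilde{f}^n(\Lambda_0^-(x))$ and $\Lambda_1^+(y)$ intersect for all large $n$. A point $\widetilde{z}$ of this intersection has its backward orbit in $\widetilde{A}_0$ and forward orbit in $\widetilde{A}_1$, hence lies in $\Theta(\widetilde{A})$, and both half-orbits drift left; comparing positions at suitable backward and forward times then yields arbitrarily long time spans with bounded total displacement, i.e. $0\in\rho_{\Theta(A)}(I)$. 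This is the mechanism your proposal is missing: the intersection is between an iterated unstable branch and a stable branch (objects living in the two different annuli but both reaching $\Gamma_1$), not between transverse paths confined to disjoint annuli.
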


\begin{figure}[!h]
  \centering
    \includegraphics{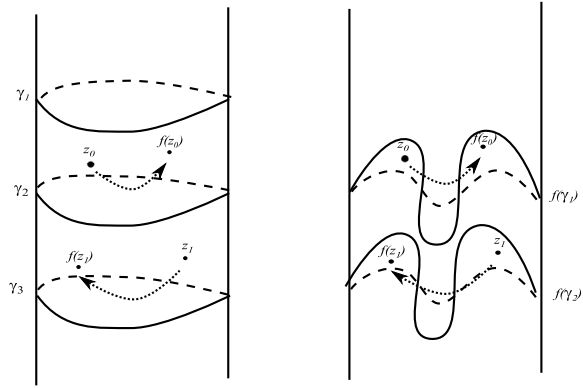}
  \caption{Theorem C}
  \label{fig:theoremeC}
\end{figure}

Another consequence of Theorem C is the following.

\begin{theod}
Let $I$ be an isotopy in $\homeoa$ from the identity to a homeomorphism $f$. Assume that there exist $\gamma^+$ and $\gamma^-$ two essential, disjoint $f$-free Jordan curves in the annulus $\A$. Let $\Theta(A)$ be the maximal invariant set of the annulus $A$ delimited by $\gamma^+$ and $\gamma^-$ and suppose that $\Theta(A)$ is non-empty. Then $\rho_{\Theta(A)}(I)$ is an interval.
\end{theod}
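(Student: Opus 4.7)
My plan is to derive Theorem~D from Theorem~C by a two-step reduction. First I would reduce the statement ``$\rho_{\Theta(A)}(I)$ is an interval'' to the special case ``if $\rho_{\Theta(A)}(I)$ meets both $(0,+\infty)$ and $(-\infty,0)$, then $0\in\rho_{\Theta(A)}(I)$''. For this I apply the standard iteration-and-translation trick: if some rational $p/q$ lay in a gap of $\rho_{\Theta(A)}(I)$, I would pass to the isotopy $J$ from the identity to $f^q$ whose associated lift is $T^{-p}\widetilde{f}^q$; the curves $\gamma^\pm$ remain essential and $f^q$-free because an essential $f$-free Jordan curve in $\A$ is automatically $f^n$-free for every $n\geq 1$ (it separates $\A$, and $f$ pushes it to one side, so all forward iterates lie on that same side). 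Under this substitution the rotation set transforms affinely as $q\rho_{\Theta(A)}(I)-p$, and the forbidden value $p/q$ becomes the forbidden value $0$. A small technicality is that the maximal invariant set of $A$ under $f$ is in general strictly contained in that under $f^q$, but the uniform boundedness of the displacement $p_1\circ\widetilde{f}-p_1$ on the preimage of the relatively compact set $\overline{A}\subset\A$ allows one to approximate rotation numbers at $q$-multiples of the time step in terms of arbitrary time steps, and vice versa.

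For the reduced statement I would follow the dichotomy sketched in the paper for Theorem~A$^*$. If $f$ admits a fixed point $z\in A$ whose $I$-trajectory is a loop contractible in $\A$, then $\rho_n(z)=0$ and we are done. Otherwise, Le Calvez's equivariant version of Brouwer's translation theorem produces an oriented singular foliation $\mathcal{F}$ positively transverse to $I$ on $\A$, whose closed leaves are exactly the essential $f$-free Jordan curves. The goal is then to exhibit three pairwise disjoint closed leaves $\gamma_0,\gamma_1,\gamma_2\subset A$, with $\gamma_1$ separating $\gamma_0$ from $\gamma_2$, such that the maximal invariant sets of the two subannuli $A_0,A_1$ between $\gamma_0,\gamma_1$ and $\gamma_1,\gamma_2$ have rotation sets contained respectively in $(0,+\infty)$ and $(-\infty,0)$. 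Once this configuration is produced, Theorem~C immediately supplies $0\in\rho_{\Theta(A_0\cup A_1)}(I)\subseteq\rho_{\Theta(A)}(I)$ and concludes the proof.

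The main obstacle I anticipate is precisely the construction of these three closed leaves with the sharp sign control demanded by Theorem~C. The heuristic is that orbits with positive asymptotic displacement must consistently cross leaves of $\mathcal{F}$ in the direction prescribed by positive transversality, and symmetrically for negative-displacement orbits; since $\overline{A}$ is compact in $\A$, limits of the leaves crossed by such orbits yield a closed leaf $\gamma^{(+)}$ beyond which the dynamics exhibits only positive rotation, and a symmetric leaf $\gamma^{(-)}$ for negative rotation, with any closed leaf of $\mathcal{F}$ between them playing the role of $\gamma_1$. The delicate point is to guarantee the strict sign conditions: the rotation sets of the resulting subannuli must lie in the \emph{open} half-lines $(0,+\infty)$ and $(-\infty,0)$, not merely in the closed ones. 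This will force us to push $\gamma^{(\pm)}$ slightly into the interior of the positive and negative rotation regions, relying on the density of closed leaves of $\mathcal{F}$ in those regions under the mixed-signs hypothesis.
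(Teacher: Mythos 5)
Your first step (the reduction to ``mixed signs implies $0\in\rho_{\Theta(A)}(I)$'' via $J^{-p}*I^q$) is the same reduction the paper makes, and it does work, but the technicality you flag is mis-diagnosed: the danger is not in comparing rotation numbers along $q$-multiples of the time step, it is that the reduced statement applied to $f^q$ would a priori put $p/q$ in the rotation set of the maximal invariant set of $A$ \emph{for $f^q$}, which you yourself note may be larger than $\Theta(A)$; no displacement-boundedness argument converts one set of points into the other. What actually rescues the reduction is the one-sidedness you invoke for freeness: each boundary curve is mapped strictly to one side of itself, so an $f$-orbit segment whose endpoints lie in $A$ cannot leave $A$ in between, and hence the maximal invariant sets of $A$ under $f$ and under $f^q$ coincide.

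The genuine gap is in your second step. First, your dichotomy is not exhaustive: Theorem \ref{theoexisfoliation} requires that $I$ have no contractible fixed point anywhere in $\A$, while you only exclude fixed points lying in $A$. A contractible fixed point in $\A\setminus A$ gives no information about $\rho_{\Theta(A)}(I)$ and at the same time prevents the construction of the transverse foliation; Theorem \ref{theoextention} is a statement about one puncture in the local case, and you have no analogous device for removing fixed points on both sides of a compact annulus. Second, even granted a transverse foliation, nothing guarantees any closed leaf inside $A$, let alone closed leaves ``dense in the positive and negative rotation regions''; and Theorem C needs the rotation sets of the two subannuli to lie in the \emph{open} half-lines, which in the paper is precisely the content of Lemma \ref{lemmapropperturbation} and of the perturbation arguments in Propositions \ref{propcases1} and \ref{propperturbation2} — your sketch restates this as a heuristic rather than proving it. The paper's proof of Theorem D avoids both difficulties by a different route: after disposing of the case of one attracting and one repulsing curve by connectedness (Lemma \ref{lemmaattrrepuls} combined with Lemma \ref{lemaensK}), it assumes both curves attracting, removes the continua $\Theta^+(\hat{U}_{\gamma^-}^S)$ and $\Theta^-(\hat{U}_{\gamma^+}^N)$ to obtain an $f$-invariant open annulus $\A'$ in which every compact invariant set is automatically contained in $\Theta(A)$, observes that $f\vert_{\A'}$ fails the local intersection property at both ends of $\A'$ (the iterates of the free curves accumulate on the ends), and then Proposition \ref{propperturbation2} — whose cases (b) and (c) require the local intersection property at an end — leaves only its case (a), producing an invariant compact $K\subset\Theta(A)$ with $0\in\rho_K(I)$. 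Without a substitute for this restriction to $\A'$ (some mechanism confining the output of the mixed-signs argument to $\Theta(A)$ and eliminating fixed points away from $A$), your outline does not yield Theorem D.
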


Let us give a plan of this article. In Section 2, we will recall some definitions and results that we will use in the proof of Theorems A and B, in particular P. Le Calvez's generalization of Brouwer's translation theorem. In Section 3, we will prove Theorem A assuming Theorem C. In Section 4, we will prove Theorem B assuming Theorem C. In Section 5, we will prove Theorem D. Finally, in Section 6, we will prove Theorem C.

\subsection*{Acknowledgements}
I thank F. Le Roux and S. Crovisier for proposing me this problem. I thank them for key comments and ideas that led to main constructs and proofs, and mainly for their constant support and encouragement.

\section{Preliminary results}

In this section, we will recall some definitions and results that we will use in the rest of the article, in particular the generalization of Brouwer's Translation Theorem due to P. Le Calvez.

\subsection{Foliations}

Let $M$ be an oriented surface. By an \textit{oriented foliation with singularities} $\mathcal{F}$ on $M$ we mean a closed set $\sing (\mathcal{F})$, called \textit{the set of singularities of $\mathcal{F}$}, together with an oriented topological foliation $\mathcal{F}'$ on $M\setminus \sing (\mathcal{F})$, i.e. $\mathcal{F}'$ is a partition of $M\setminus \sing (\mathcal{F})$ into connected oriented $1$-manifold (circles or lines) called \textit{leaf of $\mathcal{F}$}, such that for every $z$ in $M\setminus \sing (\mathcal{F})$ there exists a neighborhood $U$ of $z$, called \textit{trivializing neighborhood} and an oriented
preserving homeomorphism $\psi$, called \textit{trivialization chart at $z$} mapping $U$ to an open set $U'$ of $\R^2$ such that $\psi$ maps the foliation
induced by $\mathcal{F}$ in $U$ to the oriented foliation by vertical lines oriented upwards. By a theorem of Whitney (see \cite{whi33} and \cite{whi41}), all foliations with singularities $\mathcal{F}$ can be embedded in a flow,
i.e. $\mathcal{F}$ is the set of (oriented) orbits of some \textit{topological flow} $\fonc{\phi}{M\times \R }{M}$ (where the set of singularities of
$\mathcal{F}$ coincides with the set of fixed points of $\phi$). Therefore, we can define the $\alpha$-limit and $\omega$-limit sets of each leaf of $\mathcal{F}$
as follows: If $l$ is a leaf of $\mathcal{F}$ and $z$ is a point of $l$, then
$$  \omega(l):=\bigcap_{t\in [0,+\infty)} \adhe\{ \phi(z,t') : t'\geq t \} \quad \text{and} \quad   \alpha(l):=\bigcap_{t\in (-\infty,0]}
 \adhe\{ \phi(z,t') : t'\leq t \}  . $$

Let $\mathcal{F}$ be an oriented foliation with singularities on $M$. We say that an arc $\gamma$ in $M\setminus \sing (\mathcal{F})$ is \textit{positively transverse}
to $\mathcal{F}$ if for every $t_0\in [0,1]$, and $\psi$ trivialization chart at $\gamma(t_0)$ the application $t\mapsto p_1(\psi (\gamma(t)))$ is
increasing in a neighborhood of $t_0$, where $p_1$ denotes the projection onto the first coordinate.

\subsection{Isotopies} An \textsf{isotopy} $I=(f_t)_{t\in [0,1]}$ on $M$ is a family $(f_t)_{t\in [0,1]}$ of homeomorphisms of $M$ such that the map
$(z,t)\mapsto f_t(z)$ is continuous on $M\times [0,1]$. This implies that the map $(z,t)\mapsto f^{-1}_t(z)$ is continuons, and then we can define the inverse isotopy $I^{-1}=(f^{-1}_t)_{t\in [0,1]}$. Two isotopies $I=(f_t)_{t\in [0,1]}$ and $I'=(f'_t)_{t\in [0,1]}$ are said to be homotopic if there exists a family $(f_{t,s})_{(t,s)\in [0,1]^2}$ of homeomorphisms of $M$ satisfying
\begin{itemize}
  \item $f_{t,0}=f_t$ for every $t\in [0,1]$;
  \item $f_{t,1}=f'_t$ for every $t\in [0,1]$; and
  \item the map $(z,t,s)\mapsto f_{t,s}(z)$ is continuous on $M\times [0,1]^2$.
\end{itemize}
Let $I=(f_t)_{t\in [0,1]}$ and $I'=(f'_t)_{t\in [0,1]}$ be two isotopies on $M$. Then $I*I'=(g_t)_{t\in [0,1]}$ denotes the isotopy defined as
$$ g_t= \begin{cases}
  f_{2t}, & \text{ if } 0\leq t \leq \frac{1}{2}\\
  f'_{2t-1}\circ f_1, & \text{ if } \frac{1}{2}\leq t \leq 1.
\end{cases} $$

In particular if $q$ is an integer, we define $I^q$ as the isotopy $I*\cdots *I$ ($q$ times) if $q \geq 1$ and as $I^{-1}*\cdots *I^{-1}$ ($-q$ times) if $q\leq -1$. Let $M$ be an oriented surface, and let $I=(f_t)_{t\in [0,1]}$ be an isotopy from $f_0=Id_M$ to a homeomorphism $f_1=f$. Let $\fonc{\pi}{\widetilde{M}}{M}$ be the universal covering of $M$, and let
$\widetilde{I}=(\widetilde{f}_t)_{t\in [0,1]}$ be the lift of the isotopy $I$ such that $\widetilde{f}_0=Id_{\widetilde{M}}$. The homeomorphism
$\widetilde{f}=\widetilde{f}_1$ is called the \textit{lift of $f$ associated to $I$}. Let $z\in M$, we call \textit{trajectory of $z$ along $I$} the arc
$t\mapsto f_t(z)$. A fixed point of $f=f_1$ is said to be \textit{contractible for $I$} if its trajectory under $I$ is a loop homotopic to a constant loop in $M$. This definition depends only on the lift $\widetilde{f}$ associated to $I$. In fact, it is easy to see that the set of
contractible fixed points of $I$ coincides with the image of the set of fixed points of $\widetilde{f}$ under $\pi$.

\subsection{The existence of a transverse foliation}
Let $I=(f_t)_{t\in [0,1]}$ be an isotopy from the identity to a homeomorphism $f_1=f$. We say that an oriented topological foliation $\mathcal{F}$ on $M$ is \textit{transverse to $I$}, if for every $z\in M$, there exists an arc which is homotopic to the trajectory of $z$ along $I$ and is positively transverse to $\mathcal{F}$. We know that ``$\mathcal{F}$ is transverse to $I$'' persists on small perturbations of $\mathcal{F}$ (see Lemma 3.1.3 from \cite{leroux}). More precisely, let $\mathcal{F}$ be an oriented topological foliation on $M$. Let $\fonc{\psi}{[0,1]^2}{M}$ be a ``compact chart'', that is an injective map, which maps each vertical line oriented upwards in $[0,1]^2$ to a leaf of $\mathcal{F}$. For every real number $\epsilon>0$, we write $\mathcal{O}(\psi,\epsilon)$ the set of foliations $\mathcal{F}'$ which admit a compact chart $\psi'$ satisfying
$$ \sup_{t\in [0,1]^2} d_M(\psi(t),\psi'(t))<\epsilon, $$
where $d_M$ is a metric on $M$ which induces the topology of $M$. A family of compact charts $(\psi_\alpha)_\alpha$ is said to be locally finite in $M$ if every point $x\in M$ belongs only a finite numbers of $\psi_\alpha([0,1]^2)$ (more details can be found in \cite{leroux}).

\begin{propo}[F. Le Roux, Lemma 3.1.3 \cite{leroux}]\label{proptranversmallpertur}
  Let $I$ be an isotopy on $M$. Let $\mathcal{F}$ be an oriented topological foliation on $M$ transverse to $I$. Suppose that there exist a family $(\psi_\alpha)_\alpha$ of compact charts of $\mathcal{F}$ which is locally finite in $M$, and a positive real sequence $(\epsilon_\alpha)_\alpha$. Then every oriented foliation $\mathcal{F}'$ on $M$ belonging to the Whitney' open set
$$ \mathcal{O}((\psi_\alpha),(\epsilon_\alpha))= \bigcap_\alpha \mathcal{O}(\psi_\alpha,\epsilon_\alpha) $$
  is positively transverse to $I$ too.
\end{propo}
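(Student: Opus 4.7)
The plan is to work trajectory by trajectory: fix $z\in M$, use transversality of $\mathcal{F}$ to produce an arc $\gamma$ from $z$ to $f(z)$ that is homotopic to the trajectory $t\mapsto f_t(z)$ and positively transverse to $\mathcal{F}$, and then show that after a small perturbation $\gamma$ can be made positively transverse to any $\mathcal{F}'\in\mathcal{O}((\psi_\alpha),(\epsilon_\alpha))$ while remaining in the same homotopy class. The perturbation will be constructed chart by chart using the fact that the compact charts $\psi_\alpha$ of $\mathcal{F}$ are close (within $\epsilon_\alpha$) to some compact charts $\psi'_\alpha$ of $\mathcal{F}'$.

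First I would use the local finiteness of the family $(\psi_\alpha)_\alpha$ to extract a finite subfamily $\psi_{\alpha_1},\dots,\psi_{\alpha_k}$ whose images cover the compact arc $\gamma$ (possibly after first refining the given family so that the interiors of the charts still cover $M$; this can be arranged without disturbing local finiteness). Partition the parameter interval of $\gamma$ into finitely many subintervals $[t_{i-1},t_i]$ whose images lie in a single $\psi_{\alpha_i}([0,1]^2)$. Inside each chart, $\gamma\vert_{[t_{i-1},t_i]}$ reads as an arc $s\mapsto(x_i(s),y_i(s))$ with $x_i$ strictly increasing; by compactness there is a uniform modulus of strict monotonicity.

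Next, choose the $\epsilon_{\alpha_i}$ small enough (which is allowed, since we are asking the conclusion for foliations $\mathcal{F}'$ in $\mathcal{O}((\psi_\alpha),(\epsilon_\alpha))$ for some specified $\epsilon_\alpha$'s, so we work with the given ones) and, for $\mathcal{F}'\in \mathcal{O}((\psi_\alpha),(\epsilon_\alpha))$, pick witnessing compact charts $\psi'_{\alpha_i}$ with $\sup\,d_M(\psi_{\alpha_i},\psi'_{\alpha_i})<\epsilon_{\alpha_i}$. Define a new arc $\gamma'$ piecewise by pushing $\gamma\vert_{[t_{i-1},t_i]}$ to the chart $\psi'_{\alpha_i}$ using the explicit formula $\gamma'(s)=\psi'_{\alpha_i}(\psi_{\alpha_i}^{-1}(\gamma(s)))$. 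Because $\psi_{\alpha_i}$ is a homeomorphism onto its image, the inverse is uniformly continuous on the compact subarc, so $\gamma'$ is $\mathrm{C}^0$-close to $\gamma$ uniformly; hence $\gamma'$ is homotopic to $\gamma$, and therefore to the trajectory of $z$, provided the $\epsilon_{\alpha_i}$ were taken small enough. In the chart $\psi'_{\alpha_i}$ the first coordinate of $\gamma'$ agrees with that of $\gamma$ in the chart $\psi_{\alpha_i}$, which is strictly increasing; on the overlaps one adjusts $\gamma'$ by a small local modification (using that strict positive transversality is an open condition once the arc sits in a flow box) so that the pieces concatenate into a globally positively transverse arc for $\mathcal{F}'$.

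The main difficulty I expect is the bookkeeping at the overlaps: the individual pieces of $\gamma'$ are transverse in their respective charts, but the gluing points may not land exactly on leaves in the required manner, so one has to fix a priori reference transversal segments in the overlaps of consecutive charts and reparametrize/slide $\gamma'$ onto them, using that transversality and proximity are both preserved by such local adjustments when the perturbation is small. Once this is done for one trajectory, the same argument applies to every $z\in M$, yielding the desired transversality of $\mathcal{F}'$ to $I$.
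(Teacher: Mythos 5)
The paper never proves this proposition: it is quoted verbatim from Le Roux (Ast\'erisque 350, Lemma 3.1.3) and used as a black box, so there is no argument of the paper to compare yours with; your attempt has to stand on its own, and as it stands it has two genuine gaps. The first is a quantifier problem. Read literally (arbitrary $(\epsilon_\alpha)$) the statement is false, and its real content is that the charts and the numbers $\epsilon_\alpha$ can be chosen so that \emph{every} $\mathcal{F}'\in\mathcal{O}((\psi_\alpha),(\epsilon_\alpha))$ is transverse to $I$, i.e.\ transversality persists on a whole Whitney neighbourhood. Your argument fixes one point $z$, covers the transverse arc $\gamma_z$ by finitely many charts, and then asks the $\epsilon_{\alpha_i}$ to be ``small enough'' --- small enough depending on $z$ (on the subdivision of $\gamma_z$, on its monotonicity margin in each chart, on the size of the jumps at the junctions). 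But one single sequence $(\epsilon_\alpha)$ must serve all $z\in M$ at once, and $M$ is non-compact in the applications ($\R^2\setminus\{0\}$, the open annulus), so a per-trajectory choice does not yield the uniform one; your parenthetical (``choose the $\epsilon_{\alpha_i}$ small enough \dots\ we work with the given ones'') is self-contradictory and hides exactly the exchange of quantifiers that constitutes the lemma. A correct proof must attach to each chart an $\epsilon_\alpha$ depending only on $\alpha$ (this is what local finiteness and compactness of the charts are for) and verify that it works simultaneously for all trajectories meeting that chart.

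The second gap is in the construction of the new arc. The pieces $\psi'_{\alpha_i}\circ\psi_{\alpha_i}^{-1}\circ\gamma$ are indeed positively transverse to $\mathcal{F}'$ on the interiors of the charts, but the concatenation is discontinuous at the junction parameters $t_i$ (consecutive charts send $\gamma(t_i)$ to two different nearby points), and its endpoints are no longer $z$ and $f(z)$, whereas the arc must join $z$ to $f(z)$ and be homotopic with fixed endpoints to the trajectory $t\mapsto f_t(z)$ (this is what is needed later, e.g.\ when transverse arcs are lifted to the universal cover). Repairing this is not mere bookkeeping: two nearby points can be joined by an arc positively transverse to $\mathcal{F}'$ only if the second lies strictly on the correct side of the $\mathcal{F}'$-leaf through the first, and $C^0$-closeness of the charts gives no control on which side of an $\mathcal{F}'$-leaf a nearby point falls (the leaves of $\mathcal{F}'$ may oscillate horizontally at small scales); for the same reason neither monotonicity of a fixed arc nor ``transversality in a flow box'' is an open condition in this purely topological setting. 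So the junction and endpoint corrections --- e.g.\ making consecutive pieces overlap and switching from one to the next only after a definite number of leaves has been crossed, with all margins quantified uniformly in $z$ so that the $\epsilon_\alpha$ can be fixed in advance --- are precisely the substance of the lemma and must be carried out rather than postponed.
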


The following theorem due to P. Le Calvez is a generalization of Brouwer's Translation Theorem.

\begin{theor}[P. Le Calvez, Theorem 1.3 \cite{lec05}]\label{theoexisfoliation}
Let $I$ be an isotopy on $M$ from the identity to a homeomorphism $f$.
  If $I$ does not have any contractible fixed point, then there exists an oriented foliation $\mathcal{F}$ which is transverse to $I$.
\end{theor}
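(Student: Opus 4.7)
The plan is to construct the transverse foliation via a brick decomposition argument, lifting to the universal cover $\fonc{\pi}{\widetilde{M}}{M}$ and producing an equivariant object that descends to $M$. Let $\widetilde{I} = (\widetilde{f}_t)_{t \in [0,1]}$ be the canonical lift with $\widetilde{f}_0 = \mathrm{Id}_{\widetilde{M}}$. Since the fixed points of $\widetilde{f} := \widetilde{f}_1$ project precisely to the contractible fixed points of $I$, the hypothesis gives that $\widetilde{f}$ has no fixed point. When $\widetilde{M} \cong \R^2$, this makes $\widetilde{f}$ a Brouwer homeomorphism; in all cases the goal is to build a foliation on $\widetilde{M}$ that is invariant under the deck group and positively transverse to $\widetilde{I}$, and then to project it down.

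First I would replace $I$ by a \emph{maximal} isotopy in the same homotopy class in $\homeo(M)$ rel endpoints, that is, one to which no further fixed point of $f$ can be rendered contractible by homotopy. This preserves the ``no contractible fixed point'' hypothesis and has the benefit of making the relevant contractible fixed point set closed. Next I would fix a locally finite equivariant \emph{brick decomposition} of $\widetilde{M}$: a locally finite covering by topological closed disks with pairwise disjoint interiors, whose pairwise intersections are empty, a single vertex, or a single edge, chosen fine enough that for every brick $B$ the set $\bigcup_{t \in [0,1]} \widetilde{f}_t(B)$ has small diameter compared to the bricks adjacent to $B$. For two adjacent bricks $B, B'$ sharing an edge $e$, the isotopy then induces an orientation on $e$ that records in which direction the intermediate maps $\widetilde{f}_t$ push points across $e$.

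The central combinatorial step, in the spirit of Sauzet's work, is to refine this decomposition to a \emph{maximal free} one: meaning that $\widetilde{f}(B) \cap B = \emptyset$ for every brick $B$, and no amalgamation of two adjacent bricks preserves this freeness. A Zorn-type argument, made possible by the absence of fixed points of $\widetilde{f}$, yields such a decomposition; in it, the boundary of each brick $B$ decomposes into exactly one arc of entering edges and one arc of exiting edges, separated by two distinguished vertices. Within each brick, connect the entering arc to the exiting arc by a family of pairwise disjoint oriented arcs compatible with the edge orientations; across a shared edge of two adjacent bricks, match endpoints so orientations agree. The result is an oriented one-dimensional lamination on the complement of the discrete vertex set; declaring those vertices to be singularities gives the desired oriented foliation with singularities $\widetilde{\mathcal{F}}$ on $\widetilde{M}$. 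Equivariance of the brick decomposition under the deck group transfers to $\widetilde{\mathcal{F}}$, so it descends to a foliation $\mathcal{F}$ on $M$.

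The main obstacle, and the reason this theorem is substantially deeper than the classical Brouwer plane translation theorem or a mere foliation by Brouwer lines, is ensuring positive transversality to the full isotopy rather than only to the time-one map $\widetilde{f}$. The edge orientations must faithfully record the direction of every intermediate $\widetilde{f}_t$, $t \in [0,1]$, not merely of $\widetilde{f}_1$; this forces a definition of the arrow relation on adjacent bricks that samples the whole path, and requires the decomposition to be fine on the scale of the isotopy and not only of the time-one map. A secondary difficulty is the local analysis at the vertices of the decomposition, where one must verify that the constructed object admits trivialization charts everywhere off of a discrete singular set and behaves as an oriented singular foliation at the vertices; this is typically handled by a mild local modification, and one may then invoke Whitney's theorem from Section~2.1 (together with Proposition~\ref{proptranversmallpertur} to preserve transversality under the resulting perturbation) to embed $\mathcal{F}$ as the orbit structure of a topological flow.
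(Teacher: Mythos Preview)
The paper does not prove this statement; it is quoted as Theorem~1.3 of \cite{lec05} and used as a black box throughout. There is therefore no proof in the paper to compare your proposal against.

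That said, your sketch is broadly in the spirit of Le Calvez's actual argument in \cite{lec05}, which does proceed via equivariant maximal free brick decompositions on the universal cover. Two points in your outline are off, however. First, the arrow relation on bricks in Le Calvez's construction is determined by the time-one map $\widetilde{f}$ alone (one writes $B \to B'$ when a forward iterate of $B$ under $\widetilde{f}$ meets $B'$), not by ``sampling the whole path'' $\widetilde{f}_t$; and the foliation one extracts from a maximal free decomposition has \emph{no} singularities --- the vertices of the brick decomposition are not singular points, and the conclusion of the theorem is a genuine nonsingular foliation on $M$, so your step ``declaring those vertices to be singularities'' should not appear. Second, your concern about transversality to the full isotopy rather than just to $\widetilde{f}$ is addressed exactly by the Remark following the theorem in the paper: $\mathcal{F}$ is transverse to $I$ if and only if every leaf of the lifted foliation $\widetilde{\mathcal{F}}$ is an oriented Brouwer line for the lift $\widetilde{f}$ associated to $I$. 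Hence once one has an equivariant foliation of $\widetilde{M}$ by Brouwer lines for the time-one map, transversality to $I$ is automatic and no separate control of the intermediate homeomorphisms is required. The preliminary passage to a maximal isotopy is likewise unnecessary here, since the hypothesis already excludes contractible fixed points.
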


\begin{remar}
By a remark due to P. Le Calvez (see \cite{lec05}), the oriented foliation $\mathcal{F}$ is transverse to the isotopy $I$ if and only if in the universal covering, each leaf $\widetilde{l}$ of the lift $\widetilde{\mathcal{F}}$ of the foliation $\mathcal{F}$ is an oriented \textit{Brouwer's line} for $\widetilde{f}$ the lift of $f$ associated to $I$. More precisely, by Schoenflïes' Theorem, there exists an orientation preserving homeomorphism $h$ of $\R^2$ such that $h\circ \widetilde{l}(t)=(0,t)$, for every $t\in \R$. Then we write $R(\widetilde{l})=h^{-1}((0,+\infty)\times \R)$ and $L(\widetilde{l})=h^{-1}((-\infty,0)\times \R)$. We say that $\widetilde{l}$  is an oriented Brouwer's line for $\widetilde{f}$ if $\widetilde{f}^{-1}(\widetilde{l})\subset L(\widetilde{l})$ and $\widetilde{f}(\widetilde{l})\subset R(\widetilde{l})$.
\end{remar}

Since the set of contractible fixed points is usually non-empty, one needs some additional modifications before using the previous theorem. In the local case, we can use a result of F. Le Roux.

\begin{theor}[F. Le Roux, Appendix A \cite{leroux}]\label{theoextention}
  Let $I$ be an isotopy in $\homeourd$ from the identity to a homeomorphism $f$. Suppose that $I$ does not have any contractible fixed point on a neighborhood $U$ of $0$. Then there exists a homeomorphism $\bar{f}$ of the plan which coincides with $f$ in a neighborhood $U'\subset U$ of $0$, and which does not have any contractible fixed point in $\R^{2}\setminus \{0\}$.
\end{theor}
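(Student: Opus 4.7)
The plan is to extend $f|_{U'}$ to a global homeomorphism $\bar f$ of $\R^2$, together with an isotopy $\bar I$ of $\bar f$ from the identity, by gluing $f$ on a small disc around $0$ to a nontrivial rigid rotation outside a slightly larger disc, with a controlled interpolation on the intervening annulus. The absence of contractible fixed points is then verified by working in the universal cover of $\R^2\setminus\{0\}$.

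First, I would choose radii $0<r_0<r_1$ with $\{|z|\leq r_1\}\subset U$ and set $U':=\{|z|<r_0\}$. Let $\pi:\R\times\R\to\R^2\setminus\{0\}$ be the universal covering given by $\pi(\theta,s)=(e^s\cos\theta,e^s\sin\theta)$, and let $\widetilde f$ be the lift of $f$ associated to $I$; by hypothesis, $\widetilde f$ has no fixed point in $\R\times(-\infty,\log r_1)$. Fix an angle $\alpha\in(0,2\pi)$ and let $R_\beta$ denote the rigid rotation of $\R^2$ by $\beta$. I would construct an isotopy $\bar I=(\bar f_t)_{t\in[0,1]}$ with $\bar f_t=f_t$ on $\overline{U'}$, $\bar f_t=R_{t\alpha}$ on $\R^2\setminus\{|z|\leq r_1\}$, and $\bar f_t$ a homeomorphism of $\R^2$ depending continuously on $t$. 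The existence of such an annular interpolation on $\{r_0\leq|z|\leq r_1\}$ follows from standard extension arguments (Alexander's trick combined with the Schoenflies theorem), subject to a technical choice described next.

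For the fixed-point-free property of the lift $\widetilde{\bar f}$ associated to $\bar I$, I would observe that on $\R\times(-\infty,\log r_0)$ we have $\widetilde{\bar f}=\widetilde f$ and the hypothesis directly applies, while on $\R\times(\log r_1,+\infty)$ the lift equals $(\theta,s)\mapsto(\theta+\alpha,s)$, which has no fixed point because $\alpha\notin 2\pi\Z$. On the intermediate strip $\R\times[\log r_0,\log r_1]$, which is compact modulo the deck transformation $T(\theta,s)=(\theta+2\pi,s)$, the key observation is that one may freely replace $\alpha$ in the interpolation by $\alpha+2\pi k$ for any $k\in\Z$: this does not alter $\bar f=\bar f_1$ as a homeomorphism of $\R^2$, but changes its lift by $k$ additional full twists on the annulus. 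For $|k|$ sufficiently large, compactness of the quotient ensures that the angular displacement of $\widetilde{\bar f}$ on the strip is bounded below by a positive constant, precluding fixed points.

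The main obstacle is the simultaneous control of two properties in the annular interpolation: that $(\bar f_t)_{t\in[0,1]}$ remains a continuous family of homeomorphisms of $\R^2$ matching the prescribed boundary conditions on $\{|z|=r_0\}$ (where $\bar f_t=f_t$) and $\{|z|=r_1\}$ (where $\bar f_t=R_{t\alpha}$), and that the associated lift is fixed-point-free on the entire compact strip. The first is a standard gluing issue handled by Alexander and Schoenflies; the second rests on the twist freedom described above. Careful bookkeeping of both the isotopy time $t$ and the angular coordinate $\theta$ is needed to make the lower bound on angular displacement uniform over the compact modular strip, and to ensure that inserting the additional twists does not destroy continuity of the family or the boundary matching.
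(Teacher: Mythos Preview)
The paper does not give a proof of this theorem; it is quoted as a black box from Le~Roux's monograph (Appendix~A of \cite{leroux}), so there is no in-paper argument to compare against. I will therefore assess your sketch on its own terms.

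Your overall strategy---keep $f$ on a small disc, impose a nontrivial rotation outside a larger disc, interpolate on the intervening annulus, and check fixed-point-freeness of the associated lift---is a reasonable line of attack and is in the spirit of what one expects such a proof to look like. The difficulty, as you yourself flag, is the annulus, and here your proposed mechanism has a genuine gap.

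Your key claim is that replacing $\alpha$ by $\alpha+2\pi k$ ``does not alter $\bar f=\bar f_1$'' yet makes the angular displacement of the associated lift on the strip $\R\times[\log r_0,\log r_1]$ bounded below by a positive constant for $|k|$ large. This cannot hold as stated. On the inner boundary circle $\{|z|=r_0\}$ your construction forces $\bar f_t=f_t$ for every $t$ and every $k$, so the lift there is exactly $\widetilde f$, with angular displacement independent of $k$. In particular, at points of $\{|z|=r_0\}$ where $\widetilde f$ moves a point purely radially, the angular displacement of $\widetilde{\bar f}$ is zero for every $k$. Hence no choice of $|k|$ yields a uniform positive lower bound on the angular displacement across the whole strip. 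More generally, if one insists that $\bar f_1$ is literally unchanged when $\alpha$ is replaced by $\alpha+2\pi k$, then the two isotopies differ by an element of $\pi_1(\homeourd)\cong\Z$, so the associated lifts differ by a global deck translation $T^{k}$---including on the inner region, where $T^{k}\circ\widetilde f$ may well acquire fixed points (these are fixed points of $f$ with rotation number $-k$ for $I$). So either $\bar f$ changes on the annulus with $k$, or the lift changes globally; in neither case does your compactness argument go through as written.

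The gap is repairable, but it is not mere bookkeeping: one needs to build the interpolation so that near the inner boundary $\widetilde{\bar f}$ stays $C^0$-close to $\widetilde f$ (hence fixed-point-free by the hypothesis, since the displacement of $\widetilde f$ is bounded away from zero on the compact quotient), and only away from the inner boundary invoke a large twist (supported on a sub-annulus bounded away from $\{|z|=r_0\}$) to force large angular displacement. This two-zone construction, with an explicit choice of where the twist is supported and a uniform lower bound on $|\widetilde f(\tilde z)-\tilde z|$ on the inner collar, is what is missing from your sketch.
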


\subsection{Dynamics of the transverse foliation with two singularities}\label{sectdynamicfoliation}

In this subsection, we consider the open annulus $\A=\T{1}\times \R$. We denote by $N$ (resp. $S$) the upper (resp. lower) end of $\A$ and by $\hat{\A}$ the end compactification of $\A$ which is a topological space homeomorphic to the two-dimensional sphere.
Suppose that the isotopy
$I'=(f'_t)_{t\in [0,1]}$ from the identity to $f$ has no contractible fixed points. Thus, we can apply Theorem \ref{theoexisfoliation}, and we obtain
an oriented topological foliation $\mathcal{F}'$ on $\A$ which is transverse to $I'$. Then one can extend $I'$ to an isotopy $I=(f_t)_{t\in [0,1]}$ on $\hat{\A}$ that fixes the ends of $\A$, that is $f_t(N)=N$ and $f_t(S)=S$ for every $t\in [0,1]$.
Similarly, the foliation $\mathcal{F}'$ can be extended to an oriented topological foliation with singularities $\mathcal{F}$ on $\hat{\A}$ where the
singularities are the ends of $\A$. In this case P. Le Calvez proved that $\mathcal{F}$ is also \textit{locally transverse to $I$} at each end of $\A$ (see \cite{lec08}), that is, for every neighborhood $\hat{U}$ of an end $\hat{z}_0$ of $\A$ there exists a neighborhood $\hat{V}$ contained in $\hat{U}$ such that for every
$z\in \hat{V}\setminus \{\hat{z}_0\}$ there is an arc contained in $\hat{U}$ which is positively transverse to $\mathcal{F}$ and homotopic in $\hat{U}$ to the trajectory of $z$ along $I$.

\begin{propo}[P. Le Calvez, Proposition 3.4 \cite{lec08}]\label{theolocallytrans}
The oriented foliation $\mathcal{F}$ is locally transverse to $I$ at each end of $\A$.
\end{propo}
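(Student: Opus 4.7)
The plan is to localize the global transversality of $\mathcal{F}$ to $I$. I treat the end $\hat{z}_0=N$; the case of $S$ is analogous. Without loss of generality, take $\hat{U}$ to be an open topological disk in $\hat{\A}$ containing $N$, so $\hat{U}$ is simply connected.

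The first step is to trap the entire trajectory near $N$. Using continuity of the extended isotopy on the compact space $\hat{\A}\times [0,1]$ together with the fact that $f_t(N)=N$ for all $t\in[0,1]$, I obtain nested open disk neighborhoods $\hat{V}\subset \hat{U}_0 \subset \hat{U}$ of $N$ with $\overline{\hat{U}_0}\subset \hat{U}$ such that every trajectory $\gamma_z=\{f_t(z):t\in[0,1]\}$ with $z\in\hat{V}$ is contained in $\hat{U}_0$. In particular, for each $z\in \hat{V}\setminus\{N\}$, $\gamma_z$ is a compact arc in $\overline{\hat{U}_0}\setminus\{N\}\subset \hat{U}\setminus\{N\}$.

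The second step is to build a positively transverse arc inside $\hat{U}$ homotopic to $\gamma_z$. Since $\mathcal{F}$ restricted to $\hat{\A}\setminus\{N,S\}$ is a regular (non-singular) foliation, every point of the compact set $\overline{\hat{U}_0}\setminus\{N\}$ admits arbitrarily small trivializing neighborhoods contained in $\hat{U}\setminus\{N\}$. I subdivide $[0,1]$ into a finite number of intervals $0=t_0<t_1<\cdots<t_k=1$ so that each sub-arc $\gamma_z|_{[t_{i-1},t_i]}$ lies in some trivializing neighborhood $U_i\subset \hat{U}\setminus\{N\}$, refined if necessary so that in each $U_i$ the first coordinate of the chart is monotone along the sub-arc. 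On each $U_i$ I replace $\gamma_z|_{[t_{i-1},t_i]}$ by a positively transverse arc $\sigma_i$ with the same endpoints, and concatenating gives a positively transverse arc $\sigma$ from $z$ to $f(z)$ that lies in $U_1\cup\cdots\cup U_k\subset \hat{U}$. Since $\hat{U}$ is simply connected, $\sigma$ is automatically homotopic in $\hat{U}$ to $\gamma_z$, as required.

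The main obstacle is ensuring that the piecewise replacement in each trivializing chart is actually possible: the endpoints of each sub-arc must be positioned so that a positively transverse arc joining them exists within the chart. This is where the Brouwer-line characterization of transversality recalled after Theorem~\ref{theoexisfoliation} is indispensable: in the universal cover, every leaf of $\widetilde{\mathcal{F}}$ is a Brouwer line for $\widetilde{f}$, a condition that is local in nature and forces the trajectory, after a sufficiently fine subdivision, to cross successive leaves in a coherent direction inside each trivializing chart. With this local transversality in hand, the chart-by-chart construction can be carried out entirely within the prescribed neighborhoods, yielding $\sigma\subset \hat{U}$ and finishing the proof.
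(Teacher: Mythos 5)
Your step 1 (trapping the whole trajectory in $\hat{U}_0$ by continuity of the extended isotopy at the fixed end) is fine, but step 2 contains a genuine gap that the rest of the argument does not repair. In a trivializing chart the positively transverse arcs are exactly those whose first chart coordinate is strictly increasing, so a sub-arc $\gamma_z|_{[t_{i-1},t_i]}$ can be replaced by a positively transverse arc with the \emph{same endpoints} only if $f_{t_i}(z)$ lies strictly on the positive side of the leaf through $f_{t_{i-1}}(z)$. Nothing guarantees this: transversality of $\mathcal{F}$ to $I$ is a statement about the homotopy class of the whole trajectory from $z$ to $f(z)$, and says nothing about the local behaviour of $t\mapsto f_t(z)$ with respect to the leaves; the trajectory may cross a given leaf repeatedly in both directions, or run against the transverse orientation on a whole sub-interval (nor can a continuous arc in general be cut into finitely many pieces on which a chart coordinate is monotone). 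Your appeal to the Brouwer-line property is a non sequitur: that property compares a leaf $\widetilde{l}$ with its image under the time-one lift $\widetilde{f}$, i.e.\ it constrains the pair $(\widetilde{z},\widetilde{f}(\widetilde{z}))$, not the intermediate path $t\mapsto \widetilde{f}_t(\widetilde{z})$, so it cannot force ``coherent'' crossings after subdivision. This is precisely what makes the proposition non-trivial: global transversality gives a positively transverse arc from $z$ to $f(z)$ \emph{somewhere} in $\A$, and the whole point is to produce one that stays in $\hat{U}$; closeness of the trajectory to the end does not by itself yield this.

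Two further remarks. The paper does not prove this statement: it quotes it from Le Calvez (\cite{lec08}, Proposition 3.4), whose proof is a careful analysis of the foliation near the singular end together with an explicit construction of transverse paths kept inside the prescribed neighborhood, not a localization-by-subdivision of the trajectory. Also, your final use of simple connectivity of $\hat{U}$ proves only the vacuous homotopy statement; for the way the proposition is used later (e.g.\ in Lemma \ref{lemmclosedleafaccu0}, where the transverse arcs are lifted to the universal cover of the punctured neighborhood), what is really needed is that the transverse arc be homotopic to the trajectory with fixed endpoints in $\hat{U}\setminus\{\hat{z}_0\}$, so that the lifts share their endpoints. Had your chart-by-chart construction worked, it would in fact have given this stronger conclusion (the homotopy takes place inside the union of charts, which avoids the end), but the shortcut through simple connectivity of $\hat{U}$ delivers less than what the paper uses.
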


On the other hand, we can apply the theorem of Poincaré-Bendixson to $\mathcal{F}$ seen as a flow (see \cite{pm82}).

\begin{theor}[Poincaré-Bendixson' Theorem]\label{theopoinbendi}
  Let $\mathcal{F}$ be an oriented foliation on $\hat{\A}$ whose singularities are the ends of $\A$. For every leaf $l$ of $\mathcal{F}$ one of the following possibilities holds:
  \begin{itemize}
    \item[(i)] the $\omega$-limit set of $l$ is a closed leaf of $\mathcal{F}$,
    \item[(ii)] the $\omega$-limit set of $l$ is reduced to an end of $\A$, or
    \item[(iii)] the $\omega$-limit set of $l$ is constituted of ends of $\A$ and leaves $l'$ joining two ends (eventually the same) of $\A$.
   \end{itemize}
   One has the same possibilities for the $\alpha$-limit set of $l$. Moreover for every no closed leaf either
  \begin{itemize}
    \item[(iv)] the sets $\omega$(l) and $\alpha(l)$ are reduced to the same end of $\A$, or
    \item[(v)] the sets $\omega$(l) and $\alpha(l)$ are disjoints.
     \end{itemize}
\end{theor}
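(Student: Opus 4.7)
The plan is to realize $\mathcal{F}$ as the orbit foliation of a continuous flow $\phi$ on the compact surface $\hat{\A}$ (using Whitney's embedding theorem already invoked in Section 2), whose set of fixed points is exactly $\{N,S\}$. For any leaf $l$, the $\omega$-limit set $\omega(l)$ is then nonempty, closed, connected and $\phi$-invariant, and similarly for $\alpha(l)$. The argument then proceeds along the lines of the classical proof of the Poincar\'e--Bendixson theorem, as in \cite{pm82}, simplified by the fact that the only candidates for singular points of $\mathcal{F}$ are $N$ and $S$.

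The central tool is the \emph{transverse-section monotonicity lemma}: if $\Sigma$ is a short arc transverse to $\mathcal{F}$ through a non-singular point $z$, and an orbit meets $\Sigma$ at times $t_{1}<t_{2}<\cdots$ at points $z_{1},z_{2},\ldots$, then $(z_{n})$ is monotone along $\Sigma$. This follows from Jordan's theorem applied on $\hat{\A}\cong S^{2}$: the simple closed curve formed by the arc of the orbit from $z_{n}$ to $z_{n+1}$ together with the subarc $[z_{n},z_{n+1}]\subset\Sigma$ separates $\hat{\A}$ into two open disks, one of which is forward-invariant since the flow crosses $\Sigma$ in only one direction.

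With this, I would analyze $\omega(l)$ in three cases. If $\omega(l)\subseteq\{N,S\}$, connectedness forces $\omega(l)$ to equal a single end, which is case (ii). If $\omega(l)$ meets a closed leaf $l_{p}$, a standard tube argument near $p$ (using the first-return map on a transverse section through $p$ and the monotonicity lemma) shows that $l$ eventually stays in an arbitrarily thin neighborhood of $l_{p}$, so $\omega(l)=l_{p}$, giving case (i). Otherwise, every non-singular point $p\in\omega(l)$ lies on a non-closed leaf $l_{p}\subseteq\omega(l)$; the key sublemma is that $\omega(l_{p})$ and $\alpha(l_{p})$ contain no non-singular point, for such a point would yield two interleaved monotone sequences on one transverse section, one from $l_{p}$ itself and one from $l$ (which accumulates on every point of $l_{p}$), violating the monotonicity lemma. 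Hence $\omega(l_{p}),\alpha(l_{p})\subseteq\{N,S\}$ and $l_{p}$ joins two (possibly equal) ends of $\A$, giving case (iii). The same trichotomy applies verbatim to $\alpha(l)$.

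For the final dichotomy (iv)/(v), suppose $l$ is non-closed and $\omega(l)\cap\alpha(l)\ne\emptyset$. A non-singular common point would force $l$ to accumulate on it both forward and backward; on a transverse section through it the two-sided sequence of intersections would be monotone along $\Sigma$ yet converge to the same point in both directions, forcing it to be eventually constant and $l$ to be closed, a contradiction. Hence $\omega(l)\cap\alpha(l)\subseteq\{N,S\}$; say $N$ lies in it. If $\omega(l)\ne\{N\}$, case (iii) would produce a leaf $l'\subseteq\omega(l)$ whose closure in $\hat{\A}$ is a Jordan curve or arc; analyzing which of the complementary disks of $\hat{\A}\cong S^{2}$ contains $l$, and how $l$ can return to $N$ in negative time without crossing $l'$ or the ends, yields a contradiction with $N\in\alpha(l)$. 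Thus $\omega(l)=\{N\}$ and symmetrically $\alpha(l)=\{N\}$, which is case (iv). I expect this last step to be the principal obstacle, as it requires a careful Jordan curve analysis on $\hat{\A}$ to exclude the coexistence of a common end in $\omega(l)\cap\alpha(l)$ with further leaves in $\omega(l)$; the rest of the proof is a direct adaptation of classical Poincar\'e--Bendixson theory.
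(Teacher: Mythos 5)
The paper gives no proof of this statement at all: it is quoted as the classical Poincar\'e--Bendixson theorem with a reference to de Melo--Palis, so the only comparison available is with the classical argument, which is indeed the route you take. Your treatment of (i)--(iii) --- Whitney realization of $\mathcal{F}$ as a flow on $\hat{\A}\cong S^2$, monotonicity of successive crossings of a transverse arc via the Jordan curve theorem, and the three-case analysis of $\omega(l)$ --- is the standard proof and is sound.

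The genuine gap is in the dichotomy (iv)/(v), which is precisely the part that goes beyond the textbook statement. You correctly show that a common \emph{regular} point of $\omega(l)$ and $\alpha(l)$ forces $l$ to be closed, but the remaining step (excluding a common end when, say, $\omega(l)$ contains a leaf $l'$) is only sketched, and the tool you propose --- separating $\hat{\A}$ by $\adhe(l')$ --- fails as stated: if $l'$ joins $N$ to $S$, its closure is an arc and does not disconnect the sphere, and upgrading it to a closed circuit of leaves and ends would require the nontrivial fact that a limit ``graph'' contains a closed circuit, which you have not established (you yourself flag this step as the principal obstacle). A cleaner argument avoids $l'$ altogether: if $l$ is not closed and $\omega(l)\cup\alpha(l)$ contains a regular point, then $l$ meets a small transverse arc $\Sigma$ in two distinct consecutive points $z_1,z_2$; the Jordan curve $J$ formed by the orbit segment from $z_1$ to $z_2$ and the subarc of $\Sigma$ bounds two disks $D_1,D_2$, and your monotonicity lemma traps the forward tail of $l$ in $\adhe(D_1)$ and the backward tail in $\adhe(D_2)$, so that $\omega(l)\subset \adhe(D_1)$, $\alpha(l)\subset \adhe(D_2)$ and hence $\omega(l)\cap\alpha(l)\subset J$; since $J$ contains neither end and a common regular point is already excluded, the two limit sets are disjoint, which is (v). In the remaining case both limit sets are contained in $\{N,S\}$, hence by connectedness each is a single end, giving (iv) or (v). With this replacement your proposal becomes a complete proof; as written, the last step is missing.
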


We recall the following fact (see for example section 3.2 of \cite{hae62}).

\begin{itemize}
  \item[(1)] The union of the closed leaves of $\mathcal{F}$ is closed.
\end{itemize}

By Poincaré-Bendixson Theorem and the transversality of $\mathcal{F}$ and $I$, we have the following properties:

\begin{itemize}
  \item[(2)] every closed leaf $\gamma$ separates $N$ and $S$, i.e. $N$ and $S$ belong to distinct connected components of $\hat{\A}\setminus \gamma$, and
  \item[(3)] every closed leaf $\gamma$ is $f$-free, i.e. $f(\gamma)\cap \gamma =\emptyset$.
\end{itemize}

 The following result is due to F. Le Roux (see \cite{ler00} and \cite{ler04}).
\begin{itemize}
  \item[(4)] The foliation $\mathcal{F}$ has recurrence, that is there is a leaf $l$ of $\mathcal{F}$ such that $\alpha(l)\cup \omega(l)$ is not contained in $\{N,S\}$, or
  \item[(5)] there is a leaf $l$ of $\mathcal{F}$ which joins $N$ and $S$, i.e. either $\alpha(l)=N$ and $\omega(l)=S$ or $\alpha(l)=S$ and $\omega(l)=N$.
\end{itemize}

We deduce the following properties in a neighborhood of an end $\hat{z}_0$ of $\A$. We say that $\hat{z}_0$ is accumulated by closed leaves of $\mathcal{F}$, if every neighborhood of $\hat{z}_0$ contains a closed leaf of $\mathcal{F}$.\\

\textbf{Non-accumulated case:} Suppose that $\hat{z}_0$ is not accumulated by closed leaves of $\mathcal{F}$. Then there exists a leaf $l$ of $\mathcal{F}$ such that either $\alpha(l)=\hat{z}_0$ or $\omega(l)=\hat{z}_0$.
\begin{proof}
  Firstly, suppose that $\mathcal{F}$ has closed leaves. Let $z\in\A$ closed enough to $\hat{z}_0$ and let $l_z$ be the leaf of $\mathcal{F}$ containing $z$. Items (iv) and (v) from Theorem \ref{theopoinbendi}, imply that $\alpha(l_z)$ or $\omega(l_z)$ contains $\hat{z}_0$. Thus $l_z$ or the leaf $l$ given by item (iii) from Theorem \ref{theopoinbendi} has its $\omega$-limit set or $\alpha$-limit set reduced  to $\hat{z}_0$.\\
Next, suppose that $\mathcal{F}$ has recurrence, but not closed leaves.  Let $l$ be a leaf of $\mathcal{F}$ such that $\alpha(l)\cup \omega(l)$ is not contained in $\{N,S\}$. Hence by Theorem \ref{theopoinbendi} either $\alpha(l)$ or $\omega(l)$ contains $\hat{z}_0$. One concludes, as in the first part. \\
Finally, if $\mathcal{F}$ does not have recurrence, then by Property (4) there exists a leaf of $\mathcal{F}$ whose $\alpha$-limit set or $\omega$-limit set is reduced to $\hat{z}_0$. This completes the proof.
\end{proof}

\textbf{Accumulated case:} Suppose that the end $\hat{z}_0$ is accumulated by closed leaves of $\mathcal{F}$. Using a chart centered at $\hat{z}_0$ we can suppose that $\mathcal{F}$ is a foliation on a neighborhood of $0\in\R^2$. Let us fix a closed leaf $\gamma$ of $\mathcal{F}$ and put $\hat{U}$ the closure of the connected component of complement of $\gamma$ containing $0$. Let $\mathcal{C}$ be the union of all closed leaves of $\mathcal{F}$ contained in $\hat{U}$. This is a closed set of $\hat{U}\setminus \{0\}$. The closed leaves are totally ordered by the relation of inclusion of the connected component of their complements containing $0$. By Schoenflies' Theorem, we can construct a homeomorphism of the plane, which maps each closed leaf of $\mathcal{F}$ to a Euclidean circle centered at $0$. Let $A$ be a connected component of the complement (in $\hat{U}$) of $\mathcal{C}\cup \{0\}$. The set $A$ is delimited by two closed leaves $\partial^+A$ and $\partial^-A $, and for every leaf $l$ of $\mathcal{F}$ included in $A$ we have either $\alpha(l)=\partial^+A$ and $\omega(l)=\partial^-A$  or $\alpha(l)=\partial^-A$ and $\omega(l)=\partial^+A$. Moreover all these leaves have the same $\alpha$-limit and $\omega$-limit sets. Hence, the foliation on the closure of $A$ is homeomorphic to a Reeb component or a spiral on $\T{1}\times [0,1]$.

\section{Local Case}

In this section we will introduce the definition of the local rotation set and we will show Theorem A assuming Theorem C.

\subsection{Definitions}

\subsubsection{The centered plane}
We denote by $\R^2$ the plane endowed with its usual topology and orientation. We denote by $0$ the point $(0,0)$ of the plane $\R^2$. Let $\gamma$ be a non-contractible Jordan curve in $\R^2\setminus \{0\}$. We call \textit{interior of $\gamma$} (resp. \textit{exterior of $\gamma$}) the connected component of $\R^2\setminus \gamma$ which contains (resp. does not contain) $0$.

\subsubsection{The set $\homeourd$}

We say that a homeomorphism $f$ of the plane \textit{fixes} 0, if $f(0)=0$. We denote by
$\homeourd$ the set of all homeomorphisms of the plane isotopic to the identity and which fix $0$. We recall that the fundamental group of $\homeourd$ is
isomorphic to $\Z$ (see \cite{ham74}). More precisely, consider $J=(R_t)_{t\in [0,1]}$ the isotopy defined by:
 $$  R_t(x,y):=(x\cos (2\pi t)+y \sin (2\pi t), - x \sin( 2\pi t)+y\cos (2\pi t)). $$

 If $I=(f_t)_{t\in [0,1]}$ and $I'=(f'_t)_{t\in [0,1]}$ are two isotopies in $\homeourd$ both from the identity to the same homeomorphism  $f_1=f'_1$, then there exists a unique integer $q\in\Z$ such that $I'$ is homotopic (with fixed endpoints) to $J^q * I$.

\subsubsection{The local rotation set}\label{sectiondeflocalrotationset}

In this paragraph, we consider a homeomorphism $f$ in $\homeourd$. Let $I=(f_t)_{t\in [0,1]}$ be an isotopy in $\homeourd$ from the identity to $f$. We will give the definition of the \textit{local rotation set of $I$} due to F. Le Roux (see \cite{leroux}).
 Let $\fonc{\pi}{\R\times (0,+\infty)}{\R^2\setminus \{0\}}$ be the universal covering of $\R^{2}\setminus \{0\}$ and let $\widetilde{I}=(\widetilde{f}_t)_{t\in [0,1]}$ be the lift of the isotopy $I$ to the universal covering of $\R^{2}\setminus \{0\}$ such that $\widetilde{f}_0=Id$, one defines $\widetilde{f}:=\widetilde{f}_1$. Given $z\in \R^2\setminus \{0\}$, and an integer $n\geq 1$ we consider $\rho_n(z)$ the \textit{average change of angular coordinate along the trajectory of $z$ for the isotopy $I$}, i.e.
$$ \rho_n(z):=\frac{1}{n}(p_1(\widetilde{f}^n(\widetilde{z}))-p_1(\widetilde{z})),$$
where $\fonc{p_1}{\R\times (0,+\infty)}{\R}$ is the projection on the first coordinate and $\widetilde{z}$ is a point in $\pi^{-1}(z)$. Given two neighborhoods $V$ and $W$ of $0$ with $W\subset V$, we define  the \textit{local rotation set of $I$ relative to $V$ and $W$} by
$$ \rho_{V,W}(I):=\bigcap_{m\geq 1} \adhe\left( \bigcup_{n\geq m} \{ \rho_n(z) : z\notin W,\, f^n(z)\notin W, \text{ and } z,\cdots, f^n(z)\in V    \}   \right),   $$
where the closure is taken in $\overline{\R}:=\R\cup \{+\infty\}\cup \{-\infty\}$. Next we define the \textit{local rotation set of $I$ relative to $V$}
by
$$ \rho_{V}(I):=\adhe\left( \bigcup_{W} \rho_{V,W}(I)    \right)   $$ where $W$ is a neighborhood of $0$ included in $V$.
Finally we define the \textsf{local rotation set of the isotopy $I$} by
$$ \lrho(I):=\bigcap_{V} \rho_{V}(I) .$$
where $V$ is a neighborhood of $0$
\begin{remar}
  If we try to generalize the notion of rotation set of the closed annulus to the local case, we will define
  $$  \bigcap_{V}  \bigcap_{m\geq 1} \adhe\left( \bigcup_{n\geq m} \{ \rho_n(z) : f^{i}(z)\in V \text{ for all } i\in \{0,\cdots, n\}   \}   \right) .$$
  However, in this setting the invariance by conjugation fails, for example the two contraction mapping $z\mapsto \frac{1}{2}z$ and $z\mapsto \frac{i}{2}z$ are conjugate but they will to have $\{0\}$ and $\{\frac{1}{4}\}$ by local rotation set respectively.
\end{remar}

\begin{remar}
  The local rotation set $\lrho(I)$ only depends on the choice of the homotopy class of $I$, i.e. if $I_1$ and $I_2$ are two isotopies from the identity
  to the same homeomorphism which are homotopic, then $\lrho(I_1)=\lrho(I_2)$.
\end{remar}

The local rotation set verifies the following properties.

\begin{propo}\label{propensrotlocal}
 Let $I$ be an isotopy in $\homeourd$ from the identity to $f$. For every integer $p$ let $J^p=(R_{pt})_{t\in [0,1]}$. Then
  \begin{enumerate}
    \item The local rotation set of $I$ is invariant by oriented topological conjugation.
    \item For every $p,q\in\Z$, we have  $\lrho(J^p * I^q)=q\lrho(I)+p$.
  \end{enumerate}
\end{propo}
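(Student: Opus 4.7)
The plan is to translate both assertions into computations in the universal cover $\fonc{\pi}{\R\times (0,+\infty)}{\R^2\setminus\{0\}}$, using the fact that the lift $\widetilde{h}$ of any orientation-preserving homeomorphism $h$ of $\R^2$ fixing $0$ (viewed on $\R^2\setminus\{0\}$) commutes with the generator $T:(x,y)\mapsto (x+1,y)$ of the deck group, that is $\widetilde{h}\circ T=T\circ \widetilde{h}$.

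For (1), let $h$ be an orientation-preserving homeomorphism of the plane with $h(0)=0$, put $I':=(h\circ f_t\circ h^{-1})_{t\in[0,1]}$, and let $\widetilde{I'}=(\widetilde{h}\circ\widetilde{f}_t\circ\widetilde{h}^{-1})_{t\in[0,1]}$ be its natural lift. Since $h$ maps the filter of neighborhoods of $0$ bijectively to itself, the orbit conditions in the definition of $\rho_{V,W}(I)$ correspond under $z\mapsto h(z)$ to those in $\rho_{h(V),h(W)}(I')$. Set $\phi(\widetilde{z}):=p_1(\widetilde{h}(\widetilde{z}))-p_1(\widetilde{z})$; the relation $\widetilde{h}T=T\widetilde{h}$ makes $\phi$ invariant under $T$, so it descends to a continuous function on $\R^2\setminus\{0\}$ and is bounded on $\pi^{-1}(K)$ for every compact $K\subset\R^2\setminus\{0\}$. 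Choosing lifts compatibly yields
\[
\rho_n^{I'}(h(z))=\rho_n^{I}(z)+\tfrac{1}{n}\bigl(\phi(\widetilde{f}^n(\widetilde{z}))-\phi(\widetilde{z})\bigr).
\]
Restricting the intersection $\bigcap_V$ to bounded $V$ (which is cofinal), the sets $V\setminus W$ have compact closure in $\R^2\setminus\{0\}$, so the error is $O(1/n)$ and vanishes in the limit. Taking closures, then intersections over $V$ and unions over $W$, gives $\lrho(I')=\lrho(I)$.

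For (2), fixing orientation conventions so that the lift of $J$ at time one is the deck transformation $T$, the lift of $J^p$ at time one is $T^p$; since the time-one map of $J^p$ is the identity, the time-one map of $J^p*I^q$ is $f^q$ and its lift associated to $J^p*I^q$ is $\widetilde{f}^q\circ T^p=T^p\circ\widetilde{f}^q$. Iterating gives
\[
p_1\bigl((\widetilde{J^p*I^q})^n(\widetilde{z})\bigr)-p_1(\widetilde{z})=p_1(\widetilde{f}^{qn}(\widetilde{z}))-p_1(\widetilde{z})+np,
\]
so $\rho_n^{J^p*I^q}(z)=q\,\rho_{qn}^{I}(z)+p$. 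The inclusion $q\,\lrho(I)+p\subset\lrho(J^p*I^q)$ is immediate: any $f$-orbit remaining in $V$ has its $q$-subsampled suborbit in $V$, and this is exactly what is tracked by the time-one map $f^q$ of $J^p*I^q$. For the reverse inclusion, given an $f^q$-orbit $\{z,f^q(z),\ldots,f^{qn}(z)\}\subset V$, set $V':=\bigcup_{i=0}^{q-1}f^i(V)$; because $f(0)=0$ and each $f^i$ is continuous at $0$, $V'$ is a neighborhood of $0$ and the family $\{V'\}$ is cofinal in the filter of neighborhoods of $0$. Each intermediate iterate $f^{qk+r}(z)=f^r(f^{qk}(z))$ lies in $f^r(V)\subset V'$, providing a full $f$-orbit in $V'$ suitable for $\lrho(I)$. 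The cases $p<0$ or $q\leq 0$ reduce to the above using $J^{-1}$ and $I^{-1}$.

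The main subtlety is precisely this last step: the orbit condition defining $\lrho$ constrains \emph{every} intermediate iterate, whereas for $J^p*I^q$ the time-one map is $f^q$, so only iterates at multiples of $q$ are constrained. The enlargement $V\rightsquigarrow V'=\bigcup_{i=0}^{q-1}f^i(V)$ reconciles the two, and works precisely because $f$ fixes $0$, which ensures $\{V'\}$ remains a neighborhood basis of $0$.
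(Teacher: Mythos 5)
The paper does not actually prove this proposition (it is quoted as one of the standard properties of the local rotation set, following \cite{leroux}), so I judge your argument on its own merits. Part (1) is correct: the lift of the conjugated isotopy, the $T$-invariance of $\phi$, and the bound on $\phi$ over $\adhe(V\setminus W)$, which is compact in $\R^2\setminus\{0\}$ for bounded $V$ (a cofinal family, by monotonicity of $V\mapsto\rho_V(I)$), give $\rho_{h(V),h(W)}(I')=\rho_{V,W}(I)$ up to an $O(1/n)$ error and hence $\lrho(I')=\lrho(I)$. In part (2), the inclusion $\lrho(J^p*I^q)\subset q\lrho(I)+p$ via the enlargement $V'=\bigcup_{i=0}^{q-1}f^i(V)$ is also correct, and there the correspondence of admissible segments is exact (same endpoints, same $W$, length $qm$), so no error term appears.

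The gap is in the direction you declare immediate, $q\lrho(I)+p\subset\lrho(J^p*I^q)$. An admissible segment for $(V,W)$ and $f$, say $z,\dots,f^n(z)\in V$ with $z,f^n(z)\notin W$, does not directly yield an admissible segment for the time-one map $f^q$: you must truncate to length $qm$ with $m=\lfloor n/q\rfloor$, and then two things go wrong as written. First, the new endpoint $f^{qm}(z)$ may lie in $W$; this is repaired by replacing $W$ with $W'=\bigcap_{0\leq i<q}f^{-i}(W)$, still a neighborhood of $0$ because $f$ fixes $0$ (the mirror of your $V'$ trick, which you never invoke on this side), since $f^{qm}(z)=f^{-r}(f^n(z))\notin f^{-r}(W)\supset W'$ where $r=n-qm$. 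Second, and more seriously, the rotation number changes under truncation: $\rho_{qm}(z)-\rho_n(z)$ involves the angular displacement over the last $r\leq q-1$ steps, and the one-step angular displacement is \emph{not} bounded on $V$ (it can blow up near $0$), so ``the $q$-subsampled suborbit is tracked by $f^q$'' does not by itself give the right limit value. The inclusion is nevertheless true, but it needs an argument of the same kind as in your part (1): the discarded displacement equals $p_1(\widetilde{f}^{\,n}(\widetilde z))-p_1(\widetilde{f}^{\,qm}(\widetilde z))=\bigl(p_1-p_1\circ\widetilde{f}^{\,-r}\bigr)\bigl(\widetilde{f}^{\,n}(\widetilde z)\bigr)$, and $f^n(z)$ lies in $V\setminus W$, whose closure is compact in $\R^2\setminus\{0\}$ for bounded $V$; hence it is bounded by a constant depending only on $V$, $W$, $q$, and the error in $\rho_{qm}(z)$ is $O(1/m)$. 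Without this (or equivalent) control the step fails as stated. Finally, the reduction of $q\leq 0$ ``using $I^{-1}$'' should be made explicit via the exact identity $\rho_{V,W}(I^{-1})=-\rho_{V,W}(I)$ (reindex each admissible segment from its terminal point), and $q=0$ is a degenerate case of convention.
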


\begin{remar}\label{remarkpropOin}
One can define the following stronger and stronger notions:
\begin{itemize}
  \item[(1)] Every neighborhood of $0$ contains a contractible fixed point $z\neq 0$ of $I$.
  \item[(2)] Every neighborhood of $0$ contains a compact and $f$-invariant set $K$ not containing $0$ such that $0$ belongs to $\rho_{K}(I)$.
\item[(3)] Every neighborhood $V$ of $0$ contains a compact (non-invariant) set $K$ such that $\rho_K(I)\subset \rho_V(I)$ and $0$ belongs to $\rho_{K}(I)$.
\end{itemize}
Note that $(1)\Rightarrow (2) \Rightarrow (3)$, and $(3)$ implies that $0$ belongs to $\lrho(I)$.
\end{remar}

\subsection{Main result: Proof of Theorem A}\label{subsprooftheoA}

The purpose of this paragraph is to deduce Theorem A from Theorem C. \\
  Let $I$ be an isotopy in $\homeourd$ from the identity to a homeomorphism $f$. Since the local rotation set $\lrho(I)$ is closed and $\Q$ is dense in $\R$, in order to prove Theorem A it is sufficient to show that every
 irreducible rational number $\frac{p}{q}$ which belongs to the interior of the convex hull of $\lrho(I)$ belongs to the local rotation set. Moreover, by
 considering $J^p * I^q$ instead of $I$, we may assume that $p=0$ and $q=1$. This follows from the formula relating the local rotation sets of $I$ and
 $J^p * I^q$ (Proposition \ref{propensrotlocal}). Thus, we reduced the proof of Theorem A to proving the following theorem.

\begin{theoaast}
  Let $I$ be an isotopy in $\homeourd$ from the identity to a homeomorphism $f$. Suppose that the local rotation set $\lrho(I)$ contains both positive and negative real numbers. Then $0$ belongs to $\lrho(I)$. More precisely, for every $V$ neighborhood of $0$, there exists an $f$-invariant and compact set $K$ in $\R^2\setminus \{0\}$ contained in $V$, such that $0$ belongs to $\rho_{K}(I)$.
\end{theoaast}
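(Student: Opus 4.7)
The plan is to follow the strategy sketched just before the theorem statement, reducing to Theorem C via a transverse foliation argument. Fix a neighborhood $V$ of $0$, and split into two cases according to whether $I$ has a nontrivial contractible fixed point in $V$.

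In the easy case, suppose $V$ contains a contractible fixed point $z_0\neq 0$ of $I$. Choosing any lift $\widetilde z_0\in\pi^{-1}(z_0)$ we have $\widetilde f(\widetilde z_0)=\widetilde z_0$, so $\rho_n(z_0)=0$ for all $n\ge 1$, and $K=\{z_0\}$ is an $f$-invariant compact subset of $V\setminus\{0\}$ with $0\in\rho_K(I)$. In the main case, assume that $V\setminus\{0\}$ contains no contractible fixed point (shrinking $V$ if needed). I would then apply Theorem \ref{theoextention} to replace $f$ on a smaller neighborhood $V'\subset V$ by a homeomorphism $\bar f$ agreeing with $f$ on $V'$ but having no contractible fixed point on $\R^2\setminus\{0\}$, and apply Theorem \ref{theoexisfoliation} to the associated isotopy $\bar I$ to obtain an oriented foliation $\mathcal F$ on $\R^2\setminus\{0\}\cong\A$ positively transverse to $\bar I$.

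The heart of the proof is then to produce three essential, pairwise disjoint, $f$-free Jordan curves $\gamma_0,\gamma_1,\gamma_2\subset V'$, with $\gamma_1$ separating $\gamma_0$ from $\gamma_2$, such that the maximal invariant sets of the two annuli $A_0,A_1$ they bound have rotation sets on opposite sides of $0$. I would look for these as closed leaves of $\mathcal F$: by properties (2) and (3) of Section \ref{sectdynamicfoliation} every closed leaf is essential and $f$-free. The first subclaim is that $0$ is accumulated by closed leaves of $\mathcal F$; I would rule out the non-accumulated case of Section \ref{sectdynamicfoliation} by combining the Poincar\'e--Bendixson description (Theorem \ref{theopoinbendi}) and local transversality at the end (Proposition \ref{theolocallytrans}) to show that in that case the local rotation set is pinched to $\{+\infty\}$ or $\{-\infty\}$, contradicting the hypothesis that $\lrho(I)$ contains values of both signs. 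The second subclaim is that arbitrarily close to $0$ one finds nested pairs of closed leaves bounding an annulus whose maximal invariant set has rotation set in $(0,+\infty)$, and other nested pairs whose maximal invariant set has rotation set in $(-\infty,0)$. Indeed, for any pair of nested closed leaves $\gamma,\gamma'$ the boundary leaves are $f$-free, which forces the rotation set of the maximal invariant set in the annulus they bound to avoid $0$; were all such rotation sets (for pairs near $0$) on the same side of $0$, then comparing with the definition of $\rho_{V,W}(I)$ and $\rho_V(I)$ would show $\lrho(I)\subset[0,+\infty]$ or $\lrho(I)\subset[-\infty,0]$, again contradicting the hypothesis. Selecting such pairs provides the three curves $\gamma_0,\gamma_1,\gamma_2$ inside $V'$.

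Theorem C then applies to $(\gamma_0,\gamma_1,\gamma_2)$ and produces a compact $\bar f$-invariant subset $K$ of the annulus $A\subset V'$ they bound, with $0\in\rho_K(\bar I)$. Since $\bar f$ and the lift of $\bar I$ coincide with $f$ and the lift of $I$ on $V'$, the set $K$ is $f$-invariant and $0\in\rho_K(I)$, which is the desired conclusion. The main obstacle I anticipate is the extraction step for the three closed leaves with prescribed rotation signs: it requires a quantitative link between the local rotation set $\lrho(I)$ and the rotation sets of maximal invariant sets in annuli bounded by nested closed leaves, together with careful use of local transversality at the end $0$ to ensure that trajectories close to $0$ are effectively trapped in such annular regions.
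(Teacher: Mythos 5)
Your overall route is the paper's route: split on the existence of a contractible fixed point near $0$, otherwise use Theorem \ref{theoextention} and Theorem \ref{theoexisfoliation} to get a transverse foliation, show closed leaves accumulate at $0$, extract three closed leaves feeding Theorem C. But the heart of the argument --- your ``second subclaim'' --- is where the real work lies, and your justification for it does not hold up. First, the assertion that $f$-freeness of two nested essential curves ``forces the rotation set of the maximal invariant set in the annulus they bound to avoid $0$'' is false: Theorem C itself exhibits an annulus bounded by two $f$-free curves ($\gamma_0$ and $\gamma_2$) whose maximal invariant set has $0$ in its rotation set. What actually forces the sign is not $f$-freeness of the boundary but the structure of the foliation \emph{inside} the annulus: one must restrict to annuli of the family $\mathcal{A}$ whose boundary components are closed leaves and whose interior contains \emph{no} closed leaf, so that by Poincar\'e--Bendixson the foliation there is a Reeb component or spiral; then transversality of the trajectories to this foliation pins $\rho_{\Theta(A)}(I)$ in $(0,+\infty)$ or $(-\infty,0)$ (Lemma \ref{lemmapropperturbation}). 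Second, your claim that if all such annuli near $0$ had rotation sets on one side then ``comparing with the definition of $\rho_{V,W}(I)$ and $\rho_V(I)$'' would pinch $\lrho(I)$ into a half-line is exactly the difficult step, and it is not a matter of unwinding definitions: orbits contributing to the local rotation set may spend their time in annuli of $\mathcal{A}$ with \emph{empty} maximal invariant set, or wander among infinitely many annuli, so the local rotation set is not controlled by the rotation sets of the $\Theta(A)$'s alone. The paper handles this by (i) proving local finiteness of the family $\mathcal{A}'$ of annuli with non-empty maximal invariant set, and (ii) a perturbation scheme: breaking the closed leaves when no annulus near $0$ carries the foliation in the offending direction, and, when such annuli exist but have empty maximal invariant sets, invoking Le Roux's lemma to refoliate them by closed leaves and pasting with iterates $(f^n)_*\mathcal{F}$ to build a limit foliation $\mathcal{F}_\infty$ still transverse to $I$ with leaves spiralling into $0$; only then does Lemma \ref{lemmclosedleafaccu0} yield the contradiction (Lemma \ref{lemmapropperturbation1}(ii)). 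None of this machinery appears in your proposal, and without it the second subclaim is unsupported.

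Two smaller points. In the non-accumulated case, the correct conclusion from a leaf with $\alpha$- or $\omega$-limit $\{0\}$ is $\lrho(I)\subset[0,+\infty]$ or $\lrho(I)\subset[-\infty,0]$ (Lemma \ref{lemmclosedleafaccu0}), not that the set is ``pinched to $\{+\infty\}$ or $\{-\infty\}$''; your contradiction still goes through with the weaker statement, but as written the claim is wrong. Finally, when you select $\gamma_0,\gamma_1,\gamma_2$ you also need the annulus lying \emph{between} the chosen positive and negative annuli to have empty maximal invariant set (this is what makes $\rho_{\Theta(A_0)}(I)$ and $\rho_{\Theta(A_1)}(I)$ land on opposite sides of $0$, via $\rho_{A\cup A^-}(I)=\rho_{A^-}(I)$); this selection is possible precisely because of the local finiteness of $\mathcal{A}'$, another ingredient your sketch omits.
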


\begin{remar}
  Theorem A$^*$ implies that Property $(2)$ of Remark \ref{remarkpropOin} holds.
\end{remar}

\begin{exam}
There is a homeomorphism $f$ of $\R^2$ which satisfies Property $(2)$ but not Property $(1)$ of Remark \ref{remarkpropOin}. To construct $f$, we start with a ``twice'' Reeb's homeomorphism $f'_1$ defined on the compact annulus $D_0=\{z\in \R^2  : 1\leq \norma{z} \leq 3 \}$ in $\R^2$ satisfying:
\begin{itemize}
  \item[(i)] $f'_1$ fixes the circles $c_i$, where $i\in\{1,2,3\}$ and $c_i= \{z\in \R^2  : \norma{z}=i \}\subset D_0$.
  \item[(ii)] $f'_1|_{c_1}$ and $f'_1|_{c_3}$ act as a rotation $R_{\beta}$ with angle $\beta>0$ and  $f'_1|_{c_2}$ acts as a rotation $R_{\alpha}$ with angle $\alpha<0$.
  \item[(iii)] $f'_1$ does not have fixed points in $D_0$.
\end{itemize}
Let $I'=(f'_t)_{t\in [0,1]}$ be the natural isotopy on $D_0$ from the identity of $D_0$ to $f'_1$. We define $\fonc{f_t}{\R^2}{\R^2}$ by
$$ f_t(z)=\begin{cases}
  \frac{f'_t(3^n z)}{3^n}, &
  \text{ if } \frac{1}{3^n} \leq \norma{z} \leq \frac{1}{3^{n-1}}, n\in \Z \\ f_t(0)=0
\end{cases} $$
Let $I=(f_t)_{t\in [0,1]}$ be the isotopy from the identity to $f=f_1$. For every integer $n\in \N$, let us put $D_n=\frac{1}{3^n}D_0$. Then $0$ belongs to $\rho_{D_n}(I)$ but $f$ does not have any  fixed point $z\neq 0$.
\end{exam}

For the proof of Theorem A$^*$ one introduces the following property.

\begin{defini}
  We say that a homeomorphism $f$ in $\homeourd$ satisfies the \textsf{local intersection property} if there exists a neighborhood $V$ of $0$, such that
  every Jordan curve $\gamma$ contained in $V$ and non-contractible in $\R^2\setminus \{0\}$ meets its image under $f$, i.e. $f(\gamma)\cap \gamma \neq
  \emptyset$.
\end{defini}

Frédéric Le Roux proved Theorem A* (see \cite{leroux}) in the case when the homeomorphism $f$ satisfies the local intersection property. He proved something better: in this case the local rotation set ``detects'' some fixed points, that is Property (1) of Remark \ref{remarkpropOin} holds. This follows from the following proposition which will be proved in the next subsection.

\begin{propo}\label{propcases1}
  Let $I$ be an isotopy in $\homeourd$ from the identity to a homeomorphism $f$. Suppose that the local rotation set, $\lrho(I)$, contains both positive and negative real numbers. Then one of the following cases holds:\\

    Case (a) Every neighborhood of $0$ contains a contractible fixed point $z\neq 0$.\\

    Case (b) The homeomorphism $f$ does not satisfy the local intersection property, and every neighborhood of $0$ contains three pairwise disjoint,  $f$-free Jordan curves $\gamma_0$, $\gamma_1$ and $\gamma_2$ which are non-contractible in $\R^2\setminus\{0\}$ with $\gamma_{i+1}$ included in the interior of $\gamma_i$ for $i\in \{0,1\}$ and such that the closed annulus $A_i$ delimited by  $\gamma_i$ and $\gamma_{i+1}$ satisfies:
      \begin{itemize}
      \item[(i)] the maximal $f$-invariant set $\Theta(A_i)$ of $A_i$ is non-empty, and
      \item[(ii)] the set $\rho_{\Theta(A_0)}(I)$ is contained in one of the sets $(0,+\infty)$ or $(-\infty,0)$ and the set $\rho_{\Theta(A_1)}(I)$
      is contained in the other one.
    \end{itemize}
\end{propo}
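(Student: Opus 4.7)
The plan is to assume that Case (a) fails and derive Case (b). As a first observation, the theorem of F.~Le Roux recalled in the introduction states that the conjunction of the local intersection property with the two-signed hypothesis on $\lrho(I)$ already yields a contractible fixed point distinct from $0$ in every neighborhood of $0$, i.e.\ Case (a); therefore, under our standing assumption that Case (a) fails, $f$ cannot satisfy the local intersection property. This gives the first half of Case (b), and what remains is, given an arbitrary neighborhood $V$ of $0$, to construct in $V$ three essential, pairwise disjoint, $f$-free Jordan curves $\gamma_0,\gamma_1,\gamma_2$ with the required nesting and rotation-sign properties.

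The natural tool is a transverse foliation. Since Case (a) fails, I would fix a neighborhood $U$ of $0$ in which $I$ has no contractible fixed point other than $0$, and apply Theorem~\ref{theoextention} to produce a homeomorphism $\bar f$ coinciding with $f$ on some $U'\subset U$ and having no contractible fixed point in $\R^2\setminus\{0\}$, with associated isotopy $\bar I$. Theorem~\ref{theoexisfoliation} then yields an oriented foliation $\bar{\mathcal F}$ on $\A\cong\R^2\setminus\{0\}$ positively transverse to $\bar I$. Compactifying to $\hat\A$ with $0$ identified to an end $N$ gives a foliation $\mathcal F$ with singularities $\{N,S\}$, locally transverse to $\bar I$ at each end by Proposition~\ref{theolocallytrans}, so that the Poincar\'e--Bendixson analysis of Section~\ref{sectdynamicfoliation} and Theorem~\ref{theopoinbendi} apply.

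The core of the argument is the dichotomy at $N$: either $N$ is accumulated by closed leaves of $\mathcal F$, or it is not. I would first rule out the non-accumulated case. In that case, every leaf in a small neighborhood of $N$ has $N$ as an $\alpha$- or $\omega$-limit, and the local picture of the leaves near $N$ is spoke-like. Using the orientation of $\mathcal F$ together with the local transversality to $\bar I$, one can construct a small essential loop around $N$ whose signed intersections with the leaves all have the same sign; reading off the rotation number of a long orbit of $\bar f$ that stays close to $0$ by counting crossings of this loop then forces that rotation to be of a single sign. Since $\bar f$ agrees with $f$ on $U'$, this contradicts the assumption that $\lrho(I)$ contains both positive and negative reals. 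Hence $N$ is accumulated by closed leaves of $\mathcal F$; by properties (2) and (3) of Section~\ref{sectdynamicfoliation}, these are essential $f$-free Jordan curves lying arbitrarily close to $0$ (and inside $U'$, hence inside $V$).

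Finally I would select the three curves inside $V$. The closed leaves of $\mathcal F$ near $N$ are totally ordered by nesting, and between two consecutive closed leaves the foliation is either a Reeb component or a spiral; in such a component, orbits trapped between the two bounding leaves have rotation of a single sign, determined by the foliation structure. The two-signed hypothesis on $\lrho(I)$ forces the existence, arbitrarily close to $0$, of Reeb or spiral components giving positive rotation and others giving negative rotation. Choosing $\gamma_0$ on the outer boundary of a positive-sign component in $V$, a deeper closed leaf $\gamma_2$ on the outer boundary of a negative-sign component, and $\gamma_1$ any closed leaf between $\gamma_0$ and $\gamma_2$, would give the required configuration; non-emptiness of $\Theta(A_0)$ and $\Theta(A_1)$ follows because positive and negative values of $\lrho(I)$ are realized by orbits staying in these annuli. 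The main expected obstacles are twofold: rigorously turning the spoke-like picture of the non-accumulated case into a definite sign of rotation, and, in the accumulated case, correctly matching the rotation signs of Reeb/spiral components to the signs appearing in $\lrho(I)$ while ensuring that $\Theta(A_0),\Theta(A_1)$ are non-empty with the expected signs.
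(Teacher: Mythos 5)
Your overall strategy (transverse foliation after the Le Roux extension, dichotomy on whether $0$ is accumulated by closed leaves, Reeb/spiral components between consecutive closed leaves, sign of rotation dictated by the direction of the foliation in a component) is the same as the paper's, but the decisive step is missing. The gap is in your last paragraph: you assert that the two-signed hypothesis on $\lrho(I)$ ``forces the existence, arbitrarily close to $0$, of Reeb or spiral components giving positive rotation and others giving negative rotation'' and that non-emptiness of $\Theta(A_0),\Theta(A_1)$ ``follows because positive and negative values of $\lrho(I)$ are realized by orbits staying in these annuli.'' This is not true as stated: the orbit segments defining $\lrho(I)$ need not stay in any single annulus of $\mathcal{A}$, since the closed leaves are only one-way barriers (each is $f$-free and pushed to one side), so an orbit realizing a positive rotation number may cross many components and belong to the maximal invariant set of none of them; conversely a component whose foliation goes from $\partial^-A$ to $\partial^+A$ may well have $\Theta(A)=\emptyset$. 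Bridging exactly this gap is the heart of the paper's proof (Lemma \ref{lemmapropperturbation1}): one needs (i) local finiteness of the family of annuli with non-empty maximal invariant set, and (ii) the accumulation at $0$ of both positive and negative such annuli, proved by contradiction via perturbations of $\mathcal{F}$ -- breaking the closed leaves to produce a spiral into $0$ and invoking Lemma \ref{lemmclosedleafaccu0}, and, when ``wrong-direction'' components exist but have empty invariant set, the construction of an intermediary foliation $\mathcal{F}_\infty$ (Le Roux's lemma on annuli with empty maximal invariant set, push-forwards by $f^{n}$, pasting and a limit) still transverse to $I$. None of this is present or replaced by another argument in your proposal.

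Two further points. First, even granting both signs of ``good'' annuli near $0$, your selection of the curves is wrong: taking $\gamma_2$ to be the \emph{outer} boundary of a negative component puts that component outside $A_1$, and taking $\gamma_1$ to be ``any closed leaf between $\gamma_0$ and $\gamma_2$'' allows $A_0$ or $A_1$ to contain components of both signs, violating condition (ii); the paper chooses a positive annulus $A^+$ and a negative annulus $A^-$ whose in-between region has \emph{empty} maximal invariant set (this uses the local finiteness above) and sets $\gamma_0=\partial^+A^+$, $\gamma_1=\partial^+A^-$, $\gamma_2=\partial^-A^-$. Second, your treatment of the non-accumulated case (an essential loop crossed with a single sign) is vaguer than, and not obviously equivalent to, the paper's mechanism, where a single leaf with $\alpha$- or $\omega$-limit $\{0\}$ acts as a one-way wall in the universal cover giving $\rho_n(z)\geq -\frac{1}{n}$ (Lemma \ref{lemmclosedleafaccu0}); you flag this yourself, so it is a lesser issue. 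Your shortcut via Le Roux's theorem to get the failure of the local intersection property is legitimate but unnecessary: once the $f$-free essential curves of case (b) are produced in every neighborhood of $0$, that failure is immediate from the definition.
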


We recall that this proposition is an adaptation of the following result for homeomorphisms of the closed annulus $\T{1}\times [0,1]$. This result was proved by P. Le Calvez for diffeomorphisms in \cite{lec91}, but by Theorem \ref{theoexisfoliation} it is also valid for homeomorphisms.

\begin{theor}[P. Le Calvez \cite{lec91}, \cite{lec05}]
Let $\fonc{f}{\T{1}\times [0,1]}{\T{1}\times [0,1]}$ be a homeomorphism isotopic to the identity with a lift $\fonc{\widetilde{f}}{\R\times [0,1]}{\R\times [0,1]}$ that has no fixed points and whose rotation set contains both negative and positive real numbers. Then there exists a finite non-empty family $\{\gamma_i\}$ of essential, pairwise disjoint and  $f$-free Jordan curves such that the maximal invariant sets of the closed annulus delimited by two consecutive curves has rotation set contained either in $(0,+\infty)$ or in $(-\infty,0)$.
\end{theor}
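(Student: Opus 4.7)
The natural strategy is to produce the curves $\gamma_i$ as a finite family of closed leaves of a foliation $\mathcal F$ transverse to an isotopy $I$ associated to $\widetilde f$, reading the sign of the rotation on each complementary sub-annulus off the orientation of $\mathcal F$.

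First, I would reduce to Theorem~\ref{theoexisfoliation}. Let $I$ be the isotopy from the identity to $f$ whose lift starting at $\mathrm{Id}$ ends at $\widetilde f$; the fact that $\widetilde f$ has no fixed points on $\R\times[0,1]$ means precisely that $I$ has no contractible fixed points in $\T{1}\times[0,1]$. Although Theorem~\ref{theoexisfoliation} is stated in the boundaryless setting, the compact annulus case reduces to it by gluing a collar at each boundary circle on which $f$ is extended as the rigid rotation by its Poincar\'e rotation number, giving a fixed-point-free extension to the open annulus. Applying Theorem~\ref{theoexisfoliation} and restricting, I obtain a non-singular oriented foliation $\mathcal F$ on $\T{1}\times[0,1]$ transverse to $I$. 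By the Brouwer-line characterisation following Theorem~\ref{theoexisfoliation}, every closed leaf $\gamma$ of $\mathcal F$ satisfies $f(\gamma)\cap\gamma=\emptyset$, and must be essential (an inessential one would bound a disk whose lift to $\R\times[0,1]$ would be a disk bounded by a Brouwer line, which is impossible).

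Next I would classify the complement. Let $\mathcal C$ be the set of closed leaves; it is closed, and its complement decomposes into open sub-annuli $A_\alpha$ bounded by elements of $\mathcal C\cup\partial(\T{1}\times[0,1])$. Poincar\'e--Bendixson (Theorem~\ref{theopoinbendi}), together with the absence of singularities of $\mathcal F$, forces each $A_\alpha$ to be of Reeb type: the non-closed leaves in $A_\alpha$ spiral coherently between the two boundary curves of $A_\alpha$, defining a rotational sign $\epsilon(A_\alpha)\in\{+1,-1\}$ (the sign of the horizontal translation along a lifted leaf of $\mathcal F|_{A_\alpha}$ in $\R\times[0,1]$).

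The core analytic step is then the quantitative claim: for every $z$ in the maximal $f$-invariant subset $\Theta(\overline{A_\alpha})$ and every accumulation value $\rho$ of $(p_1(\widetilde f^n(\widetilde z))-p_1(\widetilde z))/n$, one has $\epsilon(A_\alpha)\cdot\rho>0$. The weak inequality $\epsilon(A_\alpha)\cdot\rho\geq 0$ follows from the fact that each trajectory $t\mapsto f_t(f^k(z))$ is homotopic (rel endpoints) to a positively transverse arc to $\mathcal F$, so concatenating and lifting the $n$ consecutive trajectories yields a horizontal drift of sign $\epsilon(A_\alpha)$. The strict inequality $\rho\neq 0$ is the delicate point: one excludes zero rotation via a Brouwer--Franks type argument, since a zero-rotation invariant point in a Reeb component of a fixed-point-free annulus homeomorphism would produce a contractible fixed point of $I$, contradicting the hypothesis.

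Finally, to extract the finite family, use that the rotation set meets both $(0,+\infty)$ and $(-\infty,0)$, so both values of $\epsilon(A_\alpha)$ occur; order the components linearly across the annulus and select one closed leaf of $\mathcal C$ at each transition between maximal same-sign runs of components. Finiteness holds because each sign transition contributes a definite gap to the rotation set (by the non-vanishing from step three) while the rotation set is a bounded compact interval. The main obstacle is the core analytic step: rigorously linking the orientation of $\mathcal F$ inside a Reeb component to the sign and non-vanishing of the rotation on its maximal invariant set, which requires careful bookkeeping of positively transverse crossings in the universal cover together with the no-fixed-point hypothesis to rule out the degenerate case $\rho=0$.
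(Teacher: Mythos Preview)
The paper does not actually prove this theorem; it is quoted as a known result of Le~Calvez, with the remark that Theorem~\ref{theoexisfoliation} upgrades the original diffeomorphism statement to homeomorphisms. What the paper \emph{does} prove is the local analogue, Proposition~\ref{propcases1}, via Lemmas~\ref{lemmapropperturbation} and~\ref{lemmapropperturbation1}, and your strategy is essentially the compact-annulus version of that argument: produce a transverse foliation, take the closed leaves as the curves $\gamma_i$, and read the sign on each complementary Reeb annulus off the direction of the foliation. So the architecture is right.

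Two steps in your outline need repair. First, your finiteness argument (``each sign transition contributes a definite gap to the rotation set, while the rotation set is a bounded compact interval'') does not work: the rotation sets $\rho_{\Theta(A_\alpha)}(I)$ of different sub-annuli can overlap arbitrarily and there is no a~priori lower bound on $|\rho|$ over $\Theta(A_\alpha)$. The correct mechanism is the geometric one used in the proof of Lemma~\ref{lemmapropperturbation1}(i): since the closed leaves form a compact family of $f$-free curves, $\epsilon:=\min_\gamma \dist(\gamma,f(\gamma))>0$, and any annulus $A$ with $f(A)\cap A\neq\emptyset$ (in particular any $A$ with $\Theta(A)\neq\emptyset$) must satisfy $\dist(\partial^+A,\partial^-A)\geq\epsilon$. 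In the compact annulus $\T{1}\times[0,1]$ this immediately gives that only finitely many Reeb components carry a non-empty maximal invariant set, and the $\gamma_i$ are chosen among their boundaries.

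Second, your exclusion of $\rho=0$ via a ``Brouwer--Franks type argument'' is both vaguer and heavier than what is needed. Lemma~\ref{lemmapropperturbation} does it directly: once the foliation on a Reeb annulus is straightened to the radial foliation on a sub-annulus $A'\supset\Theta(A)$, transversality gives $\rho_1(z)>0$ for every $z\in\Theta(A)$; compactness of $\Theta(A)$ then yields a uniform $\delta>0$ with $\rho_1\geq\delta$ on $\Theta(A)$, hence $\rho_n(z)\geq\delta$ for all $n$ and $\rho_{\Theta(A)}(I)\subset(0,+\infty)$. No appeal to fixed-point theory is needed beyond the initial hypothesis that allowed you to build $\mathcal F$.
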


Now we show that Proposition \ref{propcases1} and Theorem C imply Theorem A$^*$.
\begin{proof}[End of the proof of Theorem A$^*$ (assuming Theorem C)] Let $I$ be an isotopy in $\homeourd$ from the identity to a homeomorphism $f$. Suppose that the local rotation set $\lrho(I)$ contains both positive and negative real numbers. One considers the different cases of Proposition \ref{propcases1}. In the case (a), we are done. On the other hand, if case (b) of Proposition \ref{propcases1} holds, we can apply Theorem C, which says that $0$ belongs to $\rho_{\Theta(A)}(I)$ where $\Theta(A)$ is the maximal invariant set of $A=A_0\cup A_1$. This completes the proof of Theorem A$^*$.
\end{proof}

\subsection{Proof of Proposition \ref{propcases1}}\label{proofofprop33}
Let $I$, $f$ satisfy the hypotheses of the proposition. We prove Proposition \ref{propcases1} assuming that case (a) does not hold, i.e. there is a neighborhood of $0$ that does not contain any contractible fixed point. By Theorem \ref{theoextention}, changing $f$ in the complement of a Jordan domain containing $0$, we may suppose that $f$ does not have any contractible fixed point in $\R^2\setminus\{0\}$. Thus, we can apply Theorem \ref{theoexisfoliation} and we obtain an oriented foliation $\mathcal{F}$ defined in $\R^2\setminus\{0\}$ which by Proposition \ref{theolocallytrans} is locally transverse to $I$ at $0$. The foliation $\mathcal{F}$ can be extended to an oriented topological foliation with singularities on the end compactification of $\R^2\setminus\{0\}$ where the singularities are the ends of $\R^2\setminus\{0\}$.\\

\begin{lemm}
 In this case, the closed leaves of $\mathcal{F}$ accumulate at $0$.
\end{lemm}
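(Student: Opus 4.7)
The plan is to argue by contradiction. Suppose there exists a Jordan-disk neighborhood $U$ of $0$ such that $U\setminus\{0\}$ contains no closed leaf of $\mathcal{F}$; I will show this forces $\lrho(I)$ to be contained in $[0,+\infty]$ or in $[-\infty,0]$, contradicting the hypothesis that it meets both $(0,+\infty)$ and $(-\infty,0)$. I extend $\mathcal{F}$ to the end compactification of $\R^{2}\setminus\{0\}$ (a topological sphere with singularities at $0$ and at the other end) and apply the non-accumulated case analysis of Section~\ref{sectdynamicfoliation} at the end $0$. This produces a leaf $l$ of $\mathcal{F}$ with $\omega(l)=\{0\}$ or $\alpha(l)=\{0\}$; by replacing $I$ by $I^{-1}$ if needed (which flips the sign of $\lrho(I)$ without affecting its two-sidedness), I may assume $\omega(l)=\{0\}$.

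Then I pass to the universal cover $\pi\colon\R\times(0,+\infty)\to\R^{2}\setminus\{0\}$, identifying the end $0$ with the lower boundary $y=0$ and the deck action with $T\colon(x,y)\mapsto(x+1,y)$. Fix a lift $\widetilde{l}_{0}$ of $l$ and set $\widetilde{l}_{k}:=T^{k}\widetilde{l}_{0}$. By the Brouwer line characterization of transversality (the remark following Theorem~\ref{theoexisfoliation}), each $\widetilde{l}_{k}$ is an oriented Brouwer line for $\widetilde{f}$, so $R(\widetilde{l}_{k})$ is forward invariant, and since $\omega(l)=\{0\}$, the positive end of each $\widetilde{l}_{k}$ accumulates on $\R\times\{0\}$. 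Disjointness and $T$-equivariance of the family $\{\widetilde{l}_{k}\}_{k\in\Z}$ imply, by induction, that either $\widetilde{l}_{k+1}\subset R(\widetilde{l}_{k})$ for all $k$ or $\widetilde{l}_{k+1}\subset L(\widetilde{l}_{k})$ for all $k$; I treat the first case, the second being analogous with the opposite sign in the final conclusion.

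I then choose a neighborhood $V\subset U$ of $0$ small enough that $\pi^{-1}(V)$ lies in a strip $\R\times(0,y_{*})$ in which each $\widetilde{l}_{k}$ has a tail reaching the lower boundary. For $z\in V\setminus\{0\}$ whose first $n$ iterates stay in $V$, lift $z$ to some $\widetilde{z}\in\R\times(0,y_{*})$; there is a unique $k_{0}\in\Z$ with $\widetilde{z}$ lying in the slab bounded by $\widetilde{l}_{k_{0}}$ and $\widetilde{l}_{k_{0}+1}$. By forward invariance of $R(\widetilde{l}_{k_{0}})$, every iterate $\widetilde{f}^{i}(\widetilde{z})$ stays in $R(\widetilde{l}_{k_{0}})\cap\pi^{-1}(V)$; a geometric analysis of this intersection, using that the positive tails of the $\widetilde{l}_{k}$'s on $\R\times\{0\}$ are distributed $T$-equivariantly, yields a uniform bound $p_{1}(\widetilde{f}^{n}(\widetilde{z}))\geq p_{1}(\widetilde{z})-C$ independent of $n$ and of $z$. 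Dividing by $n$ gives $\liminf_{n}\rho_{n}(z)\geq 0$, whence $\rho_{V}(I)\subset[0,+\infty]$ and $\lrho(I)\subset[0,+\infty]$, the contradiction sought.

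The main obstacle I foresee is precisely this last geometric step, controlling $p_{1}$ on $R(\widetilde{l}_{k_{0}})\cap\pi^{-1}(V)$. The easy situation is when $\widetilde{l}_{0}$ lands at a single point on the lower boundary, so that the $\widetilde{l}_{k}$'s become near-vertical slabs of width $1$ close to $y=0$; if instead $\widetilde{l}_{0}$ spirals into the boundary, the analysis becomes more delicate and one must exploit both the $T$-equivariance of $\{\widetilde{l}_{k}\}$ and the fact that $U$ contains no closed leaves (which rules out Reeb-like configurations where $R(\widetilde{l}_{k_{0}})$ could ``wrap around'' to include arbitrarily negative $x$-values) in order to still produce the uniform bound.
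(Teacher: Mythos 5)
Your first step is exactly the paper's: argue by contradiction, invoke the non-accumulated case of Subsection \ref{sectdynamicfoliation} to produce a leaf $l$ with $\alpha(l)=\{0\}$ or $\omega(l)=\{0\}$, and then show that such a leaf forces $\lrho(I)$ into $[0,+\infty]$ or $[-\infty,0]$, contradicting two-sidedness. The divergence, and the gap, is in how this last implication is proved. The paper isolates it as Lemma \ref{lemmclosedleafaccu0} and proves it by a different mechanism: since every arc in the plane is tame, the half-leaf $l^-$ together with $0$ can be straightened by an ambient homeomorphism onto a segment of the positive $x$-axis; then the local transversality of $\mathcal{F}$ to $I$ at $0$ (Proposition \ref{theolocallytrans}) supplies, for every point of a small neighborhood $V$, a positively transverse arc contained in $U$ and homotopic to its trajectory, and such arcs can cross the straightened half-leaf in only one direction. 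In the universal cover this yields $p_1(\widetilde{f}^n(\widetilde{z}))>0$ for the lift normalized by $0<p_1(\widetilde{z})\leq 1$, hence $\rho_n(z)\geq -\frac{1}{n}$ and $\rho_V(I)\subset[0,+\infty]$. Note that this argument is completely insensitive to whether $l$ spirals as it approaches $0$.

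Your replacement for that step is not complete, and the missing part is precisely the heart of the matter. The decisive estimate, a uniform bound $p_1(\widetilde{f}^n(\widetilde{z}))\geq p_1(\widetilde{z})-C$ coming from boundedness to the left of $R(\widetilde{l}_{k_0})\cap\pi^{-1}(V)$, is left as ``a geometric analysis''. Two concrete problems: (i) the claim that every $\widetilde{z}\in\pi^{-1}(V)$ lies in a well-defined slab between consecutive lifts $\widetilde{l}_{k_0}$ and $\widetilde{l}_{k_0+1}$ is unjustified; $\bigcup_k\widetilde{l}_k=\pi^{-1}(l)$ need not be closed (its closure contains lifts of the $\alpha$-limit of $l$), the backward half of $l$ may re-enter $V$, and when the forward end of $\widetilde{l}_0$ does not land at a single boundary point the complement of the lifts in the strip does not decompose into width-one slabs; (ii) your proposed remedy, that the absence of closed leaves in $U$ excludes configurations where $R(\widetilde{l}_{k_0})\cap\pi^{-1}(V)$ is unbounded to the left, does not address the genuine obstruction, namely spiralling of $l$ itself into $0$: a foliation all of whose leaves spiral into $0$ has no closed leaves near $0$, is compatible with all your standing hypotheses, and in it the tail of $\widetilde{l}_{k_0}$ sweeps off to infinity in $x$ while hugging the boundary, so one of the two sides of $\widetilde{l}_{k_0}$ intersected with the strip is unbounded in the direction you must forbid; forward invariance of $R(\widetilde{l}_{k_0})$ alone then gives no lower bound on $p_1$. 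As written the proof therefore has a genuine gap at its key step; to close it you would either have to carry out the delicate spiral analysis you allude to, or switch to the paper's route, where straightening the arc $l^-\cup\{0\}$ and using local transversality at the singularity produce the one-way wall and the uniform bound directly.
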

 \begin{proof}
Otherwise, from at the end of § \ref{sectdynamicfoliation} there exists a leaf $l$ of $\mathcal{F}$ whose $\omega$-limit or $\alpha$-limit set is reduced to $\{0\}$. The following lemma implies that either $\lrho(I)\subset [0,+\infty]$ or $\lrho(I)\subset [-\infty,0]$. This contradicts our hypotheses.
\end{proof}

\begin{lemm}\label{lemmclosedleafaccu0}
  Let $I$ be an isotopy in $\homeourd$ and let $\mathcal{F}$ be a foliation positively transverse to $I$. Suppose that $\mathcal{F}$ admits a leaf $l$ such that either $\omega(l)=\{0\}$ or $\alpha(l)=\{0\}$. Then either $\lrho(I)\subset [0,+\infty]$ or $\lrho(I)\subset [-\infty,0]$.
\end{lemm}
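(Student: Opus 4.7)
The plan is to lift to the universal cover of $\R^2 \setminus \{0\}$ and use the fact that, since $\mathcal{F}$ is positively transverse to $I$, each lift of $l$ is an oriented Brouwer line for the lift $\widetilde{f}$ of $f$ associated to $I$. The forward invariance of the ``right'' half-planes will yield a one-sided bound on the horizontal displacement of forward orbits that grows more slowly than $n$, giving a one-sided bound on the rotation.

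It suffices to treat the case $\omega(l)=\{0\}$, since $\alpha(l)=\{0\}$ reduces to it by replacing $I$ by the reversed isotopy $I^{-1}$ and using $\lrho(I^{-1})=-\lrho(I)$. Let $\widetilde{M}=\R\times(0,+\infty)$ be the universal cover of $\R^2\setminus\{0\}$, with deck generator $T(x,y)=(x+1,y)$, and let $\widetilde{l}$ be a lift of $l$. By the remark following Theorem~\ref{theoexisfoliation}, both $\widetilde{l}$ and each translate $T^k(\widetilde{l})$ are oriented Brouwer lines for $\widetilde{f}$, so each $R(T^k(\widetilde{l}))$ is forward invariant. Since $\omega(l)=\{0\}$, the forward end of $\widetilde{l}$ accumulates on $\R\times\{0\}$. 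The translate $T(\widetilde{l})$ is disjoint from $\widetilde{l}$, so one is in one of two cases: either $T(\widetilde{l})\subset R(\widetilde{l})$ (Case A), or $T(\widetilde{l})\subset L(\widetilde{l})$ (Case B). I will treat Case A, which gives $\lrho(I)\subset[0,+\infty]$; Case B is symmetric and yields $\lrho(I)\subset[-\infty,0]$.

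In Case A, $T$-equivariance gives $T^{k+1}(\widetilde{l})\subset R(T^k(\widetilde{l}))$, so $\{R(T^k(\widetilde{l}))\}_{k\in\Z}$ is a strictly decreasing nested family of half-planes whose intersection meets no horizontal line crossed by $\widetilde{l}$. For such a $\widetilde{z}$, the \textit{strip index}
$$\phi(\widetilde{z}) := \max\{k\in\Z : \widetilde{z}\in R(T^k(\widetilde{l}))\}$$
is a finite integer; forward invariance of each $R(T^k(\widetilde{l}))$ yields $\phi\circ\widetilde{f}\geq\phi$, and by construction $\phi(T\widetilde{z})=\phi(\widetilde{z})+1$. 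Therefore $r:=p_1-\phi$ is $T$-invariant and descends to a real-valued function on a deleted neighborhood of $0$ in $\A\setminus l$. For a suitable choice of $W\subset V$ (small neighborhoods of $0$), I claim $r$ is bounded by some constant $C=C(V,W)$ on the lift of $\overline{V\setminus W}$. Granting this, for any $z,f^n(z)\in V\setminus W$ with $z,\ldots,f^n(z)\in V$ and any lift $\widetilde{z}$,
$$p_1(\widetilde{f}^n(\widetilde{z}))-p_1(\widetilde{z}) = \bigl(\phi(\widetilde{f}^n(\widetilde{z}))-\phi(\widetilde{z})\bigr)+\bigl(r(\widetilde{f}^n(\widetilde{z}))-r(\widetilde{z})\bigr)\geq -2C,$$
so $\rho_n(z)\geq -2C/n\to 0$. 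Hence $\rho_{V,W}(I)\subset[0,+\infty]$ for these choices, and therefore $\lrho(I)\subset[0,+\infty]$.

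The main obstacle is establishing the boundedness of $r$ on the lift of the annular shell $\overline{V\setminus W}$. Since $T$ acts by horizontal translation by $1$, the strips where $\phi$ is constant have horizontal width exactly $1$ at each height, so the task reduces to choosing $V$ and $W$ so that the piece of $\widetilde{l}$ meeting the shell has bounded horizontal extent modulo $T$. This is the delicate geometric point, especially when $l$ spirals tightly into $0$ and $\widetilde{l}$ has complicated behavior near $\R\times\{0\}$; one must argue that by shrinking $V$ (and $W$) appropriately, one can always isolate such a well-controlled piece of $\widetilde{l}$, and this is where care is required.
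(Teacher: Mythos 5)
There is a genuine gap, in fact two. The structural one: from $T(\widetilde{l})\subset R(\widetilde{l})$ you cannot deduce that the family $\{R(T^k(\widetilde{l}))\}_{k\in\Z}$ is nested. That deduction needs the extra fact $\widetilde{l}\subset L(T(\widetilde{l}))$, and this can fail under the hypotheses of the lemma: nothing is assumed about the other end of $l$, and by Theorem \ref{theopoinbendi} (item (iv)) one may have $\alpha(l)=\omega(l)=\{0\}$, i.e.\ $l$ is a ``petal'' both of whose ends converge to $0$ with finite winding. Then a lift $\widetilde{l}$ is a properly embedded line whose two ends go to the lower end of $\R\times(0,+\infty)$ over a bounded range of $p_1$, and every translate $T^k(\widetilde{l})$, $k\neq 0$, lies in the \emph{same} complementary component of $\widetilde{l}$ (the one of unbounded horizontal extent). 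In that configuration your Case A does occur but the half-planes $R(T^k(\widetilde{l}))$ are not nested, and the strip index $\phi(\widetilde{z})=\max\{k:\widetilde{z}\in R(T^k(\widetilde{l}))\}$ is either $+\infty$ or undefined off the petals, so $r=p_1-\phi$ does not exist. This petal case is not marginal: it is one of the configurations produced by the Poincar\'e--Bendixson analysis of Section 2.4 exactly where the lemma gets applied. The second gap is the one you flag yourself: even in the nested (spiralling) case, the boundedness of $r$ on the lift of $\overline{V\setminus W}$ is precisely where the content of the lemma lies, and your supporting remark that ``the strips where $\phi$ is constant have horizontal width exactly $1$ at each height'' is false in general, since $\widetilde{l}$ need not be a graph over the vertical direction and the region between consecutive translates can meet a horizontal line in a set of large diameter (indeed $\widetilde{l}$ may cross a given horizontal line in infinitely many points when $l$ accumulates on other leaves).

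The paper's proof sidesteps both difficulties with an ingredient your proposal never uses: it works with a single half-leaf converging to $0$, straightens it by Schoenflies to a segment of the positive $x$-axis, and then invokes Le Calvez's \emph{local} transversality of $\mathcal{F}$ to $I$ at $0$ (Proposition \ref{theolocallytrans}) to choose $V\subset U$ so that, for every $z\in V$, the positively transverse arc homotopic to the trajectory of $z$ stays inside the disc $U$. Inside $U$ such arcs can cross the (lifted) radial half-leaf only in one direction, which immediately gives $p_1(\widetilde{f}^{\,n}(\widetilde{z}))>0$ whenever $0<p_1(\widetilde{z})\leq 1$ and the whole orbit segment stays in $V$, hence $\rho_n(z)\geq -1/n$ and $\rho_V(I)\subset[0,+\infty]$. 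If you want to rescue your route, you would at least need this confinement of the transverse arcs near $0$ (mere transversality on $\R^2\setminus\{0\}$ and the Brouwer-line property do not control the horizontal geometry of the strips near the end), together with a separate argument for leaves with both ends at $0$.
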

\begin{proof}
Without loss of generality, we may suppose that $\alpha(l)=\{0\}$. In addition, conjugating $f$ by $\Phi$
(given by Schoenflies' Theorem), we may suppose that the negative  half-leaf $l^-$ containing one point of $l$ is contained in the positive $x$-axis. Let $U$ be a Euclidean circle centered at $0$ whose boundary meets $l^-$. By Proposition \ref{theolocallytrans}, $\mathcal{F}$ is locally transverse to $I$ at $0$. Let $V$ be a neighborhood of $0$ contained in $U$ given by the local transversality of $\mathcal{F}$ to $I$ at $0$: the trajectory of
each $z\in V$ along $I$ is homotopic, with fixed endpoints, to an arc $\alpha_z$ which is positively transverse to $\mathcal{F}$ and is included in $U$.
In particular, the arcs $\alpha_z$ can cross $l^-$ only downwards. More precisely, let $\widetilde{f}$ be the lift of $f$ associated to $I$, and let $\widetilde{\mathcal{F}}$ be the lift of $\mathcal{F}$. Let $\widetilde{U}$ and $\widetilde{V}$ be lifts of the sets $U\setminus\{0\}$ and $V\setminus\{0\}$ respectively. Let $\widetilde{l}^-$ be the lift of $l^-$ contained in the line $\{0\}\times (0,+\infty)$. Let $z\in V\setminus \{0\}$ and let $n\in\N$ such that $\{z,\cdots ,f^{n-1}(z)\}\subset V$. Let $\widetilde{z}$ be the lift of $z$ such that $0<p_1(\widetilde{z})\leq 1$ and $\alpha_{\widetilde{z}}$ the lift of the arc $\alpha_z$ from $\widetilde{z}$. Since the arc $\alpha_{\widetilde{f}^{n-1}({\tilde{z}})}*\cdots
*\alpha_{\widetilde{z}}$ is positively transverse to $\widetilde{\mathcal{F}}$ and does not meet the boundary of $\widetilde{U}$, we obtain that $p_1(\widetilde{f}^n(\widetilde{z}))>0$ and thus
$$\rho_n(z)=\frac{1}{n}(p_1(\widetilde{f}^n(\widetilde{z}))-p_1(\widetilde{z}))\geq -\frac{1}{n}.$$
  This implies the inclusion $\rho_{V}(I)\subset [0,+\infty]$, and so $\lrho(I)\subset [0,+\infty]$. This completes the proof.
\end{proof}

In the sequel, we suppose that the closed leaves of $\mathcal{F}$ accumulate at $0$ and that all closed leaves of $\mathcal{F}$ are Euclidean circles centered at $0$ (see the description at the end of § \ref{sectdynamicfoliation}). We remark that $\mathcal{F}$ cannot coincide with the oriented foliation in Euclidean circles centered at $0$ on a neighborhood of $0$. Otherwise, $f$ is conjugated to a homothety.

 We will consider the set $\mathcal{A}$ of all closed annuli $A$ whose boundary components are closed leaves of $\mathcal{F}$, and which do not contain any closed leaf of $\mathcal{F}$ in their interiors. By Poincaré-Bendixson Theorem (Theorem \ref{theopoinbendi}) the foliation on $A$ is a Reeb component or a spiral.\\

 For every annulus $A\in \mathcal{A}$, we write $\partial^+A$ and $\partial^-A$ its two boundary components with $\partial^-A$ included in the interior of $\partial^+A$.

\begin{defini}
  We say that the foliation $\mathcal{F}$ goes from $\partial^+A$ to $\partial^-A$ (resp. from $\partial^-A$ to $\partial^+A$) on the annulus $A\in \mathcal{A}$, if all the leaves of $\mathcal{F}$ in the interior of $A$ have $\partial^+(A)$ (resp. $\partial^-A$) as their $\alpha$-limit set and $\partial^-(A)$ (resp. $\partial^+A$) as their $\omega$-limit set.
\end{defini}

We have the following lemma.

\begin{lemm}[P. Le Calvez, \cite{lec91}]\label{lemmapropperturbation}
Suppose that the foliation $\mathcal{F}$ goes from $\partial^-A$ to $\partial^+A$ on the annulus $A\in \mathcal{A}$. Let $\Theta(A)$ the maximal invariant set of $A$. Then the set $\rho_{\Theta(A)}(I)$ (eventually empty) is contained in $(0,+\infty)$.
\end{lemm}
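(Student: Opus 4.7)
I would work in the universal cover $\R \times (0,+\infty)$ of $\A$. The annulus $A$ lifts to a strip $\tilde A = \R\times[r^-,r^+]$; the closed leaves $\partial^\pm A$, which are Euclidean circles after Schoenfli\`es, lift to the horizontal lines $\R\times\{r^\pm\}$; and the deck group is generated by $T(x,r)=(x+1,r)$. Because $\mathcal F|_A$ is a Reeb component or spiral going from $\partial^-A$ to $\partial^+A$, every interior leaf of $\tilde{\mathcal F}|_{\tilde A}$ is a properly embedded curve that is strictly monotone in the second coordinate, oriented upward. By the remark following Theorem \ref{theoexisfoliation}, each such leaf $\tilde l$ is an oriented Brouwer line for $\tilde f$, so $\tilde f(\overline{R(\tilde l)})\subset R(\tilde l)$. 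The key geometric point is that the rightward normal $(v_2,-v_1)$ to a tangent vector with $v_2>0$ has positive first coordinate; hence at each point of $\tilde l$ the right side lies to the east, so $T(\tilde l)\subset R(\tilde l)$, and by iteration the translates $(T^k\tilde l)_{k\in\Z}$ form a totally ordered family of disjoint Brouwer lines running from west to east, each having a strictly forward-invariant east side.

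\textbf{Nonnegativity of the rotation set.} Because $\partial^\pm A$ are themselves Brouwer lines which push a neighborhood in $A$ outside $A$, the maximal invariant set $\Theta(A)$ is a compact subset of the open interior of $A$, at positive distance from the boundary. For $\tilde z$ in the lift of $\Theta(A)$, define $k(\tilde z)=\sup\{k\in\Z:\tilde z\in R(T^k\tilde l)\}$; this is a well-defined integer because, at each fixed height in $(r^-,r^+)$, the leaves $T^k\tilde l$ cross a horizontal line at an arithmetic progression with spacing $1$. Then $k(T\tilde z)=k(\tilde z)+1$, so $k-p_1$ is $T$-invariant and therefore bounded by some constant $C$ on the $T$-compact set $\Theta(A)^\sim$. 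The forward invariance gives $k(\tilde f^n\tilde z)\geq k(\tilde z)$, hence $p_1(\tilde f^n\tilde z)-p_1(\tilde z)\geq-2C$ for all $n\geq 0$. Dividing by $n$ and letting $n\to\infty$ shows every element of $\rho_{\Theta(A)}(I)$ is $\geq 0$; finiteness is clear from the uniform bound of the continuous displacement $\Delta(\tilde z)=p_1(\tilde f\tilde z)-p_1(\tilde z)$ on the compact set $\Theta(A)$.

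\textbf{Strict positivity (main obstacle).} It remains to exclude $0$ from $\rho_{\Theta(A)}(I)$. Suppose for contradiction that $0\in\rho_{\Theta(A)}(I)$; by the standard ergodic-theoretic realization of rotation numbers via invariant measures, there is an $f$-invariant ergodic probability measure $\mu$ on $\Theta(A)$ with $\int\Delta\,d\mu=0$. Applying the Atkinson--Schmidt conservativity theorem to the skew product $(z,t)\mapsto(f(z),t+\Delta(z))$ on $\Theta(A)\times\R$ (which is conservative because $\Delta$ is an integrable zero-mean cocycle over the ergodic base), I obtain a $\mu$-generic $z$ and times $n_k\to\infty$ along which simultaneously $f^{n_k}(z)\to z$ and $p_1(\tilde f^{n_k}\tilde z)\to p_1(\tilde z)$, so that $\tilde f^{n_k}(\tilde z)\to\tilde z$ in the universal cover. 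Let $\tilde l$ be the unique interior leaf through $\tilde z$. For each $n_k\geq 2$ the strict forward invariance forces $\tilde f^{n_k-1}(\tilde z)\in R(\tilde l)$, whereas continuity of $\tilde f^{-1}$ gives $\tilde f^{n_k-1}(\tilde z)\to\tilde f^{-1}(\tilde z)\in L(\tilde l)$; this contradicts $\overline{R(\tilde l)}\cap L(\tilde l)=\emptyset$. The delicate technical step is precisely the joint recurrence — upgrading Birkhoff's ergodic theorem to the simultaneous return of both $f^{n_k}(z)$ and $\sigma_{n_k}(z)=p_1(\tilde f^{n_k}\tilde z)-p_1(\tilde z)$ near their starting values — and relies on the conservativity of the skew product; once this is in hand the Brouwer-line argument closes the proof.
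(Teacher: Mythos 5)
Your argument is correct in substance, but it takes a genuinely different route from the paper. The paper's proof is a direct transversality estimate: since $\partial^{\pm}A$ are $f$-free, $\Theta(A)$ lies in the interior of $A$; for each $z\in\Theta(A)$ the arc $\alpha_z$ joining $z$ to $f(z)$, positively transverse to $\mathcal{F}$ and homotopic to the trajectory, stays in the interior of $A$; one then chooses a sub-annulus $A'$ containing $\Theta(A)$ and straightens $\mathcal{F}|_{A'}$ to the radial foliation, so that positive transversality forces the angular coordinate to increase strictly along each $\alpha_z$, giving $\rho_n(z)>0$ for every $n$ and, by compactness of $\Theta(A)$, a uniform positive lower bound -- no ergodic theory and no Brouwer-line machinery beyond transversality. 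You instead split the statement into nonnegativity (via the lifted leaf, its deck translates $T^k\tilde l$, the nesting $T(\tilde l)\subset R(\tilde l)$ and the forward invariance of the right sides of Brouwer lines, yielding the quasi-monotone level function $k(\tilde z)$ comparable to $p_1$) and strict positivity (realizing $0$ by an ergodic invariant measure of zero mean and contradicting the Brouwer-line property through Atkinson-type recurrence of the skew product). Both halves work: the joint recurrence $f^{n_k}(z)\to z$, $\sigma_{n_k}(z)\to 0$ is exactly what conservativity of the zero-mean $\R$-extension gives, and the contradiction with $\widetilde{f}(\adhe(R(\tilde l)))\subset R(\tilde l)$, $\widetilde{f}^{-1}(\tilde l)\subset L(\tilde l)$ is sound. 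What your route buys is generality (it never uses a uniform displacement bound, only the existence of one transverse Brouwer leaf through each point); what it costs is machinery, and it delivers positivity only indirectly, whereas the paper's argument gives at once the uniform bound that is implicitly what makes the later Lemmas \ref{lemmaK} and \ref{lemmaxinfini} work (via compactness of $\rho_{\Theta(A)}(I)$).

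Two points you should tighten. First, the ``rightward normal $(v_2,-v_1)$'' computation is only heuristic: $\mathcal{F}$ is a topological foliation, its leaves have no tangent vectors, and in the Reeb case the lifted leaf need not be monotone in the second coordinate in the given coordinates (the paper's normalization only makes the \emph{closed} leaves round). The inclusion $T(\tilde l)\subset R(\tilde l)$, the finiteness of $k(\tilde z)$ and the bound $\abs{k-p_1}\leq C$ on the lift of $\Theta(A)$ should instead be derived from the conjugation of $\mathcal{F}|_{\adhe(A)}$ to the standard spiral or Reeb model on $\T{1}\times[0,1]$ (as stated at the end of \S\ref{sectdynamicfoliation}), lifted $T$-equivariantly, noting that this distorts $p_1$ only by a bounded amount; this is also where the orientation hypothesis ``from $\partial^-A$ to $\partial^+A$'' actually enters and fixes the sign, so it deserves an explicit argument rather than an asserted normal-vector picture. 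Second, the existence of an \emph{ergodic} measure with zero rotation number requires your nonnegativity step (or Lemma \ref{lemaensK} plus ergodic decomposition): a zero-mean invariant measure alone could mix ergodic components of opposite signs, so make that dependence explicit.
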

\begin{proof}
Since each boundary component of the annulus $A$ is $f$-free, we deduce that $\Theta(A)$ is included in the interior of $A$. For $z\in \Theta(A)$ by transversality of $\mathcal{F}$ and $I$ there exists an arc $\alpha_z$ joining $z$ and $f(z)$, homotopic (with fixed endpoints) to the trajectory of $z$ along the isotopy $I$, and which is transverse to the foliation $\mathcal{F}$. Moreover from the facts that $z$ and $f(z)$ are both in the interior of $A$, we deduce that $\alpha_z$ is also included in the interior of $A$. We can find an annulus $A'$ whose boundary components are Jordan curves not contractible in $\R^2\setminus \{0\}$ such that the set $\Theta(A)=f(\Theta(A))$ is included in $A'$. Considering a change of coordinates, we can assume that the boundary components of $A'$ are Euclidian circles centered at $0$, and that the foliation induced by $\mathcal{F}$ on $A'$ coincides with the radial foliation with the leaves toward $0$. By transversality, for every $z\in\Theta(A)$ and every integer $n\geq 1$, we have that $\rho_n(z)$ is positive. Since $\Theta(A)$ is a compact set, we deduce that $\rho_{\Theta(A)}(I)$ is included in $(0,+\infty)$. This completes the proof.
\end{proof}

In the sequel, $\mathcal{A}'$ denotes the subset of $\mathcal{A}$ consisting of the annuli $A\in \mathcal{A}$ whose maximal invariant set is non-empty. From the above proposition, we can consider the following definition.

\begin{defini}
  We say that $A\in\mathcal{A}$ is a positive annulus if $\rho_{\Theta(A)}(I)\subset (0,+\infty)$. We define a negative annulus similarly.
\end{defini}

\begin{lemm}\label{lemmapropperturbation1}
  Under the hypotheses of Proposition \ref{propcases1}, we have
  \begin{itemize}
    \item[(i)]  the set $\mathcal{A}'$ is locally finite in $\R^2\setminus \{0\}$, i.e. every compact set in $\R^2\setminus \{0\}$ can meet only a finite number of elements of $\mathcal{A}'$; and
    \item[(ii)] both positive and negative annuli accumulate at $0$.
  \end{itemize}
\end{lemm}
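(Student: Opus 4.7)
The plan is to prove both parts by contradiction, using the reduction (established just before the statement) that every closed leaf of $\mathcal{F}$ is a Euclidean circle centered at $0$.

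For part~(i), suppose some compact $K \subset \R^2\setminus\{0\}$ meets infinitely many distinct $A_n \in \mathcal{A}'$. Each $A_n = \{r_n^- \leq |z| \leq r_n^+\}$ and the open intervals $(r_n^-, r_n^+)$ are pairwise disjoint (otherwise one of these annuli would contain a closed leaf of $\mathcal{F}$ in its interior). Since they all meet the compact interval $[\min_{z\in K}|z|,\max_{z\in K}|z|] \subset (0,+\infty)$, their widths must tend to $0$, and after extraction $r_n^\pm \to r_* \in (0,+\infty)$. By property~(1) of \S\ref{sectdynamicfoliation}, the circle $\gamma_* := \{|z|=r_*\}$ is itself a closed leaf of $\mathcal{F}$, hence $f$-free by property~(3). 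Choosing $z_n \in \Theta(A_n)$, both $|z_n|$ and $|f(z_n)|$ lie in $[r_n^-, r_n^+]$; after a further extraction, $z_n \to z_* \in \gamma_*$ and $f(z_n) \to f(z_*)$ with $|f(z_*)|=r_*$, contradicting $f(\gamma_*) \cap \gamma_* = \emptyset$.

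For part~(ii), by the symmetry $I \leftrightarrow I^{-1}$ it is enough to show that positive annuli accumulate at $0$. I would argue by contradiction: suppose that some neighborhood $V$ of $0$ contains no positive annulus. By Lemma~\ref{lemmapropperturbation} and its mirror (applied to $I^{-1}$), every $A \in \mathcal{A}'$ contained in $V$ then has $\mathcal{F}$ going from $\partial^+A$ to $\partial^-A$. Using Proposition~\ref{theolocallytrans}, I would shrink $V$ to a smaller neighborhood $V_0$ of $0$ such that for every $z\in V_0$ the trajectory of $z$ along $I$ is homotopic in $V$ to a positively transverse arc $\alpha_z \subset V$. Fix $\alpha > 0$ in $\lrho(I) \subset \rho_{V_0}(I)$ and extract arbitrarily long orbits $z,f(z),\ldots,f^n(z) \in V_0$ with endpoints outside a small $W\subset V_0$ and $\rho_n(z)$ approaching $\alpha$. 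Writing $n\,\rho_n(z) = p_1(\widetilde f^n(\widetilde z)) - p_1(\widetilde z)$ as the sum of the angular increments along the concatenation $\alpha_{f^{n-1}(z)} * \cdots * \alpha_z$, and invoking the mirror of the sign computation in Lemma~\ref{lemmapropperturbation} (each positive transverse crossing of an inward-oriented leaf contributes non-positively to the angular coordinate in the universal cover), one should obtain $\rho_n(z) \leq 0$ up to an $o(1)$ error, contradicting $\rho_n(z) \to \alpha > 0$.

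Part~(i) is a straightforward compactness/$f$-freeness argument, so the main obstacle lies in part~(ii). The subtle point is controlling the potentially positive angular contributions coming from annuli $A \in \mathcal{A}\setminus\mathcal{A}'$ that are contained in $V$, which a priori may carry outward flow even though $\Theta(A)=\emptyset$. The tool for this will be the one-way crossing property $f(R(\gamma)) \subset R(\gamma)$ valid for every $f$-free closed leaf $\gamma$: it forces each orbit to visit any given annulus of $\mathcal{A}$ at most once, so that the total positive contribution of such transient $A$'s to $n\,\rho_n(z)$ remains uniformly bounded and is swallowed by the factor $1/n$ as $n\to\infty$. Making this uniform bound explicit, and verifying that the mirror sign computation really applies to each transverse crossing of an inward-oriented leaf regardless of whether it lies in the interior of an annulus or on a closed boundary leaf, is the technical heart of part~(ii).
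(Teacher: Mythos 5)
Your part (i) is correct, and it takes a slightly different (arguably more direct) route than the paper: the paper proves the stronger fact that every annulus of $\mathcal{A}''$ (those meeting their image under $f$) has radial width bounded below by a uniform $\epsilon>0$, obtained from the compactness of the family of closed leaves between two fixed leaves; your limit argument --- disjoint radial intervals meeting a compact set force widths to shrink, property (1) of \S 2.4 makes the limit circle a closed leaf, and $f$-freeness of that leaf is contradicted by points of $\Theta(A_n)$ --- proves exactly the local finiteness of $\mathcal{A}'$, which is all the lemma asserts. The problem is part (ii).

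The gap is precisely at the step you call the technical heart: the claim that, because a positively transverse arc can cross each closed leaf at most once, ``the total positive contribution of such transient $A$'s to $n\,\rho_n(z)$ remains uniformly bounded.'' One visit per annulus does not bound the \emph{length} of that visit uniformly in $A$: for a fixed outward annulus with $\Theta(A)=\emptyset$, compactness does give a finite transit-time bound $T_A$, but neither $T_A$ nor the per-iterate angular displacement (which is bounded only on compact subsets of $\R^2\setminus\{0\}$, not on a whole neighborhood of $0$) is uniform over the infinitely many annuli accumulating at $0$, and an orbit segment of length $n$ may visit more and more of them as $n\to\infty$; nothing in your argument prevents the time spent in outward annuli --- hence the positive angular contribution --- from being of order $n$ rather than $o(n)$. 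Moreover, the sign computation of Lemma \ref{lemmapropperturbation} is obtained after a change of coordinates making the foliation radial on an annulus containing the \emph{entire} orbit; propagating it crossing-by-crossing along an arc that traverses closed leaves and annuli whose boundary circles may carry different orientations is exactly what is not automatic. This is why the paper argues differently in the delicate case (outward annuli exist near $0$ but all have empty maximal invariant set): it modifies the foliation itself --- replacing the annuli of $\mathcal{A}\setminus\mathcal{A}''$ by foliations in circles, using Le Roux's lemma to re-foliate each empty outward annulus by closed leaves inside the band bounded by $\partial^+A$ and $f^{n}(\partial^-A)$, pasting with $(f^{n})_*\mathcal{F}$, and taking a locally stationary limit $\mathcal{F}_\infty$ --- and then breaks the closed leaves by a small perturbation (Proposition \ref{proptranversmallpertur}) to produce a transverse foliation with a leaf whose $\omega$-limit set is $\{0\}$, so that Lemma \ref{lemmclosedleafaccu0} yields $\lrho(I)\subset[-\infty,0]$, the desired contradiction. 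Some device of this kind (or another genuinely global argument) is needed; as written, the sign estimate in your part (ii) does not go through.
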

\begin{proof}

\textit{Proof of item (i).} Let $\mathcal{A}''$ be the subset of $\mathcal{A}$ constituted of the annuli $A\in \mathcal{A}$ that meet its image under $f$ i.e. $f(A)\cap A\neq \emptyset$. Remark that $\mathcal{A}'\subset \mathcal{A}''$, and so it suffices to prove that the set $\mathcal{A}''$ is locally finite in $\R^2\setminus \{0\}$. This will be a consequence of the following lemma.

\begin{lemm}
Let $\gamma^+$ and $\gamma^-$ be two closed leaves of $\mathcal{F}$ with $\gamma^-$ contained in the interior of $\gamma^+$. Let $A_{\gamma^+,\gamma^-}$ be the closed annulus delimited by $\gamma^+$ and $\gamma^-$. Then there is a finite number of elements of $\mathcal{A}''$ metting $A_{\gamma^+,\gamma^-}$.
\end{lemm}
\begin{proof}
  We know that the space of closed leaves of $\mathcal{F}$ included in $A_{\gamma^+,\gamma^-}$ is compact and that every closed leaf of $\mathcal{F}$ does not meet its image under $f$ (Properties $(1)$ and $(4)$ of §\ref{sectdynamicfoliation}). Hence, we can consider the real number $\epsilon>0 $ defined as:
  $$ \epsilon:=\min_\gamma \dist(\gamma,f(\gamma)), $$
  where $\gamma$ is a closed leaf of $\mathcal{F}$ contained in $A_{\gamma^+,\gamma^-}$ and for $C,D$ two subsets of $\R^2$, $\dist(C,D):=\inf_{c\in C,d\in D}\norma{c-d}$ and $\norma{\cdot}$ is the Euclidean distance in $\R^2$.
Let us prove that all $A\in \mathcal{A}''$ satisfies $\dist(\partial^+A,\partial^-A)\geq \epsilon$. Indeed, if $f(A)\cap A\neq \emptyset$, then either $f(\partial^+A)\cap A\neq \emptyset$ or $f(\partial^-A)\cap A\neq \emptyset$. Suppose that the first case holds (the proof is similar in the second case). Let $z\in \partial^+A$ such that $f(z)\in A$. Hence, there exist real numbers $t^-\leq 1\leq t^+$ such that $z^-=t^- f(z)\in \partial^-A$ and $z^+=t^+ f(z)\in \partial^+A$. Using the fact that $\partial^-A$ and $\partial^+A$ are Euclidean circles centered at $0$, we deduce that
$$ \epsilon\leq \dist(f(\partial^+A),\partial^+A)\leq \norma{f(z)-z^+} \leq \norma{z^--z^+}=\dist(\partial^-A,\partial^+A).$$
Since the annulus $A_{\gamma^+,\gamma^-}$ can be covered by a finite number of annuli $A$ satisfying $\dist(\partial^+A,\partial^-A)\geq \epsilon$ the proof follows.
\end{proof}
This completes the proof of item (i).\\

\textit{Proof of item (ii).} Suppose by contradiction that there exists a neighborhood of $0$ which does not contain any positive annulus (the other case is proved in a similar way). There are two cases.\\

\textit{Case 1:} There exists a neighborhood of $0$ which does not contain any annulus $A$ on which the foliation $\mathcal{F}$ goes from $\partial^-A$ to $\partial^+A$. We can find a small perturbation $\mathcal{F}'$ of $\mathcal{F}$ which ``breaks'' all closed leaves of $\mathcal{F}$ close to $0$. From Proposition \ref{proptranversmallpertur} the foliation $\mathcal{F}'$ is also transverse to $I$ and all leaves of $\mathcal{F}'$ are spirals whose $\omega$-limit set is $\{0\}$. Hence by Lemma \ref{lemmclosedleafaccu0}, the local rotation set $\lrho(I)$ is contained in $[-\infty,0]$, contradicting our hypotheses.\\

\textit{Case 2:} In the other case, every annulus $A$ close enough to $0$ on which $\mathcal{F}$ goes from $\partial^-A$ to $\partial^+A$ have maximal invariant set empty (we are assuming this). We will construct an intermediary foliation $\mathcal{F}_\infty$ transverse to the isotopy $I$ as in the Case 1. The construction has three steps.\\

\textit{1st step:}  We can replace $\mathcal{F}$ on each annulus $A\in \mathcal{A}\setminus \mathcal{A}''$ (annulus which that does not meet its image under $f$), by the foliation in Euclidean circles centered at $0$. We remark that this foliation is also transverse to $I$.\\

\textit{2nd step:}  Recall that by hypothesis every annulus $A$ (included in a small neighborhood of $0$) on which the foliation $\mathcal{F}$ goes from $\partial^-A$ to $\partial^+A$ has maximal invariant set empty. We will use the following lemma.

\begin{lemm}[F. Le Roux, \cite{leroux}]
  Suppose that the maximal invariant set of the annulus $A\in\mathcal{A}$ is empty. Then there exists an annulus $\hat{A}$ containing $A$ whose boundary is the union of $\partial^{+}A$ and $f^n(\partial^-A)$ for some integer $n$, and a foliation $\hat{\mathcal{F}}$ whose leaves $\hat{\gamma}$ in $\hat{A}$ are closed and such that $f(\hat{\gamma})$ is situated on the right of the oriented leaf $\hat{\gamma}$.
\end{lemm}
By the previous lemma, we obtain a foliation in closed leaves on the annulus $\hat{A}$ delimited by $\partial^+A$ and $f^n(\partial^-A)$, this foliation can be pasted with the foliation $\mathcal{F}$ on the exterior of $\partial^+A$, and with the foliation $(f^n)_*\mathcal{F}$ in the interior of $f^n(\partial^-A)$ by to obtain a new foliation $\mathcal{F}_{A,n}$ which is transverse to the isotopy $I$.\\

\textit{3rd step:} The intermediary foliation $\mathcal{F}_\infty$ is constructed as the limit of a sequence of foliations $\sui{\mathcal{F}}$ which is a stationary sequence on every compact set of $\R^2\setminus \{0\}$. By the first step and the fact that $\mathcal{A}'$ is locally finite, the set of annuli $A$ on which $\mathcal{F}$ goes from $\partial^-A$ to $\partial^+A$ accumulate only at $0$ and is a countable set. Let $\sui{A}$ denote  the ordered family of all such annuli, i.e. for every integer $n\geq 1$, the annulus $A_{n-1}$ is contained in the unbounded connected component of $\R^{2}\setminus A_n$.
By hypothesis the maximal invariant set of $A_0$ is empty, so by the second step we obtain a foliation $\mathcal{F}_0=\mathcal{F}_{A_0,n_0}$ which coincides with the foliation in closed leaves on $\hat{A}_0$ (annulus delimited by $\partial^+A_0$ and $f^{n_0}(\partial^-A_0)$ for some integer $n_0$), with $\mathcal{F}$ on the exterior of $\partial^+A_0$, and with the foliation $(f^{n_0})_*\mathcal{F}$ in the interior of $f^{n_0}(\partial^-A)$. In a similar way, we obtain a foliation $\mathcal{F}_1:= (\mathcal{F}_0)_{f^{n_0}(A_1),n_1}$. Iterating this process, we can construct the sequence $\sui{\mathcal{F}}$. Let $\mathcal{F}_\infty$ be the limit of the sequence $\sui{\mathcal{F}}$. Since, by construction each leaf $l_\infty$ of $\mathcal{F}_\infty$ is an iterate of one leaf of $l$ of either $\mathcal{F}$ or the foliation $\hat{\mathcal{F}}$ provides by the previous lemma, i.e. $l_\infty=f^n(l)$, the remark following Theorem \ref{theoexisfoliation} implies that the limit foliation obtained $\mathcal{F}_\infty$ also is transverse to $I$. This completes the proof of item (ii).
\end{proof}

\begin{proof}[End of the proof of Proposition \ref{propcases1}:]
If the case (a) does not hold, then items (i) and (ii) of Lemma \ref{lemmapropperturbation1} hold. Let $V$ be a neighborhood of $0$. Since both negative and positive annuli accumulate $0$ and the collection of them is locally finite, there are a negative annulus $A^-$ and a positive annulus $A^+$ included in $V$ such that the annulus $A$ (eventually an empty set) ``between'' $A^+$ and $A^-$ has maximal invariant set empty. Without loss of generality suppose that $A^-$ is included in the closure of the interior of $\partial^-A^+$. Thus $\gamma_0=\partial^+A^+$, $\gamma_1=\partial^+A^-$ and $\gamma_2=\partial^-A^-$ satisfy the case (b) of the proposition, because $\rho_{A\cup A^-}(I)=\rho_{A^-}(I)$. This completes the proof.
\end{proof}

\section{Case of the Open Annulus}

In this section we will introduce the definition of the rotation set in the open annulus and we will show Theorem B assuming Theorem C.

\subsection{Definitions}

\subsubsection{The open annulus}

We will denote by $\T{1}$ the unit circle of the plane and by $\A:=\T{1} \times \R$ the open annulus. We endow the annulus $\A$ with its usual topology and
orientation. We denote by $N$ (respectively $S$) the upper (respectively lower) end of $\A$ and by $\hat{\A}$ the end compactification of $\A$ which is a
topological space homeomorphic to the two-dimensional sphere. A Jordan curve $\gamma$ in $\A$ is called \textit{essential} if its complement has two
unbounded connected components. We denote by $U_\gamma^N$ the upper one and by $U_\gamma^S$ the lower one. We denote $ \hat{U}_\gamma^N=U_\gamma^N \cup
\{N\}$ and $ \hat{U}_\gamma^S=U_\gamma^S \cup \{S\}$.

\subsubsection{The set $\homeoa$}
We denote by $\homeoa$ the set of all homeomorphisms of the open annulus $\A$ which are isotopic to the identity. We recall that every homeomorphism $f$ in
$\homeoa$ can be extended to a homeomorphism $\hat{f}$ of the end compactification of $\A$ and this homeomorphism fixes both ends of $\A$.

\subsubsection{The rotation set of the annulus}

In this paragraph, we consider a homeomorphism $f$ in $\homeoa$. Let $I$ be an isotopy in $\homeoa$ from the identity to $f$. We will give the definition
of the \textit{rotation set of $I$} due to J. Franks (see \cite{fra96}). 

If $z\in \A$, we consider $\rho_n(z)$ the \textit{average change of angular coordinate along the trajectory of $z$ for the isotopy $I$}. Next for a compact set $K$ in $\A$, we define
  $$ \rho_{K}(I):=\bigcap_{m\geq 1} \adhe\left( \bigcup_{n\geq m} \{ \rho_n(z) : z\in K, \, f^n(z)\in K   \}   \right).   $$
where the closure is taken in $\overline{\R}:=\R\cup \{+\infty\}\cup \{-\infty\}$.

Finally we define the \textsf{rotation set of the isotopy $I$} by
$$ \arho(I):=\adhe\left(\bigcup_{K} \rho_{K}(I) \right)$$
where $K$ is a compact set in $\A$.
\begin{remar}
  The rotation set $\arho(I)$ only depends on the choice of the homotopy class of $I$.
\end{remar}

As above the rotation set verifies the following properties.

\begin{propo}\label{propensrotannulus}
 Let $I$ be an isotopy in $\homeoa$ from the identity to $f$. For every integer $p$ let $J^p=(R_{pt})_{t\in [0,1]}$. Then
  \begin{enumerate}
    \item The rotation set of $I$ is invariant by oriented topological conjugation.
    \item For every $p,q\in\Z$, we have  $\arho(J^p * I^q)=q\arho(I)+p$.
  \end{enumerate}
\end{propo}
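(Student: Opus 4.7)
The plan is to lift everything to the universal cover $\R \times \R$ of $\A$ and reduce both claims to elementary manipulations of the angular displacement $p_1(\tilde{f}^n(\tilde{z})) - p_1(\tilde{z})$. Let $\tilde{I} = (\tilde{f}_t)_{t \in [0,1]}$ be the lift of $I$ starting at the identity, with endpoint $\tilde{f}$, and let $T\colon (x,r) \mapsto (x+1, r)$ generate the group of deck transformations; recall that $\tilde{f}$ commutes with $T$. Since $\arho(I)$ depends only on the homotopy class of $I$, it is enough to establish both formulas at the level of these displacements.

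For item $(2)$, the lift of $J^p$ starting from the identity is the path of translations $\tilde{R}_{pt}(x,r) = (x+pt,r)$, ending at $T^p$, so the lift of the concatenation $J^p * I^q$ ends at $T^p \circ \tilde{f}^q$ (for $q \geq 1$; the case $q \leq 0$ is identical, using the inverse lift). Since $T^p$ commutes with every iterate of $\tilde{f}$, one obtains $(T^p \circ \tilde{f}^q)^n = T^{np} \circ \tilde{f}^{nq}$, whence
$$\rho_n^{J^p*I^q}(z) = p + q\,\rho_{nq}^{I}(z).$$
To promote this pointwise identity to an equality of rotation sets, one must check that restricting the time index to multiples of $q$ does not lose any accumulation point: if $L \in \rho_K(I)$ is realized by $(n_k,z_k)$ with $z_k, f^{n_k}(z_k) \in K$, I would adjust each $n_k$ by at most $q-1$ so that it becomes a multiple of $q$, working inside the enlarged compact set $K' = K \cup f(K) \cup \cdots \cup f^{q-1}(K)$. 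Since $\tilde{f}$ has uniformly bounded angular displacement over a single step on any fixed compact piece of $\R\times\R$, the corresponding average changes by $O(1/n_k) \to 0$; consequently $L$ is still realized along the arithmetic progression, and $\arho(J^p * I^q) = q\arho(I) + p$ follows after taking closed unions over all $K$.

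For item $(1)$, let $h$ be the orientation-preserving self-homeomorphism of $\A$ implementing the conjugation, and set $I' = hIh^{-1}$, $f' = hfh^{-1}$. I would pick a lift $\tilde{h}$ commuting with $T$, which is legitimate once ``oriented topological conjugation'' is read so as to preserve the homotopy class of the essential loop $\T{1}\times\{0\}$. Then $(\widetilde{f'})^n = \tilde{h} \circ \tilde{f}^n \circ \tilde{h}^{-1}$, and the substitution $\tilde{z}' = \tilde{h}(\tilde{z})$ yields
$$\rho_n^{I'}(h(z)) = \rho_n^{I}(z) + \frac{1}{n}\bigl[\phi(\tilde{f}^n(\tilde{z})) - \phi(\tilde{z})\bigr],$$
where $\phi(\tilde{w}) = p_1(\tilde{h}(\tilde{w})) - p_1(\tilde{w})$ is $T$-invariant and hence descends to a continuous function on $\A$. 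For $z, f^n(z) \in K$ compact, the bracket is bounded by $2\sup_K|\phi|$, so the error vanishes with $n$; this gives $\rho_K(I) \subset \rho_{h(K)}(I')$, and the reverse inclusion follows by symmetry. Passing to the closed union over compact subsets produces $\arho(I) = \arho(I')$.

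The single delicate step, and the one I would write up most carefully, is the ``arithmetic progression'' lemma hidden in item $(2)$: the pointwise formula $\rho_n^{J^p * I^q}(z) = p + q\rho_{nq}^I(z)$ is clean, but one must verify that $\arho(I)$ is insensitive to restricting the time parameter to $q\N$. Once this is in place, the remainder of the proof is a direct transcription of the proof of Proposition~\ref{propensrotlocal} for the local rotation set, and no ideas beyond those already exploited for the closed annulus are required.
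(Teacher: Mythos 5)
Your argument is correct, and since the paper states Proposition \ref{propensrotannulus} without proof (just as it does the analogous local statement, Proposition \ref{propensrotlocal}), it is exactly the standard lifting argument the paper implicitly relies on. The two points you isolate are indeed where all the content lies, and you treat both correctly: the $O(1/n_k)$ time-adjustment inside the enlarged compact set $K\cup f(K)\cup\cdots\cup f^{q-1}(K)$, which shows that restricting to times in $q\N$ loses no accumulation point (the one-step displacement being bounded on compact subsets of $\A$, and the affine map $x\mapsto qx+p$ being a homeomorphism of $\overline{\R}$ so that it commutes with the closures in the definition of $\arho$), and the requirement that the conjugating homeomorphism act trivially on $\pi_1(\A)$, i.e. preserve the two ends, so that a lift $\tilde{h}$ commuting with $T$ exists; without this last reading the statement is literally false, since conjugating by an orientation-preserving, end-swapping homeomorphism such as $(x,y)\mapsto(-x,-y)$ replaces $\arho(I)$ by $-\arho(I)$.
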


\subsubsection{Other rotation sets in the open annulus}

In this paragraph, we introduce other rotation sets in $\A$ such as the \textit{measured rotation set} of an $f$-invariant compact subset of $\A$ and also \textit{the local rotation set relative to a neighborhood} at each end of $\A$.\\

Given a compact and $f$-invariant set $K$ in $\A$, we will denote by $\mathcal{M}_{f}(K)$ the set of all $f$-invariant Borel probability measures whose support is contained in $K$. This is a compact and
convex set in the weak topology. If $\mu\in \mathcal{M}_{f}(K)$, we define its \textit{rotation number} as:
$$ \rho(\mu):=\int_K \rho_1 \, d\mu  .$$

Then, we define the \textit{measured rotation set of $K$} as:

$$ \rho_{mes}(K,I):=\{\rho(\mu) : \mu \in \mathcal{M}_{f}(K) \}  .$$
This set is always a compact interval. More precisely, we have the following result.

\begin{lemm}\label{lemaensK}
Let $K$ be a compact and $f$-invariant set in $\A$. Then
\begin{itemize}
  \item[(1)] The set $\rho_{K}(I)$ is a compact set in $\R$. If $K$ is a connected set, then $\rho_{K}(I)$ is a compact interval.
  \item[(2)] The set $\rho_{mes}(K,I)$ coincides with the convex hull of $\rho_{K}(I)$.
\end{itemize}
\end{lemm}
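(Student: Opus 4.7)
The plan is to treat parts (1) and (2) separately. Throughout, since $K$ is $f$-invariant, the condition $f^n(z)\in K$ in the definition of $\rho_K(I)$ is automatic for $z\in K$, so the $n$-th set of displacements is simply the continuous image $\rho_n(K)$, where the map
$$\rho_n(z)=\frac{1}{n}\bigl(p_1(\widetilde{f}^n(\widetilde{z}))-p_1(\widetilde{z})\bigr)$$
descends to a well-defined continuous function on $\A$ (changing the lift $\widetilde{z}$ shifts both terms by the same integer).

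For (1), I would first note that the displacement $\rho_1$ is bounded on the compact set $K$ by some constant $M$, and that the telescoping identity $\rho_n(z)=\frac{1}{n}\sum_{i=0}^{n-1}\rho_1(f^i(z))$ together with $f$-invariance of $K$ gives $|\rho_n(z)|\le M$ uniformly. Hence $\rho_K(I)$ is a decreasing intersection of closed subsets of $[-M,M]$, so a nonempty compact subset of $\R$. When $K$ is connected, each $\rho_n(K)$ is moreover a compact interval. The key step is the self-similarity identity
$$\rho_{nm}(z)=\frac{1}{m}\sum_{j=0}^{m-1}\rho_n(f^{nj}(z)),$$
which, combined with invariance, places $\rho_{nm}(K)$ inside $\rho_n(K)\cap\rho_m(K)$. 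Since $\rho_{nm}(K)$ is nonempty, the intervals $\{\rho_n(K)\}_{n\ge m}$ pairwise intersect, so their union is again an interval. Taking closures and then the decreasing intersection yields a compact interval.

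For (2), I would argue by two inclusions, both standard ergodic-theoretic. To prove $\rho_K(I)\subset\rho_{mes}(K,I)$, given $\rho\in\rho_K(I)$ I pick $z_k\in K$ and $n_k\to+\infty$ with $\rho_{n_k}(z_k)\to\rho$, and form the empirical measures $\mu_k:=\frac{1}{n_k}\sum_{i=0}^{n_k-1}\delta_{f^i(z_k)}$ supported on the compact set $K$. By the Krylov--Bogolyubov argument, any weak-$*$ subsequential limit $\mu$ lies in $\mathcal{M}_f(K)$, and continuity of $\rho_1$ yields $\rho(\mu)=\lim\int\rho_1\,d\mu_k=\lim\rho_{n_k}(z_k)=\rho$. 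Since $\rho_{mes}(K,I)$ is the affine image of the convex set $\mathcal{M}_f(K)$ and therefore convex, this gives $\conv(\rho_K(I))\subset\rho_{mes}(K,I)$. Conversely, for $\mu\in\mathcal{M}_f(K)$ I would use the ergodic decomposition $\mu=\int\mu_\omega\,d\nu(\omega)$ to write $\rho(\mu)=\int\rho(\mu_\omega)\,d\nu(\omega)$, and then Birkhoff's ergodic theorem applied to $\rho_1$ provides, for each ergodic $\mu_\omega$, a point $z\in K$ with $\rho_n(z)\to\rho(\mu_\omega)$, so that $\rho(\mu_\omega)\in\rho_K(I)$ and hence $\rho(\mu)\in\conv(\rho_K(I))$.

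The main obstacle, and the only nonstandard point, is the pairwise-intersection step in (1): one has to spot the self-similarity identity for $\rho_{nm}$ and observe that it simultaneously confines $\rho_{nm}(K)$ to both $\rho_n(K)$ and $\rho_m(K)$, thereby ruling out the possibility that two intervals $\rho_n(K)$ are disjoint. Once this is in hand, the rest is routine topology and standard invariant-measure machinery.
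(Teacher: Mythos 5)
Your proof is correct and matches the approach implicit in the paper: the paper states this lemma without proof, but your pairwise-intersection argument for (1) (via the identity $\rho_{nm}(z)=\frac{1}{m}\sum_{j=0}^{m-1}\rho_n(f^{nj}(z))$, giving $\rho_{nm}(K)\subset\rho_n(K)\cap\rho_m(K)$ when $K$ is connected and invariant) is exactly the argument the introduction gives for the closed annulus, and your empirical-measure (Krylov--Bogolyubov) argument for (2) is the one carried out in the proof of Proposition \ref{propinclusion1}. The only piece with no counterpart in the paper is the reverse inclusion $\rho_{mes}(K,I)\subset\conv(\rho_{K}(I))$ via ergodic decomposition and Birkhoff's theorem applied to $\rho_1$, which is standard and correct as you state it.
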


On the other hand, let $\hat{z}_0$ be an end of $\A$ and let $\hat{V}$ be a neighborhood of $\hat{z}_0$. We consider an oriented preserving homeomorphism $\psi$ mapping $\hat{V}$ to a neighborhood of $0\in\R^2$ such that $\psi(\hat{z}_0)=0$. We write $$ \rho_{\hat{V}}(I):= -\rho_{\psi(\hat{V})}(\psi I \psi^{-1}), $$ where $\rho_{\psi(\hat{V})}(\psi I \psi^{-1})$ is the local rotation set relative to $\psi(\hat{V})$ defined in § \ref{sectiondeflocalrotationset}.

\begin{remar}
  By Proposition \ref{propensrotlocal}, this definition does not depend on the choice of the homeomorphism $\psi$.
\end{remar}

\subsection{Main result: Proof of Theorem B}

The purpose of this subsection is to prove Theorem B assuming Theorem $C$.

As we have seen in § \ref{subsprooftheoA} in order to proof Theorem B, it suffices to prove the following theorem.

\begin{theobast}
  Let $I$ be an isotopy in $\homeoa$ from the identity to a homeomorphism $f$. Suppose that the rotation set $\arho(I)$ contains both positive and negative real numbers. Then there exists a compact (a priori non-invariant) set $K$ in $\A$, such that $0$ belongs to $\rho_{K}(I)$.
\end{theobast}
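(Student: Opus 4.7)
The plan is to mimic the strategy used to deduce Theorem A$^*$ from Theorem C, replacing ``neighborhood of $0$'' by ``compact subset of $\A$'' throughout. If $I$ admits a contractible fixed point $z\in\A$, then $\rho_n(z)=0$ for every $n\geq 1$, so $K:=\{z\}$ gives $0\in \rho_K(I)$ and we are done. Hence we may assume that $I$ has no contractible fixed points in $\A$ and apply Theorem~\ref{theoexisfoliation} to obtain an oriented foliation $\mathcal{F}$ on $\A$ positively transverse to $I$. Extending as in \S\,\ref{sectdynamicfoliation}, we view $\mathcal{F}$ as a singular foliation on $\hat{\A}$ whose singularities are the two ends $N$ and $S$, locally transverse to $I$ at each end by Proposition~\ref{theolocallytrans}.

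The core of the proof is an annular version of Proposition~\ref{propcases1}: under the hypothesis of the statement, there exist three pairwise disjoint essential $f$-free Jordan curves $\gamma_0,\gamma_1,\gamma_2$ in $\A$, with $\gamma_1$ separating $\gamma_0$ from $\gamma_2$, such that the closed annulus $A_i$ delimited by $\gamma_i$ and $\gamma_{i+1}$ satisfies $\Theta(A_i)\neq\emptyset$, and such that $\rho_{\Theta(A_0)}(I)$ and $\rho_{\Theta(A_1)}(I)$ lie in the two half-lines $(0,+\infty)$ and $(-\infty,0)$ in some order. The existence of closed leaves of $\mathcal{F}$ is forced by the hypothesis: otherwise every leaf would limit on $\{N,S\}$ by Poincar\'e--Bendixson (Theorem~\ref{theopoinbendi}), and an argument analogous to Lemma~\ref{lemmclosedleafaccu0}, carried out inside a Schoenflies chart at each end and using local transversality (Proposition~\ref{theolocallytrans}), would imply $\arho(I)\subset [0,+\infty]$ or $\arho(I)\subset [-\infty,0]$. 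Given closed leaves, I would let $\mathcal{A}$ be the family of maximal complementary annuli between consecutive closed leaves; Lemma~\ref{lemmapropperturbation} then identifies positive and negative annuli via the orientation of $\mathcal{F}$, and the local-finiteness argument of item (i) of Lemma~\ref{lemmapropperturbation1} applies verbatim. Finally, I would establish the coexistence of positive and negative annuli by reproducing the three-step perturbation procedure of item (ii) of Lemma~\ref{lemmapropperturbation1}: replacing $\mathcal{F}$ by the Euclidean radial foliation on $f$-disjoint annuli of $\mathcal{A}$ and applying Le Roux's breaking lemma to annuli with empty maximal invariant set, iteratively pasting to a limit foliation $\mathcal{F}_\infty$ transverse to $I$. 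If no positive annulus existed, $\mathcal{F}_\infty$ would have all closed leaves ``broken'', so its leaves would limit on $\{N,S\}$, again forcing $\arho(I)$ to be one-sided through Lemma~\ref{lemmclosedleafaccu0}.

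Once $\gamma_0,\gamma_1,\gamma_2$ are in hand, we apply Theorem C directly, obtaining $0\in \rho_{\Theta(A)}(I)$ where $A=A_0\cup A_1$; taking $K:=\Theta(A)$, a non-empty compact $f$-invariant subset of $\A$, finishes the proof of the starred statement and hence of Theorem B. The main obstacle in carrying out the plan is the perturbation/finiteness step: the arguments of Lemma~\ref{lemmapropperturbation1} are written in the local setting, and we must adapt them so that the foliation is controlled simultaneously near both ends $N$ and $S$, with Poincar\'e--Bendixson orchestrating the asymptotic behavior of the leaves in between.
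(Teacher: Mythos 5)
There is a genuine gap, and it is located exactly where you try to transplant the coexistence step of Lemma \ref{lemmapropperturbation1}(ii) to the open annulus. Your plan culminates in an ``annular version of Proposition \ref{propcases1}'': three essential $f$-free curves with $\Theta(A_0),\Theta(A_1)\neq\emptyset$ and opposite-signed rotation sets, so that Theorem C yields an \emph{invariant} compact set $K=\Theta(A)$ with $0\in\rho_K(I)$. This intermediate statement, and your final conclusion, are false in general: the example given in the paper right after the statement of Theorem B* (the Reeb-type homeomorphism obtained by collapsing parts of the boundary circles) has $\arho(I)=[-1,1]$ but admits \emph{no} $f$-invariant compact set in $\A$ whatsoever; in particular every annulus between closed leaves of a transverse foliation has empty maximal invariant set, so no positive or negative annulus exists, and no invariant $K$ can carry rotation number $0$. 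The reason the local argument does not adapt is that in the open annulus the two-sided rotation can be realized entirely by orbits that escape every compact set towards the ends. Concretely, your last step claims that if no positive annulus exists then, after breaking the closed leaves, ``leaves limit on $\{N,S\}$, forcing $\arho(I)$ to be one-sided through Lemma \ref{lemmclosedleafaccu0}''. But Lemma \ref{lemmclosedleafaccu0} is a statement about the \emph{local} rotation set at a singularity: having all leaves limit on $\{N,S\}$ only constrains $\rho_{\hat{U}}(I)$ near each end (as in Lemma \ref{lemmaoneside}); it constrains the global set $\arho(I)$ only when there is a leaf joining $N$ to $S$ (Property (5) of \S\ref{sectdynamicfoliation}), which need not be produced — in the example above it cannot be, since $\arho(I)$ is two-sided.

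This is precisely why the paper's proof has a different architecture. It proves Lemma \ref{lemma44} (given that case (a) fails, closed leaves of $\mathcal{F}$ fail to accumulate at at least one end), isolates ``the last closed leaf'', and then must deal with the region beyond it, where no compact invariant structure is available: this requires the local intersection property at the ends, the one-sidedness of the local rotation sets $\rho_{\hat{U}^S_{\gamma_S}}(I)$ (Lemma \ref{lemmaoneside}), the classification into cases (b) and (c) of Proposition \ref{propperturbation2}, and above all Proposition \ref{propcomplement} and Corollary \ref{coropropcomplement}, whose arc argument (the three alternatives for $\widetilde{f}^{n}(\dot\alpha)$ relative to $\alpha$) produces a compact set of the form $\fron(V)\cup(\pi(\alpha)\cap V)$ that is \emph{not} invariant. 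Theorem C is used in the paper only as an auxiliary tool (inside Lemma \ref{lemma44} and in ruling out sub-cases), not as the engine that produces $K$. Your proposal omits this entire part of the argument, and without it the statement cannot be reached.
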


The following example satisfies Property (3) but not Property (2) from the analogous Remark \ref{remarkpropOin} for the open annulus. This proves that Theorem $B^*$ is sharp, i.e. the compact set $K$ such that $0$ belongs to $\rho_{K}(I)$ can be non-invariant.\\

\begin{exam}
There are a homeomorphism $f$ of $\A$ and an isotopy $I$ from the identity to $f$ satisfying the following conditions:
\begin{itemize}
  \item[(i)] There is no $f$-invariant compact set in $\A$, and
  \item[(ii)] $\arho(I)=[-1,1]$.
\end{itemize}
\end{exam}
Let us fix two real numbers $0<R^-<R^+$. To construct $f$, we start with a homeomorphism $f'_1$ defined on $\R^2\cup \{\infty\}$, the one-point compactification of $\R^2$, satisfying:
\begin{itemize}
  \item[(i)] $f'_1$ acts as a rotation $R_{+1}$ (resp. $R_{-1}$) on the disk $\{z\in\R^{2}: \norma{z}\leq R^-\}$ (resp. $\{z\in\R^{2}: \norma{z}\geq R^+\}$).
  \item[(ii)] $f'_1$ is a Reeb's homeomorphism on $\{z\in\R^{2}: R^-\leq \norma{z}\leq R^+\}$
\end{itemize}
Let us define the following equivalence relation on $\R^2\cup \{\infty\}$.
$$ z=(x,y) \sim \begin{cases}
   0 & \text{ if } \norma{z}\leq R^- \text{ and } y\leq 0;\\
   (0,y) &\text{ if } \norma{z}\leq R^- \text{ and } y\geq 0.\\
\end{cases}$$

$$ z=(x,y) \sim \begin{cases}
   \infty & \text{ if } \norma{z}\geq R^+ \text{ and } y\leq 0;\\
   (0,y) &\text{ if } \norma{z}\leq R^+ \text{ and } y\geq 0.\\
\end{cases}$$

Let $\hat{\A}'=\R^2\cup\{\infty\}/\sim$ be the quotient space of $\R^2\cup \{\infty\}$. By $z\in \R^{2}\cup \{\infty\}$, we write $[z]$ the class of equivalence of $z$.  Now, it is easy to check that $\hat{\A}'\setminus\{[0],[\infty]\}$ is homeomorphic to the open annulus $\A$ et that $f'_1$ induces a homeomorphism $f$ on $\A'$. Moreover $f$ is isotopic to the identity of $\A'$, there exists an isotopic $I$ on the annulus $\A'$ from the identity to $f$ such that $\arho(I)=[-1,1]$.

\subsubsection{Classification}

The proof of Theorem B* consider several cases described by the following proposition. 

\begin{propo}\label{propperturbation2}
  Let $I$ be an isotopy in $\homeoa$ from the identity to a homeomorphism $f$. Suppose that the rotation set $\arho(I)$ contains both positive and negative real numbers. Then one
  of the following cases holds:\\

    Case (a) There exists an invariant and compact set $K$ in $\A$ such that $0$ belongs to $\rho_K(I)$. \\

    Case (b)
      The homeomorphism $f$ satisfies the local intersection property at least at one of the ends of $\A$,
      there exists an essential $f$-free Jordan curve $\gamma$ in $\A$, such that the local rotation set of $I$ at $N$ relative to $\hat{U}_\gamma^N$,  $\rho_{\hat{U}_\gamma^N}(I)$, is contained in one of the sets
      $[0,+\infty]$ or $[-\infty,0]$ and the local rotation set at $S$ relative to $\hat{U}_\gamma^S$, $\rho_{\hat{U}_\gamma^S}(I)$, is contained in the other one.\\

   Case (c) The homeomorphism $f$ satisfies the local intersection property at both ends of $\A$, there exist two $f$-free essential Jordan curves $\gamma_N$ and $\gamma_S$ with $\gamma_N\subset U_{\gamma_S}^N$ such that the closed annulus $A$ delimited by  $\gamma_N$ and $\gamma_S$ satisfies:
      \begin{itemize}
      \item[(i)] the maximal $f$-invariant set of $A$ is not empty, and
      \item[(ii)] the sets $\rho_{\hat{U}_{\gamma_N}^N}(I)$ and $\rho_{\hat{U}_{\gamma_S}^S}(I)$ are contained in one of the sets $[0,+\infty]$ or $[-\infty,0]$ and the set $\rho_A(I)$ is contained in the interior of the other one.
    \end{itemize}

\end{propo}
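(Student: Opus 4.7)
The plan is to adapt the strategy of Proposition \ref{propcases1} to the open annulus, performing the local analysis at both ends $N$ and $S$ simultaneously. I would assume case (a) fails; in particular $f$ admits no contractible fixed point, since a contractible fixed point is a singleton invariant compact set with rotation $0$. Applying Theorem \ref{theoexisfoliation}, I obtain an oriented foliation $\mathcal{F}$ on $\A$ transverse to $I$, and extend it to $\hat{\A}$ with singularities at $N$ and $S$ as in \S\ref{sectdynamicfoliation}; by Proposition \ref{theolocallytrans} it is locally transverse to $I$ at each end.

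For each end $\hat{z}_0 \in \{N,S\}$, I would run the local analysis of Proposition \ref{propcases1} in a Schoenflies chart around $\hat{z}_0$. If the closed leaves of $\mathcal{F}$ do not accumulate at $\hat{z}_0$, the Poincar\'e-Bendixson analysis produces a leaf whose $\alpha$- or $\omega$-limit is $\hat{z}_0$, so Lemma \ref{lemmclosedleafaccu0} forces the local rotation set near $\hat{z}_0$ into $[0,+\infty]$ or $[-\infty,0]$, and the local intersection property holds at $\hat{z}_0$. If closed leaves do accumulate at $\hat{z}_0$, the Reeb/spiral description of \S\ref{sectdynamicfoliation} together with Lemma \ref{lemmapropperturbation1} produces positive and/or negative annuli accumulating there; if both signs occurred, one chooses two adjacent annuli of opposite sign near $\hat{z}_0$ and applies Theorem C to the three closed leaves bounding them (these are essential in $\A$) to obtain an invariant compact set with $0$ in its rotation set, contradicting the failure of case (a). So at each end $\rho_{\hat{V}}(I)$ lies in a single half-line $L_{\hat{z}_0} \in \{[0,+\infty],[-\infty,0]\}$.

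I then split into cases by comparing $L_N$ and $L_S$. If $L_N \neq L_S$, I produce case (b) by taking $\gamma$ to be a closed leaf of $\mathcal{F}$ separating the two ends, whose existence follows from Poincar\'e-Bendixson since $L_N \neq L_S$ rules out the scenario in which all leaves of $\mathcal{F}$ connect $N$ and $S$ in a uniform direction (which would force $L_N = L_S$). The sign conditions on $\rho_{\hat{U}_\gamma^N}(I)$ and $\rho_{\hat{U}_\gamma^S}(I)$ then follow from the local analysis. If $L_N = L_S$, the hypothesis that $\arho(I)$ contains reals of the opposite sign, together with the exclusion of case (a) (via Lemma \ref{lemaensK}), forces the existence of an invariant compact connected set in the middle of $\A$ whose rotation set lies in the open opposite half-line. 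I then take $\gamma_N$ and $\gamma_S$ to be closed leaves of $\mathcal{F}$ sandwiching this middle invariant set, obtaining case (c).

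The main obstacle lies in the residual subcase $L_N = L_S$ where closed leaves of $\mathcal{F}$ accumulate at one end, which would block the local intersection property required by case (c). This is ruled out by one more application of Theorem C, to three closed leaves chosen so that one bounding annulus has the sign occurring near the accumulating end and the other sandwiches the middle invariant set of opposite sign, producing an invariant compact set of $0$ rotation which again contradicts the failure of case (a). This case elimination ensures that the three cases (a), (b), (c) are collectively exhaustive.
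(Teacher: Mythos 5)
Your overall architecture (transverse foliation, Poincar\'e--Bendixson at the ends, positive/negative annuli, Theorem C to exclude coexisting opposite signs) is the right one, but two steps you rely on do not hold as stated. First, you assert that if the closed leaves of $\mathcal{F}$ do not accumulate at an end then ``the local intersection property holds at $\hat{z}_0$''. This is a non sequitur: the local intersection property concerns \emph{all} essential Jordan curves near the end, not only closed leaves of the particular foliation you constructed. For instance $f$ may act as a homothety near the end, with the transverse foliation spiralling into the end (no closed leaves), while every small essential circle is $f$-free. Since both case (b) and case (c) require the local intersection property at one, respectively both, ends, this gap is central. The paper never deduces the property from the foliation alone; it argues indirectly (the Claim in the proof): if the property fails at an end, then either some free curve near that end bounds, with the last closed leaf, an annulus with non-empty maximal invariant set of the opposite sign --- and Theorem C then contradicts the failure of case (a) --- or $f$ is a homothety near that end, which pins down the sign of $\rho_{\hat{U}^S_{\gamma_S}}(I)$; failure at \emph{both} ends then forces $\arho(I)$ into a single half-line, contradicting the hypothesis.

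Second, in your branch $L_N=L_S$ you claim that the hypothesis on $\arho(I)$ together with the failure of (a) ``forces the existence of an invariant compact connected set in the middle of $\A$ whose rotation set lies in the open opposite half-line''. Nothing you cite produces an invariant set: elements of $\arho(I)$ are realized by compact but a priori non-invariant sets, and the realizing orbit segments may escape toward the ends, so Lemma \ref{lemaensK} (which presupposes an invariant compact set) gives nothing here. The paper obtains the non-emptiness of the middle maximal invariant set, and the uniform direction of $\mathcal{F}$ on the annuli of $\mathcal{A}'$, by the foliation surgery of Lemma \ref{lemmapropperturbation1} (Le Roux's lemma on annuli with empty maximal invariant set, pasting with $(f^n)_*\mathcal{F}$, and perturbing to create a leaf joining the two ends, which would force $\arho(I)$ into a half-line); and it needs Proposition-\ref{propinclusion1}-type arguments to get the containment of $\rho_A(I)$ (of the whole closed annulus, not just its maximal invariant set) in the interior of a half-line, as case (c) demands. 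Your residual subcase is also mishandled: when closed leaves accumulate at an end with $L_N=L_S$ you try to reach a contradiction via Theorem C, but near the accumulating end there may be no annulus of $\mathcal{A}'$ at all (all maximal invariant sets empty), so Theorem C is not applicable --- and indeed no contradiction should be reachable, since such configurations genuinely occur and the paper files them under case (b) (its Case 2, after perturbing to a foliation with a unique closed leaf and invoking Lemma \ref{lemmaoneside} for the two complementary regions, noting also that containments of the local rotation set on small end-neighborhoods do not automatically transfer to the larger regions $\hat{U}^N_\gamma$, $\hat{U}^S_\gamma$ required in case (b)).
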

We prove this proposition assuming that case (a) does not hold. Then, we may assume that $I$ has no contractible fixed points. We apply Theorem \ref{theoexisfoliation}, and we obtain an oriented foliation on the annulus $\A$ which is transverse to $I$. We can see $\mathcal{F}$ as a foliation with singularities on $\hat{\A}$, the end compactification of $\A$, whose singularities are the ends of $\A$.

\begin{lemm}\label{Fhasnoclosedleaves}
  In this situation, there exists an oriented foliation $\mathcal{F}'$ arbitrarily close to $\mathcal{F}$ (in Whitney's topology) which has a closed leaf.
\end{lemm}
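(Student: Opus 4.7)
My plan is to distinguish cases on the global structure of $\mathcal{F}$. If $\mathcal{F}$ already has a closed leaf, take $\mathcal{F}'=\mathcal{F}$; so assume it has none. By Properties (4) and (5) of \S\ref{sectdynamicfoliation}, either \textbf{(i)} there exists a leaf $l$ with $\alpha(l)\cup\omega(l)\not\subset\{N,S\}$ (``recurrence''), or \textbf{(ii)} every leaf has its $\alpha$- and $\omega$-limit sets inside $\{N,S\}$, with at least one leaf joining $N$ to $S$.

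In case (ii) I claim there must still exist a leaf $l^\ast$ with $\alpha(l^\ast)=\omega(l^\ast)$ equal to a single end. Partition $\A$ into the four sets $U_{SN}$, $U_{NS}$, $U_{SS}$, $U_{NN}$ according to the $(\alpha,\omega)$-type of the leaf through each point. In the absence of closed leaves and of recurrence, the type is locally constant along a transversal (otherwise a leaf of one type would have to be approached by leaves of another, forcing recurrence or a closed leaf), so each $U_{\bullet\bullet}$ is open. If $U_{SS}$ and $U_{NN}$ were both empty then $\A=U_{SN}\cup U_{NS}$, and connectedness of $\A$ would force one of these to be the whole annulus, so every leaf would go from the same end to the same end. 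Lifting to the universal cover and using that $I$ is positively transverse to $\mathcal{F}$, the same computation as in the proof of Lemma \ref{lemmclosedleafaccu0} then shows that $p_1(\widetilde{f}^n(\widetilde{z}))-p_1(\widetilde{z})$ keeps a constant sign for all $n\geq 1$ and all $\widetilde{z}$, forcing $\arho(I)\subset[0,+\infty]$ or $\arho(I)\subset[-\infty,0]$ and contradicting the hypothesis. Hence $U_{SS}$ or $U_{NN}$ is nonempty, yielding the desired leaf $l^\ast$.

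It remains to build the perturbation in each of the two remaining situations. In case (ii) the set $l^\ast\cup\{e\}$ is a Jordan curve in $\hat{\A}$ passing through the singularity $e$; inside a small neighborhood $\hat U$ of $e$, using a chart around $e$, I splice the two half-leaves of $l^\ast$ converging to $e$ into a single smooth arc that avoids $e$, while modifying the nearby leaves consistently to remain a foliation. This turns $l^\ast$ into a closed leaf of a new foliation $\mathcal{F}'$ agreeing with $\mathcal{F}$ outside $\hat U$, and shrinking $\hat U$ makes $\mathcal{F}'$ arbitrarily Whitney-close to $\mathcal{F}$. In case (i), pick $x\in(\alpha(l)\cup\omega(l))\setminus\{N,S\}$; by Poincar\'e--Bendixson, $x$ lies on a leaf $l'$ contained in that limit set, and $l$ crosses a small transversal $\sigma$ through $x$ at a sequence of points $y_n\to x$. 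Inside a trivializing neighborhood of the sub-arc of $\sigma$ between two sufficiently close consecutive crossings $y_n$ and $y_{n+1}$, a classical ``first-return'' perturbation bends the foliation so that the leaf through $y_n$ closes up on itself, producing a closed leaf; taking $|y_n-y_{n+1}|$ small enough makes this perturbation arbitrarily Whitney-small.

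The main obstacle will be case (ii), where one must carefully splice a foliation through a topological singularity at an end of $\A$ while globally preserving the foliation structure and orientation and without creating new singularities; case (i) is essentially the classical closing-lemma-type construction of closing up a trajectory near a Poincar\'e section, and presents only routine technical difficulties.
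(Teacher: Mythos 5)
Your Case (i) (recurrence) is exactly the paper's construction: accumulate the recurrent leaf on a plaque of the limit leaf inside a trivializing neighborhood and close it up by a compactly supported perturbation whose size tends to $0$ with the distance between consecutive crossings; that part is fine. The gap is in your Case (ii). First, this case is in fact vacuous, and the paper disposes of it in one line: Property (5) of \S\ref{sectdynamicfoliation} guarantees, in the absence of recurrence, a leaf joining the two \emph{distinct} ends $N$ and $S$; a single such leaf is a properly embedded line whose lifts $T^k(\widetilde{l})$ form disjoint walls in the universal cover that arcs positively transverse to $\mathcal{F}$ can cross in only one direction, so the same lifting computation you invoke (as in Lemma \ref{lemmclosedleafaccu0}) already gives $\arho(I)\subset[0,+\infty]$ or $\arho(I)\subset[-\infty,0]$, contradicting the standing hypothesis. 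You only apply this argument in the sub-case where \emph{all} leaves join the two ends, apparently believing the conclusion requires all of them, and this is what pushes you into the splice construction.

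Second, the route you take inside Case (ii) does not hold up on its own terms. The openness of the $(\alpha,\omega)$-type sets is not justified and is false in general: for the horizontal foliation of $\R^2\setminus\{0\}$ (no closed leaves, no recurrence) the two half-axes have a different type from all nearby leaves, so the type is not locally constant along a transversal and no recurrence is forced; hence the dichotomy ``$U_{SS}\cup U_{NN}\neq\emptyset$ or one type fills $\A$'' is unsupported. More seriously, the splice through the singular end cannot be made arbitrarily small in Whitney's topology: the two half-leaves of $l^\ast$ converging to the end need not approach each other, so closing them up forces a U-turn of all intermediate leaves across a region of definite width, whereas the Whitney neighborhood one must land in (in particular the one furnished by Proposition \ref{proptranversmallpertur}, which is what makes the perturbed foliation transverse to $I$) may prescribe compact charts of definite size with arbitrarily small tolerances $\epsilon_\alpha$ near the end; shrinking $\hat U$ does not help, because the strong topology controls the foliation near the ends with arbitrary precision. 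You would also need to check that the spliced curve can be a leaf of a genuine nonsingular foliation of $\A$ at all (an inessential closed leaf is impossible, and essentiality depends on the side of the splice) and that the modification is compatible with the other leaves accumulating on the end. The repair is simply to delete Case (ii) by the one-leaf wall argument above and keep your Case (i), which is the paper's proof.
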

\begin{proof}
We claim that $\mathcal{F}$ would have recurrence (see Property (4) of § \ref{sectdynamicfoliation}). Otherwise, by Properties $(4)$ and $(5)$ of § \ref{sectdynamicfoliation}, there exists a leaf of $\mathcal{F}$ joining the two ends of $\A$, this implies that either $\arho(I)\subset [-\infty,0]$ or $\arho(I)\subset [0,+\infty]$. This contradicts our hypotheses. Hence, changing the orientation of $\mathcal{F}$ by its inverse if necessary, we may suppose that there is a leaf $l$ of $\mathcal{F}$ whose $\omega$-limit set contains a point $z\in \A$. Let $l_z$ be the leaf of $\mathcal{F}$ containing $z$. Note that the $\omega$-limit and $\alpha$-limit sets of $l_z$ are reduced to $S$ or $N$. Without loss of generality, we may assume that this end is $S$. Let $U$ be a trivializing neighborhood of $z$. Let $\sui{z}$ be a sequence of points in $l\cap U$ such that for every integer $n\geq 1$, the
point $z_{n+1}$ belongs to $l^+_{z_n}$, the positive half-leaf containing $z_n$, and converges to $z$. For every integer $n\geq 1$, let $l_n$ be the connected component of $l\cap U$ containing $z_n$. Then the sequence $\sui{l}$ converges, in Hausdorff's topology, to the connected component of the leaf $l_z\cap U$ containing $z$. Let $a_n$ and $b_n$ be the first and last point in $l_n$ respectively. Now, we perturb $\mathcal{F}$ enough as Figure \ref{fig:modificationfeuilletage11} This completes the proof of lemma.

\begin{center}
\begin{figure}[h!]
  \centering
    \includegraphics{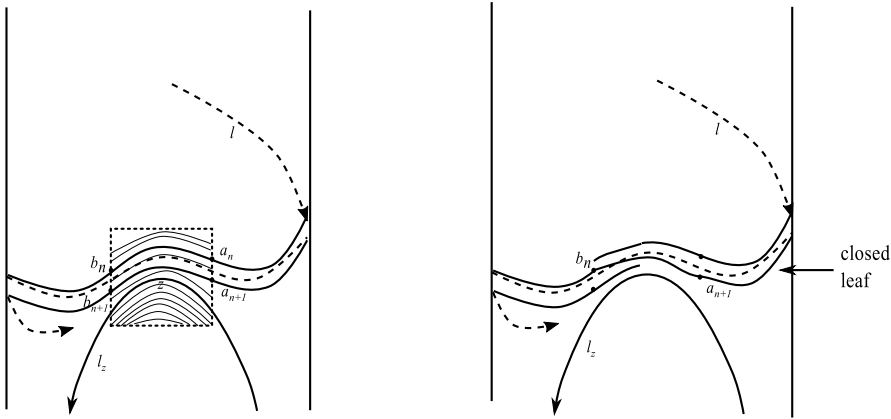}
  \caption{Left: foliation $\mathcal{F}$ Right: foliation $\mathcal{F}'$}
  \label{fig:modificationfeuilletage11}
\end{figure}
\end{center}
\end{proof}

From Proposition \ref{proptranversmallpertur}, the foliation $\mathcal{F}'$ is also transverse to $I$. Let us assume from now on that $\mathcal{F}$ has closed leaves. Moreover, $\mathcal{F}$ cannot have only closed leaves. Otherwise $f$ is conjugated to a homothety.

\begin{lemm}\label{lemma44}
 Under the hypotheses of Proposition \ref{propperturbation2}, assume furthermore that Case (a) does not hold. Then the closed leaves of $\mathcal{F}$ do not accumulate at least at one of the two ends of $\A$.
\end{lemm}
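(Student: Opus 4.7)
The plan is to argue by contradiction. I suppose that closed leaves of $\mathcal{F}$ accumulate at both ends $N$ and $S$ of $\A$, and aim to construct an $f$-invariant compact set $K\subset\A$ with $0\in\rho_K(I)$, contradicting the standing assumption that Case (a) of Proposition \ref{propperturbation2} does not hold.

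Under this assumption, both ends are in the accumulated case of Section 2, so the analysis of Section 3.3 applies globally on $\A$. I introduce the family $\mathcal{A}$ of maximal annuli of $\mathcal{F}$ whose boundary components are consecutive closed leaves, together with its subfamily $\mathcal{A}'$ consisting of those $A\in\mathcal{A}$ with $\Theta(A)\neq\emptyset$. The annular analog of Lemma \ref{lemmapropperturbation} classifies each $A\in\mathcal{A}'$ either as a \emph{positive annulus} ($\rho_{\Theta(A)}(I)\subset(0,+\infty)$) or as a \emph{negative annulus} ($\rho_{\Theta(A)}(I)\subset(-\infty,0)$); the argument of Lemma \ref{lemmapropperturbation1}(i) shows that $\mathcal{A}'$ is locally finite in $\A$; and by the contradiction hypothesis the family $\mathcal{A}$ accumulates at both ends of $\A$.

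The key intermediate step is to show that $\mathcal{A}'$ contains both positive and negative annuli. Assume for contradiction that every element of $\mathcal{A}'$ is positive (the opposite case is symmetric). I then carry out globally on $\A$ the three-step perturbation procedure of the proof of Lemma \ref{lemmapropperturbation1}(ii): first I replace $\mathcal{F}$ on each annulus of $\mathcal{A}\setminus\mathcal{A}''$ by a foliation in circles around the relevant end; next, on every annulus $A$ on which $\mathcal{F}$ goes from $\partial^-A$ to $\partial^+A$ with empty maximal invariant set, I apply Le Roux's modification lemma; iterating this construction yields, by exactly the same convergence argument as in the local case, a limit foliation $\mathcal{F}_\infty$ transverse to $I$. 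A small further perturbation breaks any closed leaves remaining in $\mathcal{F}_\infty$, producing a foliation $\mathcal{F}'$ transverse to $I$ all of whose leaves are spirals whose $\omega$-limit or $\alpha$-limit is reduced to one of the ends. Applying the analog of Lemma \ref{lemmclosedleafaccu0} at each end then forces $\arho(I)\subset[0,+\infty]$, contradicting the hypothesis that $\arho(I)$ contains both positive and negative reals.

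Granted this, both a positive annulus $A^+$ and a negative annulus $A^-$ exist in $\mathcal{A}'$. Since closed leaves of $\mathcal{F}$ are essential and hence totally ordered by nesting of their $N$-sides, and since $\mathcal{A}'$ is locally finite, the chain of annuli of $\mathcal{A}$ linking $A^+$ to $A^-$ is finite; its $\mathcal{A}'$-subsequence must therefore contain a step at which the sign switches. After absorbing any intercalated annuli with empty maximal invariant set via further application of the modification lemma, this produces two adjacent annuli $A_0$ and $A_1$ in $\mathcal{A}'$ of opposite signs, sharing a common boundary closed leaf. The three boundary closed leaves $\gamma_0,\gamma_1,\gamma_2$ of $A_0\cup A_1$ are essential, pairwise disjoint, $f$-free Jordan curves in $\A$ satisfying the hypotheses of Theorem C. Applying Theorem C then yields $0\in\rho_{\Theta(A_0\cup A_1)}(I)$ with $\Theta(A_0\cup A_1)$ a non-empty compact $f$-invariant subset of $\A$, giving the desired contradiction with Case (a) not holding. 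The main obstacle is the global perturbation argument in the intermediate step, which requires a careful adaptation of Lemma \ref{lemmapropperturbation1}(ii) from a neighborhood of a single point to the whole annulus $\A$, together with a correspondingly global application of Lemma \ref{lemmclosedleafaccu0} at each of the two ends.
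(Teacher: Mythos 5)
Your overall strategy coincides with the paper's: argue by contradiction assuming the closed leaves accumulate at both ends, show that $\mathcal{A}'$ must contain annuli of both signs (otherwise a global perturbation as in Lemma \ref{lemmapropperturbation1}, performed at both ends, produces a foliation transverse to $I$ forcing $\arho(I)$ to be one-signed, contradicting the hypothesis), and then feed a positive and a negative annulus into Theorem C to obtain an invariant compact set whose rotation set contains $0$, contradicting the failure of Case (a). This is exactly how the paper proves Lemma \ref{lemma44}.

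Two details of your write-up need correcting, though neither affects the architecture. First, under your assumption that every element of $\mathcal{A}'$ is positive, the annuli that must be replaced by closed leaves via Le Roux's modification lemma are those on which $\mathcal{F}$ goes from $\partial^+A$ to $\partial^-A$ (these are precisely the ones forced to have empty maximal invariant set), whereas you apply the lemma to the annuli going from $\partial^-A$ to $\partial^+A$; as written, the ``wrongly oriented'' Reeb components survive the construction and the limit foliation need not force a one-signed rotation set. You copied the orientation from the local-case statement of Lemma \ref{lemmapropperturbation1}(ii) without flipping it to match your sign assumption. Second, in the final step you cannot make the positive and negative annuli adjacent ``via further application of the modification lemma'': that lemma modifies the foliation, not $f$, so it does not alter maximal invariant sets. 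The correct maneuver is the one at the end of the proof of Proposition \ref{propcases1} (which the paper's proof of this lemma invokes): choose the three curves so that one of the two resulting sub-annuli is the union of the signed annulus with the intercalated region of empty maximal invariant set, and note that this union has the same rotation set as the signed annulus. With these two adjustments your argument is the paper's.
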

Since the closed leaves of $\mathcal{F}$ are $f$-free essential curves (Properties (2) and (3) from § \ref{sectdynamicfoliation}),
this lemma will follow from Theorem C. As in § \ref{proofofprop33}, we consider the set $\mathcal{A}$ of all closed annuli $A$ whose boundary components are closed leaves of $\mathcal{F}$, and which do not contain any closed leaf of $\mathcal{F}$ in their interior, and $\mathcal{A}'$ the subset of $\mathcal{A}$ consisting of the annuli $A$ whose maximal invariant set is not empty. We say that $A\in \mathcal{A}'$ is a positive (resp. negative) annulus if $\rho_{\Theta(A)}(I)\subset (0,+\infty)$ (resp. $\rho_{\Theta(A)}(I)\subset (-\infty,0)$).

\begin{proof}[Proof of Lemma \ref{lemma44}]
By contradiction, we suppose that the closed leaves of $\mathcal{F}$ accumulate both ends of $\A$. We claim that there are two closed leaves $\gamma^+$ and $\gamma^-$ of $\mathcal{F}$ close enough to the end $N$ and $S$ respectively, such that the closed annulus $A_{\gamma^+,\gamma^-}$ delimited by $\gamma^+$ and $\gamma^-$ contains a positive and a negative annulus. By contradiction, suppose that all annuli of $\mathcal{A}'$ are positive (or negative). Then in a similar way as in Lemma \ref{lemmapropperturbation1} (applied at both ends of $\A$) we can find an oriented foliation $\mathcal{F}_\infty$ which is transverse to $I$, and contains a leaf joining the two ends of $\A$. This contradicts the fact that the rotation set $\arho(I)$ contains both positive and negative real numbers. In a similar way as in end of the proof of Proposition \ref{propcases1} we conclude (assuming Theorem C) that $0$ belongs to $\rho_{\Theta(A_{\gamma^+,\gamma^-})}(I)$, where $\Theta(A_{\gamma^+,\gamma^-})$ is the maximal invariant set of the closed annulus delimited by $\gamma^+$ and $\gamma^-$. This contradicts the fact that case $(a)$ of Proposition \ref{propperturbation2} does not hold. This completes the proof.
\end{proof}

Let us assume from now on, without loss of generality, that this end is $S$. Let $\gamma_S$ be ``the last''
closed leaf of $\mathcal{F}$, that is $\gamma_S$ is a closed leaf of $\mathcal{F}$ such that $U_{\gamma_S}^S$ does not contain any closed leaf of
$\mathcal{F}$. We have the following result.

\begin{lemm}\label{lemmaperturbation2}
  There exists an oriented foliation $\mathcal{F}'$ arbitrarily close to $\mathcal{F}$ (in Whitney's topology) which coincides with $\mathcal{F}$ in
  $\adhe(\hat{U}_{\gamma_S}^N)$ and admits a leaf either whose $\omega$-limit set is reduced to $S$ and $\alpha$-limit set is $\gamma_S$ or  whose $\alpha$-limit set
  is reduced to $S$ and $\omega$-limit set is $\gamma_S$.
\end{lemm}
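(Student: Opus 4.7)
The plan is to find a leaf $l_0$ of $\mathcal{F}$ inside $U_{\gamma_S}^S$ having $\gamma_S$ as one of its limit sets and then, depending on its other limit, either take $\mathcal{F}'=\mathcal{F}$ or apply a localized surgery in a trivializing chart, exactly as in the proof of Lemma \ref{Fhasnoclosedleaves}. To produce such an $l_0$ I use that $\gamma_S$ is the last closed leaf of $\mathcal{F}$ toward $S$: a first-return map on a small transversal to $\gamma_S$ has no fixed point on the $S$ side apart from the base point, so combined with the absence of closed leaves in $U_{\gamma_S}^S$ and with Poincar\'e--Bendixson (Theorem \ref{theopoinbendi}) this forces the existence of a leaf $l_0\subset U_{\gamma_S}^S$ accumulating on $\gamma_S$ either forward or backward in time. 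After possibly interchanging $\alpha$ and $\omega$ I may assume $\alpha(l_0)=\gamma_S$.

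Next I analyze $\omega(l_0)$. Since $l_0$ stays in $U_{\gamma_S}^S$, $\omega(l_0)$ is contained in $\adhe(\hat{U}_{\gamma_S}^S)$, whose only singularity is $S$ and whose only closed leaf is $\gamma_S$. Items (iv) and (v) of Theorem \ref{theopoinbendi} rule out $\gamma_S\subset\omega(l_0)$, so by the same theorem either $\omega(l_0)=\{S\}$, in which case $l_0$ itself is the desired leaf and we take $\mathcal{F}'=\mathcal{F}$, or $\omega(l_0)$ consists of $\{S\}$ together with at least one leaf $l_1$ joining $S$ to itself (note that any leaf inside $\omega(l_0)$ which terminated at $\gamma_S$ would already give the conclusion directly).

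In the remaining case, fix such a leaf $l_1\subset\omega(l_0)$, a point $z\in l_1$ and a trivializing chart $U\subset U_{\gamma_S}^S$ of $\mathcal{F}$ at $z$. Just as in the proof of Lemma \ref{Fhasnoclosedleaves}, the leaf $l_0$ returns to $U$ along a sequence of segments converging in Hausdorff topology to the connected component of $l_1\cap U$ through $z$. I modify $\mathcal{F}$ inside $U$ using the same reconnection surgery as in Figure \ref{fig:modificationfeuilletage11}, producing an oriented foliation $\mathcal{F}'$ that agrees with $\mathcal{F}$ outside $U$ --- hence on $\adhe(\hat{U}_{\gamma_S}^N)$ --- and is arbitrarily close to $\mathcal{F}$ in Whitney's topology provided $U$ is small. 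By construction one new leaf of $\mathcal{F}'$ follows the piece of $l_0$ from $\gamma_S$ until it enters $U$, is routed by the surgery onto $l_1$, and then follows $l_1$ out to $S$; this leaf has $\alpha$-limit $\gamma_S$ and $\omega$-limit $\{S\}$, as required.

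The main obstacle is the very first step: producing a leaf $l_0\subset U_{\gamma_S}^S$ with $\gamma_S$ in its limit set. The naive return-map argument is delicate because the return map need not be defined on the whole $S$ side of a transversal --- some leaves may escape toward $S$ before completing a loop around $\gamma_S$. A clean way is to combine the absence of closed leaves in $U_{\gamma_S}^S$ with the non-accumulated-case analysis at the end of Section \ref{sectdynamicfoliation}, applied here to the boundary closed leaf $\gamma_S$ rather than to a singularity. Once $l_0$ is in hand, the remainder of the proof is a direct adaptation of the reconnection already carried out in Lemma \ref{Fhasnoclosedleaves}.
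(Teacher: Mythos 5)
Your proposal is correct and follows essentially the paper's own proof: a return-map/transversal argument at $\gamma_S$ (possible because $\gamma_S$ is the last closed leaf) yields a leaf $l\subset U_{\gamma_S}^S$ with, say, $\alpha(l)=\gamma_S$; Poincar\'e--Bendixson then gives $\omega(l)=\{S\}$ (take $\mathcal{F}'=\mathcal{F}$) or a leaf from $S$ to $S$ in $\omega(l)$, and in that case the same local reconnection surgery in a trivializing chart inside $U_{\gamma_S}^S$ (the paper's Figure \ref{fig:modificationfeuilletage2}, rather than Figure \ref{fig:modificationfeuilletage11} which you cite) produces the required leaf while leaving $\mathcal{F}$ unchanged on $\adhe(\hat{U}_{\gamma_S}^N)$. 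The only difference is cosmetic: the paper asserts directly, via a positively transverse arc through a point of $\gamma_S$, the existence of a leaf meeting it twice, where you phrase the same fact through the fixed-point-free first-return map.
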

\begin{proof}
  Let $z$ be an element in $\gamma_S$ and let $\Sigma$ be an arc positively transverse containing $z$ in its interior. Since $\gamma_S$ is a closed leaf of $\mathcal{F}$ and there is no closed leaf in $U_{\gamma_S}^S$, there is a
  leaf $l$ of $\mathcal{F}$ which meets $\Sigma \cap U_{\gamma_S}^S$ at less two times. From Theorem \ref{theopoinbendi}, we deduce that either $\alpha(l)$
  or $\omega(l)$ is $\gamma_S$. Without loss of generality, suppose that we are in the first case. Since there are not closed leaves of $\mathcal{F}$ in
  $U_{\gamma_S}^S$, we have two possibilities (items (ii) or (iii)  of Theorem \ref{theopoinbendi}): the set $\omega(l)$ is reduced to $S$, or there is a leaf $l_S$ of $\mathcal{F}$ from $S$ to $S$ contained in $\omega(l)$. In the first case, the foliation $\mathcal{F}$ satisfies the conclusion of the lemma. In the second case, let $z'$ be an element in $l_S$ and let $U'$ be a trivializing neighborhood of $z'$. Let
   $\sui{z'}$ be a sequence of points in $l\cap U'$ such that for every integer $n\geq 1$, the
point $z'_{n+1}$ belongs to $l^+_{z'_n}$ and converges to $z'$. For every integer $n\geq 1$, let $l_n$ be the connected component of $l\cap U'$ containing $z'_n$. Then the sequence $\sui{l}$ converges, in Hausdorff's topology, to the connected component $l_{z'}'$ of the leaf $l_{z'}\cap U'$ containing $z'$. Let $a_n$ and $b'$ be the first point in $l_n$ and the last point in $l'_{z'}$ respectively. Now, we perturb $\mathcal{F}$ enough as Figure \ref{fig:modificationfeuilletage2} to obtain the expected foliation. This completes the proof of lemma.

\begin{center}
\begin{figure}[h!]
  \centering
    \includegraphics{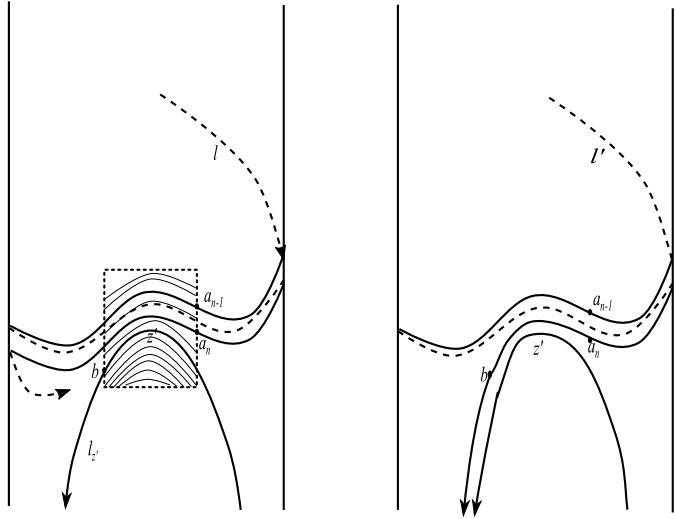}
  \caption{Left: foliation $\mathcal{F}$ Right: foliation $\mathcal{F}'$}
  \label{fig:modificationfeuilletage2}
\end{figure}
\end{center}

\end{proof}

Using analogous reasoning as in Lemma \ref{lemmclosedleafaccu0}, we obtain the following result.

\begin{lemm}\label{lemmaoneside}
  The set $\rho_{\hat{U}_{\gamma_S}^S}(I)$ is contained either in $[0,+\infty]$ or in $[-\infty,0]$.
\end{lemm}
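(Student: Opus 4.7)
The plan is to adapt the proof of Lemma \ref{lemmclosedleafaccu0} to the end $S$, replacing the half-leaf emerging from $0$ in that lemma by the leaf produced here by Lemma \ref{lemmaperturbation2}.

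First, I would apply Lemma \ref{lemmaperturbation2} to replace $\mathcal{F}$ by an oriented foliation $\mathcal{F}'$, arbitrarily close to $\mathcal{F}$, that coincides with $\mathcal{F}$ on $\adhe(\hat{U}_{\gamma_S}^N)$ and admits a leaf $l \subset U_{\gamma_S}^S$ with $\omega(l)=\{S\}$ and $\alpha(l)=\gamma_S$ (or the opposite orientation, treated symmetrically and yielding the other inclusion). Proposition \ref{proptranversmallpertur} ensures that $\mathcal{F}'$ is still positively transverse to $I$, and Proposition \ref{theolocallytrans} gives local transversality at $S$. Since $\mathcal{F}'$ agrees with $\mathcal{F}$ outside $U_{\gamma_S}^S$, the essential part of the problem is unchanged. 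Next, I would choose an orientation-preserving chart $\psi$ sending a neighborhood $\hat V \subset \hat{U}_{\gamma_S}^S$ of $S$ onto a Euclidean disk $D$ around $0$ in $\R^2$, with $\psi(S)=0$. Under $\psi$, the half-leaf of $l$ accumulating at $S$ becomes a half-leaf of $\psi_*\mathcal{F}'$ accumulating at $0$.

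At this point the computation at the heart of Lemma \ref{lemmclosedleafaccu0} applies verbatim to the conjugated isotopy $\psi I \psi^{-1}$ and to $\psi_*\mathcal{F}'$: after straightening the half-leaf via Schoenflies' theorem and lifting to the universal cover of $D\setminus\{0\}$, the fact that any positively transverse arc meets the lifts of the half-leaf in only one direction forces a one-sided bound on the angular displacement $p_1(\widetilde{f}^n(\widetilde{z}))-p_1(\widetilde{z})$. This gives $\rho_{\psi(\hat V)}(\psi I \psi^{-1})\subset[0,+\infty]$ in one of the two cases, and $\subset[-\infty,0]$ in the other. By the definition $\rho_{\hat V}(I)=-\rho_{\psi(\hat V)}(\psi I \psi^{-1})$, the same kind of one-sided inclusion (with the opposite sign) then holds for $\rho_{\hat V}(I)$.

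The step I expect to be the main subtlety is promoting this bound from the small chart neighborhood $\hat V$ to the full neighborhood $\hat{U}_{\gamma_S}^S$, which may be much larger. The key observation is that $l$ extends all the way through $U_{\gamma_S}^S$: its lift to the universal cover of $U_{\gamma_S}^S$ together with its $\Z$-translates forms a family of pairwise disjoint simple arcs going from the lift of $\gamma_S$ to the end at infinity, partitioning the cover into fundamental strips bounded by curves transverse to the lifted isotopy. For any $z\in\hat{U}_{\gamma_S}^S$ with $z,f(z),\ldots,f^n(z)\in\hat{U}_{\gamma_S}^S$, the trajectory of $z$ under $I$ is homotopic (in $\A$) to a positively transverse arc whose lift meets the translates of $\widetilde{l}$ in only one direction, forcing $p_1(\widetilde{f}^n(\widetilde{z}))-p_1(\widetilde{z})$ to be bounded on one side by a constant independent of $n$. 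Dividing by $n$ and letting $n\to\infty$ then yields the same one-sided inclusion for $\rho_{\hat{U}_{\gamma_S}^S}(I)$, completing the proof.
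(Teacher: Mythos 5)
Your overall route is the paper's: the paper proves this lemma by the single remark that one argues ``as in Lemma \ref{lemmclosedleafaccu0}'', using the leaf produced by Lemma \ref{lemmaperturbation2}, and you correctly set this up (perturbed foliation $\mathcal{F}'$ still transverse by Proposition \ref{proptranversmallpertur}, the sign convention $\rho_{\hat V}(I)=-\rho_{\psi(\hat V)}(\psi I\psi^{-1})$, and the need to pass from a small neighborhood of $S$ to all of $\hat U_{\gamma_S}^S$, which is indeed the real content here since $\rho_{V'}(I)\subset\rho_{V}(I)$ only for $V'\subset V$).

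There is, however, a genuine gap exactly in the promotion step. You assert that the lifts $T^k(\widetilde l)$ of $l$ are arcs ``going from the lift of $\gamma_S$ to the end at infinity'', cutting the cover into fundamental strips, whence a one\--sided bound on $p_1(\widetilde f^{\,n}(\widetilde z))-p_1(\widetilde z)$ by a constant independent of $n$. But $\alpha(l)=\gamma_S$ is a closed leaf, so $l$ is disjoint from $\gamma_S$ and spirals onto it: writing $\Gamma_S=\pi^{-1}(\gamma_S)$, the lift $\widetilde l$ never meets $\Gamma_S$, it is only asymptotic to it with $p_1\to\pm\infty$ (and it may likewise wind infinitely around the end $S$). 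Consequently the complementary regions of $\bigcup_k T^k(\widetilde l)$ are spiralling bands of unbounded horizontal extent, and the bound you extract from ``crossings in only one direction'' is not uniform in the point: for an endpoint at distance $\delta$ from $\gamma_S$ the admissible displacement in the a priori forbidden direction grows like the number of turns of the spiral inside the $\delta$-collar, hence is unbounded as $\delta\to 0$. Endpoints arbitrarily close to $\gamma_S$ are \emph{not} excluded by the definition of $\rho_{\hat U_{\gamma_S}^S}(I)$ (only a neighborhood $W$ of $S$ is), so the estimate as stated does not yield the lemma. The repair needs two extra observations: first, a positively transverse arc whose endpoints lie in $U_{\gamma_S}^S$ cannot cross the closed leaf $\gamma_S$ (crossings of a closed leaf are one\--directional), so the whole concatenated transverse arc stays in $U_{\gamma_S}^S$; second, since $\gamma_S$ is $f$-free, either $f(\gamma_S)$ or $f^{-1}(\gamma_S)$ lies in $U_{\gamma_S}^S$, so (after possibly arguing on the backward segment) every point of an orbit segment contained in $U_{\gamma_S}^S$ except the first one lies in $f(U_{\gamma_S}^S)$, hence at distance at least some $d>0$ from $\gamma_S$. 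Discarding that first iterate (its contribution to the displacement is bounded because the original endpoint lies in the compact set $\adhe(U_{\gamma_S}^S)\setminus \inte(W)$), both endpoints of the remaining segment lie in a compact part of $U_{\gamma_S}^S$ at positive distance from $\gamma_S$ and from $S$; there the discrepancy between $p_1$ and the ``index'' of the spiral walls is uniformly bounded, and your crossing argument then gives, for each fixed $W$, a constant $C_W$ with $p_1(\widetilde f^{\,n}(\widetilde z))-p_1(\widetilde z)\geq -C_W$ (or $\leq C_W$), so $\rho_{\hat U_{\gamma_S}^S,W}(I)\subset[0,+\infty]$ (or $[-\infty,0]$) for every $W$, which suffices because $\rho_{\hat U_{\gamma_S}^S}(I)$ is the closure of the union over $W$.
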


\begin{proof}[\textit{End of the proof of Proposition \ref{propperturbation2}}:]

We have two cases.\\
\textit{Case 1:} The closed leaves of $\mathcal{F}$ do not accumulate at $N$ either. As above, we consider ``the first'' and ``the last'' closed leaves of
$\mathcal{F}$, denoted $\gamma_N$ and $\gamma_S$ respectively, i.e. $\gamma_N\subset U_{\gamma_S}^N$ and there are not closed leaves of $\mathcal{F}$
outside of the closed annulus delimited by $\gamma_N$ and $\gamma_S$. From Lemma \ref{lemmaperturbation2} (applied to both $N$ and $S$), we can assume that the foliation $\mathcal{F}$ admits a leaf $l_N$ whose $\omega$-limit set is reduced to $N$ and $\alpha$-limit set is $\gamma_N$ or  whose $\alpha$-limit set is reduced to $N$ and $\omega$-limit set is $\gamma_N$ and another leaf $l_S$ whose $\omega$-limit set is reduced to $S$ and $\alpha$-limit set is $\gamma_S$ or  whose $\alpha$-limit set is reduced to $S$ and $\omega$-limit set is $\gamma_S$.
To fix idea assume that $l_S$ satisfies $\omega(l_S)=S$ and $\alpha(l_S)=\gamma_S$. We have two subcases.\\

\textit{Subcase 1.1:} The leaf $l_N$ satisfies $\alpha(l_N)=N$ and $\omega(l_N)=\gamma_N$. Let $\Theta(A_{\gamma_N,\gamma_S})$ be the maximal invariant set of $A_{\gamma_N,\gamma_S}$ the closed annulus delimited by $\gamma_N$ and $\gamma_S$. We will prove the following assertions.
\begin{itemize}
  \item[(1)] The set $\mathcal{A}'$ is non-empty,
  \item[(2)] $\mathcal{F}$ goes from $\partial^-A$ to $\partial^+A$ on all annuli $A\in\mathcal{A}'$, and
  \item[(3)] $f$ satisfies the local intersection property at least at one end of $\A$.
\end{itemize}
Let us prove (1). Note that if $\mathcal{A}'$ is empty, then as in the proof of item (ii) of Lemma \ref{lemmapropperturbation1}, there exist an annulus $\hat{A}_{\gamma_N,\gamma_S}$ containing $A_{\gamma_N,\gamma_S}$ whose boundary components are the union of $\gamma_N$ and $f^n(\gamma_S)$, and a foliation  $\mathcal{F}'$ whose leaves in $\hat{A}_{\gamma_N,\gamma_S}$ are closed leaves. This foliation can be pasted with $\mathcal{F}$ on $U^N_{\gamma_N}$, and with $(f^n)_*\mathcal{F}$ on $U_{f^n(\gamma_S)}^S$ by to obtain a new foliation  $\mathcal{F}_{A_{\gamma_N,\gamma_S},n}$  which is transverse to the isotopy $I$. Hence, we can find a small perturbation of $\mathcal{F}_{A_{\gamma_N,\gamma_S},n}$  which is also transverse to $I$ and contains a leaf that joins the two ends of $\A$. This implies that $\arho(I)\subset [-\infty,0]$. This contradicts our hypotheses, completing the proof of (1).\\

Let us prove (2). If $\mathcal{F}$ goes from $\partial^+A$ to $\partial^-A$  on all annuli of $A\in \mathcal{A}'$, as in the proof of Lemma \ref{lemmapropperturbation1}, we can perturb $\mathcal{F}$ by obtain a foliation which admits a leaf joining the two ends of $\A$. This implies that $\arho(I)\subset [-\infty,0]$, contradicting  our hypotheses. On the hand hand, if there exists both an annulus $A^+$ on which $\mathcal{F}$ goes from $\partial^-A^+$ to $\partial^+A^+$ and another $A^-$ on which $\mathcal{F}$ goes from $\partial^+A^-$ to $\partial^-A^-$, Theorem C implies that $0$ belongs to $\rho_{\Theta(A_{\gamma_N,\gamma_S})}(I)$. This is a contradiction, because we are assuming that Case (a) does not hold. This completes the proof of (2).\\

Let us prove (3). Under our suppositions of the leaves $l_N$ and $l_S$ and above assertion (2), we have that $\rho_{\hat{U}^S_{\gamma_S}}(I)\cup \rho_{\hat{U}^N_{\gamma_N}}(I)\subset [-\infty,0]$ and $\rho_{\Theta(A_{\gamma_N,\gamma_S})}(I)\subset (0,+\infty)$. Assertion (3) will is a consequence of the following claim.

\begin{claim}
  Under our hypotheses. Suppose that $f$ does not satisfies the local intersection property at $S$. Then $f$ restricted to $\hat{U}_{\gamma_S}^S$ is a homothety. In particular $\rho_{\hat{U}_{\gamma_N}^S}(I)$ is included in $[0,+\infty]$.
\end{claim}
\begin{proof}[Proof of the claim] Let $\sui{\gamma}$ be a sequence of essential $f$-free curves such that
\begin{itemize}
  \item[(i)] for every integer $n\in \N$, we have $\gamma_{n+1} \subset U_{\gamma_n}^S \subset U_{\gamma_S}^S$, and
 \item[(ii)] $\bigcap_{n\in\N} U_{\gamma_n}^S = \emptyset$.
\end{itemize}
For each integer $n$, we write $\Theta(A_n)$ the maximal invariant set of $A_n$ the closed annulus delimited by $\gamma_S$ and $\gamma_n$. Firstly suppose that there exists a positive integer $n_0$ such that  $\Theta(A_{n_0})$ is not empty. Then $\rho_{\Theta(A_{n_0})}(I)\subset (-\infty,0)$ and we know that $\rho_{\Theta(A_{\gamma_N,\gamma_S})}(I)\subset (0,+\infty)$. Hence Theorem C implies that $0$ belongs to the rotation set relative to the maximal invariant set of the closed annulus delimited by  $\gamma_N$ and $\gamma_{n_0}$. This contradicts again the fact that Case (a) does not hold. Now suppose that for every integer $n$, $\Theta(A_{n})$ is empty. Considering $f$ instead of $f^{-1}$, we can assume that $\gamma_S$ satisfies $f(\gamma_S ) \subset U^S_{\gamma_S }$. Hence for each integer $n$, there is an integer $k(n)$ such that $f^{k(n)}(\gamma_S ) \subset U^S_{\gamma_n }$. Therefore
$$   \bigcap_{n\in\N} f^n(U_{\gamma_S}^S ) \subset \bigcap_{n\in\N} U^S_{\gamma_n }=\emptyset.  $$
This proves that $f$ is a homothety, completing the proof of the claim.
\end{proof}
Let us continue proving assertion (3). Suppose by contradiction that $f$ does not satisfy the local intersection property at none end of $\A$.
Then, the previous claim (applied to both end of $\A$) implies that $\arho(I)$ is contained in $[0,\infty]$. This contradicts our hypotheses, completing the proof of assertion (3).\\

To complete the proof of Proposition \ref{propperturbation2} in the case of Subcase 1.1, note that if $f$ satisfies the local intersection property at the other end of $\A$ then Case (c) holds. Otherwise, by the previous claim Case (b) holds, completing the proof.\\

\textit{Subcase 1.2:} The leaf $l_N$ satisfies $\alpha(l_N)=\gamma_N$ and $\omega(l_N)=N$. Let $\Theta(A_{\gamma_N,\gamma_S})$ be the maximal invariant set of $A_{\gamma_N,\gamma_S}$ the closed annulus delimited by $\gamma_N$ and $\gamma_S$. Then, one and only one of the following conditions holds.
\begin{itemize}
  \item[(1)] $\Theta(A_{\gamma_N,\gamma_S})$ is a non-empty set. Then the set $\mathcal{A}'$ is non-empty, and we can prove as in the proof of the previous subcase that $\mathcal{F}$ goes either from $\partial^-A$ to $\partial^+A$ or from $\partial^+A$ to $\partial^-A$ on all annuli $A\in\mathcal{A}'$.
\item[(2)] $\Theta(A_{\gamma_N,\gamma_S})$ is empty.
\end{itemize}
In both case, as in the proof of Lemma \ref{lemmapropperturbation1}, we can find a perturbation $\mathcal{F}'$ of $\mathcal{F}$ which has an unique closed leaf $\gamma$. From Lemma \ref{lemmaoneside} we know that $\rho_{\hat{U}_\gamma^N}(I)$ is contained in $[0,+\infty]$ and that $\rho_{\hat{U}_\gamma^S}(I)$ is contained in $[-\infty,0]$. Let us prove by contradiction that $f$ satisfies the local intersection property at least at one end of $\A$. Otherwise, since $\arho(I)$ contains both positive and negative real numbers,  we can find two $f$-free essential Jordan curves $\gamma^+$ and $\gamma^-$ close enough to the end $N$ and $S$ respectively, such that the maximal invariant set $\Theta(A_{\gamma^+,\gamma})$ (resp. $\Theta(A_{\gamma,\gamma^-})$) of the annulus delimited by $\gamma^+$ and $\gamma$ (resp. $\gamma$ and $\gamma^-$) is not empty. Moreover $\rho_{\Theta(A_{\gamma_+,\gamma})}(I)$ is contained in $[0,+\infty]$ and $\rho_{\Theta(A_{\gamma,\gamma_-})}(I)$ is contained in $[-\infty,0]$. Therefore Theorem $C$ implies that $0$ belongs to $\rho_{\Theta(A_{\gamma^+,\gamma^-})}(I)$. This contradicts that Case (a) does not hold. This proves that Case (b) holds, completing the proof.\\

This completes the proof of Proposition \ref{propperturbation2} in Case 1.\\

\textit{Case 2:} The closed leaves of $\mathcal{F}$ accumulate at $N$. As above, we consider ``the last'' closed leaf of $\mathcal{F}$ near to $S$, denoted $\gamma_S$. As above from Lemma \ref{lemmaperturbation2} we can assume that there is a leaf $l_S$ that satisfy $\omega(l_S)=S$ and $\alpha(l_S)=\gamma_S$.
 Since $\arho(I)$ contains both negative and positive real numbers the following assertions hold (the proofs are similar as in Case 1).
\begin{itemize}
  \item[(1)] The set $\mathcal{A}'$ is non-empty,
  \item[(2)] $\mathcal{F}$ goes from $\partial^-A$ to $\partial^+A$ on all annuli $A\in\mathcal{A}'$, and
  \item[(3)] $f$ satisfies the local intersection property at $S$.
\end{itemize}
Here again we can find a perturbation $\mathcal{F}'$ of $\mathcal{F}$ which has an unique closed leaf $\gamma$. From Lemma \ref{lemmaoneside} we know that $\rho_{\hat{U}_\gamma^N}(I)$ is contained in $[0,+\infty]$ and that $\rho_{\hat{U}_\gamma^S}(I)$ is contained in $[-\infty,0]$. This proves that Case $(b)$ holds. This completes the proof of Proposition \ref{propperturbation2} in Case 2.
\end{proof}

Proposition \ref{propperturbation2} is now proved.

\subsubsection{An intermediate result}
The purpose of this paragraph is to prove the following proposition and its corollary.

\begin{propo}\label{propcomplement}
  Let $I$, $f$ be as in Theorem B*. Assume furthermore that
  \begin{itemize}
  \item[$(A_1)$] the homeomorphism $f$ satisfies the local intersection property at $S$,
  \item[$(A_2)$] there exists a Jordan domain $\hat{V}$ containing $S$ (in $\hat{\A}$) such that $\hat{f}(\adhe (\hat{V}))\subset \hat{V}$, and
  \item[$(A_3)$] the set $\rho_{\hat{V}}(I)$ is included in $[0,+\infty]$.
\end{itemize}
  Then one of the following assertions holds.
  \begin{itemize}
    \item[(1)] there exists a compact set $K$ in $\A$ such that $0$ belongs to $\rho_K(I)$, or
    \item[(2)] there exists a compact set $K$ in $\A$ contained in $\A\setminus \hat{V}$ such that the set $\rho_K(I)$ intersects $[0,+\infty]$.
  \end{itemize}
\end{propo}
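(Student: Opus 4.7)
My plan is to argue by contrapositive: assuming assertion $(1)$ fails, I will establish $(2)$. The strategy follows the transverse-foliation methodology used throughout Section 4.

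First, if $f$ admits a contractible fixed point $z\in\A$, then $K=\{z\}$ is a compact set with $0\in \rho_K(I)$, so $(1)$ holds. Thus I assume no contractible fixed points exist in $\A$, and apply Theorem \ref{theoexisfoliation} to obtain an oriented foliation $\mathcal{F}$ on $\A$ transverse to $I$, extended to $\hat{\A}$ by declaring $\{N,S\}$ to be singularities (locally transverse at each end by Proposition \ref{theolocallytrans}).

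Next, I combine hypothesis $(A_1)$ with Property $(3)$ of Subsection \ref{sectdynamicfoliation}: every closed leaf of $\mathcal{F}$ is $f$-free, but the local intersection property at $S$ prevents $f$-free essential Jordan curves sufficiently close to $S$, so the closed leaves of $\mathcal{F}$ do not accumulate at $S$. The non-accumulated case from Subsection \ref{sectdynamicfoliation} yields a leaf $l_S$ of $\mathcal{F}$ with $\alpha(l_S)=S$ or $\omega(l_S)=S$. Using the argument of Lemma \ref{lemmclosedleafaccu0} transported to the end $S$ via a chart, hypothesis $(A_3)$ fixes the orientation of $l_S$, and I may assume $\alpha(l_S)=S$.

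Hypothesis $(A_2)$ implies that $\partial\hat{V}$ is an essential $f$-free Jordan curve satisfying $f(\partial\hat{V})\subset\hat{V}$. I then apply the perturbation machinery of Lemmas \ref{Fhasnoclosedleaves} and \ref{lemmaperturbation2} to modify $\mathcal{F}$ inside $\A\setminus\hat{V}$, producing a foliation $\mathcal{F}'$ still transverse to $I$ by Proposition \ref{proptranversmallpertur}, whose closed-leaf structure in $\A\setminus\hat{V}$ can be analyzed as in Lemma \ref{lemmapropperturbation1}. The resulting family $\mathcal{A}'$ of annuli in $\A\setminus\hat{V}$ with non-empty maximal invariant set is locally finite. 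If any such annulus is positive, then $(2)$ follows by taking $K$ to be the corresponding maximal invariant set. Otherwise, a further perturbation (mimicking the third step in the proof of Lemma \ref{lemmapropperturbation1}) produces either a leaf joining $N$ to $\partial\hat{V}$, which together with $l_S$ forces $\arho(I)\subset [-\infty,0]$, contradicting the hypothesis that $\arho(I)$ contains positive reals; or a configuration of three pairwise disjoint $f$-free essential Jordan curves to which Theorem C applies, furnishing an $f$-invariant compact $K$ with $0\in \rho_K(I)$, contradicting the failure of $(1)$.

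The main obstacle will be making the perturbation near $\partial\hat{V}$ precise: the perturbed foliation must remain transverse to $I$ \emph{and} interact compatibly with both the distinguished leaf $l_S$ approaching $S$ and the forward-invariant boundary $\partial\hat{V}$. This ``boundary compatibility'' is what distinguishes the current proposition from the more symmetric analyses in Section 4, where perturbations can be carried out freely on both sides of the distinguished annulus.
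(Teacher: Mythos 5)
Your route is genuinely different from the paper's, but as written it has concrete gaps that keep it from being a proof. First, you never actually deliver assertion (2): in your ``positive annulus'' branch the compact set you propose (the maximal invariant set of an annulus bounded by closed leaves) is not shown to lie in $\A\setminus \hat{V}$. Nothing in the hypotheses prevents closed leaves of $\mathcal{F}$ from entering $\hat{V}$ away from the end $S$ --- hypothesis $(A_1)$ only forbids $f$-free essential curves in some neighborhood of $S$ --- and ``modifying $\mathcal{F}$ inside $\A\setminus\hat{V}$'' does not confine the relevant invariant sets to $\A\setminus\hat{V}$. Second, the contradiction in your ``otherwise'' branch is not valid: a leaf joining $N$ to $\partial\hat{V}$ together with a separate leaf $l_S$ emanating from $S$ does not force $\arho(I)\subset[-\infty,0]$. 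The bound used in Lemma \ref{Fhasnoclosedleaves} and Lemma \ref{lemmclosedleafaccu0} comes from a \emph{single} leaf joining the two ends (a Brouwer line in the universal cover); orbit segments crossing $\hat{V}$ are not controlled by your two disjoint leaves, and this is exactly where $(A_2)$ and $(A_3)$ must enter quantitatively. In your outline $(A_2)$ is never used, and $(A_3)$ is used only to ``fix the orientation of $l_S$'', which it does not do: the lemma gives the implication (leaf orientation $\Rightarrow$ half-line rotation set), not its converse. Similarly, the appeal to Theorem C requires two adjacent annuli whose maximal invariant sets have rotation sets of opposite signs, and you have not produced the negative one. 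Finally, the ``boundary compatibility'' of the perturbation near $\partial\hat{V}$, which you flag as the main obstacle, is not a routine detail here; without it the whole scheme is unsupported.

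For comparison, the paper's proof of this proposition uses no foliation at all. It forms the forward-invariant continuum $\Theta^+(\hat{V})=\bigcap_{n}\hat{f}^n(\hat{V})$ and the backward-invariant continuum $\Theta^-(\hat{V}^c)=\bigcap_{n}\hat{f}^{-n}(\hat{V}^c)$, shows via $(A_1)$ that the first is not reduced to $S$ and via $(A_3)$ plus the existence of negative rotation that the second is not reduced to $N$, and then, in the universal cover, takes a simple arc $\alpha$ joining lifts of these two sets and crossing the lift of $\fron(V)$ exactly once. Examining whether $\widetilde{f}^{n}(\dot{\alpha})$ meets $\alpha$, lies to its left, or lies to its right (for infinitely many $n$) yields, respectively, assertion (1), assertion (1) again (here $(A_2)$ and $(A_3)$ force the limit rotation to be $0$), or assertion (2). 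If you want to salvage your approach, you would need to replace the two-leaf argument by an honest control of orbits passing through $\hat{V}$, which essentially amounts to redoing this arc argument.
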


\begin{remar}
  We can replace $(A_2)$ by the hypothesis $\adhe (\hat{V}) \subset \hat{f}(\hat{V})$ by changing the isotopy $I$ by $I^{-1}$. We can replace $(A_3)$ by the
  hypothesis $\rho_{\hat{V}}(I) \subset [-\infty,0]$ by conjugating $I$ by the orientation reversing homeomorphism $(u,r)\mapsto (-u,r)$ of the annulus $\A$.
\end{remar}
Let us put $\Theta^+(\hat{V})$ (resp. $\Theta^-(\hat{V}^c)$) the forward (resp. backward) $\hat{f}$-invariant set of $\hat{V}$ (resp. $\hat{V}^c)$, i.e.:
$$  \Theta^+(\hat{V}):= \bigcap_{n\in \N} \hat{f}^n(\hat{V}) \quad \text{ and } \quad \Theta^-(\hat{V}^c):= \bigcap_{n\in \N} \hat{f}^{-n}(\hat{V}^c).   $$

Using hypothesis $(A_2)$, one proves that the set $\Theta^+(\hat{V})$ (resp. $\Theta^-(\hat{V}^c)$) is a continuum, its complement is a connected set of
$\hat{\A}$ and contains $S$ (resp. $N$). Moreover they satisfy the following properties.
\begin{lemm}\label{lemmatheta+non}
  Under the hypotheses of Proposition \ref{propcomplement}, we have
  \begin{itemize}
    \item[(a)] the set $\Theta^+(\hat{V})$ is not reduced to $S$, and
     \item[(b)] the set $\Theta^-(\hat{V}^c)$ is not reduced to $N$.
  \end{itemize}
\end{lemm}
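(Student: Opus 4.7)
The strategy for (a) is to derive a contradiction from the local intersection property $(A_1)$. Assuming $\Theta^+(\hat{V})=\{S\}$, the hypothesis $(A_2)$ iterates to give a strictly nested family of open Jordan domains $\hat{f}^n(\hat{V})$ containing $S$, whose closures form a decreasing sequence of compact sets with intersection $\{S\}$. For each $n$, the boundary $\hat{f}^n(\partial\hat{V})$ is a Jordan curve in $\A$ separating $S$ from $N$, hence essential, and it is $f$-free because $\hat{f}(\hat{f}^n(\adhe(\hat{V})))=\hat{f}^{n+1}(\adhe(\hat{V}))\subset\hat{f}^n(\hat{V})$ is disjoint from $\hat{f}^n(\partial\hat{V})$. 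For $n$ large enough $\hat{f}^n(\hat{V})$ lies inside the neighborhood of $S$ supplied by $(A_1)$, and then the essential curve $\hat{f}^n(\partial\hat{V})$ must meet its $f$-image, contradicting that it is $f$-free.

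For (b) the idea is to combine $(A_3)$ with the assumption of Theorem B$^*$ that $\arho(I)$ contains negative reals. Suppose $\Theta^-(\hat{V}^c)=\{N\}$. Taking complements in $(A_2)$ yields $\hat{f}^{-1}(\hat{V}^c)\subset\hat{V}^c$, so the sets $\hat{f}^{-n}(\hat{V}^c)$ form a decreasing family of compacta with intersection $\{N\}$; hence any compact $K\subset\A$ (which avoids $N$) is absorbed by $\hat{V}$, meaning $f^M(K)\subset\hat{V}$ for some $M\geq 1$. Now pick $r<0$ in $\arho(I)$, realized by a compact $K$, points $z_k\in K$, integers $n_k\to\infty$ with $f^{n_k}(z_k)\in K$ and $\rho_{n_k}(z_k)\to r$. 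Setting $z'_k=f^M(z_k)$, the iterates $z'_k,f(z'_k),\ldots,f^{n_k}(z'_k)$ all lie in $\hat{V}$ by forward invariance of $\hat{V}$, while the endpoints $z'_k$ and $f^{n_k}(z'_k)=f^M(f^{n_k}(z_k))$ remain in the fixed compact set $f^M(K)\subset\hat{V}\setminus\{S\}$.

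It then remains to compare rotation numbers. Using the lift $\tilde{z}'_k=\tilde{f}^M(\tilde{z}_k)$ and the fact that $\tilde{w}\mapsto p_1(\tilde{f}^M(\tilde{w}))-p_1(\tilde{w})$ is bounded on the preimages of the compact $K\cup f^M(K)$, a short telescope gives $\rho_{n_k}(z'_k)=\rho_{n_k}(z_k)+O(1/n_k)\to r$. Taking $W$ to be a neighborhood of $S$ in $\hat{\A}$ disjoint from $f^M(K)$, the shifted sequence fits the definition of $\rho_{\hat{V},W}(I)$, so $r\in\rho_{\hat{V}}(I)$, contradicting $(A_3)$. I expect the main care to be needed in this final step: matching the lifts of $z_k$ and $z'_k$ so that the rotation numbers can be directly compared, and respecting the sign convention $\rho_{\hat{V}}(I)=-\rho_{\psi(\hat{V})}(\psi I\psi^{-1})$, built in precisely to compensate for the orientation reversal between the annular $\T{1}$-direction and a local chart at the south end, so that a negative annular rotation really does yield a negative element of $\rho_{\hat{V}}(I)$.
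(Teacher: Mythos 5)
Your proposal is correct and follows essentially the same route as the paper: for (a) you run the paper's argument in contrapositive form (the iterated boundary curves $\hat{f}^n(\partial\hat{V})$ are $f$-free essential curves, so by $(A_1)$ they cannot all sink into a neighborhood of $S$, i.e.\ $\bigcap_n\hat{f}^n(\adhe(\hat{V}))\neq\{S\}$), and for (b) you use the same absorption argument, merely spelling out the step the paper compresses into $\rho_K(I)\subset\rho_{\hat{f}^{-n}(\hat{V})}(I)=\rho_{\hat{V}}(I)\subset[0,+\infty]$ by pushing the orbit segments forward by $f^M$ and comparing rotation numbers, with the sign convention handled as the paper implicitly does.
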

\begin{proof}
  Let us prove (a). Since $f$ satisfies the local intersection property at $S$, there is an open neighborhood $\hat{U}$ of $S$ which does not contain any $f$-free essential Jordan curve. Hence for every integer $n\geq 0$, the curve $\hat{f}^n(\partial \hat{V})$ is not contained in $\hat{U}$, because it is a free essential Jordan curve by Hypothesis $(A_2)$. Hence and Hypothesis $(A_2)$, $(\hat{f}^n(\adhe(\hat{V})\setminus \hat{U}))_{n\in\N}$ is a decreasing sequence of compact sets in $\A$, and so $$ \emptyset \neq  \bigcap_{n\in\N} \hat{f}^n(\adhe(\hat{V})\setminus \hat{U}) \subset \Theta^+(\hat{V}). $$ This proves item (a).\\

  Now let us prove (b). Suppose by contradiction that $ \Theta^-(\hat{V}^c)=\{N\}$. So, by Hypothesis $(A_2)$, for every compact set $K$ in $\A$, there exists an integer $n_K=n\geq 0$ such that $K$ is included in $\hat{f}^{-n}(\hat{V})$. Hence and by Hypotheses $(A_2)$ and $(A_3)$, we have
  $$  \rho_{K}(I)\subset \rho_{\hat{f}^{-n}(\hat{V})}(I)= \rho_{\hat{V}}(I)\subset [0,+\infty]. $$
This contradicts the fact that $\arho(I)$ contains some negative real number. This proves item (b).
\end{proof}

Recall that $\fonc{\pi}{\widetilde{\A}}{\A}$ is the universal covering of $\A$, and $\fonc{p_1}{\widetilde{\A}}{\R}$ the projection onto
the first coordinate.

\begin{proof}[\textit{Proof of Proposition \ref{propcomplement}:}]
  Let us consider $V:=\hat{V}\cap \A$. Let $\Gamma$, $\widetilde{V}$, $\Theta^+(\widetilde{V})$ and $\Theta^-(\widetilde{V}^c)$ be lifts of $\fron(V)$, $V$,
  $\Theta^+(\hat{V})\setminus\{S\}$ and $\Theta^-(\hat{V}^c)\setminus\{N\}$ respectively.

Let $X$ be the set $\widetilde{\A}\setminus ( \Theta^+(\widetilde{V})\cup \Theta^-(\widetilde{V}^c))$. The set $X$ is open, connected and
$\widetilde{f}$-invariant, i.e. $\widetilde{f}(X)=X$, where $\widetilde{f}$ is the lift of $f$ associated to $I$. Let $\fonc{\alpha}{[0,1]}{\widetilde{\A}}$
be a simple arc satisfying:
\begin{itemize}
  \item[(i)] the endpoints $\alpha(0)$ and $\alpha(1)$ lie in $\Theta^+(\widetilde{V})$ and $\Theta^-(\widetilde{V}^c)$ respectively.
  \item[(ii)] If $\dot{\alpha}$ denotes the interior of $\alpha$, i.e. $\dot{\alpha}$ is the image of $(0,1)$ under $\alpha$, then the set $\dot{\alpha}$ is included in $X$.
  \item[(iii)] The intersection $\alpha \cap \Gamma $ is a singleton.
\end{itemize}
The arc $\alpha$ separates $X$, because $\pi(X)$ is an open annulus and $\alpha$ joins its two ends (see the paragraph that follows the definitions of
$\Theta^+(\hat{V})$ and $\Theta^-(\hat{V}^c$)). Let us denote $R_{X}(\alpha)$ (resp. $L_X(\alpha)$) the connected component of $X\setminus \alpha$ on the
right (resp. left) of $\alpha$ (considering the standard orientation). Since $\dot{\alpha}$ is contained in $X$, and $X$ is $\widetilde{f}$-invariant, one
deduces that for every integer $n\geq 0$, the set $\widetilde{f}^n(\dot{\alpha})$ is included in $X$, and so we have three possibilities:
  \begin{itemize}
    \item $\widetilde{f}^n(\dot{\alpha})\cap \alpha \neq \emptyset$\;;
    \item $\widetilde{f}^n(\dot{\alpha})\subset L_X(\alpha) $; or
    \item $\widetilde{f}^n(\dot{\alpha})\subset R_X(\alpha)$.
  \end{itemize}
To conclude the proof it suffices consider the following three cases.\\

 \textit{Case 1:} Suppose that there exists a sequence of integers $(n_k)_{k\in\N}$ which converges to $+\infty$ such that for all $k\in \N$ we have
 $\widetilde{f}^{n_k}(\dot{\alpha})\cap \alpha \neq \emptyset$. Then $0$ belongs to $\rho_{\pi(\alpha)}(I)$. This proves that Assertion (1) of Proposition \ref{propcomplement} holds.\\

  \textit{Case 2:} Suppose that there exists a sequence of integers $(n_k)_{k\in\N}$ which converges to $+\infty$ such that for all $k\in \N$ we have
  $\widetilde{f}^{n_k}(\dot{\alpha})\subset L_X(\alpha)$. Let $\Gamma^r$ be the connected component of $\Gamma\setminus \alpha$ contained in $R_X(\alpha)$.
  From the facts that $\widetilde{f}^{n_k}(\alpha\cap \Gamma)\in L_X(\alpha)$, $\widetilde{f}^{n_k}(\Gamma^r)$ is a connected set in $X$ which is unbounded
  to the right and contained in $\widetilde{V}$ (by Hypothesis $(A_2)$), we deduce that $$  \widetilde{f}^{n_k}(\Gamma^r)\cap
  \alpha \cap \widetilde{V} \neq \emptyset.$$
  Therefore, there exist a real constant $M>0$ independent of $k$ and a sequence of points $\suii{\widetilde{z}}{k}$, where $\widetilde{z}_k\in \Gamma^r$ for
   all $k\in\N$ such that $$p_1(\widetilde{f}^{n_k}(\widetilde{z}_k))<p_1(\widetilde{z}_k)+M.$$ Hence, if $\rho$ is a limit point of the sequence
   $\left(\rho_{n_k}(\widetilde{z}_k)\right)_{k\in\N}$, where
   $$  \rho_{n_k}(\widetilde{z}_k):=\frac{1}{n_k}(p_1(\widetilde{f}^{n_k}(\widetilde{z}_k))-p_1(\widetilde{z}_k)),$$ then $\rho\leq 0$. This implies
   that $\rho=0$, because $\rho_{\hat{V}}(I)\subset [0,+\infty]$. Thus $0$ belongs to $\rho_K(I)$ where $K=\fron(V)\cup (\pi(\alpha)\cap V)$. This proves that Assertion (1) of Proposition \ref{propcomplement} holds.\\

   \textit{Case 3:} Suppose that there exists a sequence of integers $(n_k)_{k\in\N}$ which converges to $+\infty$ such that for all $k\in \N$ we have
  $\widetilde{f}^{n_k}(\dot{\alpha})\subset R_X(\alpha)$. Let $\Gamma^r$ be the connected component of $\Gamma\setminus \alpha$ contained in $R_X(\alpha)$.
  Since for all  $k\in\N$, the arc $\widetilde{f}^{n_k}(\alpha)$ meets $\Theta^+(\widetilde{V})$ and $\Theta^-(\widetilde{V}^c)$, we deduce that
  $$\widetilde{f}^{n_k}(\dot{\alpha} \cap \widetilde{V}^c)\cap \Gamma^r \neq \emptyset.$$
Therefore, there exist a real constant $M>0$ independent of $k$ and a sequence of points $\suii{\widetilde{z}}{k}$, where
$\widetilde{z_k}\in \alpha \cap \widetilde{V}^c$ such that $$p_1(\widetilde{z}_k)<  p_1(\widetilde{f}^{n_k}(\widetilde{z}_k)) +M.$$ If $\rho$ is a limit
point of the sequence $\left(\rho_{n_k}(\widetilde{z}_k)\right)_{k\in\N}$, where
   $$  \rho_{n_k}(\widetilde{z}_k):=\frac{1}{n_k}(p_1(\widetilde{f}^{n_k}(\widetilde{z}_k))-p_1(\widetilde{z}_k)),$$ then $\rho\geq 0$.
   Hence, if $K$ is the compact set $\fron(V)\cup (\pi(\alpha)\cap V^c)$ of $\A$, then $\rho_K(I)$ contains positive real numbers. This proves that Assertion (2) of Proposition \ref{propcomplement} holds. This completes the proof of Proposition \ref{propcomplement}.
\end{proof}

As a consequence we have the following corollary.

\begin{coroll}\label{coropropcomplement}
  Let $I=(f_t)_{t\in [0,1]}$, $f$ be as in Theorem B*. Assume furthermore that
  \begin{itemize}
  \item[$(A'_1)$] The homeomorphism $f$ satisfies the local intersection property at $S$.
  \item[$(A'_2)$] There exist two $f$-free essential Jordan curves $\gamma_N$ and $\gamma_S$ with $\gamma_N\subset U_{\gamma_S}^N$ such that the closed annulus $A$ delimited by  $\gamma_N$ and $\gamma_S$ has maximal invariant set $\Theta(A)$ non-empty.
  \item[$(A'_3)$] the set $\rho_{\hat{U}_{\gamma_S}^S}(I)$ is included in $[0,+\infty]$ and $\rho_A(I)$ is contained in $[-\infty,0]$.
\end{itemize}
  Then there exists a compact set $K$ in $\A$ such that $0$ belongs to $\rho_K(I)$.
\end{coroll}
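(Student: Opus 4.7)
The plan is to apply Proposition \ref{propcomplement} with $\hat V:=\hat U^S_{\gamma_S}$ and, if only Assertion (2) of that proposition holds, to re-run its proof with an arc $\alpha$ chosen so that the compact set produced is forced to lie inside $A$, where the extra hypothesis $\rho_A(I)\subset[-\infty,0]$ can be brought to bear.

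\textbf{Normalization and first application.} Since $\gamma_S$ is $f$-free, we have either $\hat f(\adhe\hat U^S_{\gamma_S})\subset \hat U^S_{\gamma_S}$ or $\adhe\hat U^S_{\gamma_S}\subset \hat f(\hat U^S_{\gamma_S})$. In the second case, we simultaneously replace $I$ by $I^{-1}$ and conjugate by the orientation-reversing involution $(u,r)\mapsto(-u,r)$ of $\A$; by the remark following Proposition \ref{propcomplement}, each of these operations negates rotation sets, so their composition preserves hypotheses $(A'_1),(A'_2),(A'_3)$ and puts us in the first case. Taking $\hat V=\hat U^S_{\gamma_S}$, the hypotheses $(A_1),(A_2),(A_3)$ of Proposition \ref{propcomplement} are satisfied and the proposition gives a dichotomy. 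Assertion (1) yields the corollary at once, so from now on we focus on Assertion (2).

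\textbf{Controlled arc and conclusion.} The three-case analysis in the proof of Proposition \ref{propcomplement} applies to any arc $\alpha$ verifying its conditions (i)--(iii), and the compact set $K$ it produces depends on $\alpha$. The plan is to choose $\alpha$ so that the portion of $\pi(\alpha)$ lying in $V^c$ is contained in $A$. Concretely, fix a lift $\widetilde A$ of $A$, bounded below by a lift $\Gamma$ of $\gamma_S$ and above by a lift of $\gamma_N$. Since $\Theta(A)\neq\emptyset$ and $\Theta(A)\subset\Theta^-(\hat V^c)$ (because orbits in $\Theta(A)$ stay in $A\subset\hat V^c$), we pick $\alpha(1)$ a lift in $\widetilde A$ of a point of $\Theta(A)$; pick $\alpha(0)$ a lift of a point of $\Theta^+(\hat V)\setminus\{S\}$ below $\Gamma$ (this set is non-empty by Lemma \ref{lemmatheta+non}(a)); and we join them by a simple arc crossing $\Gamma$ exactly once, with its upper half sitting in $\widetilde A$ and its interior avoiding $\Theta^+(\widetilde V)\cup\Theta^-(\widetilde V^c)$. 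With this $\alpha$, Cases 1 and 2 of the proof of Proposition \ref{propcomplement} deliver $0\in\rho_K(I)$ directly. In Case 3, $K=\gamma_S\cup(\pi(\alpha)\cap V^c)\subset\gamma_S\cup A=A$ by construction, so $\rho_K(I)\subset\rho_A(I)\subset[-\infty,0]$ by $(A'_3)$; combined with the Case 3 output that $\rho_K(I)$ contains a non-negative real, this forces $0\in\rho_K(I)$.

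\textbf{Main obstacle.} The delicate step is the construction of this constrained arc $\alpha$: one must connect $\Gamma$ to the chosen endpoint in $\Theta^-(\widetilde V^c)\cap\widetilde A$ by a path in $\widetilde A$ whose interior is disjoint from $\Theta^-(\widetilde V^c)$, and symmetrically on the lower side of $\Gamma$. Both rely on the fact that $\Theta^+(\widetilde V)$ and $\Theta^-(\widetilde V^c)$ are closed subsets of the respective simply connected half-strips, that $\Gamma$ is disjoint from $\Theta^-(\widetilde V^c)$ (since $\Gamma$ lifts $\gamma_S=\fron V$ which is $f$-free and hence not forward-invariant), and that the complements remain path-connected to $\Gamma$; this should follow from a standard topological argument but needs to be justified carefully.
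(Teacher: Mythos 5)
Your strategy is genuinely different from the paper's, and it can be made to work; but the step you yourself flag is where the real content lies, and as written it is not correct. For comparison: the paper never re-opens the proof of Proposition \ref{propcomplement}. It normalizes so that $\gamma_N$ is attracting, replaces $\A$ by the $f$-invariant open annulus $\A'=\bigcup_{n\geq 0}f^{-n}(U_{\gamma_N}^S)$, and applies Proposition \ref{propcomplement} there as a black box; in the second alternative the compact set $K'\subset \A'\setminus U_{\gamma_S}^S$ is covered by finitely many sets $f^{-i}(A)$, and since $\rho_{f^{-i}(A)}(I)=\rho_A(I)$ one concludes that $\rho_A(I)$ meets $[0,+\infty]$, hence contains $0$ by $(A'_3)$. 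Your constrained arc achieves the same localization \emph{inside} the proof of the proposition instead of by shrinking the ambient annulus, and the final arithmetic ($\rho_K(I)\subset\rho_A(I)\subset[-\infty,0]$ together with a non-negative element of $\rho_K(I)$ forces $0\in\rho_K(I)$) is the same in both arguments. The paper's version buys freedom from any arc-construction or accessibility issue; yours avoids having to restrict $f$ and $I$ to a sub-annulus. Your normalization making $\gamma_S$ attracting (simultaneous inversion and conjugation by $(u,r)\mapsto(-u,r)$) is fine, as are Cases 1 and 2, which do not see the extra constraint on $\alpha$.

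The gap is in the construction of $\alpha$: you prescribe \emph{both} endpoints (a lift of a chosen $y\in\Theta(A)$, and a chosen point of $\Theta^+(\hat V)\setminus\{S\}$) and then ask for a simple arc joining them whose interior avoids $\Theta^+(\widetilde V)\cup\Theta^-(\widetilde V^c)$. This is not possible in general: these invariant sets may have non-empty interior, and an interior point is not accessible from the complement, so no arc ending at it can have interior disjoint from the set. This is exactly why the paper's Proposition \ref{propcomplement} only requires the endpoints to \emph{lie in} $\Theta^+(\widetilde V)$ and $\Theta^-(\widetilde V^c)$ without prescribing them, and indeed nothing in its proof uses more than that (together with (ii), (iii)). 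The repair is routine and keeps your scheme intact: do not prescribe the endpoints. First take any arc in $\adhe(\widetilde V)=\widetilde V\cup\Gamma$ from a point of $\Theta^+(\widetilde V)$ (non-empty by Lemma \ref{lemmatheta+non}) to $\Gamma$ and keep the subarc between its last visit to $\Theta^+(\widetilde V)$ and the first subsequent visit to $\Gamma$; call $\widetilde w\in\Gamma$ its endpoint, and note its interior avoids $\Theta^-(\widetilde V^c)$ automatically since $\Theta^-(\widetilde V^c)\subset\widetilde V^c$ and $\Gamma\cap\Theta^-(\widetilde V^c)=\emptyset$ ($\gamma_S$ being attracting). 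Then set $Y:=\Theta^-(\widetilde V^c)\cap\widetilde A$, which is non-empty (it contains the lifts of $\Theta(A)$) and disjoint from $\Gamma$; take an arc in $\widetilde A$ from $\widetilde w$ to a point of $Y$ meeting $\Gamma$ only at $\widetilde w$ (go up a thin collar of $\Gamma$ contained in $\widetilde A$ and disjoint from $\Theta^-(\widetilde V^c)$, then travel in $\widetilde A\setminus\Gamma$), and truncate it at its first hit of $Y$. The concatenation is a simple arc satisfying (i)--(iii) of the proposition's proof with, in addition, $\alpha\cap\widetilde V^c\subset\widetilde A$, which is all your Case 3 argument needs. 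With this substitute for your construction the proof is correct.
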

\begin{proof}
  Changing $I$ by $I^{-1}$ and conjugating $I$ by the orientation reversing homeomorphism $(u,r)\mapsto (-u,r)$ of the annulus $\A$, we may suppose that $\gamma_N$ is attracting, i.e. $f(\gamma_N)\subset U_{\gamma_N}^S$ and that hypothesis $(A'_3)$ holds. Let us put $A_0=\adhe(U_{\gamma_N}^S)$ and for every integer $n\geq 1$ we define  $A_n=f^{-n}(U_{\gamma_N}^S)$. Note that $\A'=\cup_{n\in\N} A_n$ is an open annulus and $f(\A')=\A'$. Let us put $f'=f\vert_{\A'}$. We note that $f'$ is isotopic to the identity of $\A'$, and since $f'=f\vert_{\A'}$ there is an isotopy $I'$ on the annulus $\A'$ from the identity of $\A'$ to the homeomorphism $f'$ such that for every compact set $K$ contained in $\A'$, we have $\rho_K(I)=\rho_K(I')$. By Proposition \ref{propcomplement}, applied to $I'$ and $f'$ on $\A'$, we know that
  \begin{itemize}
    \item[(1)] there exists a compact set $K'$ in $\A'$ such that $0$ belongs to $\rho_K(I')$, or
    \item[(2)] there exists a compact set $K'$ in $\A'$ contained in $\A'\setminus U_{\gamma_{S}}^S$ such that the set $\rho_{K'}(I')$ intersects $[0,+\infty]$.
  \end{itemize}
If assertion (1) holds we are done. Suppose now that assertion (2) holds. Since $K'$ is a compact set contained in $\A'\setminus U_{\gamma_{S}}^S$, there exists an integer positive $n$ such that $K'$ is included in $A\cup \cdots \cup f'^{-n}(A)$. Hence $\rho_{K'}(I')$ is included in the union of $\rho_{f'^{-i}(A)}(I')$. Since for every $i$, $\rho_{f'^{-i}(A)}(I')= \rho_{A}(I')$  and $\rho_{A}(I)=\rho_{A}(I')$, we deduce that $\rho_{A}(I)$ intersects $[0,+\infty]$. Moreover, by hypothesis, we know that $\rho_A(I)$ is included in $[-\infty,0]$. Therefore, $0$ belongs to $\rho_A(I)$. This completes the proof.
\end{proof}

\subsection{Proof of Theorem B*}

In this subsection we finish the proof of Theorem B*, using Theorem C. Let $I$ an isotopy in $\homeoa$ from the identity to a homeomorphism $f$. Suppose that
$\arho(I)$ contains both positive and negative real numbers. One considers the different cases of Proposition \ref{propperturbation2}. In case (a) we are done.\\
   Suppose now that case (b) of Proposition \ref{propperturbation2} holds. Let $g$ be the orientation reversing homeomorphism of $\A$ defined by
   $(u,r) \mapsto (-u,r)$. Considering $I^{-1}$ instead of $I$, or conjugating $I$ by $g$ or considering $gI^{-1}g^{-1}$ instead of $I$, we can assume that
   hypotheses $(A_1)$-$(A_3)$ of Proposition \ref{propcomplement} hold. Thus, one of the following assertions holds.
   \begin{itemize}
     \item[(1)] There exists a compact set $K$ of $\A$ such that $0$ belongs to $\rho_{K}(I)$, or
     \item[(2)] there exists a compact set $K$ contained in $\adhe(U_{\gamma}^N)$ of $\A$ such that the rotation set $\rho_{K}(I)$ intersects $[0,+\infty]$.
     \end{itemize}
   If assertion (1) holds we are done. Suppose now that assertion (2) holds. Since $K\subset \adhe (U_{\gamma}^N)$ and
   $\hat{f}(\adhe(U_\gamma^S))\subset U_\gamma^S$, we deduce that $\rho_K(I)\subset \rho_{\hat{U}_{\gamma}^N}(I)\subset [-\infty,0]$, this last inclusion holds because case (b) of Proposition \ref{propperturbation2} and hypothesis $(A_3)$ of Proposition \ref{propcomplement} hold. Therefore $0$ belongs
   to $\rho_K(I)$. This completes the proof when the case (b) of Proposition \ref{propperturbation2} holds. \\
 At last suppose that Case (c) of Proposition \ref{propperturbation2} holds. Now, by Corollary \ref{coropropcomplement}, there exists a compact set $K$ of $\A$ such that $0$ belongs to $\rho_{K}(I)$. This completes the proof of Theorem B*.

\section{Proof of Theorem D}\label{sectheoremD}

The purpose of this section is to prove Theorem D assuming Theorem C. Let $I$, $f$, $\gamma^+$ and $\gamma^-$ be as in the hypotheses of the theorem. Without loss of generality, we suppose that $\gamma^-$ is contained in $U_{\gamma^+}^S$. Let $\Theta(A)$ be the maximal invariant set of the annulus $A$ delimited by $\gamma^+$ and $\gamma^-$. As we have seen in § \ref{subsprooftheoA}, we can suppose that $\rho_{\Theta(A)}(I)$ contains both positive and negative real numbers. We want to prove that $0$ belongs to $\rho_{\Theta(A)}(I)$.\\

An $f$-free essential Jordan curve $\gamma$ in $\A$ will be said to be attracting if $f(\gamma)\subset U_\gamma^S$ and repulsing if $f^{-1}(\gamma)\subset U_\gamma^S$. The proof of the next lemma is easy.

\begin{lemm}\label{lemmaattrrepuls}
Let $\gamma$ and $\gamma'$ be two disjoint $f$-free essential curves in $\A$. Suppose that $\gamma$ is attracting and $\gamma'$ is repulsing. Then the maximal invariant set of the closed annulus delimited by $\gamma$ and $\gamma'$ is a connected set.
\end{lemm}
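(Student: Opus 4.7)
The plan is to realize $\bar A$, the closed annulus delimited by $\gamma$ and $\gamma'$, as a trapping region for $f$ (in one time direction) and to identify the maximal invariant set $\Theta(\bar A)$ with a nested intersection of closed annuli. Connectedness will then follow from the classical fact that a decreasing intersection of nonempty compact connected subsets of a Hausdorff space is connected.

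First, since $\Theta(\bar A)$ is unchanged when $f$ is replaced by $f^{-1}$ and this replacement swaps the attracting and repulsing roles, up to relabeling I may assume $\gamma'\subset U_\gamma^S$, so that $A=U_\gamma^S\cap U_{\gamma'}^N$. The homeomorphism $f$ is orientation-preserving and fixes the two ends of $\A$, hence $f(U_\delta^N)=U_{f(\delta)}^N$ and $f(U_\delta^S)=U_{f(\delta)}^S$ for every essential Jordan curve $\delta$. The attracting condition $f(\gamma)\subset U_\gamma^S$ then forces $U_{f(\gamma)}^S\subset U_\gamma^S$, giving $f(U_\gamma^S)\subset U_\gamma^S$; dually, the repulsing condition $f^{-1}(\gamma')\subset U_{\gamma'}^S$ produces $f^{-1}(U_{\gamma'}^S)\subset U_{\gamma'}^S$, equivalently $f(U_{\gamma'}^N)\subset U_{\gamma'}^N$. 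Intersecting these two inclusions yields $f(A)\subset A$; and since $\gamma\subset U_{\gamma'}^N$ and $\gamma'\subset U_\gamma^S$, applying the same inclusions to the boundary curves places $f(\gamma)$ and $f(\gamma')$ in $A$ as well (using that $\gamma'$ is $f$-free together with $f(U_{\gamma'}^N)\subset U_{\gamma'}^N$ to locate $f(\gamma')$ on the correct side of $\gamma'$). Thus $f(\bar A)\subset A$, i.e.\ $\bar A$ is trapped into its own interior under $f$.

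Finally, the sequence $(f^n(\bar A))_{n\geq 0}$ is a decreasing family of compact connected sets, each being a closed topological annulus as the homeomorphic image of $\bar A$. A point $z$ belongs to $\Theta(\bar A)$ iff its whole two-sided orbit stays in $\bar A$; forward invariance is automatic once $z\in\bar A$, so this is equivalent to $f^{-n}(z)\in\bar A$ for every $n\geq 0$, i.e.\ $z\in\bigcap_{n\geq 0}f^n(\bar A)$. Hence $\Theta(\bar A)$ is a nested intersection of continua in the Hausdorff space $\A$, and is therefore connected. The only mildly delicate point is verifying the boundary inclusions $f(\gamma),f(\gamma')\subset A$, but these are immediate from the forward invariance of the half-annuli $U_\gamma^S$ and $U_{\gamma'}^N$; no substantive obstacle is expected, in keeping with the author's remark that the proof is easy.
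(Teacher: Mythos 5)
Your proof is correct, and since the paper dismisses this lemma with ``the proof of the next lemma is easy'' and gives no argument, yours is exactly the intended one: normalize so the attracting curve lies above the repulsing one, observe that then $f(\adhe(A))\subset A$, identify the maximal invariant set with the nested intersection $\bigcap_{n\geq 0}f^{n}(\adhe(A))$ of compact connected annuli, and conclude connectedness. No gaps; the boundary bookkeeping via the freeness of the curves and the ordering of disjoint essential curves is handled properly.
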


We remark that Lemma \ref{lemaensK} implies Theorem D when $\Theta(A)$ is a connected set. By the previous lemma and considering $I^{-1}$ instead of $I$ (if necessary), we can assume that both $\gamma^+$ and $\gamma^-$ are attracting Jordan curves. Hence the sets
$$  \Theta^+(\hat{U}_{\gamma^-}^S):= \bigcap_{n\in \N} \hat{f}^n(\hat{U}_{\gamma^-}^S) \quad \text{ and } \quad \Theta^-(\hat{U}_{\gamma^+}^N):= \bigcap_{n\in \N} \hat{f}^{-n}(\hat{U}_{\gamma^+}^N)$$ are continuum, their complements are connected set of $\hat{\A}$ and contain $S$ and $N$ respectively. Let us write $\A'$ the set $\hat{\A}\setminus ( \Theta^+(\hat{U}_{\gamma^-}^S)\cup \Theta^-(\hat{U}_{\gamma^+}^N) )$. Then $\A'$ is homeomorphic to an open annulus and $f(\A')=\A'$. Let us write $f'=f\vert_{\A'}$. We note that $f'$ is isotopic to the identity of $\A'$, and since $f'=f\vert_{\A'}$ there is an isotopy $I'$ on the annulus $\A'$ from the identity of $\A'$ to the homeomorphism $f'$ such that for every compact set $K$ contained in $\A'$, we have $\rho_K(I)=\rho_K(I')$. Since $f'=f\vert_{\A'}$ does not satisfy the local intersection property at neither $N'$ nor $S'$, where $N'$ and $S'$ are the ends of $\A'$, by Proposition \ref{propperturbation2} there exists an $f'$-invariant and compact set $K$ in $\A'$ such that $0$ belongs to $\rho_K(I')$. Since $K$ is an invariant set in $\A'$ under $f'$, we deduce that $K$ is included in $\Theta(A)$. Therefore,
$$ 0\in \rho_{K}(I')\subset \rho_{\Theta(A)}(I')=\rho_{\Theta(A)}(I).  $$
This proves Theorem D.

\section{Dynamics in the Closed Annulus: Theorem C}

The purpose of this section is to prove Theorem C. Let $I$, $f$, $\gamma_0$, $\gamma_1$ and $\gamma_2$ be as in the hypotheses of the theorem. Without loss of generality, for $i\in \{0,1\}$ we suppose that $\gamma_{i+1}$ is contained in $U_{\gamma_i}^S$. Let $\Theta(A_i)$ be the maximal invariant set of the closed annulus $A_i$ delimited by $\gamma_i$ and $\gamma_{i+1}$. Let $\Theta(A)$ be the maximal invariant set of the annulus $A=A_0\cup A_1$. We want to prove that $0$ belongs to $\rho_{\Theta(A)}(I)$.

\subsection{Reduction of Theorem ~C}

We recall that Lemma \ref{lemaensK} implies Theorem C when $\Theta(A)$ is a connected set. Thus, from now on we assume that $\Theta(A)$ is a disconnected set. This implies that the $f$-free essential Jordan curves $\gamma_0$, $\gamma_1$ and $\gamma_2$ are either all attracting or all repulsing (see § \ref{sectheoremD})  . Indeed, changing $I$ into $I^{-1}$, we can suppose that $\gamma_0$ is attracting. If $\gamma_2$ is repulsing, then by Lemma \ref{lemmaattrrepuls} $\Theta(A)$ is a connected set. If $\gamma_1$ is repulsing and $\gamma_2$ attracting, then by Lemma \ref{lemmaattrrepuls} both $\Theta(A_0)$ and $\Theta(A_1)$ are connected sets which separate the annulus. In this case, $\Theta(A)$ is the union of $\Theta(A_0)$, of $\Theta(A_1)$ and of the bounded connected component of the complement of $\Theta(A_0)\cup \Theta(A_1)
$ and so $\Theta(A)$ is a connected set. Therefore, changing $I$ into $I^{-1}$ and conjugating it by a change of coordinates (given by Schoenflies' Theorem), we can assume that $\gamma_0$, $\gamma_1$ and $\gamma_2$ are attracting Euclidean circles.\\

We will work in $\widetilde{\A}:=\R\times \R$ the universal covering of $\A=\T{1}\times \R$ where $\T{1}=\R/\Z$. We recall that $\fonc{\pi}{\widetilde{\A}}{\A}$ is the canonical projection of $\widetilde{\A}$ onto $\A$ and that $\fonc{p_1}{\widetilde{\A}}{\R}$ is the projection onto the first coordinate. Let $\widetilde{f}$ be the lift of $f$ associated to $I$, that is, if
 $\widetilde{I}=(\widetilde{f}_t)_{t\in [0,1]}$ is the unique lift of $I$ such that $\widetilde{f}_0$ is the identity of $\Ao$, then
 $\widetilde{f}:=\widetilde{f}_1$. Recall that $\fonc{\widetilde{f}}{\Ao}{\Ao}$ is a homeomorphism which is isotopic to the identity (this is equivalent to require that $\widetilde{f}$ is orientation preserving) and which commutes with the translation $\fonc{T}{\Ao}{\Ao}$ defined by $T(x,y)=(x+1,y)$. For $j\in\{0,1,2\}$ we define $\Gamma_j:=\pi^{-1}(\gamma_j)$ and let $\Theta(\widetilde{A}):=\pi^{-1}(\Theta(A))$. If $\gamma$ is an essential Jordan curve in $\A$, and $\Gamma=\pi^{-1}(\gamma)$, we will write $U_{\Gamma}^+$ (resp. $U_{\Gamma}^-$) the upper (resp. lower) unbounded connected component of the complement of $\Gamma$. From now on, we will write $\rho(\widetilde{f})$ instead of $\arho(I)$.\\

Since $\rho(\widetilde{f}^q)=q\rho(\widetilde{f})$ for every $q\in\Z$, if $\Theta(\widetilde{A})$ is a disconnected and non-empty set, then considering
an iterate of $\widetilde{f}$ instead of $\widetilde{f}$, we can assume the following hypotheses:

\begin{itemize}
  \item[($H_1$)] The sets $\Gamma_0$, $\Gamma_1$ and $\Gamma_2$ are horizontal straight lines such that for $i\in\{0,1\}$, $\Gamma_{i}\subset U^+_{\Gamma_{i+1}}$ and thus are attracting, that is $\widetilde{f}(\Gamma_j)\subset U^-_{\Gamma_j}$ for $j\in\{0,1,2\}$.
  \item[($H_2$)] For every integer $n\geq 1$, we have $ \widetilde{f}^n(\Gamma_0)\cap \Gamma_2\neq \emptyset$.
\end{itemize}
For $i\in \{0,1\}$, we define by $\Theta(\widetilde{A}_i)$ the maximal invariant set of $\widetilde{A}_i$ the topological closed band delimited by $\Gamma_i$ and $\Gamma_{i+1}$.
\begin{itemize}
  \item[$(H_3)$] The sets $\Theta(\widetilde{A}_0)$ and $\Theta(\widetilde{A}_1)$ are non-empty and
  $$ \rho_{\Theta(\widetilde{A}_0)}(\widetilde{f})\subset (0,+\infty) \quad \text{ and } \quad \rho_{\Theta(\widetilde{A}_1)}(\widetilde{f})\subset (-\infty,0). $$
  \end{itemize}
Thus, we have reduced Theorem C to the following theorem.

\begin{theocast}
  Let $\fonc{\tilde{f}}{\Ao}{\Ao}$ be a homeomorphism isotopic to the identity, commuting with the translation $T$, and satisfying hypotheses $(H_1)$ to $(H_3)$. Let $\Theta(\widetilde{A})$ be the invariant maximal set of $\widetilde{A}:=\widetilde{A}_0\cup \widetilde{A}_1$. Then  $$0\in \rho_{\Theta(\widetilde{A})}(\widetilde{f}). $$
\end{theocast}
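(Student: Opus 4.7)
I would argue by contradiction. Suppose $0\notin \rho_{\Theta(\widetilde{A})}(\widetilde{f})$. Since this rotation set is closed in $\overline{\R}$ and $\Theta(\widetilde{A})/T$ is compact, there exist $\varepsilon>0$ and $N\in\N$ such that $|p_1(\widetilde{f}^n(z)) - p_1(z)|\ge n\varepsilon$ for every $z\in\Theta(\widetilde{A})$ and every $n\ge N$. The plan is to manufacture, using hypothesis $(H_2)$, a sequence of orbit segments contained in $\widetilde{A}$ of lengths $n\to\infty$ whose endpoint displacements are $o(n)$, and to recover from them a genuine point of $\Theta(\widetilde{A})$ contradicting this lower bound.

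\textbf{Step 1 (finite orbits trapped in $\widetilde{A}$).} By $(H_2)$, for each $n\ge 1$ pick $\zeta_n\in \Gamma_0$ with $\widetilde{f}^n(\zeta_n)\in\Gamma_2$. Each $\Gamma_j$ being attracting and $\widetilde{f}$ orientation-preserving yields $\widetilde{f}(U^-_{\Gamma_j})\subset U^-_{\Gamma_j}$ for $j\in\{0,1,2\}$. Consequently the orbit $(\widetilde{f}^k(\zeta_n))_{0\le k\le n}$ cannot leave $\widetilde{A}$: once below $\Gamma_0$ (which happens at step $1$ by attractingness) it stays there, and an intermediate crossing below $\Gamma_2$ would trap the orbit below $\Gamma_2$ forever, contradicting $\widetilde{f}^n(\zeta_n)\in\Gamma_2$.

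\textbf{Step 2 (balance the displacement — the main obstacle).} Define $\Delta_n(\zeta):=p_1(\widetilde{f}^n(\zeta))-p_1(\zeta)$ on $L_n:=\Gamma_0\cap \widetilde{f}^{-n}(\Gamma_2)$, a nonempty closed $T$-invariant set on which $\Delta_n$ is continuous and $T$-invariant. The crucial claim is that, for $n$ large, one can choose $\zeta_n\in L_n$ with $|\Delta_n(\zeta_n)|=o(n)$ and, moreover, with the two half-displacements $p_1(\widetilde{f}^{\lfloor n/2\rfloor}\zeta_n)-p_1(\zeta_n)$ and $p_1(\widetilde{f}^n\zeta_n)-p_1(\widetilde{f}^{\lfloor n/2\rfloor}\zeta_n)$ both $o(n)$. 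The Jordan line $\widetilde{f}^n(\Gamma_0)$ is $T$-invariant and meets $\Gamma_2$; on the one hand the hypotheses on $\Theta(\widetilde{A}_0)\subset(0,+\infty)$ and $\Theta(\widetilde{A}_1)\subset(-\infty,0)$ force that, among orbits from $\Gamma_0$ that remain in $\widetilde{A}$ for a long time, those shadowing $\Theta(\widetilde{A}_0)$ drift strictly right and those shadowing $\Theta(\widetilde{A}_1)$ drift strictly left; on the other hand, by an intermediate-value type topological argument along the connected curve $\widetilde{f}^n(\Gamma_0)\cap \widetilde{A}$ — most likely carried out by introducing a foliation transverse to $I$ via Theorem~\ref{theoexisfoliation} and tracking how its leaves wind between the two bands — intermediate displacement values, and in particular values close to $0$, must be realised at some point of $L_n$.

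\textbf{Step 3 (pass to the limit and conclude).} For such balanced $\zeta_n$, set $\xi_n:=\widetilde{f}^{\lfloor n/2\rfloor}(\zeta_n)$ and choose $m_n\in\Z$ so that $p_1(T^{-m_n}\xi_n)\in[0,1)$. Since $\widetilde{A}\cap\{0\le p_1\le 1\}$ is compact, a subsequence converges to some $\xi_\infty\in\widetilde{A}$; continuity of each iterate $\widetilde{f}^{\,j}$ together with the fact that the orbit of $T^{-m_n}\xi_n$ lies in $\widetilde{A}$ on the time window $[-\lfloor n/2\rfloor,\lfloor n/2\rfloor]$ force $\widetilde{f}^{\,j}(\xi_\infty)\in \widetilde{A}$ for every $j\in\Z$, so $\xi_\infty\in \Theta(\widetilde{A})$. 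The refined choice from Step~2 ensures $\rho_{\lfloor n/2\rfloor}(\xi_\infty)\to 0$ along the subsequence, contradicting the uniform lower bound $|\rho_n(z)|\ge\varepsilon$ on $\Theta(\widetilde{A})$. The genuine difficulty of the whole proof is concentrated in the balancing of Step~2; the rest is a fairly standard compactness and limit-extraction argument in the universal cover.
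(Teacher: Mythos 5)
Your Step 3 contains the decisive gap, and it is a gap of principle rather than of detail. The convergence $T^{-m_n}\xi_n\to\xi_\infty$ controls, by continuity, only finitely many iterates of the limit point: for each \emph{fixed} $j$ you get $\rho_j(\xi_\infty)=\lim_k\rho_j(T^{-m_{n_k}}\xi_{n_k})$, but your ``balanced'' hypothesis concerns the displacement of $\xi_n$ over windows of length $\sim n/2$, which grow with $n$; nothing transfers this to $\rho_{\lfloor n/2\rfloor}(\xi_\infty)$, so the asserted contradiction with $\abs{\rho_j(z)}\geq\varepsilon$ on $\Theta(\widetilde{A})$ never materializes (the orbit of $\xi_n$ can drift at speed $\varepsilon$ on every fixed initial window and still have $o(n)$ net displacement over the half-window by returning later). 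The natural repair --- taking empirical measures along the segments $(\widetilde{f}^k(\zeta_n))_{0\leq k\leq n}$ and passing to a weak limit, as in Proposition \ref{propinclusion1} --- only yields $0\in\rho_{mes}(\Theta(A),I)=\conv\rho_{\Theta(A)}(I)$ (Lemma \ref{lemaensK}), and this is automatic from $(H_3)$, hence proves nothing. Note also that your segment endpoints lie on $\Gamma_0$ and $\Gamma_2$, which are disjoint from $\Theta(\widetilde{A})$ since they are free, so small average displacement of such segments only says $0\in\rho_{\widetilde{A}}(\widetilde{f})$, never $0\in\rho_{\Theta(\widetilde{A})}(\widetilde{f})$. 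The whole content of Theorem C* is to produce a point whose \emph{entire} orbit stays in $\widetilde{A}$ and has sublinear displacement at arbitrarily large times; no limit-extraction from endpoint data of longer and longer finite segments can produce that control.

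Step 2 is also unproved, and it is where you yourself locate the difficulty: $L_n=\Gamma_0\cap\widetilde{f}^{-n}(\Gamma_2)$ (and likewise $\widetilde{f}^n(\Gamma_0)\cap\Gamma_2$) is in general disconnected, so no intermediate-value argument applies to $\Delta_n$ on it; and invoking Theorem \ref{theoexisfoliation} is not licit here, since the hypotheses $(H_1)$--$(H_3)$ do not exclude contractible fixed points of $f$ outside $A$ (the paper's proof of Theorem C uses no transverse foliation at all). The actual proof runs differently: one builds, \`a la Birkhoff, unstable branches $\Lambda_0^-(x)$ of $\widetilde{A}_0$ and stable branches $\Lambda_1^+(y)$ of $\widetilde{A}_1$ (compact, connected, of uniformly bounded horizontal diameter, meeting $\Gamma_1$ resp.\ $\Gamma_1$'s side of their band), shows via $(H_3)$ that the backward iterates of $\Lambda_0^-(x)$ and the forward iterates of $\Lambda_1^+(y)$ both drift to the left (Lemma \ref{lemma} and its symmetric statement), and then proves by a planar separation argument (Proposition \ref{propimplytheoC}, using the connected set $C_n$ and the fact that $\Lambda_0^-(x)\cup\Lambda_0^+(x)$ separates the band) that $\widetilde{f}^n(\Lambda_0^-(x))\cap\Lambda_1^+(y)\neq\emptyset$ for all large $n$. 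Any point $\widetilde{z}$ of this intersection lies over $\Theta(\widetilde{A})$, and since $p_1(\widetilde{f}^{\pm n}(\widetilde{z}))\to-\infty$ in both time directions, one finds times $n^-,n^+$ with $n^++n^-$ arbitrarily large and $\abs{p_1(\widetilde{f}^{n^+}(\widetilde{z}))-p_1(\widetilde{f}^{-n^-}(\widetilde{z}))}$ bounded, which is exactly the sublinear displacement of a genuine orbit of $\Theta(\widetilde{A})$ that your scheme cannot supply.
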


\paragraph*{Outline of the proof of Theorem C*}

In Subsection \ref{subsecbranches}, using hypotheses $(H_1)$ and $(H_2)$, we begin by constructing unstable sets and
 stable sets of the band $\widetilde{A}_i$ delimited by $\Gamma_i$ and $\Gamma_{i+1}$. Then we consider their connected components,
 called unstable branches and stable branches: these are connected and compact sets, contained in $\widetilde{A}_i$ and meeting $\Gamma_{i+1}$ but not $\Gamma_i$
 and $\Gamma_i$ but not $\Gamma_{i+1}$ respectively.
Let $\Lambda_0^-(x)$ denote the unstable branch of $x\in \Theta(\widetilde{A}_0)$ for the band $\widetilde{A}_0$ and let $\Lambda_1^+(y)$ denote the stable branch of $y\in \Theta(\widetilde{A}_1)$ for the band $\widetilde{A}_1$.\\
Next, in Subsection \ref{subsecconsehypH3}, we will study the sequence of  iterates of  $\Lambda_0^-(x)$. Using hypothesis $(H_3)$ (and in particular that $0$ does not belong to $\rho_{\Theta(\widetilde{A}_0)}(\widetilde{f})$), we will prove the following properties:
\begin{itemize}
  \item[(1)] For every compact set $\widetilde{K}$ included in $\widetilde{A}_0$, $\widetilde{f}^n(\widetilde{K})$ is disjoint from $\widetilde{K}$ for every large enough integer $n$.
  \item[(2)] The sequence $(\widetilde{f}^{-n}(\Lambda_0^-(x))_{n\in\N}$ is uniformly bounded to the right.
\end{itemize}

Finally, in Subsection \ref{subsecendprooftheoC}, we will finish the proof of Theorem $C^*$. From the previous properties Theorem C$^*$ we will be a consequence of the following proposition.
\begin{propo}\label{prop61}
 For every $x\in \Theta(\widetilde{A}_0)$ there exists $y\in \Theta(\widetilde{A}_1)$ such that for every sufficiently large integer $n$, we have
$$ \widetilde{f}^n(\Lambda_0^-(x))\cap \Lambda_1^+(y)\neq \emptyset. $$
\end{propo}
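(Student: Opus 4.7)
The plan is to argue that the iterate $\widetilde{f}^n(\Lambda_0^-(x))$ necessarily crosses the whole band $\widetilde{A}_1$ for every sufficiently large $n$, and then to use the branch structure of $\widetilde{A}_1$ together with a compactness argument modulo $T$ to produce a single stable branch $\Lambda_1^+(y)$ that is hit by all these iterates.

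First I would analyze the geometric extension of $\widetilde{f}^n(\Lambda_0^-(x))$. By construction, $\Lambda_0^-(x)$ is a compact connected set in $\widetilde{A}_0$ containing $x$ and meeting $\Gamma_1$ at some point $p$, but disjoint from $\Gamma_0$. Applying $\widetilde{f}^n$ gives a compact connected set containing $\widetilde{f}^n(x)\in \Theta(\widetilde{A}_0)$ (hence above $\Gamma_1$) and the point $\widetilde{f}^n(p)\in \widetilde{f}^n(\Gamma_1)$. Using the attractor hypothesis $(H_1)$ and hypothesis $(H_2)$, the curve $\widetilde{f}^n(\Gamma_1)$ crosses $\Gamma_2$ for all $n$ large enough, so $\widetilde{f}^n(\Lambda_0^-(x))$ must contain a connected sub-continuum $C_n\subset \widetilde{A}_1$ joining $\Gamma_1$ to $\Gamma_2$.

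Second, I would invoke the branch description of Subsection \ref{subsecbranches}: any compact connected set in $\widetilde{A}_1$ joining $\Gamma_1$ to $\Gamma_2$ meets some stable branch $\Lambda_1^+(y_n)$ of $\widetilde{A}_1$. This should follow from a planar separation argument, since stable branches reach $\Gamma_1$ but not $\Gamma_2$ and, together with their horizontal translates, decompose $\widetilde{A}_1$ so that a continuum cannot simultaneously touch $\Gamma_1$ and $\Gamma_2$ while avoiding every $\Lambda_1^+(\cdot)$. This step produces, for each large $n$, a point $p_n\in C_n\cap \Lambda_1^+(y_n)$ with $y_n\in \Theta(\widetilde{A}_1)$.

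Third, to extract a \emph{single} $y$, I would use the horizontal bounds of Subsection \ref{subsecconsehypH3}: the backward iterates $\widetilde{f}^{-n}(\Lambda_0^-(x))$ are uniformly bounded to the right, and by the symmetric argument applied to $\widetilde{f}^{-1}$ on $\widetilde{A}_1$ (which uses $\rho_{\Theta(\widetilde{A}_1)}(\widetilde{f})\subset (-\infty,0)$), the forward iterates of any $\Lambda_1^+(y)$ are uniformly bounded to the left. Combined with the $T$-periodicity, these bounds confine the points $\widetilde{f}^{-n}(p_n)$ to a compact region of $\widetilde{\A}$ modulo $T$. Passing to a subsequence and using that branches are closed, I would obtain a limit $y\in \Theta(\widetilde{A}_1)$ for which $\widetilde{f}^n(\Lambda_0^-(x))\cap \Lambda_1^+(y)\neq \emptyset$ holds along a subsequence $(n_k)$.

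The main obstacle is upgrading this from "along a subsequence" to "for every sufficiently large $n$". I expect this to rely on a monotonicity argument built from the planar structure of the branches: distinct unstable branches are pairwise disjoint, giving them a natural cyclic order on $\widetilde{\A}$ that is preserved by $\widetilde{f}$; and because each $\Gamma_j$ is attracting, once $\widetilde{f}^{n_0}(\Lambda_0^-(x))$ crosses $\Lambda_1^+(y)$, the image $\widetilde{f}^{n_0+k}(\Lambda_0^-(x))$ extends even further below $\Gamma_1$ while $\Lambda_1^+(y)$ stays trapped in a bounded neighborhood of $\Gamma_1$, forcing continued intersection. Making this monotonicity precise, and establishing the cutting lemma in step two from the formal definition of branches, are the two places where most of the technical work will lie.
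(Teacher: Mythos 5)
Your step 1 already fails: nothing in the hypotheses guarantees that $\widetilde{f}^n(\Lambda_0^-(x))$ reaches $\Gamma_2$. Hypothesis $(H_2)$ only says $\widetilde{f}^n(\Gamma_0)\cap\Gamma_2\neq\emptyset$; it gives no lower crossing for $\widetilde{f}^n(\Gamma_1)$, and even if $\widetilde{f}^n(\Gamma_1)$ did meet $\Gamma_2$, the portion of $\Gamma_1$ lying in the compact set $\Lambda_0^-(x)$ need not be carried anywhere near $\Gamma_2$: the forward orbits of the points of $\Lambda_0^-(x)$ may perfectly well stay above $\Gamma_2$ forever (in which case $\Lambda_0^-(x)\subset\Theta(\widetilde{A})$ and your continuum $C_n$ never exists). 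Step 2 is also false as a general statement: the stable set $\Lambda_1^+$ is a closed set whose branches are compact, touch $\Gamma_1$ and are disjoint from $\Gamma_2$, so it does not separate $\Gamma_1$ from $\Gamma_2$ inside $\widetilde{A}_1$ --- if $\Lambda_1^+$ is concentrated, say, near the integer translates of one column, a vertical segment placed in a gap joins $\Gamma_1$ to $\Gamma_2$ and misses every stable branch. Finally, the upgrade from ``along a subsequence'' to ``for all large $n$'' cannot be repaired by the monotonicity you sketch: branches are permuted, not preserved, by the dynamics (one only has $\widetilde{f}^k(\Lambda_1^+(y))\subset\Lambda_1^+(\widetilde{f}^k(y))$, which is a different branch), so an intersection with $\Lambda_1^+(y)$ at time $n_0$ gives at time $n_0+k$ an intersection with $\Lambda_1^+(\widetilde{f}^k(y))$, not with $\Lambda_1^+(y)$; and your limit procedure only controls $\widetilde{f}^{-n}(p_n)$ modulo $T$, which does not force the limiting branch to be a fixed one.

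The paper's proof avoids all three difficulties by making the choice of $y$ quantitative and turning the ``for all large $n$'' claim into a rotation-number contradiction. One fixes $M$ with $\Lambda_0^-(x)\subset L_{\widetilde{A}_0}(M)$ and chooses $y\in\Theta(\widetilde{A}_1)$ with $p_1(y)>M+M_1^+$, so that $\Lambda_1^+(y)\subset R_{\widetilde{A}_1}(M)$. A first lemma shows $\widetilde{f}^n(\Gamma_1)\cap\Lambda_1^+(y)\neq\emptyset$ for every $n\geq 0$ (using only that $\Gamma_1$ is attracting and $y\in\Theta(\widetilde{A}_1)$). Then, with a vertical arc $\alpha$ from $\Lambda_1^+(y)\cap\Gamma_1$ to $\Gamma_0$, the set $\Lambda_0^\pm(x)=\Lambda_0^-(x)\cup\Lambda_0^+(x)$, and the splitting $\Gamma_1=\Gamma_1^l\cup\Gamma_1^r$, Lemma \ref{lemmaK} and Lemma \ref{lemmaxinfini} give an $n_0$ beyond which a connectedness argument (Lemma \ref{lemmproofpropimplyTheoC2}) yields the dichotomy: for each $n\geq n_0$, either $\widetilde{f}^n(\Lambda_0^-(x))$ meets $\Lambda_1^+(y)$, or some point of $\Gamma_1^l\subset L_{\widetilde{A}_0}(M)$ is sent by $\widetilde{f}^n$ into $\Lambda_1^+(y)\subset R_{\widetilde{A}_1}(M)$. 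If the second alternative occurred along a subsequence $n_k\to\infty$, it would produce a limit rotation number $\rho\geq 0$ in $\rho_{\widetilde{A}_1}(\widetilde{f})$, contradicting Proposition \ref{propinclusion1} (which upgrades $(H_3)$ from $\Theta(\widetilde{A}_1)$ to the whole band via invariant measures). This is precisely the mechanism that delivers the conclusion for every $n\geq n_0$ with a single, explicitly chosen $y$; your outline is missing this idea, and its first two steps would need statements that are not true in general.
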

Note that if $\widetilde{z}\in \widetilde{f}^n(\Lambda_0^-(x))\cap \Lambda_1^+(y)$, then $z\in \Theta(\widetilde{A})$. By above Property (2) and its symmetric statements the positive and negative iterates of $\widetilde{z}$ go to the left. This proves that $0$ belongs to $\rho_{\Theta(\widetilde{A})}(\widetilde{f})$, completing the proof of Theorem $C^*$.

\subsection{Stable branches and unstable branches}\label{subsecbranches}

In this subsection, we consider a homeomorphism $\fonc{\widetilde{f}}{\Ao}{\Ao}$ isotopic to the identity, commuting with the
translation $T$ and satisfying hypotheses $(H_1)$ and $(H_2)$. For $i\in\{0,1\}$, we define by $\widetilde{A}_i$ the topological closed band delimited by $\Gamma_i$ and $\Gamma_{i+1}$. Following an idea of Birkhoff, in this subsection we will
construct \textit{unstable branches} and \textit{stable branches} of the band $\widetilde{A}_i$.

\subsubsection{Stable sets and unstable sets}

Let us begin with the definition of an \textit{unstable set} (resp. \textit{stable set}) of the band $\widetilde{A}_i$ for $\widetilde{f}$.

\begin{defini}
  We say that $\Lambda^-_i$ is an \textit{unstable set} of the band $\widetilde{A}_i$ for
  $\widetilde{f}$ if it satisfies the following properties:
  \begin{enumerate}
    \item the set $\Lambda^-_i$ is a closed subset of $\widetilde{A}_i$;
    \item we have $\Lambda^-_i\cap \Gamma_{i+1}\neq \emptyset$; and
    \item the set $\Lambda^-_i$ is negatively invariant under $\widetilde{f}$, i.e. $\widetilde{f}^{-1}(\Lambda^-_i)\subset \Lambda^-_i$.
  \end{enumerate}
 Similarly we say that $\Lambda^+_i$ is a \textit{stable set} of the band $\widetilde{A_i}$ for $\widetilde{f}$ if it satisfies:
  \begin{enumerate}
    \item the set $\Lambda^+_i$ is a closed subset of $\widetilde{A}_i$;
    \item we have $\Lambda^+_i\cap \Gamma_i\neq \emptyset$;  and
    \item the set $\Lambda^+_i$ is positively invariant under $\widetilde{f}$, i.e. $\widetilde{f}(\Lambda^+_i)\subset \Lambda^+_i$.
  \end{enumerate}
\end{defini}

We prove the following proposition.

\begin{propo}\label{propense}
  Let $\fonc{\widetilde{f}}{\Ao}{\Ao}$ be a homeomorphism isotopic to the identity, commuting with the translation $T$ and
  satisfying hypotheses $(H_1)$ and $(H_2)$. For $i\in\{0,1\}$ and for every $n\in \Z$, we define
$$ \Lambda_{n,i}^-:=  \widetilde{f}^n(\adhe( U^-_{\Gamma_i})) \cap \adhe( U^+_{\Gamma_{i+1}}) \;\; \text{ and } \Lambda_{n,i}^+ :=\adhe( U^-_{\Gamma_i}) \cap   \widetilde{f}^{-n}(\adhe( U^+_{\Gamma_{i+1}} )).  $$
The sets
 $$ \Lambda^-_i :=\bigcap_{n\in \N} \Lambda_{n,i}^-  \;\; \text{ and } \Lambda^+_i :=\bigcap_{n\in \N} \Lambda_{n,i}^+,$$
are respectively an unstable set and a stable set of the
  band $\widetilde{A}_i$ for $\widetilde{f}$.
\end{propo}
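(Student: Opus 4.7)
The plan is to verify the three defining properties of an unstable set for $\Lambda_i^-$ in turn; the statement for $\Lambda_i^+$ then follows by a symmetric argument applied to $\widetilde{f}^{-1}$, under which $(H_1)$ becomes the dual repulsing condition and $(H_2)$ translates tautologically into $\Gamma_0 \cap \widetilde{f}^{-n}(\Gamma_2) \neq \emptyset$.

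First I would handle the topological part. By $(H_1)$, $\widetilde{f}(\Gamma_i) \subset U^-_{\Gamma_i}$ together with the fact that $\widetilde{f}$ fixes the two ends of $\Ao$ gives $\widetilde{f}(\adhe(U^-_{\Gamma_i})) \subset \adhe(U^-_{\Gamma_i})$; hence $(\widetilde{f}^n(\adhe(U^-_{\Gamma_i})))_{n \geq 0}$ is a decreasing family of closed sets, so each $\Lambda_{n,i}^-$ is a closed subset of $\adhe(U^-_{\Gamma_i}) \cap \adhe(U^+_{\Gamma_{i+1}}) = \widetilde{A}_i$ and the sequence itself is decreasing; consequently $\Lambda_i^-$ is a closed subset of $\widetilde{A}_i$. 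For the backward invariance, I would use $\widetilde{f}^{-1}(\widetilde{f}^n(\adhe(U^-_{\Gamma_i}))) = \widetilde{f}^{n-1}(\adhe(U^-_{\Gamma_i}))$ together with monotonicity to see that $\widetilde{f}^{-1}\bigl(\bigcap_{n \geq 0} \widetilde{f}^n(\adhe(U^-_{\Gamma_i}))\bigr) = \bigcap_{n \geq 0} \widetilde{f}^n(\adhe(U^-_{\Gamma_i}))$, while $(H_1)$ applied at $\Gamma_{i+1}$ yields $\widetilde{f}^{-1}(\adhe(U^+_{\Gamma_{i+1}})) \subset \adhe(U^+_{\Gamma_{i+1}})$; intersecting the two gives $\widetilde{f}^{-1}(\Lambda_i^-) \subset \Lambda_i^-$.

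The main step, and the expected main obstacle, is to prove $\Lambda_i^- \cap \Gamma_{i+1} \neq \emptyset$. I would set $B_n := \widetilde{f}^n(\adhe(U^-_{\Gamma_i})) \cap \Gamma_{i+1}$, a decreasing family of closed $T$-invariant subsets of $\Gamma_{i+1}$. After projecting to the compact quotient $\Gamma_{i+1}/T \cong \T{1}$, the finite intersection property for decreasing non-empty closed subsets of a compact space gives $\Lambda_i^- \cap \Gamma_{i+1} = \bigcap_n B_n \neq \emptyset$, provided every $B_n$ is non-empty. The case $n = 0$ is immediate since $\Gamma_{i+1} \subset \adhe(U^-_{\Gamma_i})$. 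For $n \geq 1$ the decisive input is $(H_2)$: the essential $T$-periodic curve $\widetilde{f}^n(\Gamma_0)$ reaches $\Gamma_2$, so the closed topological half-plane $\widetilde{f}^n(\adhe(U^-_{\Gamma_0}))$ stretches down past $\Gamma_{i+1}$. Following Birkhoff's idea, I would then argue that the essential curve $\widetilde{f}^n(\Gamma_i)$ (in the same free homotopy class as $\Gamma_{i+1}$) cannot slip strictly inside $U^-_{\Gamma_{i+1}}$, for otherwise its below-region would be trapped in $U^-_{\Gamma_{i+1}}$ and would fail to accommodate the downward stretching forced by $(H_2)$ holding simultaneously for every $n \geq 1$ together with the iterated attracting behaviour of $(H_1)$; hence $\widetilde{f}^n(\adhe(U^-_{\Gamma_i}))$ must cross $\Gamma_{i+1}$ and $B_n \neq \emptyset$, completing the proof.
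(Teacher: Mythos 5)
Your handling of items (1) and (3) of the definition is correct and is the same as the paper's: closedness and the inclusion $\Lambda^-_i\subset\widetilde{A}_i$ come from $\widetilde{f}(\adhe(U^-_{\Gamma_i}))\subset \adhe(U^-_{\Gamma_i})$, and negative invariance from $\widetilde{f}^{-1}(\adhe(U^+_{\Gamma_{i+1}}))\subset \adhe(U^+_{\Gamma_{i+1}})$ together with the monotonicity of the family $\widetilde{f}^n(\adhe(U^-_{\Gamma_i}))$. Your reduction of item (2) to the non-vacuity of the sets $B_n=\widetilde{f}^n(\adhe(U^-_{\Gamma_i}))\cap\Gamma_{i+1}$, followed by the nested-compact argument in $\Gamma_{i+1}/T$, is a legitimate (and slightly cleaner) variant of the paper's argument: the paper instead assumes every point of $\Gamma_{i+1}$ escapes, uses upper semicontinuity of the escape time and $T$-periodicity to get a uniform $n_0$ with $\widetilde{f}^{-n_0}(\Gamma_{i+1})\subset U^+_{\Gamma_i}$, which is exactly the statement $B_{n_0}=\emptyset$. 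So both proofs hinge on the same crossing claim: for every $n\geq 1$ the curve $\widetilde{f}^{n}(\Gamma_i)$ does not lie entirely in $U^-_{\Gamma_{i+1}}$.

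The gap is in your justification of that claim, and it is the step where all the content of the proposition sits. Your ``trapping'' argument does not yield a contradiction with $(H_2)$. For $i=0$: if $\widetilde{f}^n(\Gamma_0)\subset U^-_{\Gamma_1}$, its below-region is indeed trapped in $U^-_{\Gamma_1}$, but $U^-_{\Gamma_1}$ contains $\Gamma_2$ and everything beneath it, so the curve can still meet $\Gamma_2$; reaching $\Gamma_2$ is not the same as reaching $\adhe(U^+_{\Gamma_1})$. For $i=1$ the hypothesis is even less relevant: $(H_2)$ constrains the curves $\widetilde{f}^n(\Gamma_0)$, and since $\Gamma_0\subset U^+_{\Gamma_1}$ these lie in $U^+_{\widetilde{f}^n(\Gamma_1)}$, so $(H_2)$ says nothing about the region $\widetilde{f}^n(\adhe(U^-_{\Gamma_1}))$ and is perfectly compatible with $\widetilde{f}^n(\Gamma_1)\subset U^-_{\Gamma_2}$. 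The inference cannot be made purely formal from $(H_1)$ and $(H_2)$ as you use them: for instance, the time-one map of a downward vertical flow on the annulus with a single rest point at a height strictly between $\Gamma_0$ and $\Gamma_1$, and with descent speed everywhere larger than the width of the region below $\Gamma_1$, satisfies $(H_1)$ and $(H_2)$ (each $\widetilde{f}^n(\Gamma_0)$ still crosses $\Gamma_2$ near the slow angle) while $\widetilde{f}(\Gamma_1)$ lies entirely in $U^-_{\Gamma_2}$, so $B_1=\emptyset$ for $i=1$. To be fair, the paper is equally terse at this exact point (it simply declares that $\widetilde{f}^{-n_0}(\Gamma_{i+1})\subset U^+_{\Gamma_i}$ ``contradicts $(H_2)$''), and in the situation where the proposition is actually used the crossing claim is immediate for another reason: under $(H_3)$ one has $\Theta(\widetilde{A}_i)\neq\emptyset$, and any point of $\Theta(\widetilde{A}_i)$ lies in $\widetilde{f}^n(\adhe(U^-_{\Gamma_i}))\cap\adhe(U^+_{\Gamma_{i+1}})$ for all $n$, which forbids $\widetilde{f}^n(\Gamma_i)\subset U^-_{\Gamma_{i+1}}$. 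So you have matched the paper's strategy and isolated the decisive step correctly, but the mechanism you propose for deriving it from $(H_2)$ alone is not valid as written and must be replaced by a genuine argument (for example, via non-emptiness of the maximal invariant set of each band, or a crossing hypothesis stated for each pair of adjacent curves).
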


\begin{figure}[h!]
  \centering
    \includegraphics{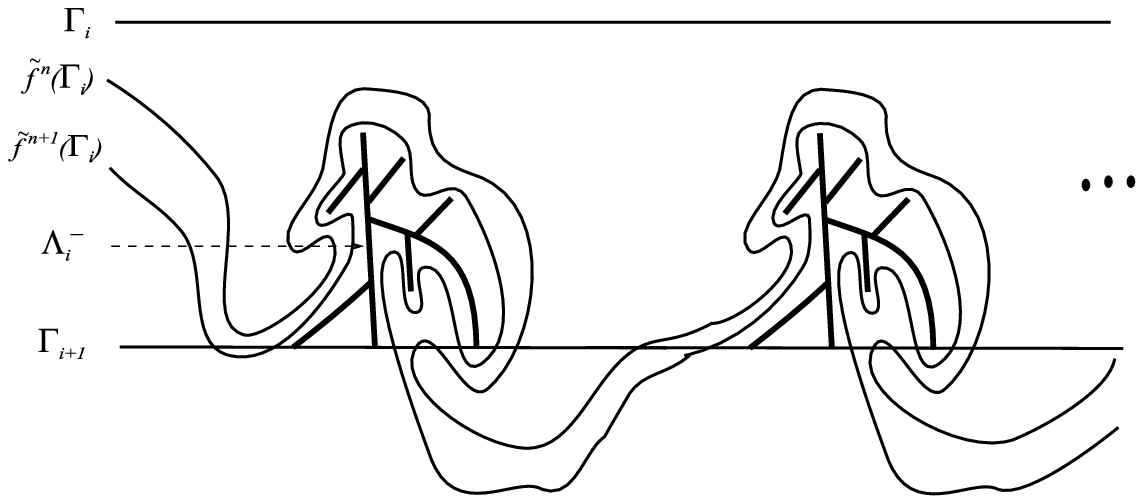}
  \caption{Unstable set}
  \label{fig:instableset}
\end{figure}

\begin{remar}\label{remaunstableset}
  The set $\Lambda^-_i$ consists of the points in $\widetilde{A}_i$ whose negative iterates under $\widetilde{f}$ remain in the band
  $\widetilde{A}_i$.
\end{remar}

\begin{proof}[Proof of Proposition \ref{propense}]
We will prove that $\Lambda^-_i$ is  an unstable set of the band $\widetilde{A}_i$ for
$\tilde{f}$ (one shows similarly that $\Lambda^+_i$ is a stable set of the band $\widetilde{A}_i$ for $\tilde{f}$). Item 1 of the above definition is a direct consequence of the definition of $\Lambda^-_i$ and the fact that $\Gamma_i$ is an attracting line. On the other hand, as the horizontal straight line $\Gamma_{i+1}$ is attracting, we have that $ \widetilde{f}^{-1}(\adhe(U^+_{\Gamma_{i+1}}))\subset \adhe(U^+_{\Gamma_{i+1}})$, and so
$$ \widetilde{f}^{-1}(\Lambda_i^-)= \bigcap_{n\in\N} \widetilde{f}^{n-1}( \adhe(U^-_{\Gamma_{i}})) \cap \widetilde{f}^{-1}(\adhe(U^+_{\Gamma_{i+1}}))\subset \Lambda_i^-.   $$
We have proved items $1$ and $3$ of the above definition. Next, let us prove item $2$, that is we prove that $\Lambda^-_i\cap\Gamma_{i+1}\neq \emptyset$ by contradiction. Suppose that for every $\tilde{z}\in \Gamma_{i+1}$, we have
$\widetilde{z}\notin \Lambda^-_i$. Then we can find the smallest integer $n_0=n_0(\widetilde{z})$ such that $\widetilde{f}^{-n_0(\widetilde{z})}
(\widetilde{z})\in U_{\Gamma_i}^+$. Since $\widetilde{f}$ is continuous and commutes with the translation $T$, we have that $n_0(\widetilde{z})\geq n_0(\widetilde{z}')$ for every $\widetilde{z}'$
close to $\widetilde{z}$ and that $n_0(\widetilde{z})=n_0(T(\widetilde{z}))$. By compactness, we can define the integer
 $$ n_0=\sup_{\widetilde{z}\in \Gamma_{i+1}} n_0(\widetilde{z})=\sup \{ n_0(\widetilde{z}): \widetilde{z}\in \Gamma_{i+1} \text{ and } p_1(\widetilde{z})\in [0,1]\}$$
 which satisfies $\widetilde{f}^{-n_0}(\Gamma_{i+1})\subset  U_{\Gamma_i}^+$. This contradicts hypothesis $(H_2)$, so item 2 is proved.
\end{proof}

\subsubsection{Stable branches and unstable branches}

Now, we state and prove the properties verified by unstable branches (we have the ``symmetric properties'' for stable branches) that we will use in the next subsections.

\begin{defini}
 For $i\in \{0,1\}$, we call \textit{unstable branch of the band $\widetilde{A}_i$} any connected component of the unstable set, $\Lambda^-_i$. For $x\in \Lambda^-_i$, we denote $\Lambda^-_i(x)$, and we say that it is \textit{the unstable branch of $x$}, the connected component of $\Lambda^-_i$ containing $x$.\\
 The \textit{stable branches} of $\widetilde{A}_i$ and the sets $\Lambda^+_i(x)$ are defined similarly.
\end{defini}

We have the following properties.

\begin{propo}\label{theobranches}
  Let $\fonc{\widetilde{f}}{\Ao}{\Ao}$ be a homeomorphism isotopic to the identity, commuting to the translation $T$ and satisfying hypotheses $(H_1)$ and $(H_2)$. If $x\in \Lambda^-_i$, then
  \begin{itemize}
    \item[(1)] the set $\Lambda^-_i(x)$ is a compact subset of $\widetilde{A}_i$;
    \item[(2)] we have $\Lambda^-_i(x)\cap \Gamma_{i+1}\neq \emptyset$;
    \item[(3)] we have $\widetilde{f}^{-1}(\Lambda^-_i(x))\subset \Lambda^-_i(\widetilde{f}^{-1}(x))$; and
    \item[(4)] there exists a real number $M^-_i>0$ (independent of $x$) such that $$\diam p_1(\Lambda^-_i(x)) <M^-_i.$$
  \end{itemize}
\end{propo}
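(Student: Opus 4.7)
The plan is as follows. Property (3) is immediate from the negative invariance $\widetilde{f}^{-1}(\Lambda_i^-)\subset \Lambda_i^-$ established in Proposition \ref{propense}: the set $\widetilde{f}^{-1}(\Lambda_i^-(x))$ is a connected subset of $\Lambda_i^-$ containing $\widetilde{f}^{-1}(x)$, so it must lie in the connected component $\Lambda_i^-(\widetilde{f}^{-1}(x))$.

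For (1), (2), (4), I would use the following Birkhoff-style finite-level approximation. For each integer $n\geq 0$, let $C_n(x)$ denote the connected component of $x$ in the closed set $\Lambda_{n,i}^-$; since $(\Lambda_{n,i}^-)_n$ is decreasing, so is $(C_n(x))_n$. The main claim is that each $C_n(x)$ is a nonempty compact continuum meeting $\Gamma_{i+1}$, with $\diam p_1(C_n(x))\leq M_i^-$ for some constant $M_i^-$ independent of $n$ and $x$. Granted this, a standard continuum-intersection argument yields
$$\Lambda_i^-(x)=\bigcap_{n\in\N}C_n(x),$$
since a decreasing intersection of compact continua is itself a compact continuum, which must coincide with the connected component of $x$ in $\bigcap_n \Lambda_{n,i}^-=\Lambda_i^-$. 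Then (1) is compactness, (4) is the passage of the diameter bound to the limit, and (2) follows from the nonemptiness of the decreasing intersection of nonempty compacts $C_n(x)\cap \Gamma_{i+1}$.

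The crux is the structure of $\Lambda_{n,i}^-=\widetilde{f}^n(\adhe(U^-_{\Gamma_i}))\cap \widetilde{A}_i$. By $(H_1)$ the set $\widetilde{f}^n(\Gamma_i)$ is a $T$-invariant topological line strictly below $\Gamma_i$, so its trace $\widetilde{f}^n(\Gamma_i)\cap \widetilde{A}_i$ is a disjoint family of topological arcs with both endpoints on $\Gamma_{i+1}$. Hypothesis $(H_2)$ guarantees that $\widetilde{f}^n(\Gamma_0)$ actually reaches $\Gamma_2$ and hence transversally crosses $\Gamma_1$; the case $i=1$ is obtained symmetrically, by applying $\widetilde{f}^{-n}$ to $(H_2)$ and using that $\Gamma_2$ is attracting. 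Thus $\Lambda_{n,i}^-$ decomposes into the compact topological disks (``tongues'') enclosed between each arc (``finger'') and the segment of $\Gamma_{i+1}$ joining its two feet; these are precisely the connected components of $\Lambda_{n,i}^-$. Each is compact, meets $\Gamma_{i+1}$, and has $p_1$-diameter at most the $p_1$-extent of its bounding finger.

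The main obstacle will be producing the uniform bound $M_i^-$. The cleanest route is to observe $\Lambda_i^-(x)\subset C_1(x)$, so it suffices to bound $\diam p_1(C_1(x))$ uniformly in $x$; and since $\widetilde{f}(\Gamma_i)\cap \widetilde{A}_i$ is $T$-invariant, it is the $T$-orbit of finitely many arcs living in one fundamental domain, so the maximum of their $p_1$-extents (enlarged if necessary) provides $M_i^-$. A subtle point here is to verify that each $C_n(x)$ is a single tongue rather than several tongues glued together into an essential, $T$-translation-invariant component wrapping around the annulus $A_i$; this is ruled out by the fact that the fingers are disjoint arcs with feet on $\Gamma_{i+1}$, which prevents such essential concatenation.
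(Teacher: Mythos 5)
Your item (3) and your reduction $\Lambda_i^-(x)=\bigcap_n C_n(x)$ via a decreasing sequence of compact continua are exactly the paper's route (its Lemma \ref{lema127}), and reducing the diameter bound to the level $n=1$ set is also what the paper does. The gap is in the structural description on which you base (1), (2) and above all (4). First, it is false in general that the connected components of $\Lambda_{n,i}^-$ are the one-finger ``tongues'': the line $\widetilde{f}^n(\Gamma_i)$ is not a graph over $\Gamma_{i+1}$, and its fingers can be nested; a finger lying inside the disk bounded by a bigger finger and $\Gamma_{i+1}$ produces a component of $\Lambda_{n,i}^-$ whose boundary contains two (or more) fingers and two (or more) subsegments of $\Gamma_{i+1}$, and which is not the region enclosed by any single finger and the segment of $\Gamma_{i+1}$ joining its feet. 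Your closing remark only rules out components that wrap around the annulus; it does not address nesting. Second, the claim that $\widetilde{f}(\Gamma_i)\cap \widetilde{A}_i$ ``is the $T$-orbit of finitely many arcs living in one fundamental domain'' is also false: the curves are merely topological, so $f(\gamma_i)\cap A_i$ can have infinitely many components (accumulating tangencies with $\gamma_{i+1}$), and a single finger can perfectly well have $p_1$-extent larger than $1$. Hence ``the maximum of their $p_1$-extents'' is not known to be finite by your argument, and this finiteness is precisely the uniform constant that item (4) demands.

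The missing ingredient is quantitative, and it is the one the paper uses: $M_0:=\sup_{\widetilde{z}\in\widetilde{A}_i}\abs{p_1(\widetilde{f}(\widetilde{z}))-p_1(\widetilde{z})}$, finite because the displacement is continuous and $T$-periodic. With it your picture can be repaired: the set of parameters of $\Gamma_i$ sent strictly below $\Gamma_{i+1}$ is invariant under $t\mapsto t+1$ and nonempty (for $i=1$ this already requires a small separation argument showing that $(H_2)$, which speaks of $\Gamma_0$, forces $\widetilde{f}^n(\Gamma_1)$ to go strictly under $\Gamma_2$), so each finger is the image of a parameter interval of length at most $1$ and has extent at most $1+2M_0$; but you would still have to prove that a component with several nested fingers on its boundary stays inside the disk of its outermost finger. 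The paper bypasses all of this: writing $\widetilde{Q}_i$ for a fundamental box of the band, it observes $\Lambda_{1,i}^-\subset \bigcup_{n\in\Z}\bigl(T^n(\widetilde{f}(\widetilde{Q}_i))\cap \adhe(U^+_{\Gamma_{i+1}})\bigr)$, takes the component $X$ of one such piece containing $x$, and uses item (3) together with $\Lambda_i^-\cap\Gamma_i=\emptyset$ to show that the branch $\Lambda_i^-(x)$ cannot cross the part of $\widetilde{f}(\Gamma_i)$ in $\partial X$, hence is trapped in $X$, whose $p_1$-diameter is at most $2M_0+1$. You should either adopt that argument or supply the two missing facts (the uniform finger bound via $M_0$, and the containment of a component in a single outermost finger disk); note that the compactness of $C_n(x)$ in your step (1), and the nonemptiness of $C_n(x)\cap\Gamma_{i+1}$ in your step (2), rest on the same flawed tongue description and need the same repair.
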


The proof of Proposition \ref{theobranches} will use following lemma.

\begin{lemm}\label{lema127}
 For $x\in \Lambda_i^-$, we write $\Lambda_{n,i}^-(x)$ the connected component of $\Lambda_{n,i}^-$ containing $x$. Then
$$ \Lambda^-_i(x)=\bigcap_{n\in \N} \Lambda_{n,i}^-(x).$$
\end{lemm}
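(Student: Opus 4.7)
The plan is to reduce the identity to the classical topological fact that, in a compact Hausdorff space, a decreasing intersection of continua is a continuum, applied not in the strip $\widetilde{A}_i$ (which is non-compact) but in its compact quotient $A_i = \widetilde{A}_i/T$. First I would record that $(\Lambda^-_{n,i})_{n\in\N}$ is a decreasing sequence: the attracting hypothesis $\widetilde{f}(\Gamma_i)\subset U^-_{\Gamma_i}$ yields $\widetilde{f}(\overline{U^-_{\Gamma_i}})\subset \overline{U^-_{\Gamma_i}}$, hence $\widetilde{f}^{n+1}(\overline{U^-_{\Gamma_i}})\subset \widetilde{f}^n(\overline{U^-_{\Gamma_i}})$, and intersecting with $\overline{U^+_{\Gamma_{i+1}}}$ gives $\Lambda^-_{n+1,i}\subset \Lambda^-_{n,i}$, whence $\Lambda^-_{n+1,i}(x)\subset \Lambda^-_{n,i}(x)$. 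The inclusion $\Lambda^-_i(x)\subset \bigcap_n \Lambda^-_{n,i}(x)$ is then immediate, since $\Lambda^-_i(x)$ is connected, contains $x$, and lies inside every $\Lambda^-_{n,i}$, hence inside every component of $x$ there.

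For the reverse inclusion I would push everything down to the annulus. Set $e_n := \pi(\Lambda^-_{n,i})\subset A_i$ (compact, decreasing) and let $q_n$ denote the connected component of $\pi(x)$ in $e_n$. A straightforward compactness extraction in a fundamental strip (which has finite vertical width, so its closure in $\widetilde{A}_i$ is compact) shows $\pi\bigl(\bigcap_n \Lambda^-_{n,i}\bigr)=\bigcap_n e_n$; then the classical theorem in the compact space $A_i$ gives that $q := \bigcap_n q_n$ is a continuum, equal to the connected component of $\pi(x)$ in $\pi(\Lambda^-_i)$. A short verification, splitting on whether $\pi^{-1}(q_n)$ is connected or a disjoint union of compact translates, yields $\pi(\Lambda^-_{n,i}(x))=q_n$ and identifies $\Lambda^-_i(x)$ with the connected component of $x$ in $\pi^{-1}(q)$.

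The main obstacle is this lifting step, because a continuum in $A_i$ has one of two very different lifts --- a $T$-invariant connected set if it is \emph{essential} in the annulus (separates $\gamma_i$ from $\gamma_{i+1}$), and a disjoint union of compact translates otherwise --- so the intersection might a priori meet several components of $\pi^{-1}(q)$. I would conclude via a case distinction on $q$. If $q$ is essential, then each $q_n\supset q$ is essential as well, $\pi^{-1}(q_n)$ is connected and coincides with $\Lambda^-_{n,i}(x)$, and the intersection is the connected set $\pi^{-1}(q)=\Lambda^-_i(x)$. If $q$ is not essential, a path in $A_i\setminus q$ joining $\gamma_i$ to $\gamma_{i+1}$ avoids $q_n$ for large $n$ (by the Hausdorff convergence $q_n\to q$), so $q_n$ is not essential for $n\geq n_0$ and $\Lambda^-_{n,i}(x)$ is then the compact lift of $q_n$ through $x$. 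The sequence $(\Lambda^-_{n,i}(x))_{n\geq n_0}$ consists of decreasing compact continua, its intersection is a continuum by the classical theorem, and being a connected subset of $\Lambda^-_i$ through $x$ it is contained in $\Lambda^-_i(x)$. Combined with the easy inclusion, this yields the desired equality.
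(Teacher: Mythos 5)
Your argument is correct (granting the standard facts you quote), but it takes a genuinely different route from the paper's. The paper never leaves the universal cover: inside the proof it establishes the Claim that each $\Lambda^-_{n,i}(x)$ is itself \emph{compact} --- this is exactly where hypothesis $(H_2)$ enters, since $\widetilde{f}^{n}(\Gamma_0)\cap\Gamma_2\neq\emptyset$ together with the commutation with $T$ forces $\widetilde{f}^{n}(\Gamma_i)$ to pass strictly below $\Gamma_{i+1}$ along a whole $T$-orbit of points, cutting $\Lambda^-_{n,i}$ into horizontally bounded pieces --- and then it applies the decreasing-intersection-of-continua theorem directly in $\Ao$, with no quotient and no essential/inessential dichotomy. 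You avoid proving this compactness by projecting to the compact annulus $A_i$ and invoking the lifting dichotomy for continua (essential $\Rightarrow$ connected $T$-invariant preimage; inessential $\Rightarrow$ disjoint compact translates), stabilizing the inessential case via the Hausdorff convergence of the decreasing compacta $q_n$ to $q$; the saturation of the sets $\Lambda^-_{n,i}$ under $T$ makes your intermediate identifications ($\pi(\bigcap_n\Lambda^-_{n,i})=\bigcap_n e_n$, $\pi(\Lambda^-_{n,i}(x))=q_n$, and $\Lambda^-_i(x)$ equal to the component of $x$ in $\pi^{-1}(q)$) unproblematic. What your route buys: it never uses $(H_2)$, so it proves the lemma under $(H_1)$ alone, and your justification of monotonicity via the attracting line $\Gamma_i$ is the right one. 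What the paper's route buys: it is shorter and more elementary, and its by-product is reused --- the compactness of the sets $\Lambda^-_{n,i}(x)$ established in the Claim is cited again in the proof of item (1) of Proposition \ref{theobranches}, so substituting your argument would leave that later step needing a separate justification; note also that your essential case, while logically sound, describes a configuration that $(H_2)$ in fact excludes, so in the paper's setting it is vacuous.
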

\begin{proof}[Proof of Lemma \ref{lema127}]
  First, for every $n$ in $\N$, from Proposition \ref{propense} we deduce that $\Lambda^-_i(x)\subset \Lambda^-_{n,i}(x)$. This implies that $\Lambda^-_i(x)\subset
  \bigcap_{n\in \N} \Lambda_{n,i}^-(x)$. To prove the reverse inclusion we need the following claim.

  \begin{claim}
    $(\Lambda^-_{n,i}(x))_{n\geq 1}$ is a decreasing sequence of compact and connected sets contained in $\Lambda_i^-$.
  \end{claim}
\begin{proof}[Proof of the claim] By definition for every integer $n\geq 1$, the set $\Lambda^-_{n,i}(x)$ is connected, and since $\Gamma_{i+1}$ is an attracting line, we deduce that $\Lambda^-_{n+1,i}(x)\subset \Lambda^-_{n,i}(x)$, i.e. the sequence is decreasing. Now, the compactness follows of the facts that $\widetilde{f}^{n}(\Gamma_0)\cap \Gamma_2\neq \emptyset$ (hypothesis $(H_2)$) and that $\widetilde{f}$ commutes with $T$. This proves the claim.
\end{proof}

 From the claim the intersection
 $\cap_{n\in\N}\Lambda^-_{n,i}(x)$ is a compact, connected set contained in $\Lambda_i^-$ which contains $x$. We conclude that $\cap_{n\in\N}\Lambda^-_{n,i}(x)\subset \Lambda^-_i(x)$. This completes the proof of the lemma.
\end{proof}

\begin{proof}[Proof of Proposition \ref{theobranches}]
\textit{Proof of item (1).} We proved (see the claim above) that $\Lambda^-_{n,i}(x)$ is a compact set for every integer $n\geq 1$, so
it provides the compactness of $\Lambda^-_i(x)$. Moreover by Proposition \ref{propense}, $\Lambda^-_{i}(x)$ is contained in $\widetilde{A}_i$.\\

\textit{Proof of item (2).} We know that
$\left(\Lambda^-_{n,i}(x)\right)_{n\geq 1}$ is a decreasing sequence of compact and connected sets. Moreover for every integer $n\geq 1$,
$\Lambda^-_{n,i}(x)\cap \Gamma_{i+1}\neq \emptyset$. Therefore $\left(\Lambda^-_{n,i}(x)\cap \Gamma_{i+1}\right)_{n\geq 1}$ is a decreasing sequence of compact non-empty sets and so by Proposition \ref{propense}, we have that
   $$\Lambda^-_i(x)\cap \Gamma_{i+1}= \bigcap_{n\in\N}\Lambda^-_{n,i}(x)\cap \Gamma_{i+1}\neq \emptyset.$$
This proves item (2).\\

\textit{Proof of item (3).} By Proposition \ref{propense}, we have that $\widetilde{f}^{-1}(\Lambda^-_i)\subset \Lambda^-_i$. By connectedness, we deduce that $\widetilde{f}^{-1}(\Lambda^-_i(x))\subset \Lambda^-_i(\widetilde{f}^{-1}(x))$.\\

\textit{Proof of item (4).} Let $x$ be in $\Lambda_i^-$. Since by Lemma \ref{lema127}, we have that $\Lambda_i^-(x)$ is contained in $\Lambda_{1,i}^-(x)$, it suffices check the property for $\Lambda_{1,i}^-$. Let $\widetilde{Q}_i:=\{\widetilde{z}\in \widetilde{A}_i: 0\leq p_1(\widetilde{z})\leq 1\}$ be and let
$$  M_0:=\sup_{\widetilde{z}\in \widetilde{A}_i} \abs{ p_1(\widetilde{f}(\widetilde{z}))-p_1(\widetilde{z})  }.  $$ First, we claim that $\diam p_1(\widetilde{f}(\widetilde{Q}_i))\leq 2M_0+1.$ Indeed, let $\widetilde{z}, \widetilde{z}'\in \widetilde{Q}_i$. Then
$$ \abs{ p_1(\widetilde{f}(\widetilde{z}))-p_1(\widetilde{f}(\widetilde{z'}))  }\leq 2M_0+1.$$

On the other hand, since $\widetilde{f}$ commutes with $T$ and $\Gamma_{i+1}$ is an attracting line, we have
$$  \Lambda_{1,i}^- \subset \widetilde{f}(\adhe(U_{\Gamma_i}^-))\cap \adhe(U_{\Gamma_{i+1}}^+) \subset \bigcup_{n\in \Z} (T^{n}(\widetilde{f}(\widetilde{Q}_i))\cap \adhe(U_{\Gamma_{i+1}}^+)).    $$
Hence, if $x\in \Lambda_{1,i}^-$, then there is an integer $n$ such that $x\in T^n(\widetilde{f}(\widetilde{Q}_i))\cap \adhe(U_{\Gamma_{i+1}}^+)$. Let $X$ be the connected component of $T^n(\widetilde{f}(\widetilde{Q}_i))\cap \adhe(U_{\Gamma_{i+1}}^+)$ containing $x$. We remark that by Keréj\'art\'o result (see\cite{ly97}) the boundary of $X$ is the union of two arcs $\widetilde{f}(\Gamma_i')$ where $\Gamma_i'\subset \Gamma_i $ and $\Gamma'_{i+1}\subset \Gamma_{i+1}$ . We have the following claim.

\begin{claim}
  $\Lambda_i^-(x)$ is contained in $X$.
\end{claim}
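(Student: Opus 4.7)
The plan is to prove the claim by identifying $X$ with the connected component of $\Lambda_{1,i}^- = \widetilde{f}(\adhe(U_{\Gamma_i}^-)) \cap \adhe(U_{\Gamma_{i+1}}^+)$ that contains $x$, which I shall denote $\Lambda_{1,i}^-(x)$. Once this identification is established, the conclusion follows immediately from Lemma~\ref{lema127}: one has $\Lambda_i^-(x) \subset \Lambda_{1,i}^-(x) = X$.

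The first step is the inclusion $X \subset \Lambda_{1,i}^-(x)$, which is immediate since $X$ is a connected subset of $\Lambda_{1,i}^-$ containing $x$. The second and harder step is the reverse inclusion. For this I would invoke the Ker\'ej\'art\'o-type statement just recalled from \cite{ly97}, which asserts that $\partial X$ is the union of an arc of $\widetilde{f}(\Gamma_i)$ and an arc of $\Gamma_{i+1}$. Since
\[
\partial \Lambda_{1,i}^- \subset \partial\,\widetilde{f}(\adhe(U_{\Gamma_i}^-)) \cup \partial\, \adhe(U_{\Gamma_{i+1}}^+) = \widetilde{f}(\Gamma_i) \cup \Gamma_{i+1},
\]
every boundary point of $X$ already belongs to $\partial \Lambda_{1,i}^-$. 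I then argue by path-connectedness: given $y \in \Lambda_{1,i}^-(x)$ and a path $\gamma$ in $\Lambda_{1,i}^-$ joining $x$ to $y$, a first exit time $t^*$ of $\gamma$ from the closed set $X$ would force $\gamma(t^*)\in \partial X \subset \widetilde{f}(\Gamma_i) \cup \Gamma_{i+1}$; a sufficiently small planar neighborhood of $\gamma(t^*)$ meets $\Lambda_{1,i}^-$ in a wedge lying below $\widetilde{f}(\Gamma_i)$ and above $\Gamma_{i+1}$, this wedge stays inside the tile $T^n \widetilde{f}(\widetilde{Q}_i)$ and is therefore contained in $X$ by connectedness, contradicting the choice of $t^*$.

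The main technical obstacle lies in this last verification, namely that the local wedge around a boundary point $y$ of $X$ is truly contained in $T^n \widetilde{f}(\widetilde{Q}_i)$. This is precisely the content of the strong form of the Ker\'ej\'art\'o statement, which forbids the images of the vertical sides of $\widetilde{Q}_i$ from appearing in $\partial X$; were they to appear, the component $\Lambda_{1,i}^-(x)$ could spread across several $T$-translates of the tile and the desired uniform bound on $\diam p_1(\Lambda_i^-(x))$ would collapse.
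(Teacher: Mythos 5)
Your reduction is fine as far as it goes: the inclusion $X\subset \Lambda_{1,i}^-(x)$ is immediate, and if one knew $\Lambda_{1,i}^-(x)\subset X$ then Lemma~\ref{lema127} would indeed give the claim. But that reverse inclusion is a genuine gap, and in fact it is false in general, so the route cannot be repaired as stated. The Ker\'ekj\'art\'o-type result only says that a connected component of the intersection of two closed Jordan domains is a closed Jordan domain whose boundary is contained in the union of the two boundary curves; applied to $T^n(\widetilde{f}(\widetilde{Q}_i))\cap \adhe(U^+_{\Gamma_{i+1}})$, the boundary of the first domain contains the $\widetilde{f}$-images of the two vertical sides of $T^n\widetilde{Q}_i$, and there is no ``strong form'' of the theorem excluding them from $\partial X$. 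Nor can topology exclude them: the vertical side satisfies $\widetilde{f}(T^n(\{1\}\times[c_{i+1},c_i]))=T^{n+1}\widetilde{f}(\{0\}\times[c_{i+1},c_i])$, so it is shared with the adjacent tile and lies in the interior of $\widetilde{f}(\widetilde{A}_i)$; a component of $\Lambda_{1,i}^-$ bounded by an arc $\widetilde{f}(\alpha)$ with $\alpha\subset\Gamma_i$ straddling the line $\{p_1=n+1\}$ crosses that side, and then $X\subsetneq \Lambda_{1,i}^-(x)$, which is exactly where your wedge argument breaks. Note also that this does not threaten the statement being proved, because the claim concerns $\Lambda_i^-(x)$ and not $\Lambda_{1,i}^-(x)$: the component of $\Lambda_{1,i}^-$ may spread over several $T$-translates of the tile while $\Lambda_i^-(x)$ does not, so the uniform bound on $\diam p_1(\Lambda_i^-(x))$ is not evidence for your intermediate assertion.

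What actually confines $\Lambda_i^-(x)$ to $X$ is dynamical, and your proposal never uses it. The paper's proof runs as follows: if $\Lambda_i^-(x)\not\subset X$, then $\Lambda_i^-(x)$, being a connected subset of $\widetilde{A}_i$ containing $x$ (item (1) of Proposition~\ref{theobranches}), must reach the part of $\partial X$ carried by $\widetilde{f}(\Gamma_i)$ (it cannot escape downward through $\Gamma_{i+1}$ since it stays in $\widetilde{A}_i$); but then item (3) gives
$$\emptyset\neq \widetilde{f}^{-1}(\Lambda_i^-(x))\cap\Gamma_i\subset \Lambda_i^-(\widetilde{f}^{-1}(x))\cap\Gamma_i\subset \Lambda_i^-\cap\Gamma_i,$$
which is impossible because $\Gamma_i$ is attracting: every point of $\Lambda_i^-$ lies in $\widetilde{f}(\adhe(U^-_{\Gamma_i}))$, a set disjoint from $\Gamma_i$. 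So the obstruction to leaving $X$ is the negative invariance of the unstable set together with the fact that $\Gamma_i$ is $\widetilde{f}$-free, not a purely topological property of the one-step set $\Lambda_{1,i}^-$; any correct proof must use these ingredients (items (1) and (3)), which are absent from your argument.
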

\begin{proof}
  By contradiction. If $\Lambda_i^-(x)$ is not contained in $X$, then by connectedness $\Lambda_i^-(x)$ meets $\widetilde{f}(\Gamma_i')$, because $\Lambda_i^-(x)$ is contained in $\widetilde{A}_i$ (item (1)). Using item (3), we deduce
  $$ \emptyset \neq \widetilde{f}^{-1}(\Lambda_i^-(x)) \cap \Gamma'_i \subset  \Lambda_i^-(\widetilde{f}^{-1}(x)) \cap \Gamma_i \subset \Lambda_i^- \cap \Gamma_i   .$$ This contradicts the definition of $\Lambda_i^-(x)$.
\end{proof}
 By the claim and the first point we deduce that

 $$ \diam(p_1(\Lambda_{1,i}^-(x)) \leq \diam(X) \leq \diam(T^n(\widetilde{f}(Q_i))) \leq 2M_0+1 .$$
\end{proof}

\subsection{Consequence of Hypothesis $(H_3)$}\label{subsecconsehypH3}
In this subsection, we consider a homeomorphism $\fonc{\widetilde{f}}{\Ao}{\Ao}$ isotopic to the identity, commuting with the translation $T$ and satisfying hypotheses ($H_1$)-($H_3$). Using hypothesis $(H_3)$ we will study the dynamics of unstable branches (we have the symmetric properties for stable branches).\\

Let us begin by recalling that for $i\in\{0,1\}$, the inclusion $\Theta(\widetilde{A}_i)\subset \widetilde{A}_i$ implies that $\rho_{\Theta(\widetilde{A}_i)}(\widetilde{f})\subset \rho_{\widetilde{A}_i}(\widetilde{f})$, but we cannot use hypothesis $(H_3)$ to conclude that $\rho_{\widetilde{A}_0}(\widetilde{f})\subset (0,+\infty)$ and $\rho_{\widetilde{A}_1}(\widetilde{f})\subset (-\infty,0)$. However, using the fact that the straight lines $\Gamma_i$ and $\Gamma_{i+1}$ are attracting, we can prove that the above inclusions are true.

\begin{propo}\label{propinclusion1}
 For $i\in \{0,1\}$, we have
  $$ \rho_{\widetilde{A}_i}(\widetilde{f})\subset \rho_{mes}(\Theta(\widetilde{A}_i),\widetilde{f})=\conv{\rho_{\Theta(\widetilde{A}_i)}(\widetilde{f})}. $$
  In particular $\rho_{\widetilde{A}_0}(\widetilde{f})\subset (0,+\infty)$ and $\rho_{\widetilde{A}_1}(\widetilde{f})\subset (-\infty,0)$.
\end{propo}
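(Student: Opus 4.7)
My plan is to prove the inclusion $\rho_{\widetilde{A}_i}(\widetilde{f})\subset\rho_{mes}(\Theta(\widetilde{A}_i),\widetilde{f})$ by a Krylov--Bogolyubov averaging argument along orbit segments that remain in the band, and then to read off the ``in particular'' statement from the fact that the convex hull of a compact subset of $(0,+\infty)$ (resp.\ $(-\infty,0)$) still lies in $(0,+\infty)$ (resp.\ $(-\infty,0)$). The identity $\rho_{mes}(\Theta(\widetilde{A}_i),\widetilde{f})=\conv\rho_{\Theta(\widetilde{A}_i)}(\widetilde{f})$ is already supplied by Lemma~\ref{lemaensK}(2), so only the inclusion requires genuine work.

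The crucial first step will be a \emph{trapping lemma}: if $\widetilde{z}\in\widetilde{A}_i$ and $\widetilde{f}^n(\widetilde{z})\in\widetilde{A}_i$, then $\widetilde{f}^k(\widetilde{z})\in\widetilde{A}_i$ for every $0\le k\le n$. To prove it I will promote the attracting condition of $(H_1)$, namely $\widetilde{f}(\Gamma_j)\subset U^-_{\Gamma_j}$, to the closed-half-plane invariance
$$\widetilde{f}(\adhe(U^-_{\Gamma_j}))\subset U^-_{\Gamma_j}, \qquad j\in\{0,1,2\},$$
which follows from the fact that $\widetilde{f}$ is a homeomorphism of the plane whose downstairs counterpart preserves both ends of $\A$, so that the image of the closed lower half-plane is a connected open set lying strictly below $\Gamma_j$. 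Applied with $j=i$ this keeps all forward iterates of $\widetilde{z}$ in $\adhe(U^-_{\Gamma_i})$; applied with $j=i+1$ it shows that any intermediate iterate falling into $U^-_{\Gamma_{i+1}}$ would remain there forever, contradicting $\widetilde{f}^n(\widetilde{z})\in\widetilde{A}_i\subset\adhe(U^+_{\Gamma_{i+1}})$.

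Once trapping is in place, the remainder is standard. Taking $\rho\in\rho_{\widetilde{A}_i}(\widetilde{f})$ witnessed by a sequence $(\widetilde{z}_k,n_k)$ with $n_k\to+\infty$, $\widetilde{z}_k,\widetilde{f}^{n_k}(\widetilde{z}_k)\in\widetilde{A}_i$, and $\rho_{n_k}(z_k)\to\rho$, the trapping lemma forces the full orbit $z_k,f(z_k),\dots,f^{n_k}(z_k)$ to lie in the compact annulus $A_i\subset\A$, so the empirical measures $\mu_k:=\frac{1}{n_k}\sum_{j=0}^{n_k-1}\delta_{f^j(z_k)}$ are supported in $A_i$. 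A weak-$*$ subsequential limit $\mu$ is then $f$-invariant (the defect $f_*\mu_k-\mu_k$ has total variation $\le 2/n_k\to 0$) and supported in $\Theta(A_i)$. The displacement function $\rho_1:=p_1\circ\widetilde{f}-p_1$ is $T$-invariant, hence descends to a continuous function on $\A$, and the telescoping identity $\int\rho_1\,d\mu_k=\rho_{n_k}(z_k)$ passes to the limit to yield $\rho=\rho(\mu)\in\rho_{mes}(\Theta(\widetilde{A}_i),\widetilde{f})$. I expect the trapping lemma to be the only delicate point; without it the limit measure $\mu$ could spread over $\adhe(U^-_{\Gamma_i})$ and fail to be supported in the maximal invariant set of the band, and the averaging step itself is routine Krylov--Bogolyubov.
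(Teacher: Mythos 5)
Your proposal is correct and follows essentially the same route as the paper: the paper also extracts orbit segments whose intermediate iterates remain in $A_i$ (this is exactly your trapping step, which the paper justifies in one line from the attracting/free boundary lines of hypothesis $(H_1)$), forms the empirical measures $\mu_k=\frac{1}{n_k}\sum_{j=0}^{n_k-1}\delta_{f^j(z_k)}$, passes to an $f$-invariant weak-$*$ limit supported in $\Theta(A_i)$, and concludes via Lemma \ref{lemaensK} and hypothesis $(H_3)$. Your only difference is that you spell out the half-plane forward-invariance argument behind the trapping lemma, which the paper leaves implicit.
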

\begin{proof}
Recall that we are using the notation $\rho_{\widetilde{A}_i}(\widetilde{f})$ instead of $\rho_{\pi(\widetilde{A}_i)}(I)$.  Let $\rho$ be an element in $\rho_{\widetilde{A}_i}(\widetilde{f})$. By definition and the fact that $\gamma_i=\pi(\Gamma_i)$ and $\gamma_{i+1}=\pi(\Gamma_{i+1})$ are $f$-free, there exist a sequence of points
$\left(z_k\right)_{k\in\N}$ and a sequence of integers $\left(n_k\right)_{k\in\N}$ satisfying:
\begin{itemize}
  \item[(i)] for every $k\in\N$ and integer $j\in \{0,\cdots, n_k \}$ we have  $f^{j}(z_k)\in A_i=\pi(\widetilde{A}_i) $;
  \item[(ii)] the sequence $(n_k)_{k\in\N}$ converges to $+\infty$; and
  \item[(iii)] the sequence $(\rho_{n_k}(z_k))_{k\in\N}$ converges to $\rho$.
\end{itemize}
Let us consider the sequence of Borel probability measures $\suii{\mu}{k}$, where for every integer $k$, $\mu_k$ is defined as:

$$ \mu_{k}:=\frac{1}{n_k}\sum_{i=0}^{n_k-1} \delta_{f^{i}(z_k)},  $$ where $\delta_{z}$ ($z\in \A$) denotes the \textit{Dirac's measure at $z$}.
 By item (i), these measures have their supports included in $A_i$, and so changing the sequence $\suii{\mu}{k}$ by one of its subsequence, we may suppose that
 the sequence $\suii{\mu}{k}$ converges. Let $\mu$ be its limit. We know that the measure $\mu$ is invariant (under $f$), and so its support $\supp(\mu)$ is
 contained in the maximal $f$-invariant set of $A_i$, denoted by $\Theta(A_i)$. Moreover
$$ \rho_{n_k}(z_k)=\int \rho_1\;d\mu_k  \rightarrow \int \rho_1\;d\mu=\rho(\mu).$$
From item (iii) and the uniqueness of the limit, we deduce that $\rho$ belongs to $\rho_{mes}(\Theta(\widetilde{A}_i),\widetilde{f})$. This proves that  $\rho_{\widetilde{A}_i}(\widetilde{f})\subset \rho_{mes}(\Theta(\widetilde{A}_i),\widetilde{f})$. On the other hand, by Lemma \ref{lemaensK}, we know that $\rho_{mes}(\Theta(\widetilde{A}_i),\widetilde{f})=\conv{\rho_{\Theta(\widetilde{A}_i)}(\widetilde{f})}$. Thus, using hypothesis $(H_3)$ we have that $\rho_{\widetilde{A}_0}(\widetilde{f})\subset (0,+\infty)$ and $\rho_{\widetilde{A}_1}(\widetilde{f})\subset (-\infty,0)$, because $ \rho_{\widetilde{A}_i}(\widetilde{f})$ is a compact set. This completes the proof of the proposition.
\end{proof}

As an immediate consequence of the above proposition (and in particular using that $0$ does not belong to $\rho_{\widetilde{A}_0}(\widetilde{f})$) we have the following lemma.
\begin{lemm}\label{lemmaK}
  For every real number $M>0$, there exists $n_0=n_0(M)\in\N$ such that for every compact set $\widetilde{K}$ included in $\widetilde{A}_0$, with
  $\diam p_1(\widetilde{K})\leq M$, and for every integer $n$, $\abs{n}\geq n_0$ we have $$ \widetilde{f}^n(\widetilde{K})\cap \widetilde{K}=\emptyset.$$
\end{lemm}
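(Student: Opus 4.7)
The plan is to argue by contradiction, leveraging Proposition \ref{propinclusion1} which asserts that $\rho_{\widetilde{A}_0}(\widetilde{f}) \subset (0,+\infty)$ is compact and therefore bounded away from $0$.

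Suppose the conclusion fails. Then there exist a constant $M>0$, a sequence $(n_k)_{k\in\N}$ of integers with $\abs{n_k}\to+\infty$, and a sequence of compact sets $\widetilde{K}_k \subset \widetilde{A}_0$ with $\diam p_1(\widetilde{K}_k)\leq M$ such that $\widetilde{f}^{n_k}(\widetilde{K}_k)\cap \widetilde{K}_k \neq \emptyset$. For each $k$, pick $\widetilde{z}_k \in \widetilde{K}_k$ with $\widetilde{f}^{n_k}(\widetilde{z}_k)\in \widetilde{K}_k$. Replacing $\widetilde{z}_k$ by $\widetilde{f}^{n_k}(\widetilde{z}_k)$ and $n_k$ by $-n_k$ when needed, we may assume $n_k>0$ for all $k$. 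Setting $z_k=\pi(\widetilde{z}_k)$, we obtain $z_k \in A_0$ and $f^{n_k}(z_k)\in A_0$, since $\widetilde{A}_0$ is the full preimage of $A_0$ under $\pi$. This places $\rho_{n_k}(z_k)$ inside the set $\{\rho_n(z)\, :\, z\in A_0,\ f^n(z)\in A_0\}$ used in the definition of $\rho_{A_0}(I)=\rho_{\widetilde{A}_0}(\widetilde{f})$.

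Now I would estimate the rotation numbers directly: since $\widetilde{z}_k$ and $\widetilde{f}^{n_k}(\widetilde{z}_k)$ both belong to $\widetilde{K}_k$ whose $p_1$-diameter is at most $M$, we have
\[
\abs{\rho_{n_k}(z_k)}=\frac{\abs{p_1(\widetilde{f}^{n_k}(\widetilde{z}_k))-p_1(\widetilde{z}_k)}}{n_k}\leq \frac{M}{n_k}\xrightarrow[k\to\infty]{} 0.
\]
Therefore $0$ is a limit point of the sequence $(\rho_{n_k}(z_k))_{k\in\N}$, and from the definition of $\rho_{\widetilde{A}_0}(\widetilde{f})$ we conclude that $0\in \rho_{\widetilde{A}_0}(\widetilde{f})$. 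This contradicts Proposition \ref{propinclusion1}, which forces $\rho_{\widetilde{A}_0}(\widetilde{f})\subset(0,+\infty)$.

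No serious obstacle is expected: the argument is essentially a uniform pigeonhole once Proposition \ref{propinclusion1} is granted. The only mildly delicate point is the reduction to $n_k>0$, which works because $\widetilde{f}^{n}(\widetilde{K})\cap \widetilde{K}\neq\emptyset$ is equivalent to $\widetilde{f}^{-n}(\widetilde{K})\cap \widetilde{K}\neq\emptyset$, and because the rotation set $\rho_{\widetilde{A}_0}(\widetilde{f})$ and the bound $M$ are both symmetric under swapping $\widetilde{z}_k$ with $\widetilde{f}^{n_k}(\widetilde{z}_k)$.
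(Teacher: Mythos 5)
Your proof is correct and follows essentially the same route as the paper: argue by contradiction, extract points $\widetilde{z}_k\in\widetilde{A}_0$ with $\widetilde{f}^{n_k}(\widetilde{z}_k)\in\widetilde{A}_0$ and $\abs{p_1(\widetilde{f}^{n_k}(\widetilde{z}_k))-p_1(\widetilde{z}_k)}\leq M$, deduce $\rho_{n_k}(z_k)\to 0$, hence $0\in\rho_{\widetilde{A}_0}(\widetilde{f})$, contradicting Proposition \ref{propinclusion1}. Your explicit reduction to $n_k>0$ only makes precise a symmetry the paper leaves implicit.
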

\begin{proof}
  Suppose by contradiction that there are a real number $M_0>0$, and a sequence of integers $\suii{n}{k}$ which tends to $+\infty$ and points
  $\widetilde{z}_k\in \widetilde{A}_0$ such that
  $\widetilde{f}^{n_k}(\widetilde{z}_k)\in \widetilde{A}_0$, and $$\abs{p_1(\widetilde{f}^{n_k}(\widetilde{z}_k))-p_1(\widetilde{z}_k)}\leq  M_0.$$
 Hence,
  $$ \abs{\rho_{n_k}(\widetilde{z}_k, \widetilde{f})}:=\frac{1}{n_k}\abs{p_1(\widetilde{f}^{n_k}(\widetilde{z}_k))-p_1(\widetilde{z}_k)}\leq
  \frac{M_0}{n_k}.$$
  Letting $k$ tends to infinity, one deduces that $0$ belongs to $\rho_{\widetilde{A}_0}(\widetilde{f})$. This contradicts  Proposition \ref{propinclusion1}, completing the proof.
\end{proof}

In the sequel, for every real number $M$, and $i\in\{0,1\}$ we write
$$ L_{\widetilde{A}_i}(M):=\{\widetilde{z}\in \widetilde{A}_i: p_1(\widetilde{z})<M\} \quad  \text{ and } \quad R_{\widetilde{A}_i}(M):=\{\widetilde{z}\in \widetilde{A}_i: p_1(\widetilde{z})>M\}. $$

\begin{lemm}\label{lemmaxinfini}
 Let $x\in \Theta(\widetilde{A}_0)$. For every real number $M$, there exists $n_0\in\N$ such that for every integer $n\geq n_0$ we have $$ \widetilde{f}^{-n}(x)\subset L_{\widetilde{A}_0}(-M) \quad \text{ and } \quad  \widetilde{f}^{n}(x)\subset R_{\widetilde{A}_0}(M). $$
\end{lemm}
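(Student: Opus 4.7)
The claim is equivalent to the two divergences $p_1(\widetilde f^n(x))\to+\infty$ and $p_1(\widetilde f^{-n}(x))\to-\infty$ as $n\to+\infty$. The entire input I need is Proposition \ref{propinclusion1}, which tells us that $\rho_{\widetilde A_0}(\widetilde f)\subset(0,+\infty)$. Since this rotation set is, by definition, a closed subset of $\overline{\R}$ contained in the open subset $(0,+\infty)$ of $\overline{\R}$, it is contained in a closed interval $[\delta,C]$ with $0<\delta\le C<+\infty$. The only (mild) subtlety worth flagging is this compactness in $\overline{\R}$: it simultaneously bounds the rotation set away from $0$ and away from $+\infty$.

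For $x\in\Theta(\widetilde A_0)$, every iterate $\widetilde f^k(x)$ with $k\in\Z$ lies in $\widetilde A_0$, so each $\rho_n(x)$ contributes to the sets over which the intersection defining $\rho_{\widetilde A_0}(\widetilde f)$ is taken. Consequently every accumulation point of $(\rho_n(x))_{n\ge 1}$ in $\overline{\R}$ lies in $\rho_{\widetilde A_0}(\widetilde f)$, and is therefore $\ge\delta$. Hence there exists $n_1$ with $\rho_n(x)\ge\delta/2$ for all $n\ge n_1$, which gives the linear lower bound
$$p_1(\widetilde f^n(x))\;\ge\;p_1(x)+\tfrac{n\delta}{2}\qquad(n\ge n_1).$$
For $n$ sufficiently large this forces $p_1(\widetilde f^n(x))>M$, i.e.\ $\widetilde f^n(x)\in R_{\widetilde A_0}(M)$.

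For the backward iterates I would apply exactly the same idea to the auxiliary sequence
$$r_n\;:=\;\rho_n(\widetilde f^{-n}(x))\;=\;\tfrac{1}{n}\bigl(p_1(x)-p_1(\widetilde f^{-n}(x))\bigr).$$
Both $\widetilde f^{-n}(x)$ and $\widetilde f^{n}(\widetilde f^{-n}(x))=x$ lie in $\widetilde A_0$, so once again every accumulation point of $(r_n)$ in $\overline{\R}$ belongs to $\rho_{\widetilde A_0}(\widetilde f)$, hence is at least $\delta$. Therefore $p_1(x)-p_1(\widetilde f^{-n}(x))\ge n\delta/2$ for all large $n$, which gives $\widetilde f^{-n}(x)\in L_{\widetilde A_0}(-M)$ for all large $n$.

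I do not expect a genuine obstacle: the lemma is a direct quantitative upgrade of the qualitative statement in Lemma \ref{lemmaK} and exploits precisely the same mechanism (strict positivity of the rotation set on $\widetilde A_0$, together with the fact that the whole two-sided orbit of a point in the maximal invariant set remains in $\widetilde A_0$). The only point of care is to apply the definition of $\rho_{\widetilde A_0}(\widetilde f)$ to both the forward sequence $\rho_n(x)$ and the shifted sequence $\rho_n(\widetilde f^{-n}(x))$; neither step requires any additional tool.
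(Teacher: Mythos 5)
Your argument is correct, and it reaches the conclusion by a genuinely more direct route than the paper. The paper does not extract a uniform gap $\delta>0$; instead it applies Lemma \ref{lemmaK} to the compact slab $\widetilde{K}=\adhe(R_{\widetilde{A}_0}(-M))\cap\adhe(L_{\widetilde{A}_0}(M))$ to conclude that $\widetilde{f}^{-n}(x)$ avoids $\widetilde{K}$ for $n\geq n_0$, then uses a bounded one-step displacement constant $M_0$ together with an intermediate-value-type claim to show the tail of the backward orbit lies entirely in $L_{\widetilde{A}_0}(-M)$ or entirely in $R_{\widetilde{A}_0}(M)$, and finally rules out the second possibility by the positivity of $\rho_{\Theta(\widetilde{A}_0)}(\widetilde{f})$ (the forward case being ``analogous''). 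Your proof bypasses Lemma \ref{lemmaK}, the constant $M_0$ and the one-sidedness dichotomy altogether: from Proposition \ref{propinclusion1} (or, just as well, from hypothesis $(H_3)$ together with Lemma \ref{lemaensK}) you get $\rho_{\widetilde{A}_0}(\widetilde{f})\subset[\delta,C]\subset(0,+\infty)$, and the key observation --- that any accumulation point of $\rho_n(x)$, and likewise of $r_n=\rho_n(\widetilde{f}^{-n}(x))$, belongs to $\rho_{\widetilde{A}_0}(\widetilde{f})$ because both endpoints of the corresponding orbit segments lie in $\widetilde{A}_0$ --- is exactly the mechanism the paper itself uses inside the proof of Lemma \ref{lemmaK}, here exploited with the lower bound $\delta$ rather than to produce a contradiction. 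What your version buys is a quantitative linear escape rate $p_1(\widetilde{f}^{\pm n}(x))\gtrless p_1(x)\pm n\delta/2$, a symmetric treatment of forward and backward iterates, and independence from the choice of a large $M$ (the paper's proof fixes $M$ with $2M>M_0$ and $-M<p_1(x)<M$ first and deduces the general statement by monotonicity); what the paper's version buys is economy within the article, since Lemma \ref{lemmaK} is needed again later (in the proof of Proposition \ref{prop61}), so routing through it costs little extra. Both proofs rest on the same underlying fact, the strict positivity and compactness of the rotation set of the band, so either one serves the subsequent Lemma \ref{lemma} equally well.
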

\begin{proof}
In both cases the proof is analogous, so we only prove the first inclusion.
Let  $$M_0:=\sup_{\widetilde{z}\in\widetilde{A}_0} \abs{p_1(\widetilde{f}(\widetilde{z}))-p_1(\widetilde{z})}.$$
Let $M$ be a positive real number such that $2M >M_0$ and $-M< p_1(x)<M$. Let us put $\widetilde{K}= \adhe(R_{\widetilde{A}_0}(-M))\cap \adhe(L_{\widetilde{A}_0}(M))$. By Lemma \ref{lemmaK}, there exists $n_0\in\N$ such that for every integer $n\geq n_0$ we have $\widetilde{f}^{-n}(x)\notin \widetilde{K}$, i.e. $\widetilde{f}^{-n}(x)\in L_{\widetilde{A}_0}(-M)\cup R_{\widetilde{A}_0}(M)$. Suppose that there are two integers $n''>n'\geq n_0$ such that
$\widetilde{f}^{-n'}(x)\in L_{\widetilde{A}_0}(-M)$ and $\widetilde{f}^{-n''}(x)\in R_{\widetilde{A}_0}(M)$. The following claim contradicts the choice of $n_0$ (see Figure \ref{fig:proofclaim}).

\begin{figure}[h!]
  \centering
    \includegraphics{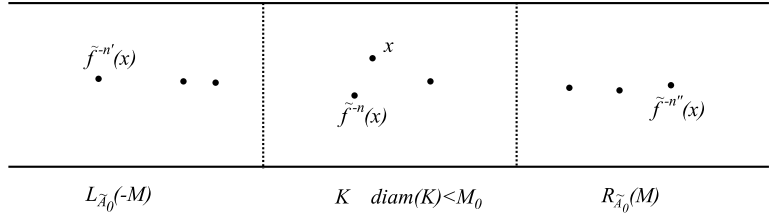}
  \caption{Orbit of $x$}
  \label{fig:proofclaim}
\end{figure}

\begin{claim}\label{equalemmaxinfini}
  There is an integer  $n''>n>n'\geq n_0$ such that $\widetilde{f}^{-n}(x) \in \widetilde{K}$.
\end{claim}
\begin{proof}
Let $$ n-1:=\max\{m\in \{n',\cdots,n'' \} :  \widetilde{f}^{-m}(x)\in L_{\widetilde{A}_0}(-M)  \}.$$
(From the choice of $n'$ and $n''$ such integer exists). By definition of $n$, we have $p_1(\widetilde{f}^{-n+1}(x))<-M\leq p_1(\widetilde{f}^{-n}(x)),$ and so in order to prove the claim it suffices to check that $ \widetilde{f}^{-n}(x)<M $.
Recall that for every integer $m$, the point $\widetilde{f}^{m}(x)$ is in $\widetilde{A}_0$ and so
$$p_1(\widetilde{f}^{-n}(x))= p_1(\widetilde{f}^{-n}(x)) -p_1(\widetilde{f}^{-n+1}(x)) + p_1(\widetilde{f}^{-n+1}(x)) \leq M_0 - M < M.$$
This proves the claim.
 \end{proof}
We conclude that the sequence $(f^{-n}(x))_{n\geq n_0}$ is contained either in $L_{\widetilde{A}_0}(-M)$ or in $R_{\widetilde{A}_0}(M)$. Since $\rho_{\Theta(\widetilde{A}_0)}(\widetilde{f})\subset (0,+\infty)$, the second inclusion cannot possible, completing the proof of the lemma.
\end{proof}

\begin{lemm}\label{lemma}
  Let $x\in \Theta(\widetilde{A}_0)$. Then the sequence $\{\widetilde{f}^{-n}(\Lambda_0^-(x))\}_{n\in\N}$ is uniformly bounded to the right, that is, for every real number $M$, there exists $n_0\in\N$ such that for every integer $n\geq n_0$ we have $$ \widetilde{f}^{-n}(\Lambda_0^-(x))\subset L_{\widetilde{A}_0}(M).$$
\end{lemm}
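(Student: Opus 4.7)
The plan is to combine the uniform diameter bound on unstable branches (item (4) of Proposition \ref{theobranches}), the compatibility of branches with iteration (item (3) of Proposition \ref{theobranches}), and the fact that the negative orbit of $x$ escapes to the left (Lemma \ref{lemmaxinfini}).

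First, I would iterate item (3) of Proposition \ref{theobranches} to obtain the inclusion
$$ \widetilde{f}^{-n}(\Lambda_0^-(x)) \subset \Lambda_0^-(\widetilde{f}^{-n}(x)) $$
for every integer $n \geq 0$. This reduces the problem from controlling the iterates of a set to controlling the unstable branches based at the points $\widetilde{f}^{-n}(x)$. Note that $\widetilde{f}^{-n}(x)\in \Theta(\widetilde{A}_0)$ since $\Theta(\widetilde{A}_0)$ is $\widetilde{f}$-invariant, so these branches are well-defined.

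Next, by item (4) of Proposition \ref{theobranches}, there is a constant $M_0^-$ (independent of the base point) such that
$$ \diam p_1\bigl(\Lambda_0^-(\widetilde{f}^{-n}(x))\bigr) < M_0^- $$
for every $n\in\N$. Since $\widetilde{f}^{-n}(x)$ belongs to its own branch, every point $\widetilde{y}\in \Lambda_0^-(\widetilde{f}^{-n}(x))$ satisfies
$$ p_1(\widetilde{y}) < p_1(\widetilde{f}^{-n}(x)) + M_0^-. $$

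Finally, given $M\in\R$, I would apply Lemma \ref{lemmaxinfini} to the point $x\in \Theta(\widetilde{A}_0)$ with threshold $M' := M_0^- - M$ (or any value large enough so that $M_0^- + (-M') < M$). This yields an integer $n_0$ such that $\widetilde{f}^{-n}(x)\in L_{\widetilde{A}_0}(-M')$ for every $n\geq n_0$, i.e.\ $p_1(\widetilde{f}^{-n}(x)) < -M' = M - M_0^-$. Combining with the inequality from the previous step gives $p_1(\widetilde{y}) < M$ for every $\widetilde{y}\in \Lambda_0^-(\widetilde{f}^{-n}(x))$, hence $\Lambda_0^-(\widetilde{f}^{-n}(x))\subset L_{\widetilde{A}_0}(M)$, and the first inclusion from the displayed formula gives $\widetilde{f}^{-n}(\Lambda_0^-(x))\subset L_{\widetilde{A}_0}(M)$, as required.

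There is no real obstacle here: the argument is a routine assembly of already-proved statements. The only thing to be mindful of is that the uniformity of the diameter bound $M_0^-$ in item (4) of Proposition \ref{theobranches} is essential — without the independence of the base point, one could not convert the escape of the single orbit $\widetilde{f}^{-n}(x)$ into the escape of the entire branch containing it.
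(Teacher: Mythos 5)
Your argument is correct and is essentially the same as the paper's proof: both use the inclusion $\widetilde{f}^{-n}(\Lambda_0^-(x))\subset \Lambda_0^-(\widetilde{f}^{-n}(x))$ from item (3) of Proposition \ref{theobranches}, the uniform diameter bound $M_0^-$ from item (4), and the escape $p_1(\widetilde{f}^{-n}(x))\to-\infty$ given by Lemma \ref{lemmaxinfini}. Your write-up just makes the quantifier bookkeeping slightly more explicit than the paper does.
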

\begin{proof}
Let $n\in \N$. Since $\widetilde{f}^{-n}(\Lambda_0^-(x))\subset \Lambda_{0}^-(\widetilde{f}^{-n}(x))$ Proposition \ref{theobranches} provides us a real number $M_0^-$ (independent of $n$) such that for every $\widetilde{z}\in \Lambda_{0}^-(x)$ we have $$\abs{ p_1(\widetilde{f}^{-n}(\widetilde{z}))-p_1(\widetilde{f}^{-n}(x)) }<M_0^-.$$ On the other hand, by Lemma \ref{lemmaxinfini}, we have that $\lim_{n\to +\infty} p_1(\widetilde{f}^{-n}(x))=-\infty$. This completes the proof of the lemma.
\end{proof}

%

\subsection{Proof of Proposition \ref{prop61}}

In this section we prove Proposition \ref{prop61} reformulated as follows.

\begin{propo}[Proposition \ref{prop61}]\label{propimplytheoC}
Let $\fonc{\widetilde{f}}{\Ao}{\Ao}$ be a homeomorphism isotopic to the identity, commuting with the translation $T$, and satisfying hypotheses $(H_1)$ to $(H_3)$. For every $x\in \Theta(\widetilde{A}_0)$, there exist $y\in \Theta(\widetilde{A}_1)$ and a positive integer $n_0$ such that for every integer $n\geq n_0$, we have
  $$ \widetilde{f}^{n}(\Lambda_0^-(x)) \cap \Lambda_1^+(y)\neq \emptyset.$$
\end{propo}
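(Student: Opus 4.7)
The plan is in two parts: first, locate a point of the unstable branch $\Lambda_0^-(x)$ whose forward iterates remain trapped in the stable set $\Lambda_1^+$; second, select the fixed branch $\Lambda_1^+(y)$ by compactness modulo the $T$-action and then propagate the resulting intersection to every sufficiently large $n$.

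Fix $a\in\Lambda_0^-(x)\cap\Gamma_1$, non-empty by Proposition~\ref{theobranches}(2). The crucial first step is to show that $a$ can be chosen so that $\widetilde f^n(a)\in\widetilde A_1$ for every $n\geq 1$; equivalently, $b:=\widetilde f(a)\in\Lambda_1^+$. I argue by contradiction. If no such $a$ exists, then for each $a\in\Lambda_0^-(x)\cap\Gamma_1$ there is a least $N(a)\geq 1$ with $\widetilde f^{N(a)}(a)\in U^-_{\Gamma_2}$; compactness of $\Lambda_0^-(x)\cap\Gamma_1$ and openness of $U^-_{\Gamma_2}$ yield a uniform $N$ with $\widetilde f^N(\Lambda_0^-(x)\cap\Gamma_1)\subset U^-_{\Gamma_2}$. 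The connected compact set $\widetilde f^N(\Lambda_0^-(x))$ then contains $\widetilde f^N(x)\in\widetilde{A}_0$ and points in $U^-_{\Gamma_2}$, so it crosses $\Gamma_2$. Combining this with the uniform horizontal-diameter bound of Proposition~\ref{theobranches}(4), the attractivity of $\Gamma_2$, and Lemma~\ref{lemmaK} applied to a suitable $T$-translated compact set, one should derive a contradiction with $\rho_{\Theta(\widetilde{A}_0)}(\widetilde f)\subset(0,+\infty)$ from hypothesis~$(H_3)$.

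Once $a$ is found, the iterates $\widetilde f^n(a)\in\widetilde f^n(\Lambda_0^-(x))\cap\Lambda_1^+$ lie in stable branches $\Lambda_1^+(\widetilde f^n(a))$ which, by the symmetric analogues of Lemmas~\ref{lemmaxinfini} and~\ref{lemma}, have uniformly bounded horizontal diameter and drift leftward. Modulo $T$ they belong to a compact family, so a subsequence $\widetilde f^{n_k}(a)$ converges after translation to some $y^\ast\in\Lambda_1^+$, and I take $y\in\Theta(\widetilde A_1)$ in the branch $\Lambda_1^+(y^\ast)$. The promotion from ``non-empty intersection along the subsequence $n_k$'' to ``non-empty intersection for every $n\geq n_0$'' uses the monotone inclusion $\Lambda_0^-(\widetilde f^n(x))\subset\widetilde f^n(\Lambda_0^-(x))$ from Proposition~\ref{theobranches}(3), together with the $T$-periodicity of the branch structure: the family $\{\widetilde f^n(\Lambda_0^-(x))\}_n$ sweeps the annulus at the drift rate of $x$, and by the bounded-diameter estimates this sweep continues to cover the fixed branch $\Lambda_1^+(y)$ at every step once it has done so once.

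The main obstacle is Step 1 — the existence of a non-escaping point $a$ — which is the topological crux of the proof and requires a careful balance of connectedness of $\widetilde f^N(\Lambda_0^-(x))$, the horizontal-diameter estimate, and the drift hypothesis~$(H_3)$ in a non-trivial way. A secondary delicate step is the propagation from subsequence intersection to intersection for every sufficiently large $n$, which relies on the rigidity of the $T$-action and the monotone-containment property of unstable branches from Proposition~\ref{theobranches}(3).
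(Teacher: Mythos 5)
Your Step 1 contains the decisive gap, and it is precisely the point where the real difficulty lies. You ask for a point $a\in\Lambda_0^-(x)\cap\Gamma_1$ with $\widetilde{f}(a)\in\Lambda_1^+$, i.e. a point whose backward orbit stays in $\widetilde{A}_0$ and whose forward orbit stays in $\widetilde{A}_1$; such a point is essentially an element of $\Theta(\widetilde{A})$ attached to the prescribed branch, so you are assuming something at least as strong as what the proposition must produce. Your contradiction argument does not close. If every point of $\Lambda_0^-(x)\cap\Gamma_1$ enters $U^-_{\Gamma_2}$ by a uniform time $N$, the only consequence is that the connected set $\widetilde{f}^N(\Lambda_0^-(x))$ stretches from $\widetilde{A}_0$ across $\widetilde{A}_1$ and below $\Gamma_2$ --- and nothing in $(H_1)$--$(H_3)$ forbids this. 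The unstable set is only \emph{negatively} invariant, so forward images of $\Lambda_0^-(x)$ are in no way confined to $\widetilde{A}_0$; Proposition \ref{theobranches}(4) bounds the horizontal diameter of branches, hence of the backward images $\widetilde{f}^{-n}(\Lambda_0^-(x))\subset\Lambda_0^-(\widetilde{f}^{-n}(x))$, but says nothing about forward images; and Lemma \ref{lemmaK} together with $(H_3)$ only constrains orbit segments that remain in $\widetilde{A}_0$, whereas the escaping points you consider leave $\widetilde{A}_0$ downward and therefore carry no rotation information about $\widetilde{A}_0$ at all. So the contradiction you hope to ``derive'' is not available, and the non-escaping point $a$ need not exist for the given $x$; this is not a fixable detail but a wrong route to the intersection.

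The second half also falls short of the statement: choosing $y$ by compactness modulo $T$ along a subsequence and then ``propagating'' via a sweeping heuristic does not prove the conclusion for every $n\geq n_0$ (translating by $T$ changes the branch, while the proposition requires one fixed $y$), and the monotone inclusion from Proposition \ref{theobranches}(3) does not by itself force the covering to persist. The paper's proof is structured quite differently: one picks $y\in\Theta(\widetilde{A}_1)$ far to the \emph{right} of $\Lambda_0^-(x)$ (using the uniform bound $M_1^+$), shows that $\widetilde{f}^n(\Gamma_1)\cap\Lambda_1^+(y)\neq\emptyset$ for all $n$ (Lemma \ref{lemmproofpropimplyTheoC1}), and then runs a plane-separation argument with a vertical arc $\alpha$ from $\Lambda_1^+(y)\cap\Gamma_1$ to $\Gamma_0$, the decompositions $\Gamma_1=\Gamma_1^l\cup\Gamma_1^r$, $\Gamma_0=\Gamma_0^l\cup\Gamma_0^r$ and the connected set $C_n$, with Lemmas \ref{lemmaK} and \ref{lemmaxinfini} placing $\widetilde{f}^n(\Lambda_0^-(x))$ inside the component $X_n$: this yields, for \emph{every} $n\geq n_0$, that $\widetilde{f}^n(\Lambda_0^-(x))$ or $\widetilde{f}^n(\Gamma_1^l)$ meets $\Lambda_1^+(y)$ (Lemma \ref{lemmproofpropimplyTheoC2}). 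Finally, if the first alternative failed along a subsequence, one would get points of $\Gamma_1^l$ whose orbit segments stay in $\widetilde{A}_1$ and have non-negative rotation number, contradicting Proposition \ref{propinclusion1}. In particular the intersection point is \emph{not} obtained as the forward image of a point of $\Lambda_0^-(x)\cap\Gamma_1$ trapped in $\widetilde{A}_1$, which is exactly the object your argument would need and cannot supply.
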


 We note that if $\widetilde{z}\in  \widetilde{f}^{n}(\Lambda_0^-(x)) \cap \Lambda_1^+(y)$, then $z\in \Theta(\widetilde{A})$. By Lemma \ref{lemma} and its symmetric result the sequence of positive and negative iterates of $ \widetilde{z}$ go to the left. We prove to next subsection that this implies Theorem C$^*$.\\

\textit{Proof of Proposition \ref{propimplytheoC}}
Let $x\in \Theta(\widetilde{A}_0)$. Since $\Lambda_0^-(x)$ is a compact set, we can consider a real number $M$ such that $\Lambda_0^-(x)\subset L_{\widetilde{A}_0}(M)$. Let $M_1^+$ be the positive real constant provided by Proposition \ref{theobranches}, such that
$$  \max_{y'\in \Theta(\widetilde{A}_1)} \diam p_1(\Lambda_1^+(y')) < M_1^+.$$ Let us fix $y\in \Theta (\widetilde{A}_1)$ such that $p_1(y)>M+M_1^+$. We recall that by the choice of $y$, one has that
\begin{equation}\label{equaproofpropimplyTheoC1}
     \Lambda_0^-(x)\subset L_{\widetilde{A}_0}(M)   \quad \text{ and } \quad  \Lambda_1^+(y)\subset R_{\widetilde{A}_1}(M).
\end{equation}

\begin{lemm}\label{lemmproofpropimplyTheoC1}
  For every integer $n\geq 0$, we have $$  \widetilde{f}^{n}(\Gamma_1) \cap \Lambda_1^+(y)\neq \emptyset.$$
\end{lemm}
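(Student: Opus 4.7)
The plan is to prove the lemma by a short orbit-chasing argument using only the stable-branch analogue of Proposition \ref{theobranches}, which was explicitly stated to hold by symmetry. The case $n=0$ is immediate, because the stable version of item (2) of Proposition \ref{theobranches} gives $\Gamma_1\cap\Lambda_1^+(y)\neq\emptyset$. Thus I focus on $n\geq 1$, and the idea is to produce, for each such $n$, a single point $z\in\Gamma_1$ whose image $\widetilde{f}^n(z)$ lands in $\Lambda_1^+(y)$.

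First I would observe that $y\in\Theta(\widetilde{A}_1)$ and that $\Theta(\widetilde{A}_1)$ is $\widetilde{f}$-invariant, so $\widetilde{f}^{-n}(y)\in\Theta(\widetilde{A}_1)$. Since every point of $\Theta(\widetilde{A}_1)$ has all its forward iterates in $\widetilde{A}_1$, it belongs to $\Lambda_1^+$, so the stable branch $\Lambda_1^+(\widetilde{f}^{-n}(y))$ is well defined. By the stable analogue of item (2) of Proposition \ref{theobranches}, this branch meets $\Gamma_1$; pick any point $z\in\Lambda_1^+(\widetilde{f}^{-n}(y))\cap\Gamma_1$.

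The key transport step is the stable analogue of item (3) of Proposition \ref{theobranches}, namely $\widetilde{f}(\Lambda_1^+(w))\subset\Lambda_1^+(\widetilde{f}(w))$ for every $w\in\Lambda_1^+$. Iterating this inclusion $n$ times yields
$$\widetilde{f}^n\bigl(\Lambda_1^+(\widetilde{f}^{-n}(y))\bigr)\subset\Lambda_1^+(y).$$
Applying this to our chosen $z$, we see that $\widetilde{f}^n(z)\in\Lambda_1^+(y)$; on the other hand $\widetilde{f}^n(z)\in\widetilde{f}^n(\Gamma_1)$ by construction. Hence $\widetilde{f}^n(\Gamma_1)\cap\Lambda_1^+(y)\neq\emptyset$, as required.

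There is essentially no substantive obstacle here: the whole argument reduces to invoking the symmetric versions of the two relevant items of Proposition \ref{theobranches} together with the invariance of $\Theta(\widetilde{A}_1)$. The only bookkeeping subtlety is to make sure that the stable-branch versions of items (2) and (3) are recorded (or spelled out) before this lemma, since Proposition \ref{theobranches} is phrased in terms of unstable branches; once that is done the proof is two lines.
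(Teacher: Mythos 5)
Your argument is correct, but it is not the route the paper takes. You argue directly: since $\Theta(\widetilde{A}_1)$ is $\widetilde{f}$-invariant and contained in the stable set $\Lambda_1^+$, the branch $\Lambda_1^+(\widetilde{f}^{-n}(y))$ is defined and meets $\Gamma_1$ (stable analogue of item (2) of Proposition \ref{theobranches}), and the equivariance property $\widetilde{f}(\Lambda_1^+(w))\subset \Lambda_1^+(\widetilde{f}(w))$ (stable analogue of item (3)), iterated $n$ times, pushes a point of $\Gamma_1\cap\Lambda_1^+(\widetilde{f}^{-n}(y))$ into $\widetilde{f}^n(\Gamma_1)\cap\Lambda_1^+(y)$; every step checks out, since the intermediate points $\widetilde{f}^{-n+j}(y)$ stay in $\Theta(\widetilde{A}_1)\subset\Lambda_1^+$. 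The paper instead argues by contradiction, using only the stable analogue of item (2) together with connectedness and the attracting character of $\Gamma_1$: if $\widetilde{f}^{n}(\Gamma_1)\cap\Lambda_1^+(y)=\emptyset$, then the connected set $\Lambda_1^+(y)$, which meets $\Gamma_1$ and hence lies above $\widetilde{f}^{n}(\Gamma_1)\subset U^-_{\Gamma_1}$, would be contained in $U^+_{\widetilde{f}^{n}(\Gamma_1)}=\widetilde{f}^{n}(U^+_{\Gamma_1})$, forcing $y\in\widetilde{f}^{n}(U^+_{\Gamma_1})$ and contradicting $y\in\Theta(\widetilde{A}_1)$. So your proof is constructive and exhibits an explicit intersection point but leans on the equivariance of branches, whereas the paper's proof avoids item (3) altogether at the cost of being indirect; both rely on the symmetric (stable) versions of Proposition \ref{theobranches}, which, as you note, should be recorded explicitly before use.
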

\begin{proof}
  Suppose by contradiction that there exists a positive integer $n'$ such that $  \widetilde{f}^{n'}(\Gamma_1) \cap \Lambda_1^+(y)=\emptyset$. This implies that $\Lambda_1^+(y)$ is contained in one of the connected components of $\Ao\setminus \widetilde{f}^{n'}(\Gamma_1)$. Since $\Gamma_1$ is an attracting line, i.e. $\widetilde{f}^{n'}(\Gamma_1)\subset U^-_{\Gamma_1}$, and since $\Lambda_1^+(y)$ meets $\Gamma_1$ (hypothesis $(H_1)$ and Proposition \ref{theobranches}), we conclude that $\Lambda_1^+(y)\subset U^+_{\widetilde{f}^{n'}(\Gamma_1)}$, and so $y\in U^+_{\widetilde{f}^{n'}(\Gamma_1)}=\widetilde{f}^{n'}(U_{\Gamma_1}^+)$. This contradicts the choice of $y$, because $y\in\Theta(\widetilde{A}_1)\subset \cap_{n\in\N} \widetilde{f}^n(U_{\Gamma_1}^-)$.
\end{proof}

From Proposition \ref{theobranches}, we know that $\Lambda_1^+(y)\cap \Gamma_1$ is a non-empty set. Let us fix $\widetilde{z}\in \Lambda_1^+(y)\cap \Gamma_1$ and consider the vertical line segment $\fonc{\alpha}{[0,1]}{\Ao}$ satisfying:
  \begin{itemize}
    \item $\alpha(0)=\widetilde{z}$;
    \item $\alpha(1)\in \Gamma_0$; and
    \item $\alpha((0,1))\subset \inte (\widetilde{A}_0)$.
  \end{itemize}
Let $R_{\widetilde{A}_0}(\alpha)$ (resp. $L_{\widetilde{A}_0}(\alpha)$) be connected component of $\widetilde{A}_0\setminus \alpha$ which is unbounded to the right (resp. to the left). Let $\widetilde{K}=\Lambda_0^-(x)\cup \alpha$. Since $\widetilde{K}$ is a compact subset of $\widetilde{A}_0$, by Lemma \ref{lemmaK} there is a positive integer $n_0$ such that for every integer $n\geq n_0$, we have
 \begin{equation}\label{equaproofpropimplyTheoC2}
 \widetilde{f}^n(\Lambda_0^-(x))\cap \alpha \subset \widetilde{f}^n(\widetilde{K})\cap  \widetilde{K}=\emptyset.
 \end{equation}
Moreover by Lemma \ref{lemmaxinfini}, we can assume that $n_0$ is large enough such that for every integer $n\geq n_0$.
 \begin{equation}\label{equaproofpropimplyTheoC3}
\widetilde{f}^n(x)\in R_{\widetilde{A}_0}(\alpha).
 \end{equation}
 From now on, we fix such integer $n_0$. Let us consider $\Lambda_0^\pm(x):=\Lambda_0^-(x)\cup \Lambda^+_0(x)$. This is a connected and compact subset of $\widetilde{A}_0$ which separates the band $\widetilde{A}_0$ (i.e. it meets both $\Gamma_0$ and $\Gamma_1$). We can define its right $R_{\widetilde{A}_0}(\Lambda_0^\pm(x))$ that is the connected component of $\widetilde{A}_0\setminus \Lambda_0^\pm(x)$ which is unbounded to the right.

Let us denote by
$$ \Gamma_1^r:=R_{\widetilde{A}_0}(\Lambda_0^\pm(x))\cap \Gamma_1 \quad  \text{ and } \quad \Gamma_1^l:= \Gamma_1\setminus \Gamma^r_1. $$

(See Figure \ref{fig:proofproposition}). Note that
 \begin{equation}\label{equaproofpropimplyTheoC4}
\Gamma_1^r \cap \Lambda_0^-(x) \subset R_{\widetilde{A}_0}(\Lambda_0^\pm(x)) \cap \Lambda_0^\pm(x)=\emptyset.
 \end{equation}

\begin{figure}[h!]
  \centering
    \includegraphics{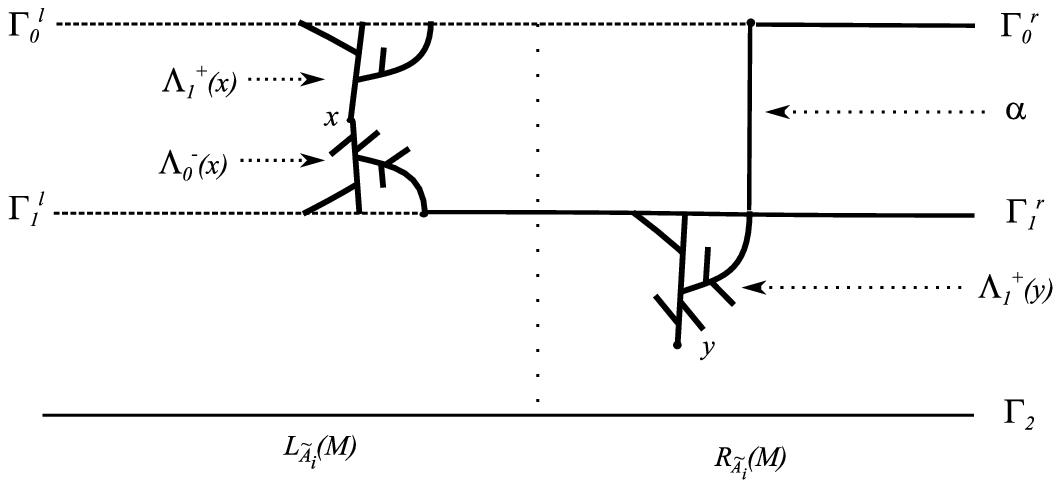}
  \caption{Proof Proposition \ref{prop61}}
  \label{fig:proofproposition}
\end{figure}

Proposition \ref{propimplytheoC} we will be a consequence of the following lemma.

\begin{lemm}\label{lemmproofpropimplyTheoC2}
  For every integer $n\geq n_0$, we have $$ (\widetilde{f}^n(\Lambda_0^-(x)) \cup \widetilde{f}^n(\Gamma^l_1)) \cap \Lambda_1^+(y)\neq \emptyset.$$
\end{lemm}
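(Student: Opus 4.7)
The plan is to argue by contradiction. Suppose that for some $n \geq n_0$ we have both $\widetilde{f}^n(\Lambda_0^-(x)) \cap \Lambda_1^+(y) = \emptyset$ and $\widetilde{f}^n(\Gamma_1^l) \cap \Lambda_1^+(y) = \emptyset$. By Lemma \ref{lemmproofpropimplyTheoC1}, the set $\widetilde{f}^n(\Gamma_1)$ meets $\Lambda_1^+(y)$, and since $\Gamma_1 = \Gamma_1^l \cup \Gamma_1^r$, there exists $\widetilde{w} \in \widetilde{f}^n(\Gamma_1^r) \cap \Lambda_1^+(y)$. The strategy is to combine $\widetilde{w}$ with the witness point $\widetilde{z} \in \Lambda_1^+(y) \cap \Gamma_1$ (which, provided $p_1(y)$ is chosen large enough that $\widetilde{z}$ itself lies in $\Gamma_1^r$) and derive a topological contradiction from the connectedness of $\Lambda_1^+(y)$.

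The key geometric step is to locate $\widetilde{f}^n(\Lambda_0^-(x))$ precisely. By equations (\ref{equaproofpropimplyTheoC2}) and (\ref{equaproofpropimplyTheoC3}), this is a compact connected set containing $\widetilde{f}^n(x) \in R_{\widetilde{A}_0}(\alpha)$, disjoint from $\alpha$, and meeting $\widetilde{f}^n(\Gamma_1) \subset U^-_{\Gamma_1}$ through $\widetilde{f}^n(\Lambda_0^-(x) \cap \Gamma_1) \neq \emptyset$ (Proposition \ref{theobranches}). Because $\widetilde{f}^n(\Lambda_0^-(x)) \subset \widetilde{f}^n(\widetilde{A}_0) \subset \adhe(U^-_{\Gamma_0})$, connecting $\widetilde{f}^n(x)$ to a point below $\Gamma_1$ inside this continuum forces a passage across $\Gamma_1$; the disjointness from $\alpha$ and the confinement below $\Gamma_0$ then imply that every such crossing point $\widetilde{c}$ satisfies $p_1(\widetilde{c}) > p_1(\widetilde{z})$, and hence $\widetilde{c} \in \Gamma_1^r$.

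With $\widetilde{c}$ in hand, I would close a topological loop in $\widetilde{\A}$ formed by (i) a connected sub-continuum of $\widetilde{f}^n(\Lambda_0^-(x))$ linking $\widetilde{f}^n(x)$ to $\widetilde{c}$, (ii) the horizontal sub-arc of $\Gamma_1^r$ from $\widetilde{c}$ to $\widetilde{z}$, and (iii) the vertical arc $\alpha$ connecting $\widetilde{z}$ through $\widetilde{A}_0$, together with a return path along $\widetilde{f}^n(\Gamma_1)$ or $\widetilde{f}^n(\Gamma_0)$ from $\alpha(1)$ back down to the sub-continuum. This loop bounds a planar region $D$ chosen so that $\widetilde{w}$ (lying on $\widetilde{f}^n(\Gamma_1) \subset U^-_{\Gamma_1}$ below $\Gamma_1$) sits on one side of $D$ while the ``far'' part of $\Lambda_1^+(y)$ extends to the opposite side. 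The connectedness of $\Lambda_1^+(y)$ then forces a crossing of $\partial D$; by construction this crossing cannot lie on $\alpha$ (since $\inte(\alpha) \subset \inte(\widetilde{A}_0)$ while $\Lambda_1^+(y) \subset \widetilde{A}_1$), nor on the sub-arc of $\Gamma_1$ by our assumption on $\Gamma_1^l$, nor on $\widetilde{f}^n(\Gamma_1^l)$, so it must occur on the sub-continuum of $\widetilde{f}^n(\Lambda_0^-(x))$, contradicting our standing assumption.

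The main obstacle will be the rigorous construction of the separating loop, because $\widetilde{f}^n(\Lambda_0^-(x))$ is only a compact connected set and need not be arcwise connected. To bypass this, one should work either with an arc contained in the (connected) image of a well-chosen arc in $\Lambda_0^-(x)$ joining a point of $\Lambda_0^-(x) \cap \Gamma_1$ to $x$, or apply a Ker\'ekj\'art\'o/Carath\'eodory-style accessibility argument on the complement of the continuum. A secondary care is needed to verify that $\widetilde{c} \in \Gamma_1^r$ and not in the possibly disconnected set $\Gamma_1^l$; this reduces to the strict inequality $p_1(\widetilde{c}) > p_1(\widetilde{z})$ established in the second paragraph and to the choice of $y$ with $p_1(y)$ large enough so that $p_1(\widetilde{z})$ strictly exceeds $\sup p_1(\Lambda_0^\pm(x))$.
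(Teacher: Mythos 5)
There is a genuine gap: the separating loop at the heart of your argument can neither be closed as described nor made disjoint from $\Lambda_1^+(y)$, so connectedness of $\Lambda_1^+(y)$ never produces a contradiction. Concretely: (i) your ``return path along $\widetilde{f}^n(\Gamma_1)$ or $\widetilde{f}^n(\Gamma_0)$ from $\alpha(1)$'' does not exist, since $\alpha(1)\in\Gamma_0$ while $\widetilde{f}^n(\Gamma_0)$ and $\widetilde{f}^n(\Gamma_1)$ are strictly below $\Gamma_0$ (the lines are attracting), so the pieces of the loop do not meet; (ii) even if a closed curve were produced, it passes through $\widetilde{z}\in\Lambda_1^+(y)$ and may be crossed by $\Lambda_1^+(y)$ along the sub-arc of $\Gamma_1$ between $\widetilde{z}$ and $\widetilde{c}$ --- your standing assumption only forbids intersections with $\widetilde{f}^n(\Gamma_1^l)$ and $\widetilde{f}^n(\Lambda_0^-(x))$, not with $\Gamma_1$ itself, and $\Lambda_1^+(y)$ does meet $\Gamma_1$ by construction --- and also along any piece taken on $\widetilde{f}^n(\Gamma_1^r)$, which it certainly meets (at $\widetilde{w}$). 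Hence a crossing of $\partial D$ is not contradictory. In addition, your ``key geometric step'' that \emph{every} crossing point of $\widetilde{f}^n(\Lambda_0^-(x))$ with $\Gamma_1$ satisfies $p_1(\widetilde{c})>p_1(\widetilde{z})$ is unjustified: $\alpha$ only blocks inside $\widetilde{A}_0$, so the image continuum can dip below $\Gamma_1$, travel to the left underneath it, and resurface in $\Gamma_1^l$. (What does follow, since the continuum avoids $\alpha$ and $\Gamma_0$ and joins $\widetilde{f}^n(x)\in R_{\widetilde{A}_0}(\alpha)$ to points of $U^-_{\Gamma_1}$, is the existence of \emph{one} crossing with $p_1\geq p_1(\widetilde{z})$; but that alone does not repair the argument.)

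The paper avoids exactly these difficulties by not trying to separate $\Lambda_1^+(y)$ at all: it puts $\Lambda_1^+(y)$ \emph{into} the barrier, setting $C_n=\Lambda_1^+(y)\cup\widetilde{f}^n(\Gamma_1^r)\cup\alpha\cup\Gamma_0^r$ (a connected set, once one assumes $\widetilde{f}^n(\Gamma_1^r)\cap\Lambda_1^+(y)\neq\emptyset$), observes that if $\widetilde{f}^n(\Lambda_0^-(x))$ met $C_n$ it could only meet $\Lambda_1^+(y)$ (by \eqref{equaproofpropimplyTheoC2}, \eqref{equaproofpropimplyTheoC4} and Remark \ref{remaunstableset}), and otherwise traps $\widetilde{f}^n(\Lambda_0^-(x))$ in the component $X_n$ of $\Ao\setminus C_n$ containing $\inte(R_{\widetilde{A}_0}(\alpha))$, which is \emph{bounded to the left}. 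Since $\Lambda_0^-(x)\cap\Gamma_1\subset\Gamma_1^l$, the curve $\widetilde{f}^n(\Gamma_1^l)$ meets $X_n$ yet is unbounded to the left, so it must hit $\fron(X_n)\subset C_n$; eliminating $\alpha$, $\Gamma_0^r$ and $\widetilde{f}^n(\Gamma_1^r)$ leaves $\Lambda_1^+(y)$. This left-boundedness versus left-unboundedness mechanism is the ingredient your sketch lacks, and without it (or a genuinely disjoint separating curve) the contradiction you aim for does not materialize.
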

\begin{proof}
Fix an integer $n\geq n_0$. We have two cases. If $\widetilde{f}^n(\Gamma^r_1) \cap \Lambda_1^+(y)=\emptyset$, then by Lemma \ref{lemmproofpropimplyTheoC1} and the fact that $\Gamma_1=\Gamma_1^l \cup \Gamma_1^r$, we have that $\widetilde{f}^n(\Gamma^l_1) \cap \Lambda_1^+(y)\neq \emptyset$. This completes the proof in this case.\\

 Now suppose that $\widetilde{f}^n(\Gamma^r_1) \cap \Lambda_1^+(y)\neq \emptyset$. Let $\Gamma_0^l=\Gamma_0 \cap L_{\widetilde{A}_0}(\alpha)$ and $\Gamma_0^r=\Gamma_0\setminus \Gamma_0^l$. Let
 $$C_n:= \Lambda_{1}^+(y)\cup \widetilde{f}^{n}(\Gamma_1^r)\cup \alpha\cup \Gamma_0^r.$$
 Remark that $C_n$ is a connected set. Moreover we can assume that it does not meet $\widetilde{f}^{n}(\Lambda_0^-(x))$. Otherwise, $\widetilde{f}^{n}(\Lambda_0^-(x))\cap \Lambda^+_1(y)\neq \emptyset$, because $\widetilde{f}^{n}(\Lambda_0^-(x))$ meets none of $\widetilde{f}^{n}(\Gamma_1^r)$, $\alpha$ and $\Gamma_0$  (relations \eqref{equaproofpropimplyTheoC4} and \eqref{equaproofpropimplyTheoC2}, and remark \ref{remaunstableset} respectively), which finishes the proof in this case. \\
 Since $\Gamma_1$ is an attracting line, i.e. $\widetilde{f}^{n}(\Gamma_1) \subset U^-_{\Gamma_1}$ and $\Lambda_1^{+}(y)$ is contained in $\widetilde{A}_1$, we have that
 $$ \inte(R_{\widetilde{A}_0}(\alpha)) \subset \inte (\widetilde{A}_0)\setminus \alpha \subset \Ao\setminus C_n. $$
\begin{figure}[h!]
  \centering
    \includegraphics{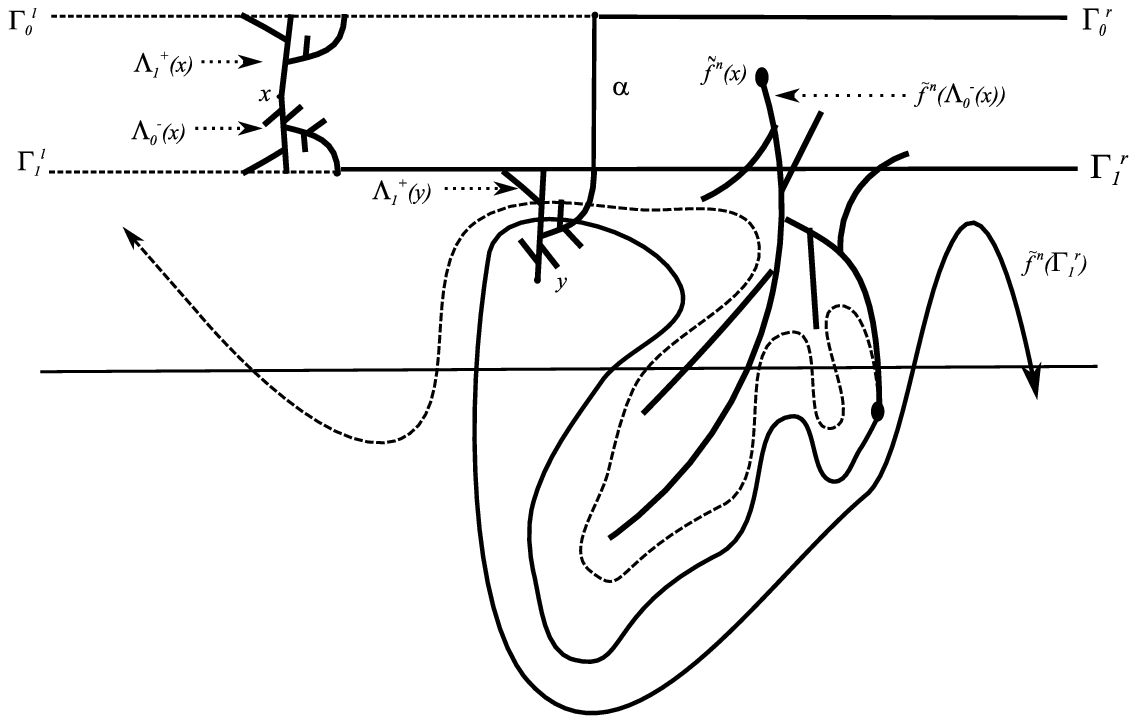}
  \caption{Proof Lemma \ref{lemmproofpropimplyTheoC2}}
  \label{fig:prooflemma}
\end{figure}

 Let $X_n$ be the connected component of $\Ao\setminus C_n$ that contains $\inte (R_{\widetilde{A}_0}(\alpha))$. Remark that since $C_n$ is a closed subset of $\Ao$, $X_n$ is an open set of $\Ao$ and $\fron(X_n)$ is contained in $C_n$. Hence $X_n$ is bounded to the left, because $\Lambda_1^+(y)\cup \alpha$ and $\widetilde{f}^n(\Gamma_1^r)$ are bounded to the left. On the other hand, since by relation \eqref{equaproofpropimplyTheoC3} $\widetilde{f}^n(x)\in R_{\widetilde{A}_0}(\alpha)$ and $\widetilde{f}^n(\Lambda_0^-(x))$ does not meet $C_n$, we have that $\widetilde{f}^n(\Lambda_0^-(x))$ is contained in $X_n$. Therefore the curve $\widetilde{f}^n(\Gamma_1^l)$ which is unbounded to the left must meet $\fron(X_n)$.
 As $\fron(X_n)\subset C_n$, it follows that $$\widetilde{f}^n(\Gamma_1^l)\cap C_n \neq \emptyset.$$ Since $\Gamma_1$ is an attracting line and since  $\Gamma_1^l\cap \Gamma_1^r=\emptyset$ and $\widetilde{f}$ is a homeomorphism, we have that $  \widetilde{f}^n(\Gamma_1)\cap (\alpha \cup \Gamma_0)=\emptyset$ and $  \widetilde{f}^n(\Gamma_1^l)\cap  \widetilde{f}^n(\Gamma_1^r)=\emptyset$ respectively. Therefore, we conclude that $\widetilde{f}^n(\Gamma_1^l)\cap \Lambda_1^+(y)\neq \emptyset$. This completes the proof of the lemma.

\end{proof}

\begin{proof}[End of the proof of Proposition \ref{propimplytheoC}]
Suppose by contradiction that there exists a sequence of integers $\suii{n}{k}$ which converges to $+\infty$ such that
$$  \widetilde{f}^{n_k}(\Lambda_0^-(x)) \cap \Lambda_1^+(y)=\emptyset.$$
From Lemma \ref{lemmproofpropimplyTheoC2} we obtain a sequence of points $\suii{\widetilde{z}}{k}$ with $\widetilde{z}_k\in \Gamma_1^l$ and $\widetilde{f}^{n_k}(\widetilde{z}_k)\in \Lambda_{1}^+(y)$. Recall that by $\eqref{equaproofpropimplyTheoC1}$, we have that $\Gamma_1^l\subset L_{\widetilde{A}_0}(M)$ and $\Lambda_1^+(y)\subset R_{\widetilde{A}_1}(M)$. Hence $ p_1(\widetilde{z}_k)<p_1(\widetilde{f}^{n_k}(\widetilde{z}_k))$. If $\rho$ is a limit point of the sequence $(\rho_{n_k}(\widetilde{z}_k))_{k\in\N}$, where
$$ \rho_{n_k}(\widetilde{z}_k):=\frac{1}{n_k}(p_1(\widetilde{f}^{n_k}(\widetilde{z}_k))-p_1(\widetilde{z}_k)),$$ then $\rho\geq 0$. This implies that $0\leq \rho\in \rho_{\widetilde{A}_1}(\widetilde{f})$, contradicting  Proposition \ref{propinclusion1}. This completes the proof of Proposition \ref{propimplytheoC}.
\end{proof}


\subsection{End of the proof of Theorem C*\label{theoremC4}}\label{subsecendprooftheoC}

 Let $n_0\in \N$ and let $\widetilde{z}\in \widetilde{f}^{n_0}(\Lambda_0^-(x)) \cap \Lambda_1^+(y)$ given by Proposition \ref{propimplytheoC}. From the definitions of $\Lambda_0^-(x)$ and $\Lambda_1^+(y)$ and the fact that $\Gamma_0$ and $\Gamma_2$ are $\widetilde{f}$-free, we deduce that $z\in \Theta(\widetilde{A})$. Moreover by Lemma \ref{lemma} and its symmetric property for stable branches, we know that
  \begin{equation}\label{equa1}
    \lim_{n\to+\infty } p_1(\widetilde{f}^{-n}(\widetilde{z}))=-\infty \quad \text{ and } \quad  \lim_{n\to+\infty } p_1(\widetilde{f}^{n}(\widetilde{z}))=-\infty.
  \end{equation}
Let us consider  $$M_0:=\sup_{\widetilde{z}'\in\widetilde{A}} \abs{p_1(\widetilde{f}(\widetilde{z}'))-p_1(\widetilde{z}')}.$$ Let $k$ be a positive integer and let us define $M:=p_1(\widetilde{z})-kM_0$. In a similar way as in the proof of the claim in Lemma \ref{lemmaxinfini}, we prove that there are two positive integers $n^+$ and $n^-$ such that
  $$ M-M_0\leq p_1(\widetilde{f}^{-n^-}(\widetilde{z}))<M  \text{  and  }  M-M_0\leq p_1(\widetilde{f}^{n^+}(\widetilde{z}))<M.$$
Therefore by the choice of $M$ and $M_0$, we deduce that
 \begin{itemize}
   \item $n^++n^-\geq 2k$, and
   \item $$ \abs{p_1(\widetilde{f}^{n^+}(\widetilde{z}))-p_1(\widetilde{f}^{-n^-}(\widetilde{z}))}\leq M_0.$$
 \end{itemize}
Since $k$ can be chosen arbitrarily large, this implies that $0$ belongs to $\rho_{\Theta(\widetilde{A})}(\widetilde{f})$. This is what we needed to prove.

\bibliographystyle{alpha}
\bibliography{bibliografie2}

  Institut de Mathématiques de Jussieu - Paris Rive Gauche,
  UPMC, 4 place Jussieu, Case 247, 75252 Paris Cedex 5.\\
  E-mail address: jonathan.conejeros@imj-prg.fr

\end{document}